\documentclass[reqno,english,11pt]{amsart}

\usepackage[utf8]{inputenc}       % encodage
\usepackage[T1]{fontenc}
\usepackage{microtype}            % petites améliorations typographiques

\usepackage{geometry}
\usepackage{amsmath,amssymb,amsthm,mathtools}
\usepackage{mathrsfs}             % \mathscr
\usepackage{bm}                   % vecteurs gras mathématiques
\numberwithin{equation}{section}  % équations numérotées par section

\usepackage[notref,notcite,final]{showkeys}
\mathtoolsset{showonlyrefs}

\theoremstyle{plain}
\newtheorem{theorem}{Theorem}[section]
\newtheorem{lemma}[theorem]{Lemma}
\newtheorem{proposition}[theorem]{Proposition}
\newtheorem{corollary}[theorem]{Corollary}

\theoremstyle{plain}
\newtheorem{definition}[theorem]{Definition}

\newtheorem{remark}[theorem]{Remark}

\usepackage{graphicx}
\usepackage{caption}
\usepackage{subcaption}
\usepackage{tikz}      % dessins vectoriels
\usepackage{booktabs}  % tables soignées

\usepackage{enumitem}
\setlist[itemize]{leftmargin=*, itemsep=2pt}
\setlist[enumerate]{leftmargin=*, itemsep=3pt}

\usepackage[colorlinks=true, linkcolor=blue, citecolor=blue, urlcolor=blue]{hyperref}

\def\whF{\widehat{{\mathcal{F}}}}
\def\jx{\langle x \rangle}
\def\jy{\langle y \rangle}
\def\jk{\langle k \rangle}

\def\#{\sharp}

\def\im{\text{Im }}

\def\R{\mathbb{R}}
\def\C{\mathbb{C}}

\def\K{\mathcal{K}}
\def\e{\epsilon}

\def\jt{\langle t \rangle}
\def\js{\langle s \rangle}

\def\pv{\mathrm{p.v.}}

\def\sgn{\mathrm{sgn}}
\def \ran {\text{Ran}}

\def\what{\widehat}
\def\Fou{\mathcal{F}}

\usepackage{mathtools}
\title[NLS with a non-generic potential in 1D]{Long time behavior of small solutions of NLS \\ with non-generic potentials in one dimension 
 }

\author{Neba Polneau}
\address{Institut de Mathématiques de Jussieu–Paris Rive Gauche,
CNRS, Sorbonne Université, Université Paris Cité,
4 place Jussieu, 75005 Paris, France}
\email{polneau@imj-prg.fr}
\date{\today} % ou fixer : 12 septembre 2025
\keywords{Nonlinear Schr\"odinger equation, distorted Fourier transform, resonance}
\thanks{This research was supported by the ERC project INSOLIT (No. 101117126), by NSF Grants DMS-2350301 and CAREER-DMS-2540992, by the Simons Foundation MP-TSM-00002258. The author would like to thank Jacek Jendrej and Gong Chen for helpful discussions, and Andrew Lawrie for his encouragement and support.}

\begin{document}

\begin{abstract}
We consider the one-dimensional cubic nonlinear Schr\"odinger equation with a non-generic real-valued external potential $V$. We prove almost global-in-time quantitative bounds for small solutions. More precisely, small initial data of size $\varepsilon$ in a weighted Sobolev space give rise to solutions with the sharp decay rate $t^{-1/2}$ in $L^{\infty}_x$ up to time $\exp(\frac{1}{c\varepsilon^{2}})$. The main novelty of our result is that no additional symmetry assumption is imposed on $V$.

First, we use a modification of the standard distorted Fourier transform basis to resolve the possible discontinuity at zero energy due to the presence of a resonance. Then, following the work of Chen and Pusateri \cite{CPNLS2}, we use smoothing estimates in the setting of non-generic potentials to analyze the low frequency structure of the (modified) nonlinear spectral distribution. A key novel ingredient is a Fourier restriction type inequality that handles low frequency contributions not amenable to the approach of \cite{CPNLS2}, and which is central to establishing the quantitative bounds. 

%In addition, we use a "Fourier restriction" type inequality to resolve some issues in the low frequency structure we cannot handle with the approach in \cite{CPNLS2}. That was the key trick in establishing the quantitative bounds.
\end{abstract}

\maketitle

\setcounter{tocdepth}{1}
\tableofcontents

%-----------------------------------------------

\section{Introduction}
We consider the one-dimensional cubic nonlinear Schrödinger equation (NLS)
\begin{align} \label{NLS}
    \begin{cases}
        i\partial_t u - \partial_{xx} u + V(x)u \pm |u|^2u = 0  \\
        u(0,x) = u_0(x)
    \end{cases} 
\end{align}
for an unknown function $u : \R_t \times \R_x \to \C$, with initial data $u_0$ in $H^1(\R) \cap L^2(\R, \jx^2 \, dx)$ and a real-valued potential $V \in L^1(\R, \jx^\gamma \, dx)$ for some $\gamma$. We assume that $V$ is \emph{non-generic}, in the sense that there exists a non-trivial $L^\infty$ bounded function $\varphi = \varphi(x)$ satisfying $H\varphi = 0$, where $H := -\partial_x^2 + V(x)$ is the associated Schr\"odinger operator. Our goal is to understand the long-time behavior of small solutions to this equation. 

\smallskip
\subsection{Known facts}
First of all, for small data in $H^1(\R)$ the equation \eqref{NLS} is well-posed in $H^1(\R)$ and has a global solution regardless of the sign $\pm$ in front of the nonlinear term. This is a consequence of the semigroup theory and conservation of  the mass 
\begin{align}\label{mass}
    M(u) := \int\left|u\right|^{2}\,dx 
\end{align}
and the total energy (Hamiltonian)
\begin{align}\label{Ham}
H(u) := \int\frac{1}{2}|\partial_xu|^{2}
  +\frac{1}{2}V|u|^{2}\pm\frac{1}{4}|u|^{4} \, dx.
\end{align}
%\col{The question is now to give a more precise description of the solutions when $|t|\to +\infty$, especially quantitative bounds and asymptotics. Due to some dispersive features of the equation, the solutions tend to exhibit  scattering behavior at large times. On this question of scattering, the cubic nonlinearity is critical and we do not have standard scattering compared to higher order of nonlinearity, and the one dimensional case makes things harder in that situation because wave packets tend to spread more slowly in time (heuristically one can say they have less space to spread out). \\
%In the "free" case ($V = 0$), Hayashi and Naumkin (see \cite{HN}) give a precise description of the asymptotics behavior with a $\log t$ correction in the phase of the asymptotic "profile". There exist several proofs of this modified scattering result. We can consult the expository paper of Murphy (\cite{Mur}). The more recent proof due to Kato and Pusateri (see \cite{KP}) uses the method of space-time resonances introduced by Germain, Masmoudi and Shatah in their paper \cite{GMS}. This method was also used to study the behavior in the presence of an external potential $V(x)$. In the case where $V$ is generic, Germain, Rousset, Pusateri give a proof of modified scattering (see \cite{GPR}). The non-generic case was treated by Chen and Pusateri (see \cite{CPNLS2}) under some symmetry assumptions on the zero-energy resonance. This work considers the non-generic case, regardless of symmetry assumptions.}

We now turn to a more precise description of the behavior of solutions as $|t|\to +\infty$, seeking both quantitative bounds and sharp asymptotics. Due to the dispersive nature of the equation, solutions are expected to exhibit scattering behavior at large times. However, the cubic nonlinearity is critical in this regard: unlike higher-order nonlinearities, it does not admit standard scattering, and the one-dimensional setting further complicates the analysis, as wave packets spread more slowly in time. Heuristically, they have less "room" to disperse.

In the flat case ($V = 0$), Hayashi and Naumkin \cite{HN} established a precise description of the asymptotic behavior, revealing a logarithmic correction in the phase of the asymptotic profile, a phenomenon known as modified scattering. Several alternative proofs of this result have since appeared; we refer the reader to the expository paper of Murphy \cite{Mur}. A more recent proof, due to Kato and Pusateri \cite{KP}, relies on the method of space-time resonances introduced by Germain, Masmoudi, and Shatah \cite{GMS}. This method was adapted to the case of an external potential by Germain, Pusateri, and Rousset in \cite{GPR}. In the non-generic case, Chen and Pusateri \cite{CPNLS2} obtained global-in-time results under additional assumptions on the zero-energy resonance (see Definition \ref{0reson}). They established modified scattering if the zero-energy resonance is either odd or even. In particular, this includes the case of even potentials. In fact, the arguments in \cite{CPNLS2} work under a slightly weaker assumption on the zero-energy resonance, namely its limit at $\pm\infty$ are either equal or opposite in sign (see Remark \ref{remarkdecompmu}). The present work removes these additional symmetry-type restrictions. We obtain almost global-in-time quantitative bounds similar to the result of Murphy and Pusateri in \cite{MurPus}.

\smallskip
\subsection{Main result}\label{THM}
The goal of this paper is to prove the following result.
\begin{theorem}\label{mainthm}
Consider the nonlinear Schrödinger equation  \eqref{NLS}  with a real-valued potential $V$ satisfying the following assumptions:
\begin{enumerate} [label = (H\arabic*)]
\item  $V$ is non-generic
\item  $-\partial_{xx} + V$ has no eigenvalues 
\item \label{VassumeWei}
$V \in L^1(\R, \jx^\gamma dx)$ for some $\gamma > \frac 52$.
\end{enumerate}
Then there exists $\varepsilon_{0} > 0$ %only depending on prescribed constants
such that for all $\varepsilon\leq\varepsilon_{0}$ and $u_0$ with
\begin{align}\label{smalldata}
\|u_{0}\|_{H^{1}} +\|xu_{0}\|_{L^{2}} = \varepsilon
\end{align}
the equation \eqref{NLS} has a unique global solution $u\in C(\R,H^1(\R))$, with $u(0,x)=u_{0}(x)$ and this solution satisfies the linear decay rate
\begin{align}\label{main1fdecay}
\|u(t)\|_{L^\infty_x} \lesssim \varepsilon \jt^{-\frac{1}{2}} 
\end{align}
for all $t \in [-T_\varepsilon , T_\varepsilon]$ where $T_\varepsilon = \exp(\frac{1}{c\varepsilon^2})$ with $c > 0$ depending only on the potential $V$.\\
Moreover, if we define the profile of the solution $u$ as
\begin{align}\label{main1prof}
f(t,x) := e^{-itH}u(t,x), \quad H := -\partial_{xx}+V,
\end{align} 
then for all $t \in [-T_\varepsilon , T_\varepsilon]$, the following bound holds:
:
\begin{align}\label{main1fbounds2}
{\big\| (\mathcal{F}^\sharp f) (t) \big\|}_{L_k^\infty} + \jt^{-1/4} 
  {\big\| \partial_{k} (\mathcal{F}^\sharp f) (t) \big\|}_{L_k^2} \lesssim \varepsilon,
\end{align}
where $\mathcal{F}^\sharp$ is the modified distorted Fourier transform (see \eqref{matK} and \eqref{sharpF}).
\end{theorem}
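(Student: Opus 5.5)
The proof will be a bootstrap on the profile $f=e^{-itH}u$, carried out in the modified distorted Fourier space. Fix $T\le T_\varepsilon$. The bootstrap hypothesis on $[0,T]$ is
\begin{align}
\sup_{t\in[0,T]}\Big( \|\mathcal{F}^\sharp f(t)\|_{L^\infty_k} + \jt^{-1/4}\|\partial_k\mathcal{F}^\sharp f(t)\|_{L^2_k}\Big)\le 2C_0\varepsilon ,
\end{align}
and the goal is to recover it with $C_0\varepsilon$. Negative times follow by time reversal, and global existence and uniqueness in $H^1$ come from mass and energy conservation.

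\textbf{Step 1: linear decay.} Write the solution through $\mathcal{F}^\sharp$. The modified basis removes the discontinuity at $k=0$ caused by the zero-energy resonance. A stationary phase argument in $k$ then gives a linear estimate of the form
\begin{align}
\|e^{itH}f\|_{L^\infty_x}\lesssim |t|^{-1/2}\|\mathcal{F}^\sharp f\|_{L^\infty_k}+|t|^{-3/4}\|\partial_k\mathcal{F}^\sharp f\|_{L^2_k}.
\end{align}
Under the bootstrap hypothesis this yields \eqref{main1fdecay}. I would prove it from the stated properties of the modified generalized eigenfunctions. The assumption \ref{VassumeWei} with $\gamma>5/2$ supplies enough derivatives in $k$ of the Jost-type functions to handle the low-frequency region, where the non-symmetric resonance makes the coefficients only piecewise smooth.

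\textbf{Step 2: nonlinear spectral distribution.} Duhamel's formula gives
\begin{align}
\partial_t\mathcal{F}^\sharp f(t,k)=\pm i\iint e^{it\Phi}\,\overline{\widetilde f(\ell)}\,\widetilde f(m)\,\widetilde f(n)\,\mu^\sharp(k,\ell,m,n)\,d\ell\, dm\, dn,
\qquad \Phi=-k^2+\ell^2-m^2+n^2 .
\end{align}
Following \cite{CPNLS2}, I would decompose $\mu^\sharp$ into three parts:
\begin{itemize}
\item a singular part containing $\delta$ and $\pv\frac1{\cdot}$ terms, which resembles the flat case;
\item a regular part;
\item low-frequency corrections produced by the resonance.
\end{itemize}
In the symmetric case the limits of the resonance at $\pm\infty$ are equal or opposite, and the coefficients in front of the $\delta$ terms then combine neatly. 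Here they do not, so some singular terms at low frequency mix incompatible reflection and transmission coefficients.

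\textbf{Step 3: weighted $L^2$ bound.} Bound $\partial_k\mathcal{F}^\sharp f$ by the usual space-time resonance argument. When $\partial_k$ falls on the phase it produces $t\,k$; integrating by parts in frequency at non-resonant points and using the $L^\infty$ decay from Step 1 on the remaining factors gives $\|\partial_k\mathcal{F}^\sharp f\|_{L^2}\lesssim\varepsilon+\varepsilon^3\jt^{1/4}$. The regular part would be handled with the smoothing (local decay) estimates for non-generic potentials, as in \cite{CPNLS2}.

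\textbf{Step 4, the main obstacle: low-frequency terms.} The low-frequency contributions that do not fit \cite{CPNLS2} are terms where the resonance coefficients fail to cancel at $k=0$. For these I would establish a Fourier restriction type inequality. It should bound trilinear expressions of the form $\int \chi(k/\delta)\,\widehat{F}(k)\,dk$ by $\delta^{1/2}\|F\|_{L^2}$, or more precisely by $\|xF\|_{L^2}$-type norms, so as to gain a factor $t^{-1/2}$ from the localization $|k|\lesssim t^{-1/2}$. Combined with a restriction of $|u|^2u$ to frequency zero, this should give the bound $\varepsilon^3\jt^{-1}$.

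\textbf{Step 5: $L^\infty_k$ bound and conclusion.} Extract the leading stationary-phase contribution $\frac{c}{t}|\widetilde f(t,k)|^2\widetilde f(t,k)$ and cancel it with the phase correction $\exp\big(\mp\tfrac{i}{2}\int|\widetilde f|^2\tfrac{ds}{s}\big)$. The remainders are $O(\varepsilon^3 t^{-1-\alpha})$, except for those resonance terms that Step 4 only controls at the rate $\varepsilon^3 t^{-1}$ with no available phase cancellation. Integrating gives $|\mathcal{F}^\sharp f(t)|\le\varepsilon+C\varepsilon^3\log t$. This closes the bootstrap exactly as long as $C\varepsilon^2\log T\lesssim1$, that is, for $T\le\exp(1/(c\varepsilon^2))$.
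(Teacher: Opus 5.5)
There is a genuine gap in Step 3, at exactly the place where the asymmetry of the resonance enters. Your Step 2 also misplaces the difficulty. The $\delta$-coefficients do combine, because $A_-^2=B_+^2$ and $A_+^2=B_-^2$. What fails when $a^2\neq 1$ is the cancellation of the $\pv$ terms coming from the cross products of type $(3,1)$ and $(1,3)$ in $\prod \mathcal{K}^\sharp_0$. This leaves $\mu^\sharp_{\pv}$ with the constant coefficient $2(A_-^2-A_+^2)A_+B_+\neq 0$, i.e. a $\pv$ kernel with no vanishing at zero frequency.

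Apply $\partial_k$ to $\mathcal{N}_{\pv}$. The commutation identity of Lemma~\ref{lem:algetri} moves the derivative onto the inputs, except for the term generated by $\xi\cdot \pv\frac{\what{\zeta}(\xi)}{i\xi}=-i\what{\zeta}(\xi)$, namely
\[
\int_0^t s\,\mathcal{T}_{\what{\zeta},\epsilon}(f_1,f_2,f_3)(s,k)\,ds=\int_0^t e^{-isk^2}\int_\R e^{-iky}F(s,y)\,dy\,ds ,
\]
where $F(s,y)$ is $s\,\zeta$ times a product of three linear flows. This term resists every tool in your Step 3:
\begin{itemize}
\item The integrand has $L^2_k$ norm of size $\varepsilon^3\js^{-1/2}$, and in general no smaller, so dispersive decay only gives $\varepsilon^3 t^{1/2}$.
\item The smoothing estimate of Lemma~\ref{lem:smoothingsim} needs a factor $|\phi(k)|\lesssim\sqrt{|k|}$, which is absent here.
\item $k=0$ is time-resonant, so integrating by parts in frequency or in time does not remove it.
\end{itemize}
The ``usual space-time resonance argument'' therefore does not give $\varepsilon^3\jt^{1/4}$ for this term, and this term is the whole difficulty of the theorem.

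Your Step 4 does not supply the missing bound. The inequality $|\int\chi(k/\delta)\what{F}\,dk|\le\delta^{1/2}\|F\|_{L^2}$ is just Cauchy--Schwarz, and at $\delta=t^{-1/2}$ it gains $t^{-1/4}$, not $t^{-1/2}$. Moreover, no fixed-time rate $\varepsilon^3 t^{-1}$ can hold for the $L^2_k$ quantity above: its gain must come from the time oscillation $e^{-isk^2}$.

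The paper's idea is to split $e^{-iky}=(e^{-iky}-1)+1$.
\begin{itemize}
\item The first piece factors as $\phi(k)Q(y,k)$, with $\phi(k)=k$ for $|k|\le 1$, $\phi=1$ otherwise, and $|Q|\lesssim\jy$. Lemma~\ref{lem:smoothingsim} then gives $\varepsilon^3\sqrt{\log\jt}$.
\item The second piece is $\what{G_t}(k^2)$ with $G_t(s)=s\,\mathcal{T}_{\what{\zeta},\epsilon}(f_1,f_2,f_3)(s,0)\mathbf{1}_{[0,t]}(s)=O(\varepsilon^3\js^{-1/2})$. The restriction inequality that closes the estimate lives in the time-frequency variable: $\|\what{G}(k^2)\|_{L^2_k}\lesssim\|G\|_{L^{4/3}}$. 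To prove it, substitute $\lambda=k^2$, then use Hardy--Littlewood--Sobolev for the weight $|\lambda|^{-1/2}$ and the Hilbert transform.
\end{itemize}
This yields exactly $\varepsilon^3\jt^{1/4}$, which is optimal for $G\sim s^{-1/2}$, since $|\int_0^t e^{-isk^2}s^{-1/2}ds|\approx\min(t^{1/2},|k|^{-1})$.

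In Step 5, the phase correction with $O(t^{-1-\alpha})$ remainders is not available once $\|\partial_k f^\sharp\|_{L^2}$ grows like $\jt^{1/4}$; the stationary phase lemmas then only give $O(t^{-1})$ errors. It is also unnecessary. It suffices to bound $\mathcal{N}_0,\mathcal{N}_{\pv},\mathcal{N}_L$ by $\varepsilon^3\jt^{-1}$ in $L^\infty_k$ and the regular part by $\varepsilon^3\jt^{-3/2}$. For the $\pv$ kernels, the $L^\infty_k$ bound comes from a uniform bound on $\pv\int e^{itx^2}G(x)\frac{\psi(x-K)}{x-K}\,dx$, as in Lemma~\ref{lemStat}. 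This already gives the $\log\jt$ growth that determines $T_\varepsilon$.
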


\begin{remark}[Quintic case] Using the same strategy, one can prove with the same hypothesis on $V$ and $u_0$, that the solution of the equation
    \begin{align} \label{NLS5}
    \begin{cases}
        i\partial_t u - \partial_{xx} u + V(x)u \pm |u|^4u = 0  \\
        u(0,x) = u_0(x)
    \end{cases} 
\end{align}
exhibits global bounds for the profile in distorted Fourier setting and  standard scattering of the solution in $H^1(\R)$.
    
\end{remark}

\subsection{Ideas of the proof}
\subsubsection{Bootstrap space}
We perform a bootstrap argument to prove theorem \ref{mainthm}. In fact, we work under the hypothesis that  
\begin{align} \label{boothyp}
    \|u\|_{T} : =\underset{t \in [-T,T]}{\sup}\Big({\big\| (\mathcal{F}^\sharp f) (t) \big\|}_{L_k^\infty} + \jt^{-1/4} 
  {\big\| \partial_{k} (\mathcal{F}^\sharp f) (t) \big\|}_{L_k^2} \Big) = 2C_0\varepsilon 
\end{align}
with $C_0$ a positive constant depending only on $V$ to be determined and $T \in (0 , T_\varepsilon]$. Our analysis will show that $\|u\|_{T} \le C_0 \varepsilon$, which by standard arguments yields the desired bounds on the time interval $[-T_\varepsilon,T_\varepsilon]$, see the proof of the main theorem in Section \ref{propgoalproof} for details. The choice of this bootstrap space comes naturally from the linear dispersion estimate 
\begin{align}
    \|u(t)\|_{L^\infty_x} \lesssim \jt^{-1/2}\Big({\big\| (\mathcal{F}^\sharp f) (t) \big\|}_{L_k^\infty} + \jt^{-1/4} 
  {\big\| \partial_{k} (\mathcal{F}^\sharp f) (t) \big\|}_{L_k^2} \Big)
\end{align} 
which will be proved in Lemma \ref{lem:pointwiseH}.

\subsubsection{Linear theory}
The analysis of this problem highlights the importance of having a distorted Fourier transform that is continuous at zero frequency, particularly when handling nonlinear interactions in the low-frequency regime. First of all, the distorted Fourier transforms serve to diagonalize self-adjoint Schrödinger operators and define unitary isomorphisms from $L^2(\R)$ to $L^2(\R)$. We know that such operators admit Schwartz kernels, so one can formally write 
$$ \Fou{(\varphi)}(k) = \int_{\R} \overline{\K(x,k)}\varphi(x) dx $$
Requiring that this transform diagonalizes $H$, the function $ x \mapsto \overline{\K(x,k)}$ must be a generalized eigenfunction of $H$, associated with the spectral value $k^2$. Thus, constructing a distorted Fourier transform reduces to selecting an appropriate basis of generalized eigenfunctions.
In the non-generic case, the kernel of the standard distorted Fourier transform may exhibit a discontinuity at zero frequency (see Proposition \ref{dftdiscont}). To resolve this, we introduce a modification of the distorted Fourier basis, consisting of a well-chosen unitary transformation thereof, see \eqref{matK} and \eqref{matKunitary}, so that Plancherel-type identities are preserved.

We then study the dynamics of the profile $f = e^{-itH}u$, for which we establish in Section \ref{seclinest} the following linear estimates: 
\begin{itemize}
    \item dispersive decay as \eqref{eq:linearpoinwiseH}
    \item smoothing estimates in Lemma \ref{lem:smoothingsim};
    \item local decay (Lemma \ref{lem:lowlocaldecay}) and $L^2$ improved local decay (Corollary \ref{cor:locdecdiffflow}). 
\end{itemize}

\subsubsection{Nonlinear spectral distribution}
We begin the nonlinear analysis by writing the Duhamel formulation for the profile $f$ in the distorted frequency space. For any function $\varphi$, we use the convention $\varphi^\sharp : = \Fou^\sharp(\varphi)$. Since $f^\sharp (t,k) = e^{-itk^2}u^\sharp(t,k)$, this  gives us the  nonlinear equation 
\begin{align}\label{introD}
f^{\#}(t,k) & = f^{\#}(0,k) + i \int_{0}^{t} \mathcal{N}_{\mu^\sharp}[f^\sharp,f^\sharp,f^\sharp](s,k) \, ds,
\end{align}
with
\begin{align}
\mathcal{N}_{\mu^\sharp}[f^\sharp,f^\sharp,f^\sharp](s,k) & := \!\! \underset{\R^3}{\iiint} e^{is(-k^2+\ell^2-m^2+n^2)}
f^{\#}(s,\ell)\overline{f^{\#}(s,m)}f^{\#}(s,n) \, \mu^\sharp(k,\ell,m,n) \,\frac{dn}{2\pi}\frac{dm}{2\pi}\frac{d\ell}{2\pi}
\end{align}
and
%the profile $\wt{f}(t,k) = e^{it|k|^2} \wt{u}(t,k)$ for a solution $u$ of \eqref{eq:NLSE}, and 
\begin{align}\label{intromu}
\mu^{\#}(k,\ell,m,n):=\int_\R\overline{\mathcal{K}^{\#}(x,k)}\mathcal{K}^{\#}(x,\ell)
  \overline{\mathcal{K}^{\#}(x,m)}\mathcal{K}^\sharp(x,n)\,dx,
\end{align}
which is a tempered distribution on $\R^4$. We call this distribution the {\it nonlinear spectral distribution} (NSD).
We decompose it into 4 pieces  as follows
\begin{align}\label{intromudec}
\mu^{\#}(k,\ell,m,n) = \mu^\#_0(k,\ell,m,n) + \mu^{\#}_L(k,\ell,m,n) +  \mu^{\#}_{\pv} (k,\ell,m,n)  + \mu^{\#}_R(k,\ell,m,n).
\end{align}
%where  $\mu^{\#}_L$ vanishes if one of $k,\ell,m,n$ is $0$ and  $\mu^{\#}_R$ is a regular term.
The first part is a $\C-$linear combination of zero order distributions involving \emph{Dirac delta} terms. 
The term $\mu^{\#}_L$ is referred to as the "improved low-frequency" part of $\mu^{\#}$, as the  singularity at $k =\ell = m = n= 0$ vanishes. We call $\mu^{\#}_{\pv}$ the "dangerous" $\pv$ part, because of a lack of improvement of the low frequency in this part, and we call $\mu^{\#}_R$ the "regular" part of $\mu^{\#}$ since it is a function rather than a distribution. This decomposition generalizes the one carried out in \cite{CPNLS2}; notably, the term $\mu^\#_{\pv}$, which is absent in the setting of \cite{CPNLS2}, appears here as an additional contribution. This decomposition follows from a refined analysis of the modified kernel
$$\mathcal{K}^\sharp(x,k) = \mathcal{K}^\sharp_S(x,k) + \mathcal{K}^\sharp_R(x,k). $$
The term $\K_S^\sharp$ splits into two contributions: one capturing the main effect of the zero-energy resonance, which does not vanish at $k=0$, and one consisting of a linear combination of exponentials $e^{\pm i xk}$ with coefficients that are smooth enough
functions depending only on $k$ and vanish at $k=0$. $\K_R^\sharp$ is the part that arises from the interaction with the potential, it has strong localization in $x$ and is regular in $k$ (see \eqref{Ksharpdecomp} and \eqref{KsharpSdecomp}-\eqref{kR}).

The decomposition \eqref{intromudec} of the NSD induces a corresponding decomposition of the nonlinear interactions $\mathcal{N}_{\mu^\sharp}$. i.e. 
\begin{align}
   \mathcal{N}_{\mu^\sharp}[f^\sharp,f^\sharp,f^\sharp] =  \sum_{\ast\in\{0; \pv; L; R \}}\mathcal{N}_{\ast}
\end{align}
where, for $\ast\in\{0; \pv; L; R \}$,  
\begin{align}\label{Nastintro}
	\begin{split} 
	\mathcal{N}_{\ast}(s,k) := \underset{\R^3}{\iiint} e^{is(-k^2+\ell^2-m^2+n^2)}
	f^{\#}(s,\ell)\overline{f^{\#}(s,m)}
	f^{\#}(s,n) \ \mu^{\#}_{\ast}(k,\ell,m,n)\,\frac{dn}{2\pi}\frac{dm}{2\pi}\frac{d\ell}{2\pi}
	\end{split}
	\end{align} 

\subsubsection{Nonlinear estimates}

 We start with the $L^2$ estimates.  We do not  have  much trouble with the nonlinear part $\mathcal{N}_{0} $ since it corresponds, in some sense, to the flat case $(V = 0)$. The part involving the most singular features at low frequency are  $\mathcal{N}_{\pv} $ and $\mathcal{N}_{L}$. To treat this part, we use the strategy of Chen and Pusateri, based on a commutation identity between $\partial_k$ and the trilinear form that appear in the nonlinear interactions $\mathcal{N}_{\ast}$. By performing the flat (that is, non-distorted) Fourier transform, we are led to treat terms of the form $a(x)s|u|^2u$ with a  localized function $a(x)$. But, the $t^{-1/2}$ sharp  $L^\infty$ dispersive decay of $u$ is not sufficient to perform a direct time integration and hope to close the bootstrap scheme. This issue is resolved in \cite{CPNLS2} using smoothing estimates obtained by exploiting the time oscillation of the Schr\"odinger flow (see Lemma \ref{lem:smoothingsim})
%The other type of estimate that we use %in combination with the improved low frequency behavior of $\mu^\sharp_L$
and enhanced local decay for low frequency improved Schrödinger flows (see Lemma \ref{lem:lowlocaldecay}).
The main smoothing estimate used can be stated as follows:
assuming $\mathcal{Q}(x,k)$ is a bounded function, and letting $\phi=\phi(k)$ be a  
function such that $|\phi(k)|\lesssim \sqrt{|k|}$, we have
\begin{align}\label{eq:smoothingintro}
\left\Vert \int_0^t \left[ \int_\R e^{-ik^{2}s} \phi(k) \mathcal{Q}(y,k)F(s,y)\,dy\right]
	  \,ds\right\Vert _{L_{k}^{2}}\lesssim & \left\Vert F\right\Vert _{L_{y}^{1}L_{s}^{2}([0,t])}.
\end{align}

This estimate is established via the (flat) Fourier transform in time. However, this strategy breaks down for $\mathcal{N}_{\pv}$, due to the absence of low-frequency improvement, which is essential in applying the smoothing estimates above. To handle $\mathcal{N}_\pv$, we also exploit the commutation identity, as in the treatment of $\mathcal{N}_L$ in \cite{CPNLS2}, but the estimation techniques differ. We resolve the difficulty through explicit refined (flat) Fourier-in-time computations involving the Schrödinger multiplier. The key ingredient and main novelty of this analysis is the following inequality (see Lemma \ref{k^2lemma}) stated as follows: for a Schwartz function $F$ on the real line, \begin{align}
        \Big( \int_\R |\widehat{F}(k^2)|^2 \ dk \Big)^{1/2} \lesssim ||F||_{L^{4/3}} \ .
    \end{align}

For the regular part $\mathcal{N}_R$, the analysis is essentially to estimate the $L^2_k$ norm of the term of the form
\begin{align*}
    \int_0^t e^{-isk^2}sk \int_\R
 \mathcal{Q}(x,k)u_1(s,x)\overline{u_2(s,x)}u_3(s,x) \ dx ds \ .  \end{align*}
 The kernel $\mathcal{Q}(x,k)$ is either well localized in $x$ or just bounded. In the latter case, one of the $u_j$ is well localized. Moreover, the different $u_j$ decay as Schrödinger free flow. The $k$ growth is also a challenge. We treat it by performing a low$\backslash$high frequency analysis in order to use the smoothing estimate \eqref{eq:smoothingintro}. The low frequency regime is direct. But, for the high frequencies, we need integration by parts to absorb the growth in $k$. After that, we use the smoothing estimate combined with the improved local decay of the differentiated Schr\"odinger flow (see Lemma~\ref{lem:localderivative}).

% For the regular part $\mathcal{N}_R$, we observe that it can be viewed broadly as a nonlinear expression of the form $\widehat{\Fou}\big(e^{isH} a(x)\, |u(s)|^{2}u(s)\big)$, where $a(x)$ is a localized function. Applying $\partial_k$ to $\mathcal{N}_R[f^\sharp,f^\sharp,f^\sharp]$ makes a factor of $s k$ appear in the computations. We proceed in a low/high frequency analysis. In fact, when $|k|$ is small, this factor allows us to invoke the smoothing estimate 
% \eqref{eq:smoothingintro} directly to obtain the desired bound. For frequencies $|k|\gg 1$, the estimate \eqref{eq:smoothingintro} is no longer immediately applicable. However, by performing an integration by parts, we can absorb the growing factor of $k$, after which we use the smoothing estimate \eqref{eq:smoothingintro}, in the form given in \eqref{eq:smoothingQimh} (corollary \ref{cor:smoothing}) together with the enhanced local decay available for the differentiated Schr\"odinger flow (see Lemma~\ref{lem:localderivative}).\\ \\ \\

\smallskip
For the $L^\infty$ estimates,  the regular part $\mathcal{N}_R$ contributes to remainder in the asymptotics. The bound is obtained with the dispersive estimates we establish in Subsection \ref{ssecdisp}. Therefore, the asymptotic dynamics is encoded in the singular parts $\mathcal{N}_0$, $\mathcal{N}_\pv$ and $\mathcal{N}_L$. We handle it via stationary phase lemmas for oscillatory integrals with a quadratic phase and a distributional symbol. We mean quantities as 
\begin{align*}
\begin{split}
 & \underset{\R^2}{\iint} e^{it \Phi(k,p,m,n)}
	g_1(\epsilon_1(\epsilon_0 k  + \epsilon_2 m - \epsilon_3 n)) \overline{g_2(m)} 
	g_3(n) \, dm dn  \ ,\\
& \underset{\R^3}{\iiint} e^{it \Phi(k,p,m,n)}
	g_1(\epsilon_1(\epsilon_0 k - p + \epsilon_2 m - \epsilon_3 n)) \overline{g_2(m)} 
	g_3(n) \pv \frac{\widehat{\phi}(p)}{p} \, dm dn dp \ . 
\end{split}
\end{align*}
where $\e_j \in \{+,-\}$ and $g_j$ are smooth enough functions and
$$ \Phi(k,p,m,n) = -k^2 + (\epsilon_0 k - p + \epsilon_2 m - \epsilon_3 n)^2 - m^2 + n^2 \ . $$
The analysis leads to a pointwise growth of $\log \jt$. A more satisfactory pointwise asymptotic description remains to be established. For instance, in previous work such as \cite{GPR}, \cite{CPNLS2}, the authors got an asymptotic ODE for their respective models. It takes the form
\begin{align}\label{asODE}
i\partial_t \mathcal{F}f(t,k) =  - \frac{1}{2t}\big|\mathcal{F}f(t,k)\big|^2 \mathcal{F}f(t,k)
   + \mathcal{O}(\varepsilon^3 \jt^{-1-\rho})
\end{align}
where $\rho$ is a small positive constant and $\mathcal{F}$ denotes the respective (distorted) Fourier transforms in each setting. These precise asymptotics lead to modified scattering in the models studied in \cite{GPR} or \cite{CPNLS2}. In our case, this strategy fails. This is due to the $t^{1/4}$ growth of $\| \partial_kf^\sharp\|_{L^2}$ established by the bootstrap argument, rather than  $t^{\alpha}$ with $\alpha < 1/4$ as in the papers  \cite{GPR} or \cite{CPNLS2}. We discuss these issues at the end of the paper. \\
 \smallskip
\subsubsection*{Organization of the paper}
Section \ref{seclinscatt} presents basic results on linear scattering theory and introduces the 
distorted Fourier transform (dFT), the modified/sharp transform needed in the non-generic case,
and our decomposition of the (modified) generalized eigenfunctions. Section \ref{seclinest} provides the linear estimates that we will use for the nonlinear estimates. Section \ref{secmu} exposes the decomposition of the nonlinear spectral distribution (NSD) which encodes the nonlinear interactions. In Sections \ref{secnonlinl2sing},  \ref{secnonlinl2reg} and \ref{secnonlinlinf}, we prove respectively the $L^2$ and $L^\infty$ nonlinear estimates. Finally, in Section \ref{propgoalproof}, we give the proof of the main theorem \ref{mainthm} with details of the bootstrap scheme. We also discuss some issues about the choice of the bootstrap space. 

\smallskip
\subsection*{Notation}
We use the notation $\langle x\rangle := (1+|x|^{2})^{\frac{1}{2}}$.

\noindent
For positive quantities $a$ and $b$, we write $a\lesssim b$ for
	$a\leq Cb$ where $C$ is a universal constant,
	and $a\simeq b$ when $a\lesssim b$ and $b\lesssim a$. For $a,b \in \C$, we write $a = \mathcal{O}(b)$ if $|a| \lesssim |b|$. 
	
\noindent
We let $\mathbf{1}_A$ denote the characteristic function of the set $A$,
	and let $\mathbf{1}_+ := \mathbf{1}_{[0,\infty)}$, $\mathbf{1}_- := \mathbf{1}_{(-\infty,0)}$.

\noindent
The (flat) Fourier transform is defined as
	\begin{equation}\label{eq:FT}
	\hat{h}(\xi) = \whF [h](\xi)
	=\int_\R e^{-ix\xi}h(x)\,dx,
	\qquad \whF^{-1} [h](x)
	=\int_\R e^{ix\xi}h(\xi)\,\frac{d\xi}{2\pi}. 
	\end{equation}

\noindent
We use notation $\mathcal{S}(\R)$ to denote the space of Schwartz functions. \\
We use the standard notation $L^p$ for Lebesgue spaces, and $H^s$ for Sobolev spaces of order $s$. i.e. for a measurable function $f$
$$  \|f\|_{L^p} = \Big( \int | f(x)|^p \ dx \Big)^{1/p}  \ \ , \ \qquad \|f\|_{H^s} = \big\|\jk^s \widehat{f}\big\|_{L^2}.$$
We will sometimes specify the domain and the variable, e.g. $L^p_x(\R)$ or $L^2_s([0,t])$,
but often omit these when there is no risk of confusion.

Given $p,q \in[1,\infty]$, the mixed norms for a space-time function $F(t,x)$ are given by
$$	\big\| F \big\|_{L_{x}^{p}L_t^q}=	\big\| x \mapsto \big\| F(\cdot,x)\big\|_{L^q_t} \big\|_{L_{x}^{p}}\quad\text{and}\quad\big\| F \big\|_{L_{t}^{q}L_x^p}=	\big\| t \mapsto \big\| F(t, \cdot)\big\|_{L^p_x} \big\|_{L_{t}^{q}}.$$
For two differentiable functions $f , g : \R \to \C$, their Wronskian is defined as the function
$$ W(f,g) = f'g - fg' .$$

%---------------------------------------------
\bigskip
\section{Linear theory and modified distorted Fourier transforms}\label{seclinscatt}
\subsection{Linear scattering}
\subsubsection{Jost functions} \label{sec:jostf}  
% A more detailed exposition can be found in, for example, Deift-Trubowitz \cite{DT}, Germain-Pusateri-Rousset \cite{GPR}, Lindblad-L\"uhrmann-Schlag-Soffer \cite{LLSS} and the monograph \cite{Yaf} of Yafaev.

\begin{lemma}\label{existJost}
    For each $k \in \C$ such that $\im k \ge 0$, there exists functions \( \psi_+(x,k) \) and \( \psi_-(x,k) \) which are solutions to the eigenvalue problem
    \begin{align}\label{eigenprob}
		 ( -\partial_{xx} + V ) \psi_{\pm}(x,k) = k^2 \psi_{\pm}(x,k)
		\end{align}
    with the following asymptotic behavior:
    $$
    \lim \limits_{x \to +\infty}  \psi_+(x,k) - e^{ikx} \ = 0 \quad , \quad \lim \limits_{x \to -\infty}  \psi_-(x,k) - e^{-ikx}  = 0,
    $$
    $$
    \lim \limits_{x \to +\infty}  \partial_x\psi_+(x,k) - ike^{ikx}  = 0 \quad , \quad \lim \limits_{x \to -\infty}  \partial_x\psi_-(x,k) + ike^{-ikx} = 0.
    $$
    The functions $\psi_{\pm}$ are called Jost functions.
    
    Moreover, for any fixed $x \in \R$, $\psi_{\pm}$ is analytic in $k$ for $\im k > 0$, continuous up to $\im k \ge 0$ and they satisfy  
$$ \psi_\pm(x,-\bar{k}) = \overline{\psi_\pm(x,k)}.$$
\end{lemma}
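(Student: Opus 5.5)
We construct $\psi_+$ through the Volterra integral equation; $\psi_-$ is obtained symmetrically, or by applying the $+$ construction to $V(-x)$ and reflecting. Write $\psi_+(x,k) = e^{ikx} m_+(x,k)$. The eigenvalue problem \eqref{eigenprob} with the prescribed behaviour at $+\infty$ is equivalent to
\begin{align}
m_+(x,k) = 1 + \int_x^{\infty} D_k(y-x)\, V(y)\, m_+(y,k)\, dy, \qquad D_k(z) := \frac{e^{2ikz}-1}{2ik} = \int_0^z e^{2iks}\,ds .
\end{align}
For $\im k \ge 0$ and $z \ge 0$ we have $|e^{2iks}| \le 1$ for $s\ge 0$. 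This gives the uniform bound $|D_k(z)| \le z$, valid in particular near $k=0$, which is the case of interest here since $V$ is non-generic and $k=0$ must be included. For $k \neq 0$ we also have $|D_k(z)| \le |k|^{-1}$.

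We solve the equation by Picard iteration, $m_+ = \sum_{j\ge0} m^{(j)}$, where $m^{(j)}$ is the $j$-fold iterated integral. Using $|D_k(y-x)| \le \langle x\rangle\langle y\rangle$ for $x$ bounded below, the standard Volterra ordering of the variables gives
\begin{align}
|m^{(j)}(x,k)| \le \frac{1}{j!}\Big(\int_x^\infty (y-x)|V(y)|\,dy\Big)^j .
\end{align}
This is finite on every half-line $[x_0,\infty)$ because $V \in L^1(\langle x\rangle^{\gamma}dx)$ with $\gamma > 5/2 > 1$ by \ref{VassumeWei}. The series therefore converges uniformly for $x \ge x_0$ and $\im k \ge 0$, and $|m_+(x,k)-1| \lesssim \int_x^\infty \langle y\rangle |V(y)|\,dy \to 0$ as $x\to+\infty$. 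Differentiating the integral equation in $x$ gives
\begin{align}
\partial_x m_+(x,k) = -\int_x^\infty e^{2ik(y-x)} V(y)\, m_+(y,k)\, dy ,
\end{align}
which also tends to $0$. From these two facts we get both $\psi_+ - e^{ikx} \to 0$ and $\partial_x\psi_+ - ike^{ikx} \to 0$. Differentiating the equation once more shows that $\psi_+$ solves \eqref{eigenprob}, in the sense of absolutely continuous derivatives since $V$ is only $L^1$. We then extend to all $x$, for instance by solving the ODE from $x_0$ leftwards.

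For regularity in $k$: each iterate $m^{(j)}(x,\cdot)$ is analytic in $\{\im k>0\}$ and continuous on $\{\im k \ge 0\}$. This holds because the integrand is entire in $k$ (note that $D_k$ is entire, including at $k=0$), and it is dominated uniformly in $k$ by the integrable bound above. The uniform convergence of the series then transfers analyticity and continuity to $m_+$, hence to $\psi_+$. For the symmetry, use that $V$ is real and $D_{-\bar k}(z) = \overline{D_k(z)}$. It follows that $\overline{m_+(x,k)}$ solves the same Volterra equation as $m_+(x,-\bar k)$, so by uniqueness of the Volterra solution the two coincide. Multiplying by $e^{i(-\bar k)x} = \overline{e^{ikx}}$ gives $\psi_+(x,-\bar k) = \overline{\psi_+(x,k)}$.

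The only delicate point is the uniformity at $k=0$, together with the loss of the $|k|^{-1}$ bound there. This is exactly why we use the estimate $|D_k(z)|\le z$, which costs one power of $\langle y\rangle$ and so requires the first moment of $V$. Everything else is the standard Deift–Trubowitz argument.
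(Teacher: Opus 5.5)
Your proof is correct and follows the same route as the paper. The paper gives no argument of its own and instead cites Lemma 1 of Deift--Trubowitz, which is precisely this Picard iteration of the Volterra equation for $m_\pm$ with kernel $D_k$; your bound $|D_k(z)|\le z$, which costs one moment of $V$ (available since $\gamma>5/2$), is what makes the construction, analyticity, continuity and the symmetry $\psi_\pm(x,-\bar k)=\overline{\psi_\pm(x,k)}$ hold uniformly down to $k=0$.
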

\begin{proof} See (\cite{DT}, Lemma 1).
\end{proof}

% \begin{definition}
%     The \textit{Jost functions} are the functions \( \psi_+(x,k) \) and \( \psi_-(x,k) \) which are solutions to the eigenvalue problem
%     \begin{align}\label{eigenprob}
% 		H\psi_{\pm}(x,k) = ( -\partial_{xx} + V ) \psi_{\pm}(x,k) = k^2 \psi_{\pm}(x,k)
% 		\end{align}
%     with the following asymptotic behavior:
%     $$
%     \lim \limits_{x \to +\infty}  \psi_+(x,k) - e^{ikx} \ = 0 \quad ; \quad \lim \limits_{x \to -\infty}  \psi_-(x,k) - e^{-ikx}  = 0.
%     $$
%     $$
%     \lim \limits_{x \to +\infty}  \partial_x\psi_+(x,k) - ike^{ikx}  = 0 \quad ; \quad \lim \limits_{x \to -\infty}  \partial_x\psi_-(x,k) + ike^{-ikx} = 0.
%     $$
% \end{definition}

We normalize the Jost functions by defining the functions
$$
m_{\pm}(x,k) = e^{\mp ikx} \psi_{\pm}(x,k).
$$
They satisfy the following asymptotic behavior 
$$
\lim \limits_{x \to \pm \infty}  m_{\pm}(x,k) = 1\quad, \quad \quad    \lim \limits_{x \to \pm \infty} \partial_x m_{\pm}(x,k)  = 0,
$$
and solve the Volterra equations (see Lemma 1 in \cite{DT} ) 
% \begin{align}\label{neigprob}
%     \partial_{xx} m_{\pm} \pm 2ik \partial_x m_{\pm} = V m_{\pm}.
% \end{align}
\begin{align} \label{volterra}
 m_{\pm}(x,k) = 1 \pm \int_x^{\pm \infty} D_k(\pm(y-x)) V(y) m_{\pm}(y,k) \, dy \quad \text{with} \quad D_k(x) = \frac{e^{2ikx} - 1}{2ik}.
  \end{align}

%\begin{lemma}\label{Jostfunc}
    %Let $V$ be a real-valued function.
    %\begin{enumerate}[label = (\roman*)]
    %\item If $V \in L^1(\R)$, then for any $k \in \C$ such that $\im k \ge 0$ and $k \neq 0$, the Jost functions associated to $V$ exist. Moreover, these functions are analytic in $k$ on the upper half plane $\im k > 0$ and are continuous in $k$ up to the real axis, with a possible discontinuity at $k=0$.
    
    %\item If $V \in L^1(\R,\jx dx)$, then for any $x \in \R$,  the Jost functions $k \mapsto \psi_{\pm}(x, k)$  are continuous in $k$ up to the whole real line. Moreover, for $k \in \R$ the Jost functions satisfy the following
    %$$ \psi_\pm(x,-k) = \overline{\psi_\pm(x,k)}$$
   %\end{enumerate} 
%\end{lemma}
%\begin{proof} For (i), see \cite{Yaf}, Chapter 4, Lemma 1.4. For a proof of (ii), see \cite{GPR}, Appendix A.1. 
%\end{proof}
We define the following quantities:
$$
\mathcal{W}_+^s(x) = \int_x^{\infty} \langle y \rangle^s |V(y)| \, dy \quad ; \quad \mathcal{W}_-^s(x) = \int_{-\infty}^{x} \langle y \rangle^s |V(y)| \, dy.
$$
\begin{lemma}\label{estim m} For every $s\ge 0$, assuming that $V \in L^1(\R , \jx^{s+1} dx)$, we have the following uniform estimates in $x$ and $k$
\begin{align} \label{mestim}
 |\partial_k^s (m_\pm(x,k) - 1)| & \lesssim \frac{1}{\jk}\mathcal{W}_\pm^{s+1}(x) \quad,\quad \text{ for } \pm x \ge -1 \\
  |\partial_k^s (m_\pm(x,k) - 1)| & \lesssim \frac{1}{\jk}\jx^{s+1} \quad\quad, \quad \text{ for } \pm x \le 1. 
\end{align}
Also,
\begin{align}\label{Mestimates1'}
			\begin{split}
				& \left|\partial_{k} \left(m_{\pm}(x,k)-1\right)\right|\lesssim\frac{1}{|k|}\mathcal{W}_{\pm}^{1}(x)\quad, \quad  \pm x\geq-1.
				%\\
				%& \left|\partial_{k}^{s}\left(m_{\pm}(x,k)-1\right)\right|\lesssim\frac{1}{\jk}\jx^{s+1}, \qquad \pm x\leq1.
			\end{split}
		\end{align}
For the derivatives in $x$, we have 
\begin{align}
    |\partial_x\partial_k^s m_\pm(x,k)| & \lesssim \mathcal{W}_\pm^{s}(x) \quad\quad \text{ for } \pm x \ge -1 \\
  |\partial_x\partial_k^s m_\pm(x,k) | & \lesssim \jx^{s} \quad\quad \quad \text{ for } \pm x \le 1.
\end{align}
\end{lemma}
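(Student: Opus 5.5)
We would prove Lemma \ref{estim m} by working directly with the Volterra equation \eqref{volterra}. We treat $m_+$ only; $m_-$ follows by the reflection $x\mapsto -x$, which exchanges $V(x)$ with $V(-x)$.

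\textbf{Step 1: the case $s=0$ and $x\ge -1$.} The kernel obeys $|D_k(y-x)|\le \min\{y-x,\,1/|k|\}$ for $y\ge x$ and $k$ real. Hence $|D_k(y-x)|\lesssim \langle y\rangle/\langle k\rangle$ when $x\ge -1$ and $y \ge x$. Set $w(x)=m_+(x,k)-1$. Then
\begin{align}
|w(x)| \le \frac{C}{\jk}\int_x^\infty \jy |V(y)|\,dy + \frac{C}{\jk}\int_x^\infty \jy |V(y)||w(y)|\,dy .
\end{align}
Gronwall's inequality applied backward from $+\infty$ gives $|w(x)|\lesssim \jk^{-1}\mathcal{W}^1_+(x)\,e^{C\mathcal{W}^1_+(-1)}$. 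This is the first bound of \eqref{mestim} for $s=0$.

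\textbf{Step 2: the region $x\le 1$.} Here we continue the solution with the same Volterra equation. For $x\le y\le 1$ we use $|D_k(y-x)|\lesssim \min\{\langle x\rangle,1/|k|\}$. Gronwall over the finite-weight region then gives polynomial growth $\jx$. Combining the two pieces $1/|k|$ and $\jx$ yields the $\jk^{-1}\jx^{s+1}$ form.

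\textbf{Step 3: derivatives in $k$, by induction on $s$.} Differentiating \eqref{volterra} $s$ times in $k$ gives
\begin{align}
\partial_k^s w(x) = \sum_{j\le s}\binom{s}{j}\int_x^\infty \partial_k^{j} D_k(y-x)\,V(y)\,\partial_k^{s-j} m_+(y,k)\,dy .
\end{align}
The kernel derivatives satisfy $|\partial_k^j D_k(z)|\lesssim z^{j}\min\{z,1/|k|\}$, and the bound $\lesssim z^j\langle z\rangle/\jk$ should be checked from the identity $D_k(z)=\int_0^z e^{2ik\tau}\,d\tau$. For $y\ge x\ge -1$ we have $z=y-x\lesssim \langle y\rangle$. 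So each term with $j\ge1$ is a forcing term bounded by $\jk^{-1}\int_x^\infty \langle y\rangle^{s+1}|V|$, using the induction hypothesis on the lower derivatives. The term $j=0$ is the same Volterra operator acting on $\partial_k^s w$. Gronwall then closes the bound exactly as in Step 1, and this is where the assumption $V\in L^1(\jx^{s+1}dx)$ is used.

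\textbf{Step 4: the bound \eqref{Mestimates1'}.} Here we use the bound $|\partial_kD_k(z)|\lesssim |z|/|k|$. This follows from $\partial_k D_k = \frac{2iz e^{2ikz}}{2ik} - \frac{D_k}{k}$.

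\textbf{Step 5: $x$-derivatives.} Differentiating \eqref{volterra} in $x$ gives
\begin{align}
\partial_x m_+(x,k) = -\int_x^\infty e^{2ik(y-x)}V(y)\,m_+(y,k)\,dy,
\end{align}
since $D_k(0)=0$. The kernel of this operator has no $1/k$ gain. Taking $\partial_k^s$ produces factors $(y-x)^j$, which are bounded by $\langle y\rangle^{s}$. Combined with the $m$ bounds already proven, this gives $\mathcal{W}^s_+$ for $x\ge -1$ and $\jx^{s}$ for $x\le 1$.

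\textbf{Main obstacle.} The delicate point is getting uniform-in-$k$ constants including near $k=0$, where no $1/|k|$ gain is available. The remedy is the bound $|D_k(z)|\le z$, which is what forces the weight $\jy^{s+1}$ rather than $\jy^{s}$. A second concern is that the region $x\le 1$ must be handled without losing extra powers, and we would follow \cite{DT} for this.
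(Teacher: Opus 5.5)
Your Volterra--Gronwall argument is correct and is essentially the proof in \cite{GPR}, Appendix A.1, which the paper cites without giving an argument of its own; the ingredients match: the kernel bounds $|\partial_k^jD_k(z)|\lesssim z^j\min\{z,1/|k|\}$, induction on $s$ with the $j=0$ term absorbed by Gronwall, and the formula $\partial_x m_+=-\int_x^\infty e^{2ik(y-x)}V(y)m_+(y,k)\,dy$. The one thin spot is the region $\pm x\le 1$ with $s\ge 1$, which you defer to \cite{DT}; it is better to run the same induction directly on $\langle x\rangle^{-(s+1)}|\partial_k^s(m_+-1)|$, using $y-x\lesssim\langle x\rangle$ and $\langle y\rangle\lesssim\langle x\rangle$ for $x\le y\le 1$, and $y-x\lesssim\langle x\rangle\langle y\rangle$ for $y\ge 1$, which closes with only $\langle y\rangle|V(y)|$ in the Gronwall kernel.
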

 \begin{proof} %These are consequences of the fact that the functions \( m_{\pm}(x,k) \) satisfy the following Volterra equation:
 %  \begin{align} \label{volterra}
 % m_{\pm}(x,k) = 1 \pm \int_x^{\pm \infty} D_k(\pm(y-x)) V(y) m_{\pm}(y,k) \, dy \quad \text{with} \quad D_k(x) = \frac{e^{2ikx} - 1}{2ik}.
 %  \end{align}
see (\cite{GPR}, Appendix A.1).
\end{proof}

\smallskip
\subsubsection{Transmission and reflection coefficients, scattering matrix}
\begin{lemma}\label{TRcoeff}
   For all $k \in \R\setminus \{0\}$, there exist $T(k)\in \C$ and $R_\pm(k) \in \C$ such that for all $x \in \R$, the following relations hold
    \begin{eqnarray}
        T(k) \psi_+(x,k) &  = & R_-(k)\psi_-(x,k) + \psi_-(x,-k) \ , 
        \label{transref+} \\
         T(k) \psi_-(x,k) &  = & R_+(k)\psi_+(x,k) + \psi_+(x,-k)\ .
         \label{transref-}
    \end{eqnarray}
    $T(k)$ is called the transmission coefficient and $R_\pm(k)$ are the reflection coefficients.
    
    Moreover, the following relations hold:
    \begin{enumerate} [label = (\roman*)]
    \item \begin{align} \label{wronsk}
        T(k) W(k) = 2ik 
    \end{align} 
    where $W(k) := W(\psi_+(\cdot , k) , \psi_-(\cdot,k)) $.
         \item  
         \begin{align}
          T(-k) = \overline{T(k)} \qquad  , \qquad R_{\pm}(-k) = \overline{R_{\pm}(k)}. \nonumber 
          \end{align}
          \item 
          \begin{align}
               |R_{\pm}(k)|^2 + |T(k)|^2 =1 \qquad  , \qquad  T(k)\overline{R_-(k)} + \overline{T(k)}R_+(k) = 0.\nonumber 
          \end{align}
          \item 
    \begin{eqnarray}
    \frac{1}{T(k)} & = & 1 - \frac{1}{2ik}\int_{\R} V(x)m_{\pm}(x,k) dx,  \label{formulaT} \\
    \frac{R_{\pm}(k)}{T(k)} & = & \frac{1}{2ik} \int_{\R} e^{\mp 2ikx} V(x)m_{\pm}(x,k) dx. \label{formulaR}
\end{eqnarray} 
     \end{enumerate}
     \
\end{lemma}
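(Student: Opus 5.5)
The plan is to build everything from Wronskians of the Jost functions of Lemma \ref{existJost}, using that for real $k\neq 0$ the four functions $\psi_\pm(\cdot,\pm k)$ all solve the same second order ODE \eqref{eigenprob}. The Wronskian of two solutions is constant in $x$, so I will evaluate it at $x\to\pm\infty$ via the asymptotics of Lemma \ref{existJost}. First, $\psi_+(\cdot,k)$ and $\psi_+(\cdot,-k)=\overline{\psi_+(\cdot,k)}$ satisfy $W(\psi_+(\cdot,k),\psi_+(\cdot,-k)) = 2ik\neq 0$ (compute at $+\infty$), and similarly $W(\psi_-(\cdot,k),\psi_-(\cdot,-k))=-2ik$. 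Hence each pair is a basis of solutions for $k\neq0$. Next, $W(k)=W(\psi_+(\cdot,k),\psi_-(\cdot,k))\neq 0$ for real $k\neq 0$. Otherwise $\psi_+(\cdot,k)$ would be a multiple $c\,\psi_-(\cdot,k)$. Writing $\psi_-(\cdot,k)=a\psi_+(\cdot,k)+b\psi_+(\cdot,-k)$ and conserving the flux $\mathrm{Im}(\bar\psi\psi')$, which equals $k$ at $+\infty$ for $\psi_+$ and $-k$ at $-\infty$ for $\psi_-$, would give a sign contradiction.

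I then define $T(k):=2ik/W(k)$, which is \eqref{wronsk}. Expanding $\psi_-(\cdot,-k)$ in the basis $\psi_-(\cdot,k),\psi_-(\cdot,-k)$ is trivial, so instead I expand $T\psi_+$ in the basis $\{\psi_-(\cdot,k),\psi_-(\cdot,-k)\}$. The coefficient of $\psi_-(\cdot,-k)$ is found by taking the Wronskian with $\psi_-(\cdot,k)$:
$$W(T\psi_+(\cdot,k),\psi_-(\cdot,k))=TW(k)=2ik=W(\psi_-(\cdot,-k),\psi_-(\cdot,k)),$$
so that coefficient equals $1$. The remaining coefficient defines $R_-(k)$, and this gives \eqref{transref+}. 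The symmetric argument gives \eqref{transref-} with the same $T$, because $W(\psi_-,\psi_+)=-W(k)$ matches $W(\psi_+(\cdot,k),\psi_+(\cdot,-k))$ up to the correct sign.

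For (ii), I conjugate \eqref{transref+}, using $\overline{\psi_\pm(x,k)}=\psi_\pm(x,-k)$ for real $k$. This gives $\overline{T(k)}\psi_+(x,-k)=\overline{R_-(k)}\psi_-(x,-k)+\psi_-(x,k)$, and comparing with \eqref{transref+} at $-k$ in the same basis yields $T(-k)=\overline{T(k)}$ and $R_-(-k)=\overline{R_-(k)}$. The same works for $R_+$. For (iii), I compute the Wronskian of \eqref{transref+} with its complex conjugate. The left side is $|T|^2W(\psi_+(\cdot,k),\psi_+(\cdot,-k))=2ik|T|^2$. The right side is $(|R_-|^2-1)W(\psi_-(\cdot,k),\psi_-(\cdot,-k))=-2ik(|R_-|^2-1)$. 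This gives $|T|^2+|R_-|^2=1$, and likewise for $R_+$. For the cross relation, I take the Wronskian of \eqref{transref+} with $\psi_+(\cdot,-k)$ and use \eqref{transref-} to express $\psi_+(\cdot,-k)=T\psi_--R_+\psi_+$. Expanding with $W(\psi_-(\cdot,\pm k),\psi_-(\cdot,\mp k))=\mp 2ik$ and $W(k)=2ik/T$, then simplifying, should yield $T\overline{R_-}+\overline{T}R_+=0$. This step is bookkeeping, and the main risk is sign errors.

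For (iv), I will not use Wronskians directly. Instead I let $x\to-\infty$ in the Volterra equation \eqref{volterra} for $m_+$. Since $m_+(x,k)=1+\frac{1}{2ik}\int_x^\infty (e^{2ik(y-x)}-1)V m_+\,dy$, we get
$$\psi_+(x,k)\sim e^{ikx}\Big(1-\frac{1}{2ik}\int Vm_+\Big)+e^{-ikx}\frac{1}{2ik}\int e^{2iky}Vm_+ \qquad\text{as } x\to-\infty.$$
Comparing with \eqref{transref+}, whose right side behaves like $R_-e^{-ikx}+e^{ikx}$ at $-\infty$, gives $1/T$ and $R_-/T$. The $m_-$ versions follow symmetrically from \eqref{transref-} at $+\infty$. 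So formula \eqref{formulaT} holds with $m_+$, and \eqref{formulaR} holds with the sign convention pairing $R_-$ with $e^{+2ikx}m_+$. I will need to double-check which index pairs with which exponent against the stated $\mp$. The limits are justified because $V\in L^1$ and $m_+$ is bounded (Lemma \ref{estim m}), so dominated convergence applies. The main obstacle is only the sign and index bookkeeping in (iii) and (iv).
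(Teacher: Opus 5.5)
Your route is the standard Deift--Trubowitz argument that the paper simply cites: constant Wronskians evaluated at $\pm\infty$, $T:=2ik/W(k)$, coefficients read off by Wronskians, conjugation for (ii), the Wronskian of \eqref{transref+} with its conjugate for $|T|^2+|R_-|^2=1$, and limits of the Volterra equations for (iv). With the paper's convention $W(f,g)=f'g-fg'$, the following are all correct: your values $W(\psi_\pm(\cdot,k),\psi_\pm(\cdot,-k))=\pm 2ik$, the flux argument for $W(k)\neq 0$, the derivations of \eqref{transref+}, \eqref{transref-}, (i), (ii), and the two modulus identities.

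The step that does not go through as described is the cross relation $T\overline{R_-}+\overline{T}R_+=0$. It contains complex conjugates, and none of the ingredients you list for it can produce one: \eqref{transref-}, $W(\psi_-(\cdot,\pm k),\psi_-(\cdot,\mp k))=\mp 2ik$ and $W(k)=2ik/T$ are all conjugation-free. Suppose you substitute $\psi_+(\cdot,-k)=T\psi_-(\cdot,k)-R_+\psi_+(\cdot,k)$ throughout. The only term not on your list is $W(\psi_-(\cdot,-k),\psi_+(\cdot,k))$. Evaluating it with \eqref{transref+} gives $R_-W(k)$, and the whole computation collapses to $2ikT=2ikT$. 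You must feed in $\overline{\psi_\pm(x,k)}=\psi_\pm(x,-k)$ somewhere.

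The cleanest fix is to take Wronskians of \eqref{transref+} with $\psi_-(\cdot,-k)$ and of \eqref{transref-} with $\psi_+(\cdot,-k)$. This gives
\[
R_-=-\frac{T}{2ik}\,W\big(\psi_+(\cdot,k),\psi_-(\cdot,-k)\big),\qquad R_+=\frac{T}{2ik}\,W\big(\psi_-(\cdot,k),\psi_+(\cdot,-k)\big).
\]
Since $\overline{W(f,g)}=W(\bar f,\bar g)$, you get $\overline{R_-}=\frac{\overline{T}}{2ik}W(\psi_+(\cdot,-k),\psi_-(\cdot,k))$. Hence $T\overline{R_-}+\overline{T}R_+$ is $\frac{|T|^2}{2ik}$ times a sum of two Wronskians that cancel by antisymmetry.

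Your own route can also be rescued. Evaluate $W(\psi_-(\cdot,-k),\psi_+(\cdot,-k))=-W(-k)=2ik/\overline{T}$ using (i) at $-k$ together with (ii). This only yields $R_-\,(T\overline{R_-}+\overline{T}R_+)=0$, so the case $R_-=0$ must be handled separately via $|R_+|=|R_-|$.

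For (iv), your computation is right and your doubt about the indices is justified. Letting $x\to-\infty$ in the equation for $m_+$ gives $R_-/T=\frac{1}{2ik}\int e^{2ikx}Vm_+\,dx$. Letting $x\to+\infty$ in the equation for $m_-$ gives $R_+/T=\frac{1}{2ik}\int e^{-2ikx}Vm_-\,dx$. Together, $R_\pm/T=\frac{1}{2ik}\int e^{\mp 2ikx}Vm_\mp\,dx$, which is the Deift--Trubowitz formula. The printed \eqref{formulaR}, with $m_\pm$, is a misprint; the two versions already differ at second order in $V$. So your check should correct the statement, not your derivation.

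Finally, Lemma \ref{estim m} does not say that $m_+(\cdot,k)$ is bounded on $\R$: for $x\le 1$ its bound grows like $\jx$. What dominated convergence actually needs is $Vm_+\in L^1$, and that bound does give it because $\jx V\in L^1$. Alternatively, for fixed $k\neq0$, use $|D_k|\le 1/|k|$ and Gronwall to get $\sup_x|m_+(x,k)|<\infty$.
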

\begin{proof}
See the discussion in the Subsection $3$ of Section 2 in \cite{DT}.
    % For any real $k \neq 0$, the Wronskian of solutions of \eqref{eigenprob} does not depend on $x$ and the asymptotics behavior of the Jost functions give respectively
    % $$ W(\psi_+(\cdot , k) , \psi_+(\cdot,-k)) = -2ik \quad \quad ; \quad  \quad W(\psi_-(\cdot , k) , \psi_-(\cdot,-k)) = 2ik. $$
    % Therefore, $\{\psi_+(\cdot , k) , \psi_+(\cdot,-k)\}$ and $\{\psi_-(\cdot , k) , \psi_-(\cdot,-k)\}$ are couples of linear independent solutions of \eqref{eigenprob}. Thus the ODE theory says 
    % \begin{align}
    %     \label{sol1}
    %     \psi_+(x,k) & = A_+(k) \psi_-(x,k) + B_+(k)\psi_-(x,-k) \\
    %     \label{sol2}
    %      \psi_-(x,k) & = A_-(k) \psi_+(x,k) + B_-(k)\psi_+(x,-k)
    % \end{align}
    % and we can compute exactly $A_\pm$ and $B_\pm$ in terms of Wronskian
    % \begin{align}
    % A_+(k) = \frac{W_0(k)}{2ik} \quad \quad ; \quad \quad B_+(k) = -\frac{W(k)}{2ik} \\
    %  A_-(k) = \frac{\overline{W_0(k)}}{2ik} \quad \quad ; \quad \quad B_-(k) = -\frac{W(k)}{2ik}
    % \end{align} 
    % where $W_0(k) : =W(\psi_+(\cdot , k) , \psi_-(\cdot,-k))$
    % then plugging \eqref{sol2} in \eqref{sol1} gives the relation
    % \begin{align} \label{rel}
    %     |W(k)|^2 = 4k^2 +|W_0(k)|^2
    % \end{align}
    % Therefore, $W(k) \neq 0$ for real $k \neq 0$ and we get the lemma by setting 
    % $$ T(k) = \frac{1}{B_\pm(k)} =  - \frac{2ik}{W(k)} \quad  ; \quad R_-(k) = \frac{A_+(k)}{B_+(k)} = -\frac{W_0(k)}{W(k)}\quad  ;  \quad \frac{R_+(k)}{T(k)} = \frac{A_-(k)}{B_-(k)} = -\frac{\overline{W_0(k)}}{W(k)} $$
    \end{proof}

We now give a decay property of the derivatives of the transmission and reflection coefficients.
\begin{lemma}\label{estim TR} The following uniform estimate holds
\begin{align}
 \langle k \rangle (|\partial_k T(k)| + |\partial_k R_{\pm}(k)| ) \lesssim 1 \ .
 \end{align}
\end{lemma}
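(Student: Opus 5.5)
We will prove the bound by writing $T$ and $R_\pm$ as quotients of Wronskians of the Jost functions and splitting into low frequencies $|k|\le 1$ and high frequencies $|k|\ge 1$. The low-frequency case is where the non-generic hypothesis matters. Set $W(k)=W(\psi_+(\cdot,k),\psi_-(\cdot,k))$ and $W_0(k)=W(\psi_+(\cdot,k),\psi_-(\cdot,-k))$. Expanding $\psi_+(\cdot,k)$ in the basis $\{\psi_-(\cdot,k),\psi_-(\cdot,-k)\}$ gives
\begin{align}
T(k)=\frac{2ik}{W(k)},\qquad R_-(k)=-\frac{W_0(k)}{W(k)},\qquad R_+(k)=-\frac{\overline{W_0(k)}}{W(k)}
\end{align}
up to sign conventions consistent with \eqref{wronsk}. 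Evaluating the Wronskians at $x=0$ expresses $W,W_0$ through $m_\pm(0,\pm k)$, $\partial_x m_\pm(0,\pm k)$ and factors $\pm ik$. By Lemma \ref{estim m} with $s=1$, these are $C^1$ in $k$ with bounded derivatives, since $V\in L^1(\jx^{\gamma}dx)$ with $\gamma>5/2\ge 2$.

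\textbf{High frequencies} $|k|\ge 1$. We use \eqref{formulaT} and \eqref{formulaR} directly. Write $1/T = 1-\frac{1}{2ik}\int V m_\pm\,dx$. Its derivative consists of a term of size $|k|^{-2}$ and a term $\frac{1}{2ik}\int V\partial_k m_\pm\,dx$. By \eqref{Mestimates1'}, together with the bound $\jx^{2}$ on the region $\pm x\le 1$, this second term is $O(|k|^{-2})$. Hence $\partial_k(1/T)=O(k^{-2})$.

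Similarly, $R_\pm/T$ has derivative $O(|k|^{-1})$. The worst term is $\frac{1}{2ik}\int (\mp 2ix)e^{\mp2ikx}Vm_\pm\,dx$, which is bounded using $\int\jx|V|<\infty$.

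Since $|T|\le 1$ by (iii), we have $|\partial_k T| = |T|^2|\partial_k(1/T)|\lesssim k^{-2}$. Then $\partial_k R_\pm = T\,\partial_k(R_\pm/T) + (R_\pm/T)\partial_k T$. Here $R_\pm/T=O(1/|k|)$ and $|T|\le1$, so $|\partial_k R_\pm|\lesssim |k|^{-1}$. This gives $\jk(|\partial_kT|+|\partial_kR_\pm|)\lesssim1$ for $|k|\ge1$.

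\textbf{Low frequencies} $|k|\le 1$, the main obstacle. In the generic case $W(0)\ne0$ and the quotient rule finishes the argument immediately. Here, however, non-genericity means $\psi_+(\cdot,0)$ and $\psi_-(\cdot,0)$ are linearly dependent, so $W(0)=0$. We must therefore show $|W(k)|\gtrsim |k|$ near $0$ while $W$ is $C^1$.

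From the identity $|W(k)|^2=4k^2+|W_0(k)|^2$ (a consequence of (iii)), we get $|W(k)|\ge 2|k|$. Then $T=2ik/W$ is a bounded quotient with $\partial_kT = 2i/W - 2ik\,W'/W^2$, and both terms are $O(|W|^{-1})=O(|k|^{-1})$. This only gives a $1/|k|$ bound, which is not enough. So we need the finer structure near $0$: write $W(k)=W(0)+kW'(0)+o(k)=k\,\omega(k)$, with $\omega(k)=\int_0^1W'(\theta k)\,d\theta$ continuous and $|\omega|\ge2$.

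The difficulty is that bounding $\partial_k T = \partial_k(2i/\omega)$ requires $\omega\in C^1$, i.e. $W\in C^2$ in the weak sense. This is where we would use $V\in L^1(\jx^{\gamma})$ with $\gamma>5/2$: Lemma \ref{estim m} with $s=2$ (valid for $\gamma\ge3$), or an interpolated version for $\gamma>5/2$, gives a bound on $\partial_k^2 m_\pm$. A Taylor argument then shows $\partial_k\omega$ is bounded, or at least that $k\,\partial_k^2W$ is controlled. If the $s=2$ estimate is not directly available, we would instead follow \cite{GPR} and \cite{CPNLS2}: express $\partial_kT$ through $\int x V m\ldots$ identities and use the Volterra equation \eqref{volterra} to gain the extra $x$-weight only up to $\jx^{\gamma}$.

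The same factorization $W_0(k)=W_0(0)+k\,\omega_0(k)$ (with $W_0(0)=W(0)=0$ in the resonant case) gives $R_\pm=-\omega_0/\omega$, and the bound follows in the same way. Combining the two regimes yields the lemma.
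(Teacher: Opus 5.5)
\textbf{Gap: the low-frequency bound is not proved under (H3).}

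Your high-frequency argument is sound. Differentiating \eqref{formulaT}--\eqref{formulaR}, using Lemma~\ref{estim m} with $s=1$ (which only needs $\jx^2V\in L^1$) and $|T|\le1$, gives $|\partial_kT|\lesssim k^{-2}$ and $|\partial_kR_\pm|\lesssim|k|^{-1}$ for $|k|\ge1$. This is a self-contained substitute for the reference \cite{We2} that the paper cites for this regime.

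The problem is low frequency. This is the only delicate part of the lemma, and the paper does not derive it; it imports it from \cite[Theorem 2.2]{EHT}. Your reduction there is correct. With $W=k\,\omega$ and $|\omega|\ge2$, one has $\partial_kT=-2i\,\omega'/\omega^2$ and $\omega'(k)=k^{-2}\big(kW'(k)-W(k)+W(0)\big)$. So everything hinges on $|W(k)-W(0)-kW'(k)|\lesssim k^2$ near $0$, and likewise for $W_0$.

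You never prove this under (H3). Lemma~\ref{estim m} with $s=2$ requires $\jx^{3}V\in L^1$, i.e.\ $\gamma\ge3$, which is strictly stronger than $\gamma>5/2$. Under that stronger assumption your quotient argument does close, both for $T=2i/\omega$ and for $R_\pm=-\omega_0/\omega$. It does not close otherwise.

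None of your fallbacks repairs this:
\begin{itemize}
\item An ``interpolated'' estimate gives at best $W'\in C^{0,\theta}$ with $\theta\le\gamma-2<1$. Then $|W(k)-W(0)-kW'(k)|\lesssim|k|^{1+\theta}$ and $|\partial_kT|\lesssim|k|^{\theta-1}$, which blows up at $k=0$.
\item A bound $|kW''(k)|\lesssim1$ only gives $|\omega'(k)|=k^{-2}\big|\int_0^k rW''(r)\,dr\big|\lesssim|k|^{-1}$. That is exactly the estimate you had already rejected as insufficient.
\item \cite{GPR} treats the generic case $W(0)\neq0$, where no division by $k$ occurs.
\end{itemize}

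What is missing is an argument that actually uses the resonance. The bounds of Lemma~\ref{estim m} on the half-line $\pm x\le1$ are the generic ones: they grow like $\jx^{s+1}$, reflecting the linear growth of $m_\pm(x,0)$ for a generic potential. They are blind to the fact that, in the non-generic case, $m_\pm(\cdot,0)$ is bounded on both sides. With these bounds, passing from $W$ to $W/k$ costs one full moment of $V$.

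To cover $5/2<\gamma<3$ you need either a refined zero-energy analysis that exploits this boundedness (so no moment is lost), or, as the paper does, the cited result. As it stands, your proposal proves the lemma only under $\jx^3V\in L^1$.
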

\begin{proof}  
The boundedness at low frequency follows from  (\cite{EHT}, Theorem 2.2). The high frequency estimates is provided in (\cite{We2}, Theorem 2.3).
% It suffices to look at the case $|k| \ge 1$ since the small value case is handled by lemma \ref{explicit TR}. First, we estimate $\partial_k T(k)$. By differentiating the formula (\ref{formulaT}), we get 
% \begin{align}
%     \partial_k T(k) & = -T(k)^2 \Big[\frac{1}{2ik}\int_\R V(y)\partial_k m_\pm(y,k) \ dy -\frac{1}{2ik^2}  \int_\R V(y)m_\pm(y,k) \ dy \Big] 
% \end{align}
% then for $|k| \ge 1$, we get the desired estimation by invoking the estimations in lemma \ref{mestim} and the fact that $|T(k)| \le 1$.\\
%
% Now for the estimation of $\partial_kR_\pm(k)$,
% we start by differentiating the formula (\ref{formulaR}) and we get 
% \begin{align}
%     \partial_k R_\pm(k)  = \Big[ \frac{\partial_kT(k)}{2ik} - \frac{T(k)}{2ik^2}\Big] \int_{\R} e^{\mp 2iky} V(y)m_{\pm}(y,k) \ dy + \frac{T(k)}{2ik}\int_{\R} V(y) \partial_k \Big(e^{\mp 2iky} m_{\pm}(y,k)\Big) \ dy 
%  \end{align}
% \begin{align}
%     \partial_k R_\pm(k)  = \Big[ \frac{\partial_kT(k)}{2ik} - \frac{T(k)}{2ik^2}\Big] \int_{\R} e^{\mp 2iky} V(y)m_{\pm}(y,k) \ dy &+ \frac{T(k)}{2ik}\int_{\R} \mp 2iy e^{\mp 2iky} V(y)m_{\pm}(y,k) \ dy \\
%         & + \frac{T(k)}{2ik}\int_{\R} e^{\mp 2iky} V(y)\partial_k m_{\pm}(y,k) \ dy
%  \end{align}
% then for $|k| \ge 1$, we have the right estimates, since the estimation of $\partial_k T(k)$, the fact that $|T(k)| \le 1$ and the estimations of lemma \ref{mestim}.
\end{proof}  

\smallskip
\subsubsection{Generic and non-generic potentials}

\begin{definition}\label{def:generic}
		$V$ is said to be a generic potential if
		\begin{equation}\label{eq:genericcond}
		\int_\R V(x)m_{\pm}\left(x,0\right)\,dx \neq 0,
	\end{equation}
		and it is non-generic otherwise.
\end{definition}

\begin{definition}\label{0reson}
     $V$ is said to have a zero-energy resonance if there exist a $L^\infty$ bounded function $\varphi$ solution of $-\partial_{xx}\varphi + V(x)\varphi =0$. In that case, the normalized solution $\varphi_+(x) = \psi_+(x,0)$ which has $1$ has limit at $+\infty$ is called the zero-energy resonance of the potential $V$.
\end{definition}

\begin{remark} \label{resonlim}
    Assume that there exist a $L^\infty$ bounded function $\varphi$ solution of $-\partial_{xx}\varphi + V(x)\varphi =0$. Then, $\varphi$ has finite limits at $\pm \infty$. This is a consequence of the decay of the potential $V$. 
    \end{remark}

\begin{lemma} \label{moreongen}We have equivalence between the following assertions :
\begin{enumerate} [label = (\roman*)]
    \item $V$ is generic.
    \item $T(0) = 0$ ; $R_\pm(0) = -1 .$
    \item $W(0) \neq 0$.
    \item The potential $V$ does not have a zero-energy resonance.
\end{enumerate}
\end{lemma}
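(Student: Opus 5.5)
We argue through the Wronskian $W(0)=W(\psi_+(\cdot,0),\psi_-(\cdot,0))$, which is well defined by Lemma \ref{existJost} (the Jost functions are continuous up to $k=0$). We show (i)$\Leftrightarrow$(iii), (iii)$\Leftrightarrow$(iv) and (ii)$\Leftrightarrow$(iii).

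\emph{(i)$\Leftrightarrow$(iii).} Integrate $\partial_{xx}\psi_+(x,0)=V\psi_+(x,0)$ over $[-R,R]$. Since $m_+(\cdot,0)=\psi_+(\cdot,0)$, this gives
\[ \int_\R V m_+(x,0)\,dx=\lim_{R\to\infty}\big[\partial_x\psi_+(R,0)-\partial_x\psi_+(-R,0)\big]. \]
By Lemma \ref{estim m}, $\partial_x m_+(x,0)\to 0$ as $x\to+\infty$. Hence $\int V m_+(x,0)\,dx=-\lim_{x\to-\infty}\partial_x\psi_+(x,0)$, and this limit exists.

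The Wronskian is constant in $x$. Evaluating it as $x\to-\infty$, where $\psi_-(x,0)\to1$ and $\partial_x\psi_-(x,0)\to0$, we need $\psi_+(x,0)\partial_x\psi_-(x,0)\to 0$. This holds because Lemma \ref{estim m} gives $|\psi_+(x,0)|\lesssim\jx$ for $x\le1$, while $|\partial_x\psi_-(x,0)|\lesssim \mathcal W_-^0(x)$. This tail decays faster than $\jx^{-1}$ since $V\in L^1(\jx\,dx)$. Therefore
\[ W(0)=\lim_{x\to-\infty}\partial_x\psi_+(x,0)=-\int_\R Vm_+(x,0)\,dx. \]
The same argument with the roles of $\pm$ exchanged gives $W(0)=\int V m_-(x,0)\,dx$. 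So either integral vanishes if and only if $W(0)=0$, which is (i)$\Leftrightarrow$(iii).

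\emph{(iii)$\Leftrightarrow$(iv).} If $W(0)=0$, then $\psi_+(\cdot,0)$ and $\psi_-(\cdot,0)$ are proportional. Now $\psi_+(\cdot,0)$ is bounded on $x\ge -1$ by Lemma \ref{estim m}, and $\psi_-(\cdot,0)$ is bounded on $x\le 1$. A common nonzero multiple is therefore bounded on $\R$, so it is a zero-energy resonance.

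Conversely, let $\varphi$ be a bounded solution of $H\varphi=0$; by Remark \ref{resonlim} it has finite limits at $\pm\infty$. Write $\varphi=a\psi_+(\cdot,0)+b\theta$, where $\theta$ is a second solution with $W(\psi_+,\theta)=1$. Standard reduction of order shows $\theta(x)\sim x$ as $x\to+\infty$, since $\psi_+(\cdot,0)\to1$. Boundedness forces $b=0$, so $\varphi$ is a nonzero multiple of $\psi_+(\cdot,0)$. The symmetric argument at $-\infty$ shows $\varphi$ is a nonzero multiple of $\psi_-(\cdot,0)$. Hence $\psi_\pm(\cdot,0)$ are proportional and $W(0)=0$.

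\emph{(ii)$\Leftrightarrow$(iii).} This is the main obstacle, because $T$ and $R_\pm$ are only defined for $k\neq0$, so one must control the limit $k\to0$. By \eqref{wronsk}, $T(k)=2ik/W(k)$, and $W(k)$ is continuous at $0$.

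If $W(0)\neq0$, then $T(k)\to0$. For the reflection coefficients we use the identity $R_-(k)W(k)=-W(\psi_+(\cdot,k),\psi_-(\cdot,-k))$, obtained by taking Wronskians in \eqref{transref+} with $\psi_-(\cdot,k)$ and $\psi_-(\cdot,-k)$ respectively. The right-hand side tends to $-W(0)$ as $k\to0$, so $R_-(0)=-1$. The case of $R_+$ is symmetric.

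If $W(0)=0$, we need $T(0)\neq0$. It suffices to show that $W(k)/k$ has a nonzero limit. By Lemma \ref{estim TR}, $T$ and $R_\pm$ are continuous up to $0$, and in the proportional case $\psi_-(\cdot,0)=c\,\psi_+(\cdot,0)$ with $c\ne0$. Passing to the limit $k\to0$ in \eqref{transref+} gives
\[ T(0)\psi_+(\cdot,0)=(R_-(0)+1)\,c\,\psi_+(\cdot,0), \qquad\text{so}\qquad T(0)=c\,(1+R_-(0)). \]
Combining this with the unitarity relations $|R_\pm|^2+|T|^2=1$ and $T\overline{R_-}+\overline TR_+=0$ at $k=0$ rules out $T(0)=0$. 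Indeed, if $T(0)=0$, then $R_-(0)=-1$, and $R_+(0)=-1$ by the analogous relation for $R_+$ with constant $1/c$. The reflection relation would then give $\psi_-(\cdot,0)\to$ a behaviour inconsistent with the limits, obtained by dividing \eqref{transref+} by $k$ and differentiating.

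If this last route is too delicate, we would instead cite the explicit low-frequency expansions of \cite{DT} or \cite{EHT}, which give $T(0)=\frac{2c}{1+c^2}\neq0$ in the non-generic case.
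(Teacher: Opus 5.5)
Your arguments for (i)$\Leftrightarrow$(iii), (iii)$\Leftrightarrow$(iv) and (iii)$\Rightarrow$(ii) are sound. Computing $W(0)=-\int_\R Vm_+(x,0)\,dx$ by integrating the zero-energy equation is a cleaner route than the paper's passage through \eqref{formulaT}. The mirror computation actually gives $W(0)=-\int_\R Vm_-(x,0)\,dx$, not $+$, consistent with $W(k)=2ik-\int_\R Vm_\pm(x,k)\,dx$ from \eqref{formulaT} and \eqref{wronsk}; this sign slip is harmless.

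The gap is the direction (ii)$\Rightarrow$(iii), i.e.\ showing $W(0)=0\Rightarrow T(0)\neq0$. Your relations $T(0)=c\,(1+R_-(0))$ and $T(0)=(1+R_+(0))/c$ are correct. But if $T(0)=0$, they yield only $R_\pm(0)=-1$, which is exactly assertion (ii). Moreover, $(T,R_+,R_-)=(0,-1,-1)$ satisfies both $|R_\pm|^2+|T|^2=1$ and $T\overline{R_-}+\overline{T}R_+=0$. So these identities at $k=0$ cannot produce a contradiction. The last sentence about dividing \eqref{transref+} by $k$ and differentiating is not an argument. What is missing is quantitative information on the behaviour near $k=0$, not just identities at $k=0$.

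That step is one line, and you already cite the needed lemma. The paper differentiates $T(k)W(k)=2ik$ and evaluates at $k=0$: if $T(0)=0$ then $\partial_kT(0)\,W(0)=2i$, so $W(0)\neq0$.

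In Lipschitz form: Lemma \ref{estim TR} gives $|\partial_kT|\lesssim1$, so $T(0)=0$ forces $|T(k)|\lesssim|k|$. Hence $|W(k)|=2|k|/|T(k)|\gtrsim1$ for small $k\neq0$, and continuity of $W$ gives $W(0)\neq0$.

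Alternatively, since $V\in L^1(\langle x\rangle^2dx)$ under (H3), Lemma \ref{estim m} with $s=1$ makes $W(k)=2ik-\int_\R Vm_+(x,k)\,dx$ Lipschitz. Then $W(0)=0$ gives $|W(k)|\lesssim|k|$, and therefore $|T(k)|\gtrsim1$ near $0$. You need only boundedness of $W(k)/k$, not a nonzero limit.

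Your fallback, citing $T(0)=2a/(1+a^2)\neq0$ (the paper's Lemma \ref{lemngTR}), would also close the gap. But then this key direction rests on an external result rather than on your own argument.
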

\begin{proof}
   $(i) \iff  (ii)$. Indeed, from (\ref{formulaT}), we know that 
\begin{align}\label{formulaT(k)}
T(k) = \frac{2ik}{2ik - \int_\R V(x)m_\pm(x,k) \ dx }
\end{align}
and the equivalence follows from evaluation $k =0$. If $(i)$ is true, we get $T(0) = 0$. $R_\pm(0) = -1$ follows from the relations (\ref{transref+}) and (\ref{transref-}). And if $(ii)$ is true, then $\int_\R V(x)m_{\pm}\left(x,0\right)\,dx \neq 0$ necessarily.\\

$(ii) \iff (iii)$. By differentiating the wronskian relation (\ref{wronsk}), we get 
\begin{align}
    \partial_k T(k) W(k) + T(k) \partial_k W(k) = 2i 
\end{align}
If $(ii)$ is true, by evaluating at $k = 0$, we get $\partial_k T(0)W(0) = 2i$; in particular, $W(0) \neq 0$. And if $(iii)$ is true, we deduce $T(0) =0$ from the relation \eqref{wronsk} evaluating at $k=0$. $R_\pm(0) = -1$ follows from the relations (\ref{transref+}) and (\ref{transref-}).\\

$(iii) \iff (iv)$. Let us assume $(iii)$ is true. Suppose by contradiction that there exist a non-trivial $L^\infty$ bounded function $\varphi$ such that $- \partial_{xx}\varphi + V\varphi =0$. The function $\varphi$ has finite limits at both $\pm \infty$ (see Remark \ref{resonlim}).  Since $W(0) \neq 0$, then the functions $\psi_+(x,0)$ and $\psi_-(x,0)$ are linearly independent solutions of the ODE $-y''+ Vy=0$. So, they form a basis of the solutions. Thus, there exist $\alpha, \beta \in \C$ such that 
\begin{align}
    \varphi(x) = \alpha\psi_+(x,0) +\beta\psi_-(x,0)
\end{align}
Without loss of generality, assume $\beta \neq 0$. Since $\varphi$ is bounded and $\psi_+(x,0) \to 1$ as $x \to +\infty$, the function $\psi_-(x,0)$ must also have a finite limit as $x \to +\infty$. After normalization, $\psi_-(\cdot,0)$ satisfies the same zero-energy Jost condition at $+\infty$ as $\psi_+(\cdot,0)$. By uniqueness of the Volterra construction at zero energy, the two solutions must coincide, contradicting the linear independence of $\psi_+(\cdot,0)$ and $\psi_-(\cdot,0)$. Therefore $(iv)$ is true. 

Now, let us assume $(iv)$ is true and $(iii)$ is not true. Then $W(0) =0$. It implies that $\psi_+(x,0)$ and $\psi_-(x,0)$ are linearly dependent. Thus, $\psi_+$ has finite limit at both infinity which contradict $(iv)$.
% $(iv) \Rightarrow (i)$. If $V$ were not generic, then by \eqref{formulaT(k)} the denominator in \eqref{formulaT(k)} would not vanish at $k=0$, so $T(0)$ would be finite and nonzero. The Wronskian identity \eqref{wronsk} would then force $W(0)=0$, hence $\psi_+(\cdot,0)$ and $\psi_-(\cdot,0)$ would be linearly dependent and yield a non-trivial bounded solution of $H\varphi=0$, contradicting (iv). Therefore $V$ is generic
\end{proof}

\subsection{Some bounds on Pseudo-Differential Operators}
We state some bounds on Pseudo-Differential Operators (PDOs) 
that are specifically tailored to the Jost functions $m_\pm(x,k)$.

	\begin{lemma}\label{lem:m-1}
		Suppose $\jx^{\gamma}V\in L^1$.
		Then, for $\gamma>3/2$, we have
		\begin{align}\label{eq:pseeasy}
		\left\Vert \mathbf{1}_{\{\pm x\geq-1\}}\int_\R e^{ik x}m_\pm(x,k)g(k)\,dk
		\right\Vert_{L^2_x}\lesssim{\| g \|}_{L^2},
		\\
		\label{eq:pseeasy-1}
		\left\Vert \int_\R e^{ik x} m_\pm(x,k)
		\mathbf{1}_{\{\pm x\geq-1\}}h(x)\,dx\right\Vert_{L_{k}^{2}}
		\lesssim\left\Vert h\right\Vert_{L^2},
		\end{align}
		%Similar estimates hold for $m_{-}$ restricted on $x\leq1$.	
		and, for $\gamma>\beta+3/2$,
		\begin{align}\label{eq:mPDO+}
		\left\Vert \mathbf{1}_{\{\pm x\geq-1\}}\jx^{\beta}
		\int_\R e^{ik x}\left(m_\pm(x,k)-1\right)g(k)\,dk\right\Vert_{L^2_x}
		\lesssim \left\Vert g\right\Vert_{L^{2}},
		\\
		\label{eq:mPDO+-1}
		\left\Vert \int_\R e^{ik x}\left(m_\pm(x,k)-1\right)
		\mathbf{1}_{\{\pm x\geq-1\}}h(x)\,dx\right\Vert_{L_{k}^{2}}
		\lesssim\left\Vert \jx^{-\beta}h\right\Vert_{L^2_x}.
		\end{align}
		%Similar estimates hold for $m_{-}-1$ restricted on $x\leq1$.
		%
		%\label{lem:m_x}
		Moreover, with $\gamma>\max(\beta/2+3/4,\beta)$, we have
		\begin{align}\label{eq:psehard}
		& \left\Vert \jx^{\beta} \mathbf{1}_{\{\pm x\geq-1\}}
		\int_\R e^{ik x}\partial_xm_\pm(x,k)g(k)\,dk\right\Vert_{L^2_x}\lesssim{\| g \|}_{L^2},
		\\
		\label{eq:mPDOhard+2}
		& \left\Vert \int_\R e^{ik x}\partial_xm_\pm (x,k)
		\mathbf{1}_{\{\pm x\geq-1\}}h(x)\,dx\right\Vert_{L_{k}^{2}}
		\lesssim\left\Vert \jx^{-\beta}h\right\Vert_{L^2}.
		\end{align}
		%Similar estimates hold for $\partial_xm_{-}$ restricted on $x\leq1$.
		%
		%\label{lem:m_k}
		%Suppose $\jx^{\gamma}V\in L^1$ with $\gamma \geq 2$.
		Furthermore, if $\gamma > \beta + 5/2$, then
		\begin{align}
		\label{eq:pdomk}
		& \left\Vert \mathbf{1}_{\{\pm x\geq-1\}} \jx^\beta 
		\int_\R e^{ik x}\partial_{k}m_\pm(x,k)g(k)\,dk\right\Vert_{L^2_x}\lesssim{\| g \|}_{L^2},
		\\
		\label{eq:pdomkd}
		& \left\Vert \int_\R e^{ik x}\partial_{k}m_\pm(x,k)
		\mathbf{1}_{\{\pm x\geq-1\}}h(x)\,dx\right\Vert_{L^2_k}
		\lesssim\left\Vert \jx^{-\beta}h\right\Vert_{L^2_x}.
		\end{align}
		Finally if $\gamma > \max(\beta/2 + 3/4, \beta) + 1 $, then
		\begin{align}
		\label{eq:pdomxk}
		& \left\Vert \mathbf{1}_{\{\pm x\geq-1\}} \jx^\beta 
		\int_\R e^{ik x} \partial_x\partial_k m_\pm(x,k)g(k)\,dk\right\Vert_{L^2_x}\lesssim{\| g \|}_{L^2},
		\\
		\label{eq:pdomxkd}
		& \left\Vert \int_\R e^{ik x} \partial_x\partial_k m_\pm(x,k)
		\mathbf{1}_{\{\pm x\geq-1\}}h(x)\,dx\right\Vert_{L^2_k}
		\lesssim\left\Vert \jx^{-\beta}h\right\Vert_{L^2_x}.
		\end{align}

	\end{lemma}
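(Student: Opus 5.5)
The plan is to treat each operator as a pseudo-differential operator whose symbol is built from the Jost functions. Each bound then follows from Plancherel plus a Hilbert--Schmidt or Schur-type estimate on the part of the symbol that deviates from a constant. By duality, \eqref{eq:pseeasy-1}, \eqref{eq:mPDO+-1}, \eqref{eq:mPDOhard+2}, \eqref{eq:pdomkd} and \eqref{eq:pdomxkd} are the adjoints of \eqref{eq:pseeasy}, \eqref{eq:mPDO+}, \eqref{eq:psehard}, \eqref{eq:pdomk} and \eqref{eq:pdomxk}. For \eqref{eq:mPDO+-1}, for example, the adjoint of $g\mapsto \mathbf{1}\jx^\beta\int e^{ikx}(m-1)g\,dk$ is $h\mapsto\int e^{-ikx}\overline{(m-1)}\jx^\beta\mathbf{1}h\,dx$, and conjugation uses $m_\pm(x,-k)=\overline{m_\pm(x,k)}$ from Lemma \ref{existJost}. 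So it suffices to prove the first inequality of each pair, and I treat only the $+$ case, since $m_-$ is symmetric.

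\emph{Step 1: the easy bounds.} Write $m_+=1+(m_+-1)$. The constant part of \eqref{eq:pseeasy} is the flat inverse Fourier transform and is handled by Plancherel, so \eqref{eq:pseeasy} follows from \eqref{eq:mPDO+} with $\beta=0$. For \eqref{eq:mPDO+} I plan to avoid oscillation altogether. Cauchy--Schwarz in $k$ gives
$$\Big|\int e^{ikx}(m_+-1)g\,dk\Big|\le \|g\|_{L^2}\,\|m_+(x,\cdot)-1\|_{L^2_k}.$$
This only needs $\mathbf{1}_{x\ge-1}\jx^\beta\|m_+(x,\cdot)-1\|_{L^2_k}\in L^2_x$, i.e. a Hilbert--Schmidt bound. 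By Lemma \ref{estim m} with $s=0$, $|m_+-1|\lesssim\jk^{-1}\mathcal W_+^1(x)$, and for $x\ge-1$ we have $\mathcal W_+^1(x)\lesssim\jx^{1-\gamma}$. Since $\jk^{-1}\in L^2_k$, the requirement becomes $\jx^{\beta+1-\gamma}\in L^2(x\ge -1)$, which holds exactly when $\gamma>\beta+3/2$. The same argument proves \eqref{eq:pdomk}. There I use the $s=1$ bound $|\partial_k m_+|\lesssim\jk^{-1}\mathcal W^2_+(x)\lesssim\jk^{-1}\jx^{2-\gamma}$, which is square integrable against $\jx^{2\beta}$ when $\gamma>\beta+5/2$. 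This reproduces the stated thresholds, which suggests it is the intended route.

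\emph{Step 2: the $x$-derivative bounds, which are the main obstacle.} For \eqref{eq:psehard} and \eqref{eq:pdomxk}, Lemma \ref{estim m} only gives $|\partial_x\partial_k^s m_+|\lesssim\mathcal W^s_+(x)$, with no $\jk^{-1}$ decay. So Hilbert--Schmidt fails at high frequency, and I need to extract decay in $k$ another way. Differentiating the Volterra equation \eqref{volterra} gives
$$\partial_x m_+(x,k)=-\int_x^\infty e^{2ik(y-x)}V(y)m_+(y,k)\,dy.$$
Substituting this, the operator becomes
$$-\int_x^\infty V(y)\int e^{ik(2y-x)}m_+(y,k)g(k)\,dk\,dy.$$
Setting $G(y):=\int e^{2iky}e^{-ikx}\ldots$ is not directly separable, so I write $m_+(y,k)=1+(m_+(y,k)-1)$. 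The ``$1$'' part equals $\int_x^\infty V(y)\check g(2y-x)\,dy$, where $\check g$ is the inverse Fourier transform of $g$. Its $L^2_x$ norm with weight $\jx^\beta$ is estimated by Cauchy--Schwarz in $y$ with weight $|V(y)|\jy^{\sigma}$, together with $\jx\le\jy$ for $y\ge x\ge -1$:
$$\|\cdots\|^2\lesssim\int|V(y)|\jy^{2\beta-\sigma}\,dy\cdot\sup_y\int|\check g(2y-x)|^2dx\cdot\int|V|\jy^{\sigma}dy,$$
which is finite for suitable $\sigma\le\gamma$. The remainder part, involving $m_+-1$, carries $\jk^{-1}$ decay and is handled by Step 1's Hilbert--Schmidt argument with the extra weight from $V$. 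Optimizing the Cauchy--Schwarz split produces the condition $\gamma>\max(\beta/2+3/4,\beta)$. For \eqref{eq:pdomxk} I apply $\partial_k$ to the same representation. This produces factors $(y-x)$, costing one power of the weight, and $\partial_k m_+$, controlled by Step 1, hence the shift of the threshold by $1$.

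I expect the careful bookkeeping of weights in Step 2 to be delicate, specifically matching the sharp exponent $\beta/2+3/4$. If the direct split does not give it, I would instead use the bounds established in \cite{GPR} and \cite{CPNLS2}, which treat exactly these PDOs.
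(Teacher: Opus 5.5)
Your argument is correct. The paper gives no proof of its own (it only cites \cite{NLSV}, Lemmas 2.4--2.9), and your route is the standard one behind those lemmas: Hilbert--Schmidt bounds via Lemma \ref{estim m} for $m_\pm-1$ and $\partial_k m_\pm$, duality for the $L^2_k$ versions, and for the $x$-derivatives the Volterra identity $\partial_x m_+(x,k)=-\int_x^\infty e^{2ik(y-x)}V(y)m_+(y,k)\,dy$ with the split $m_+=1+(m_+-1)$.

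Your hedge in Step 2 is unnecessary, because the direct split gives the sharp threshold. By Minkowski, the main piece $\int_x^\infty V(y)\check g(2y-x)\,dy$ only costs $\jy^{\beta}V\in L^1$. The remainder is bounded pointwise by $\|g\|_{L^2}\int_x^\infty |V(y)|\mathcal{W}^1_+(y)\,dy\lesssim \|g\|_{L^2}\jx^{1-2\gamma}$, which is square integrable against $\jx^{2\beta}$ exactly when $\gamma>\beta/2+3/4$. In the $\partial_x\partial_k$ case, treat the $\partial_k m_+$ term the same way through $\|\partial_k m_+(y,\cdot)\|_{L^2_k}\lesssim\mathcal{W}^2_+(y)$. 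This needs $\jx^2V\in L^1$, which the standing assumption $\gamma>5/2$ guarantees.
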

\begin{proof}
See (\cite{NLSV}, Lemmas 2.4-2.9).
\end{proof}

\medskip
\subsection{Spectral theory: Resolvent and Spectral Projectors}
The Schrödinger operator $H = -\partial_{xx} + V$ is defined as the Friedrichs realization associated to the quadratic form on $H^1(\R)$
\begin{align}
    q(f) = \int_\R  |f'(x)|^2 \ dx + \int_\R V(x)|f(x)|^2 \ dx , 
    \end{align}
with domain
 \begin{align}
    D(H) = \big\{ f \in H^1(\R) \quad : \quad -\partial_{xx} f + Vf \in L^2(\R) \big\}
\end{align}
where every term is understood in the sense of distributions. Moreover, since $V$ is a real-valued $L^1$ function on $\R$, Weyl's theorem on the invariance of the essential spectrum yields
$$ \sigma(H) = \sigma_{ess}(H) = \sigma_{ess} (-\partial_{xx}) = [0,+\infty) $$
Under our standing assumption that $H$ has no eigenvalues, it follows that $\sigma(H) = [0,+\infty)$.
Those standard facts can be found in the spectral theory book of Lewin \cite{Lew}.

\begin{lemma}[Resolvent of $H$]
For all $z \in \C$ such that $\im z \neq 0$ and $W(\lambda) \neq 0$ where $\lambda= z^{1/2}$ is such that $\im \lambda > 0$, the resolvent $R(z)$ of the Schrödinger operator is an integral operator given by the kernel 
\label{resol}
\begin{align}R(z)(x,y) = \frac{1}{W(\lambda)}\, \psi_+(\max(x,y), \lambda)\, \psi_-(\min(x,y), \lambda).
\end{align}
The kernel $R(z)(x,y)$ is an analytic function of $z$ on $\C \setminus [0,+\infty)$ and continuous up to the cut along $[0, +\infty)$, with exception, possibly at $z = 0$. 
\end{lemma}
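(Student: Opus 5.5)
The plan is to verify directly that the proposed kernel inverts $H - z$, using the ODE satisfied by the Jost functions and the Wronskian. Fix $z \notin [0,+\infty)$ and let $\lambda = z^{1/2}$ with $\im \lambda > 0$. By Lemma \ref{existJost}, $\psi_\pm(\cdot,\lambda)$ solve $-\psi'' + V\psi = \lambda^2\psi = z\psi$. Since $\psi_+(x,\lambda)\sim e^{i\lambda x}$ as $x\to+\infty$ and $\psi_-(x,\lambda)\sim e^{-i\lambda x}$ as $x\to -\infty$, both decay exponentially at their respective ends. The asymptotic behaviour in Lemma \ref{existJost} shows the Wronskian is constant in $x$, and it is nonzero by hypothesis.

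\emph{Step 1: the kernel solves the equation.} For $g \in C_c^\infty$, define
\[
u(x) = \frac{1}{W(\lambda)}\Big(\psi_+(x,\lambda)\int_{-\infty}^x \psi_-(y,\lambda)g(y)\,dy + \psi_-(x,\lambda)\int_x^{\infty}\psi_+(y,\lambda)g(y)\,dy\Big).
\]
Differentiating twice (variation of constants), the boundary terms from the first derivative cancel. The second derivative produces the jump term $(\psi_+'\psi_- - \psi_+\psi_-')g/W = g$. With the paper's convention $W(f,g)=f'g-fg'$, this gives $-u''+Vu-zu = -g$ up to sign. So I will track the sign carefully: the kernel is the Green's function of $z-H$ or $H-z$, and $R(z)$ is to be read with the matching convention.

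\emph{Step 2: $u$ lies in $L^2$ and in the domain $D(H)$.} On $x\to+\infty$, the factor $\psi_+$ decays like $e^{-\im\lambda\, x}$. The factor $\psi_-(x,\lambda)$ grows at most like $e^{\im\lambda\,|x|}$ there, by the Volterra bounds on $m_\pm$ (Lemma \ref{estim m}, with $k$ in the upper half plane, as in \cite{DT}). For compactly supported $g$, the second term vanishes for large $x$. Hence $u$ decays exponentially at both ends, so $u \in H^1$ and $Hu\in L^2$, i.e.\ $u\in D(H)$.

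Since $z\notin\sigma(H)=[0,\infty)$, the operator $H-z$ is injective on $D(H)$, so $u = \pm R(z)g$. Density of $C_c^\infty$, together with boundedness of the integral operator, extends the identity to all of $L^2$. For the boundedness, a Schur test works: the kernel is bounded by $C e^{-\im\lambda\,|x-y|}$, using $|\psi_+(x)\psi_-(y)|\lesssim e^{-\im\lambda(x-y)}$ for $x>y$, again from the Volterra bounds.

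\emph{Step 3: regularity.} Analyticity in $z$ follows from analyticity of $\psi_\pm(x,\cdot)$ for $\im\lambda>0$ (Lemma \ref{existJost}), the holomorphy of $z\mapsto z^{1/2}$ off the cut, and $W(\lambda)\neq0$. Continuity up to the cut follows from continuity of $\psi_\pm$ up to $\im\lambda\ge0$. It persists wherever $W(\lambda)\ne0$. For real $\lambda\neq0$ this holds by \eqref{wronsk}, since $T(\lambda)W(\lambda)=2i\lambda\neq0$. The only possible failure is $\lambda=0$, where $W(0)=0$ precisely in the non-generic case (Lemma \ref{moreongen}); this is the stated exception at $z=0$.

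The main obstacle is Step 2. It needs uniform exponential bounds on $\psi_\pm(x,\lambda)$ on the "wrong" half-line for complex $\lambda$. This does not follow directly from the real-$k$ estimates of Lemma \ref{estim m}; it requires redoing the Volterra iteration with $|D_\lambda(x)|\lesssim |x|$ for $\im\lambda\ge0$ and $x\geq 0$, and then propagating the bound across the origin.
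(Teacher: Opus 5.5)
The paper does not prove this lemma; it cites Yafaev's monograph. Your proposal is the classical self-contained verification: variation of constants, membership in $D(H)$, uniqueness of the inverse off the spectrum, and regularity in $z$ read off from the Jost functions and the zero set of $W$. It is correct in outline. What it gains over the citation is that it exposes a sign problem in the statement, and here you should commit rather than hedge. Your Step 1 computation gives exactly $(H-z)u=-g$ with the paper's convention $W(f,g)=f'g-fg'$, so the displayed kernel is that of $(z-H)^{-1}=-(H-z)^{-1}$. The free case confirms this: for $V=0$ the formula gives $e^{i\lambda|x-y|}/(2i\lambda)$, whereas $(-\partial_x^2-z)^{-1}$ has kernel $\frac{i}{2\lambda}e^{i\lambda|x-y|}=-e^{i\lambda|x-y|}/(2i\lambda)$. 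So with $R(z)=(H-z)^{-1}$ the denominator should be $-W(\lambda)=W(\psi_-,\psi_+)$. This is a defect of the statement, not of your argument, and it does not affect the analyticity and continuity claims.

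The step you flag as the main obstacle is easier than you expect, but the route you sketch for it does not give the bound you then use. Iterating \eqref{volterra} with $|D_\lambda(s)|\lesssim s$ only yields $|m_+(x,\lambda)|\lesssim\jx$ for $x<0$ and $|m_-(x,\lambda)|\lesssim\jx$ for $x>0$. That is enough for $u\in L^2$ when $g$ has compact support. However, it bounds the kernel only by roughly $(1+\min(|x|,|y|))e^{-\im\lambda|x-y|}$ when $xy>0$, which fails the Schur test.

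Instead, use that for $\im\lambda\ge0$, $\lambda\neq0$, $s\ge0$ one has $|D_\lambda(s)|=|e^{2i\lambda s}-1|/(2|\lambda|)\le 1/|\lambda|$. Gronwall applied to \eqref{volterra} then gives $\sup_{x\in\R}|m_\pm(x,\lambda)|\le e^{\|V\|_{L^1}/|\lambda|}$, hence $|\psi_\pm(x,\lambda)|\le C_\lambda e^{\mp x\im\lambda}$ on all of $\R$, with no propagation across the origin needed. Differentiating \eqref{volterra} gives the same kind of bound for $\partial_x m_\pm$, which you need for $u'\in L^2$. This yields exactly the kernel bound $C_\lambda^2|W(\lambda)|^{-1}e^{-\im\lambda|x-y|}$ that you claimed.

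Alternatively, you can bypass Schur entirely. For fixed $x$ the kernel lies in $L^2_y$, so if $g_n\to g$ in $L^2$ the integrals converge pointwise while $R(z)g_n\to R(z)g$ in $L^2$. An a.e.-convergent subsequence then identifies the two. With either fix the argument is complete.
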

\begin{proof}See (\cite{Yaf} chapter 5, proposition 1.4).
% We need to prove that for all $f \in L^2(\R)$, we have:
% $$R(z)f \in H^1(\R) \quad \text{ and } \quad  (-\partial_{xx} + V)R(z)f = f $$
% First of all, for $f \in L^2$ and $z, \lambda$ as in the lemma, we have :
% \begin{align}
%     (R(z)f) (x) = \frac{1}{W(\lambda)}\Big\{\int_{-\infty}^x \psi_+(x,\lambda)\psi_-(y,\lambda)f(y) \ dy  + \int_x ^{+\infty} \psi_+(y,\lambda)\psi_-(x,\lambda)f(y) \ dy \Big \} \nonumber
% \end{align}
% Using the fact that Jost functions satisfy the  asymptotics (with $C$ a constant that depends only on $V$ and $\lambda$)
% $$ |\psi_+(x,\lambda)| \le C e^{-x\im\lambda } \qquad ; \qquad |\psi_-(x,\lambda)| \le C e^{x\im\lambda } $$
% we have 
% \begin{align}
%     |W(\lambda)|^2\|R(z)f\|^2 & \le C^2 \int_\R \int_{-\infty}^x e^{-2(x-y)\im \lambda} |f(y)|^2 dy + C^2 \int_\R \int_x^{+\infty} e^{-2(y-x)\im \lambda} |f(y)|^2 dy \nonumber  \\
%     & \le \frac{C^2}{\im \lambda} \int_{\R} |f(y)|^2 \ dy \nonumber
% \end{align}
% therefore $R(z)f \in L^2(\R)$. For the derivative, we get by direct computation (in the sense of distribution) 
% \begin{align}
%     W(\lambda)\partial_x (R(z)f)(x) = \partial_x \psi_+(x,\lambda)\int_{-\infty}^x \psi_-(y,\lambda)f(y) \ dy + \partial_x \psi_-(x,\lambda)\int_x^{+\infty} \psi_-(y,\lambda)f(y) \ dy \nonumber
% \end{align}
% We get the $L^2$ boundedness by using the asymptotics (through lemma \ref{mestim})
% $$ |\partial_x\psi_\pm(x,\lambda)| \le C \langle \lambda \rangle e^{\mp x\im \lambda} $$
% and $(-\partial_{xx} + V)R(z)f = f$ follows straightforwardly by computations.
\end{proof} 
\smallskip
\begin{lemma}[Spectral Projectors of $H$]
Let $\lambda > 0$. The spectral projector $\mathbf{1}_{[0, \lambda]}(H)$ is defined as an integral operator with kernel given by:
\begin{align}\label{projspec}
(x, y) \mapsto \frac{1}{4\pi} \int_{0}^{\lambda} \frac{|T(\sqrt{s})|^2}{\sqrt{s}} \left[ \psi_+(x,\sqrt{s})\, \psi_+(y, -\sqrt{s}) + \psi_-(x,\sqrt{s})\, \psi_-(y, -\sqrt{s}) \right] ds.
\end{align}

\end{lemma}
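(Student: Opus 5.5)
The plan is to derive the kernel from Stone's formula, using the resolvent kernel of Lemma \ref{resol}. Since $H$ has no eigenvalues and $\sigma(H)=[0,\infty)$, Stone's formula gives, for $0<a<\lambda$,
\begin{align}
\mathbf{1}_{(a,\lambda]}(H) = \lim_{\epsilon\downarrow 0}\frac{1}{2\pi i}\int_a^\lambda \big[R(s+i\epsilon)-R(s-i\epsilon)\big]\,ds
\end{align}
in the strong sense. The kernels are continuous up to the cut away from $z=0$ by Lemma \ref{resol}, so on $[a,\lambda]$ we may pass to the limit inside the integral, at least when tested against compactly supported $f,g$.

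For $z=s\pm i0$ with $s>0$, the root with positive imaginary part tends to $\pm\sqrt{s}$. This gives
\begin{align}
R(s\pm i0)(x,y) = \frac{\psi_+(\max(x,y),\pm\sqrt s)\,\psi_-(\min(x,y),\pm\sqrt s)}{W(\pm\sqrt s)}.
\end{align}
Write $k=\sqrt s$ and take $x\ge y$. We need to express the jump $\frac{\psi_+(x,k)\psi_-(y,k)}{W(k)}-\frac{\psi_+(x,-k)\psi_-(y,-k)}{W(-k)}$ in the symmetric form of \eqref{projspec}. First use $W(\pm k)=\pm 2ik/T(\pm k)$ from \eqref{wronsk}, together with $T(-k)=\overline{T(k)}$. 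Then eliminate $\psi_+(x,-k)$ using \eqref{transref-}, i.e. $\psi_+(x,-k)=T(k)\psi_-(x,k)-R_+(k)\psi_+(x,k)$, and eliminate $\psi_-(y,-k)$ using \eqref{transref+}. Finally apply the unitarity relations of Lemma \ref{TRcoeff}(iii): $|R|^2+|T|^2=1$ and $T\overline{R_-}+\overline{T}R_+=0$. After these steps the jump should collapse to
\begin{align}
\frac{|T(k)|^2}{2ik}\big[\psi_+(x,k)\psi_+(y,-k)+\psi_-(x,k)\psi_-(y,-k)\big].
\end{align}
This expression is symmetric in $x,y$, so the case $x<y$ gives the same formula. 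Dividing by $2\pi i$ yields the density $\frac{|T(\sqrt s)|^2}{4\pi\sqrt s}[\dots]$.

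The algebraic identity above is the main computational step. I would verify it by comparing the coefficients of the independent solutions in $x$ for fixed $y$, rather than by brute force.

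The remaining point is to let $a\to 0$. Because $0$ is not an eigenvalue, $\mathbf{1}_{(a,\lambda]}(H)\to\mathbf{1}_{[0,\lambda]}(H)$ strongly. It then suffices to show the integrand is integrable near $s=0$, which is the expected obstacle in the non-generic case. Here $T(0)\neq 0$ (Lemma \ref{moreongen}), so the density behaves like $s^{-1/2}$. This is integrable, and the Jost functions are bounded locally uniformly in $x$ by Lemma \ref{estim m}. Dominated convergence then gives \eqref{projspec}, first as a bilinear form on $C_c$ functions and hence as the kernel of the operator.
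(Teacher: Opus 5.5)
Your route is the standard Stone's-formula derivation behind the reference the paper simply cites for this lemma (\cite{Yaf}, Ch.~5, Prop.~1.5). Its skeleton is sound. Passing $\epsilon\downarrow 0$ on $[a,\lambda]$ works by joint continuity of the Jost functions and $W\neq 0$ away from $0$. The algebraic collapse of the jump is correct. The limit $a\downarrow 0$ works because $0$ is not an eigenvalue and the density is $O(s^{-1/2})$ locally uniformly in $x,y$; note this needs only $|T|\le 1$ and $|m_\pm(x,k)|\lesssim\langle x\rangle$, not $T(0)\neq 0$.

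There is, however, a sign error. With the jump you state, $\frac{1}{2\pi i}\cdot\frac{|T(k)|^2}{2ik}=-\frac{|T(k)|^2}{4\pi k}$. So your computation actually produces the negative of \eqref{projspec}, which is minus a positive operator; for $V=0$ it gives $-\frac{\sin(\sqrt{\lambda}(x-y))}{\pi(x-y)}$. The source is the resolvent kernel as printed in Lemma~\ref{resol}. With the paper's convention $W(f,g)=f'g-fg'$, for which $T(k)W(k)=2ik$, the kernel of $(H-z)^{-1}$ is $-W(\lambda)^{-1}\psi_+(\max(x,y),\lambda)\psi_-(\min(x,y),\lambda)$. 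You can check this at $V=0$: there $W=2ik$, and $(-\partial_{xx}-k^2)^{-1}$ has kernel $\frac{i}{2k}e^{ik|x-y|}=-\frac{1}{2ik}e^{ik|x-y|}$. With the correct sign, the jump for $x\ge y$ is $-\frac{|T(k)|^2}{2ik}[\cdots]$, and dividing by $2\pi i$ gives exactly the positive density in \eqref{projspec}. As you wrote it, two sign slips cancel.

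A smaller point concerns symmetry. The bracket $B(x,y)=\psi_+(x,k)\psi_+(y,-k)+\psi_-(x,k)\psi_-(y,-k)$ is not manifestly symmetric, because $\psi_\pm(\cdot,-k)=\overline{\psi_\pm(\cdot,k)}$ only gives $B(y,x)=\overline{B(x,y)}$. What you need is that $B$ is real, and your identity supplies this. Indeed, $|T(k)|^2B(x,y)=T\psi_+(x,k)\psi_-(y,k)+\overline{T\psi_+(x,k)\psi_-(y,k)}=2\,\mathrm{Re}\bigl(T(k)\psi_+(x,k)\psi_-(y,k)\bigr)$, and $T(k)\neq 0$ for $k\neq 0$. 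This is what makes the case $x<y$, where the jump is built from $B(y,x)$, yield the same formula.
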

\begin{proof} See (\cite{Yaf} chapter 5, proposition 1.5).
\end{proof}

\subsection{(Standard) distorted Fourier Transform}\label{ssecDFT0}
 Given the Jost functions $\psi_{\pm}$, we define the following kernel:
\begin{equation}
    \K(x,k) = \begin{cases}
    T(k)\psi_+(x,k)  \quad \quad k \ge 0 ,\\
    T(-k)\psi_-(x,-k) \quad k < 0.
\end{cases}
\label{kernel}
\end{equation} 
\begin{definition}\label{dft}
    For any function $f$ in the Schwartz class $\mathcal{S}(\R)$, we define the distorted Fourier transform (dFT) by
    $$ \widetilde{\Fou}{(f)}(k) = \int_{\R} \overline{\K(x,k)}f(x) dx .$$
\end{definition}
\begin{proposition} 
\label{plancherel}\
    \begin{enumerate} [label = (\roman*)]
        \item  The dFT extends to an unitary isomorphism from $L^2(\R,dx)$ to $L^2(\R,\frac{dk}{2\pi})$, that is:
        $$ ||\widetilde{\Fou}{(f)}||_{L^2} = ||f||_{L^2} \quad, \quad \forall f \in L^2(\R) .$$
        \item If $\phi \in L^1$, the inverse of $\widetilde{\Fou}$ is given by the formula: 
        $$ \widetilde{\Fou}^{-1}{(\phi)} (x) = \int_{\R} \K(x,k)\phi(k) \frac{dk}{2\pi}. $$
        \item If $f \in L^1$, then the distorted Fourier transform $\widetilde{\mathcal F}(f)$ is bounded. In fact, the following holds:
$$ \|\widetilde{\mathcal F}(f)\|_{L^{\infty}} \lesssim \|f\|_{L^1}. $$
Moreover, $\widetilde{\mathcal F}(f)$ is continuous everywhere except possibly at $0$, where a discontinuity may occur.
    \end{enumerate}
\end{proposition}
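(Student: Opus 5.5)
Since $\K(x,k)$ is built from generalized eigenfunctions of $H$, the plan is to derive (i) and (ii) from the spectral theorem through the explicit projector kernel \eqref{projspec}, and (iii) from uniform bounds on $\K$.

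\emph{Step 1: spectral representation.} Take $f,g \in \mathcal{S}(\R)$ and compute $\langle \mathbf{1}_{[0,\lambda]}(H) f, g\rangle$ with \eqref{projspec}. Substitute $s=k^2$, so that $ds/\sqrt{s} = 2\,dk$, and use $\psi_\pm(y,-k)=\overline{\psi_\pm(y,k)}$ for real $k$ together with $|T(k)|^2 = T(k)\overline{T(k)}$. The $\psi_+$ term then becomes $\int_0^{\sqrt\lambda} \widetilde{\Fou}(f)(k)\overline{\widetilde{\Fou}(g)(k)}\,\frac{dk}{2\pi}$ with $k\geq 0$. The $\psi_-$ term, after the change of variables $k\mapsto -k$ and using $T(-k)=\overline{T(k)}$, covers $k<0$.

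Because $H$ has no eigenvalues (hypothesis (H2)), $\sigma(H)=[0,\infty)$. The resonance at zero is not an $L^2$ eigenvalue, so $\mathbf{1}_{\{0\}}(H)=0$. Letting $\lambda\to\infty$ then gives $\langle f,g\rangle = \int_\R \widetilde{\Fou}(f)\overline{\widetilde{\Fou}(g)}\,\frac{dk}{2\pi}$. This is the isometry on $\mathcal{S}(\R)$, and density extends it to $L^2$.

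\emph{Step 2: surjectivity and the inverse formula.} The polarized identity above says $\widetilde{\Fou}^*\widetilde{\Fou}=I$. It remains to show $\widetilde{\Fou}\widetilde{\Fou}^*=I$, i.e. that the range is dense, where $\widetilde{\Fou}^*\phi(x)=\int \K(x,k)\phi(k)\,\frac{dk}{2\pi}$; this adjoint formula is legitimate for $\phi\in L^1$ because $\K$ is bounded (Step 3). I would argue as follows. The dFT intertwines $H$ with multiplication by $k^2$, which follows from $H\K(\cdot,k)=k^2\K(\cdot,k)$ after integrating by parts against Schwartz $f$. The two branches $k>0$ and $k<0$ give two independent "channels" of multiplicity two, and each channel is covered by the spectral measure. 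Suppose $\phi\perp \mathrm{Ran}\,\widetilde{\Fou}$. Then $\widetilde{\Fou}^*(\chi(k^2)\phi)=0$ for every bounded $\chi$, so for almost every energy the combination $a\K(\cdot,k)+b\K(\cdot,-k)=0$. The Wronskian identity $W(\psi_\pm(\cdot,k),\psi_\pm(\cdot,-k))=\mp 2ik\neq 0$ and \eqref{wronsk} give linear independence for $k\neq 0$, and hence $a=b=0$. This gives (ii), and (i) follows.

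\emph{Step 3: the $L^\infty$ bound.} Since $|T|\le 1$ by Lemma~\ref{TRcoeff}(iii), it suffices to show that $\sup_{x,k}|\K(x,k)|<\infty$. On $x\ge -1$ we have $|\psi_+(x,k)|=|m_+(x,k)|\lesssim 1$ by Lemma~\ref{estim m}. On $x\le 1$ I would rewrite $T(k)\psi_+(x,k)=R_-(k)\psi_-(x,k)+\psi_-(x,-k)$ using \eqref{transref+}, and both terms there are bounded by the same lemma and $|R_-|\le1$. This gives $\|\widetilde{\Fou} f\|_{L^\infty}\lesssim \|f\|_{L^1}$.

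Continuity for $k\neq0$ follows by dominated convergence, since $T$, $R_\pm$ and $m_\pm$ are continuous in $k$ there. At $k=0$, $\K(x,0^+)=T(0)\psi_+(x,0)$ and $\K(x,0^-)=T(0)\psi_-(x,0)$, and these generally differ in the non-generic case. I expect the main obstacle to be the completeness in Step 2, in particular the behavior near $k=0$, where $T(0)\neq0$ and where the projector formula requires the limiting absorption principle up to $z=0$. To handle this I would justify the limit $\lambda\downarrow0$ using the boundedness of $\K$ (which makes the contribution of $[0,\delta]$ vanish as $\delta\to0$) together with the absence of a zero eigenvalue.
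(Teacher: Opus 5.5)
Your proof is correct, but it takes a different route from the paper. The paper gives no argument and simply defers the whole proposition to \cite{KGV} (Proposition 3.6).

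You instead derive (i)--(ii) from the spectral projector kernel \eqref{projspec}. The substitution $s=k^2$ together with $\psi_\pm(y,-k)=\overline{\psi_\pm(y,k)}$ gives
$\langle \mathbf{1}_{[0,\lambda]}(H)f,g\rangle=\int_{|k|\le\sqrt\lambda}\widetilde{\Fou}f\,\overline{\widetilde{\Fou}g}\,\frac{dk}{2\pi}$.
Hypothesis (H2) and the absence of a zero eigenvalue then yield the isometry. Surjectivity comes from localizing in energy and from the linear independence of $\K(\cdot,k)=T(k)\psi_+(\cdot,k)$ and $\K(\cdot,-k)=T(k)\psi_-(\cdot,k)$ for $k>0$. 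Their Wronskian is $2ikT(k)\neq 0$ by \eqref{wronsk}; this is the relevant identity, not $W(\psi_\pm(\cdot,k),\psi_\pm(\cdot,-k))$.

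This is the same orthogonal-complement mechanism the paper uses later for $\Fou^\sharp$ in Proposition \ref{propsharpF}(i). Your version therefore makes the section self-contained, apart from \eqref{projspec} and the Jost bounds. It also shows that the point $k=0$ plays no role in the $L^2$ theory and only affects the continuity statement in (iii).

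Two small points to tighten:
\begin{enumerate}
\item The intertwining $\widetilde{\Fou}\chi(H)=\chi(k^2)\widetilde{\Fou}$ is better obtained from Step 1 than by integrating by parts against Schwartz $f$. The latter needs care because $\mathcal{S}(\R)\not\subset D(H)$ in general when $V$ is only in weighted $L^1$. Step 1 gives $\widetilde{\Fou}^*\chi(k^2)\widetilde{\Fou}=\chi(H)$ and $\|\chi(k^2)\widetilde{\Fou}f\|=\|\chi(H)f\|$. Hence the orthogonal projection onto $\mathrm{Ran}\,\widetilde{\Fou}$ preserves the norm of $\chi(k^2)\widetilde{\Fou}f$, so this vector lies in the range.
\item In ``$\phi(k)\K(x,k)+\phi(-k)\K(x,-k)=0$ for a.e.\ $k$'', the null set depends on $x$. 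Fix this by taking $x$ in a countable dense set and using continuity in $x$.
\end{enumerate}

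Your concern about $\lambda\downarrow 0$ is unnecessary. The formula \eqref{projspec} is stated for all $\lambda>0$, and its density $|T(\sqrt{s})|^2/\sqrt{s}$ is integrable at $0$.
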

\begin{proof} See (\cite{KGV}, Proposition 3.6).

\end{proof}

The eventual discontinuity at 0 in the standard dFT can be analyzed based on the genericity of the potential $V$. In fact, when the potential is generic, the standard dFT is continuous since it vanishes at \(0\). This follows from the following lemma and the dominated convergence theorem.

\begin{lemma}
For a generic potential \(V\), we have the following Taylor expansion:
$$ T(k) = \alpha k + \mathcal{O}(k^2) \quad \text{ as } k \to 0 \quad , \alpha \in i\R$$
\end{lemma}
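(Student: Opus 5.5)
The plan is to start from the explicit formula \eqref{formulaT(k)},
\begin{align}
T(k) = \frac{2ik}{2ik - \int_\R V(x)m_+(x,k)\,dx},
\end{align}
and Taylor expand the denominator $D(k) := 2ik - \int_\R V(x)m_+(x,k)\,dx$ at $k=0$. By genericity (Definition \ref{def:generic}), $D(0) = -\int_\R V(x)m_+(x,0)\,dx =: -c_0 \neq 0$. The first step is to show that $c_0$ is real. Lemma \ref{existJost} gives $\psi_+(x,0) = \overline{\psi_+(x,0)}$, so $m_+(x,0)$ is real; since $V$ is real-valued, $c_0 \in \R\setminus\{0\}$.

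The second step is to control the regularity of $D$ near $0$. By Lemma \ref{estim m} with $s=1$, which requires $V \in L^1(\jx^2dx)$ and is guaranteed by \ref{VassumeWei}, $\partial_k m_+(x,k)$ is bounded by $\mathcal W^2_+(x)$ for $x\ge -1$ and by $\jx^2$ for $x\le 1$, uniformly in $k$. Hence $\int V\,\partial_k m_+\,dx$ converges absolutely and uniformly in $k$, because $V\jx^2 \in L^1$ and $\mathcal W_+^2$ is bounded. It follows that $D \in C^1$ near $0$ with bounded derivative, and therefore $D(k) = -c_0 + \mathcal O(k)$.

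The final step is to expand the quotient. Since $D(0) \neq 0$, for $|k|$ small we have $1/D(k) = -1/c_0 + \mathcal O(k)$, and so
\begin{align}
T(k) = 2ik\Big(-\frac{1}{c_0} + \mathcal O(k)\Big) = \alpha k + \mathcal O(k^2), \qquad \alpha = -\frac{2i}{c_0} \in i\R.
\end{align}
This also gives $T(0) = 0$, consistent with Lemma \ref{moreongen}.

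The only delicate point is the uniform differentiability of $k\mapsto \int V m_+\,dx$ at $k=0$, that is, justifying differentiation under the integral sign. This is handled by the weighted bounds of Lemma \ref{estim m} together with dominated convergence. Note that we must use the $\jk^{-1}$ bound \eqref{mestim} rather than \eqref{Mestimates1'}, since the latter is singular at $k=0$.
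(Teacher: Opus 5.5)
Your proof is correct. The paper gives no argument for this lemma and simply defers to \cite{KGV}, Proposition 3.4, so your proposal is a self-contained replacement for that citation. It uses only ingredients already stated in the paper: the formula \eqref{formulaT(k)}; the symmetry $\psi_+(x,-\bar k)=\overline{\psi_+(x,k)}$ of Lemma \ref{existJost}, which gives $m_+(x,0)\in\R$; and the uniform $s=1$ bounds of Lemma \ref{estim m}, which make $k\mapsto\int_\R V m_+(x,k)\,dx$ Lipschitz. Your denominator is exactly the Wronskian: $D(k)=2ik/T(k)=W(k)$ by \eqref{wronsk}. So your constant $\alpha=-2i/c_0$ coincides with $2i/W(0)$, the value read off from $\partial_kT(0)W(0)=2i$ in the proof of Lemma \ref{moreongen}. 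What your argument supplies, and the paper leaves to the reference, is the justification that $W$ is Lipschitz at $0$ with $W(0)$ real and nonzero.

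Two minor points. First, you do not need $D\in C^1$. Lemma \ref{estim m} bounds $\partial_k m_+$ uniformly but says nothing about its continuity in $k$, and the uniform bound already gives $|D(k)-D(0)|\lesssim|k|$, which is all the expansion of $1/D$ uses. Second, the weight $\jx^2V\in L^1$ you rely on is not part of the lemma's statement. Rather than citing \ref{VassumeWei}, which is formally a hypothesis of Theorem \ref{mainthm} in the non-generic setting, it is cleaner to list it as an explicit assumption of the lemma.
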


\begin{proof}
See \cite{KGV}, proposition 3.4
\end{proof} 

For the non-generic case, we collect the impact of the zero-energy resonance on the problem at $k=0$. First of all, one should note that the zero-energy resonance is a real-valued function since its conjugate satisfies the same eigenvalue equation with the same boundary value conditions at $+\infty$. 
\begin{lemma}\label{lemngTR}
		If $V$ is a non-generic potential, we set
		\begin{align}\label{a}
		a := \psi_+(-\infty,0) \in \R \setminus \{ 0 \}. 
		\end{align}
		Then, 
		\begin{align}\label{LESTR0}
		T(0) = \frac{2a}{1+a^2}, \qquad R_+(0) = \frac{1 - a^2}{1+ a^2}, \qquad \mbox{and} \qquad R_-(0) = \frac{a^2-1}{1+ a^2}.
		\end{align}
	\end{lemma}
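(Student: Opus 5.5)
We will evaluate the relations \eqref{transref+}–\eqref{transref-} at $k=0$ and compare the asymptotics of both sides at $\pm\infty$. First we note that $a$ is well defined, real and nonzero. The zero-energy resonance $\varphi_+=\psi_+(\cdot,0)$ is real-valued, as remarked above. By Lemma \ref{moreongen}, in the non-generic case $W(0)=0$, so $\psi_-(\cdot,0)=c\,\psi_+(\cdot,0)$ for some constant $c$, and both are nontrivial. Letting $x\to-\infty$ and using $\psi_-(x,0)\to 1$ gives $c\,a=1$. Hence $a=\psi_+(-\infty,0)$ exists, $a\neq 0$, $c=1/a$, and $a\in\R$ because $\psi_+(\cdot,0)$ is real.

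Next, by Lemma \ref{estim m} (which needs $V\in L^1(\jx dx)$, implied by \ref{VassumeWei}), the functions $m_\pm(x,k)$, and hence $T(k)$ and $R_\pm(k)$, extend continuously to $k=0$. Indeed, $T$ and $R_\pm$ are bounded by (iii) of Lemma \ref{TRcoeff}, and Lemma \ref{estim TR} gives boundedness of their derivatives, so they extend continuously to $k=0$. Letting $k\to0$ in \eqref{transref+} and \eqref{transref-} for fixed $x$ gives the identities
\begin{align}
T(0)\psi_+(x,0) = (R_-(0)+1)\psi_-(x,0), \qquad T(0)\psi_-(x,0) = (R_+(0)+1)\psi_+(x,0).
\end{align}
Substituting $\psi_-(\cdot,0)=\psi_+(\cdot,0)/a$ and using that $\psi_+(\cdot,0)\not\equiv0$, we get $T(0)=(1+R_-(0))/a$ and $T(0)=a(1+R_+(0))$. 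This gives only two equations for three unknowns, so one more relation is needed. We take it from (iii) of Lemma \ref{TRcoeff} passed to the limit $k\to0$: $|T(0)|^2+|R_\pm(0)|^2=1$ and $T(0)\overline{R_-(0)}+\overline{T(0)}R_+(0)=0$.

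We then need the limits to be real. From $T(-k)=\overline{T(k)}$, $R_\pm(-k)=\overline{R_\pm(k)}$ and continuity at $0$, the values $T(0)$ and $R_\pm(0)$ are real. The orthogonality relation then reduces to $T(0)(R_-(0)+R_+(0))=0$. To see that $T(0)\neq0$, we argue by contradiction. If $T(0)=0$, the linear equations above force $R_\pm(0)=-1$, which makes $V$ generic by Lemma \ref{moreongen}, a contradiction. Hence $R_-(0)=-R_+(0)=:-r$. Writing $T_0=T(0)$, the two linear equations become $aT_0=1-r$ and $T_0=a(1+r)$. Eliminating gives $a^2(1+r)=1-r$, so $r=\frac{1-a^2}{1+a^2}$ and $T_0=a(1+r)=\frac{2a}{1+a^2}$. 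This matches \eqref{LESTR0}, and one can check that it satisfies $T_0^2+r^2=1$ as a consistency test.

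The main point requiring care is the continuity of $T$ and $R_\pm$ at $k=0$ in the non-generic case, since formulas \eqref{formulaT} and \eqref{formulaR} are singular there. We would justify it through Lemma \ref{estim TR} or the cited low-frequency result of \cite{EHT}. Once continuity is granted, the remaining steps are algebraic.
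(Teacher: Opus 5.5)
Your argument is correct, and it takes a different route from the paper. The paper gives no proof of Lemma \ref{lemngTR}: it cites \cite{KGV}, Proposition 3.4. You derive the values directly from four ingredients: the connection relations \eqref{transref+}--\eqref{transref-} at $k=0$, the proportionality $\psi_+(\cdot,0)=a\psi_-(\cdot,0)$ coming from $W(0)=0$, unitarity, and conjugation symmetry. This makes clear that everything is algebra except the behaviour of $T,R_\pm$ at $k=0$, which the paper itself imports from \cite{EHT} in Lemma \ref{estim TR}.

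That analytic input is where the real content lies. Suppose you only had the derivative bound on $\R\setminus\{0\}$. You would then get one-sided limits with $T(0^-)=\overline{T(0^+)}$. Your linear relations and the unitarity identities would collapse to the single real equation $(1+a^2)|T(0^+)|^2=2a\operatorname{Re}T(0^+)$, which describes a circle through $0$. So reality of $T(0)$ is exactly equivalent to two-sided continuity, and $T(0)\neq 0$ is a separate fact, for which you appeal to Lemma \ref{moreongen}.

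Both facts can be obtained at once from \eqref{formulaT}, without \cite{EHT}. Since $\int V m_+(x,0)\,dx=0$, one has $1/T(k)=1-\frac{1}{2i}\int V(x)\frac{m_+(x,k)-m_+(x,0)}{k}\,dx$. Dominated convergence, using the $s=1$ bound of Lemma \ref{estim m} and $\jx^2V\in L^1$, gives $1/T(k)\to 1-\frac{1}{2i}\int V\,\partial_k m_+(x,0)\,dx$ from both sides. This limit is real, because $m_+(x,-k)=\overline{m_+(x,k)}$ forces $\partial_k m_+(x,0)\in i\R$. It is finite, so $T(0)\neq0$. The limits of $R_\pm$ then follow from \eqref{transref+}--\eqref{transref-}, and your algebra finishes the proof in a fully self-contained way.
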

\begin{proof}
	See \cite{KGV}, proposition 3.4 
\end{proof}
This lemma, along with the fact that \(\psi_+(x,0) = a \psi_-(x,0)\), allows us to deduce the following relation for the kernel of the dFT:
$$ \K(x,0+) = \frac{2a}{1+a^2}\psi_+(x,0) \quad \text{ and } \quad \K(x,0-) = \frac{1}{a}\K(x,0+) $$
We can therefore state the following proposition:

\begin{proposition}[Discontinuity of the standard dFT] \label{dftdiscont}
Assume $V$ is non-generic. If \(f \in L^1\), then:
$$ \widetilde{f}(0+) = \frac{2a}{1+a^2}\int_\R f(x)\psi_+(x,0)\,dx \quad \text{ and } \quad \widetilde{f}(0-) = \frac{1}{a}\widetilde{f}(0+)$$
In particular, \(\widetilde{f}\) is continuous if and only if \(a = 1\) or \(\widetilde{f}(0) = 0\).
\label{continu}
\end{proposition}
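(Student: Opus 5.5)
The plan is to compute the one-sided limits of $\widetilde f(k)=\int_\R \overline{\K(x,k)}f(x)\,dx$ as $k\to0^\pm$ by passing the limit inside the integral, and then evaluate the resulting kernels at $k=0$ using Lemma \ref{lemngTR}.

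\textbf{Step 1: domination.} For $f\in L^1$ we need a bound on $\K$ that is uniform in $x$ and in $k$ near $0$. We have $|T(k)|\le 1$ by Lemma \ref{TRcoeff}(iii). By Lemma \ref{estim m} with $s=0$, $m_\pm(x,k)$ is bounded on $\pm x\ge -1$ uniformly in $k$. On the complementary half-line we use the relations \eqref{transref+}--\eqref{transref-}: they give $T(k)\psi_+(x,k)=R_-(k)\psi_-(x,k)+\psi_-(x,-k)$, and the right-hand side is bounded for $x\le 1$ because $|R_-|\le1$. Hence
\begin{align}
\sup_{x\in\R,\,0<|k|\le1}|\K(x,k)|\lesssim 1,
\end{align}
and the analogous bound holds for $T(-k)\psi_-(x,-k)$ when $k<0$.

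\textbf{Step 2: pointwise limits.} For fixed $x$, $\psi_\pm(x,\cdot)$ is continuous up to the real axis by Lemma \ref{existJost}. Moreover $T$ is continuous at $0$: Lemma \ref{estim TR} gives $\partial_kT$ bounded, so $T$ is Lipschitz and $T(0)$ is the value given in Lemma \ref{lemngTR}. Therefore
\begin{align}
\K(x,0+)=T(0)\psi_+(x,0)=\frac{2a}{1+a^2}\psi_+(x,0),\qquad \K(x,0-)=T(0)\psi_-(x,0).
\end{align}
Dominated convergence then gives $\widetilde f(0\pm)=\int_\R\overline{\K(x,0\pm)}f(x)\,dx$. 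Since $\psi_+(x,0)$ is real and $a\in\R$, the first formula of the statement follows.

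\textbf{Step 3: the relation $\psi_+(\cdot,0)=a\,\psi_-(\cdot,0)$.} In the non-generic case $W(0)=0$ by Lemma \ref{moreongen}, so $\psi_+(\cdot,0)$ and $\psi_-(\cdot,0)$ are linearly dependent. Comparing their limits at $-\infty$, where $\psi_-(x,0)\to1$ and $\psi_+(x,0)\to a$ by \eqref{a}, forces $\psi_+(\cdot,0)=a\,\psi_-(\cdot,0)$. Consequently $\K(x,0-)=\tfrac1a\K(x,0+)$, which yields $\widetilde f(0-)=\tfrac1a\widetilde f(0+)$.

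\textbf{Step 4: continuity.} By Proposition \ref{plancherel}, $\widetilde f$ is continuous away from $0$. It is continuous at $0$ exactly when $\widetilde f(0+)=\widetilde f(0-)$, i.e. when $(1-\tfrac1a)\widetilde f(0+)=0$. This holds if and only if $a=1$ or $\widetilde f(0+)=0$, and in the latter case $\widetilde f(0-)=0$ as well, so $\widetilde f(0)=0$ in the sense of both one-sided limits.

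The main obstacle is the uniform bound in Step 1 on the "wrong" half-line. It is handled cleanly by the scattering relations combined with $|T|,|R_\pm|\le1$, which avoids needing any direct control of $m_\pm(x,k)$ there as $k\to0$.
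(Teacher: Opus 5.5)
Your proof is correct and takes essentially the same route as the paper, which simply reads off $\K(x,0+)=\frac{2a}{1+a^2}\psi_+(x,0)$ and $\K(x,0-)=\frac{1}{a}\K(x,0+)$ from Lemma \ref{lemngTR} and the relation $\psi_+(\cdot,0)=a\,\psi_-(\cdot,0)$. Your Steps 1 and 3 make explicit what the paper leaves implicit: the uniform bound on the kernel that justifies dominated convergence, and the derivation of the proportionality relation from $W(0)=0$.
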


\smallskip
	\subsection{Modified kernel}\label{subsec:JCP}
   Given the Jost functions $\psi_{\pm}$ from \eqref{eigenprob}, assume that $V$ is non-generic. Set $T := T(0)$ and $R := R_-(0)$, so that $R_+(0) = -R$, and define
	\begin{align}\label{matK}
	\mathcal{K}^{\#}(x,k):=
	\begin{cases}
	T(k) \hspace{0.3cm} [ A_+\psi_+(x,k) + A_-\psi_-(x,k)] & k \geq 0
	\\
	T(-k) [ B_+\psi_+(x,-k) + B_-\psi_-(x,-k)] & k < 0,
	\end{cases}
	\end{align}
    where 
    \begin{eqnarray}
        A_+ & = & \frac{1}{2}\Big(\sqrt{1 + R} + \sqrt{1 - R} \Big) \nonumber \\
         B_+ & = & \frac{1}{2}\Big(\sqrt{1 + R} - \sqrt{1 - R} \Big) \nonumber \\
         A_- & = & \frac{T}{2}\Big(\frac{1}{\sqrt{1 + R}} - \frac{1}{\sqrt{1 - R}}  \Big) \nonumber \\
          B_- & = & \frac{T}{2}\Big(\frac{1}{\sqrt{1 + R}} + \frac{1}{\sqrt{1 - R}}  \Big) \nonumber
    \end{eqnarray}
 We note the following algebraic relations  
 \begin{equation}\label{eq:ABrela1}
     A_+A_-+B_+B_-=0,\,\,A_+B_++A_-B_-=0
 \end{equation}
and
\begin{equation}\label{eq:ABrela2}
    A_+^2+B_+^2=A_-^2+B_-^2=A_+^2+A_-^2=B_+^2+B_-^2=1.
\end{equation}
From \eqref{eq:ABrela2}, one notes that
\begin{equation}\label{eq:ABrela3}
 A^2_+=B_-^2,\,\,A_-^2=B_+^2.   
\end{equation}
  
    One can check that the vectors $(A_+ , B_+)$ and $(A_- , B_-)$ are orthonormal. This information will lead to  Plancherel theorem for the forthcoming modified transform. Furthermore, those coefficients were chosen so that the modified kernel satisfies continuity at $k =0$.

We can rewrite \eqref{matK} as follows
\begin{align} \label{matKunitary}
     \begin{bmatrix}
\mathcal{K}^\#(x,k) \mathbf{1}_{k\geq 0}  \\
\mathcal{K}^\#(x,k) \mathbf{1}_{k\leq 0}\\
   \mathcal{K}^\#(x,-k) \mathbf{1}_{k\leq 0}\\
    \mathcal{K}^\#(x,-k) \mathbf{1}_{k\geq 0}\\
	\end{bmatrix}=\begin{bmatrix}
	A_+ & 0 & 0 & A_- \\
	0& B_-& B_+ & 0\\
    0& A_-& A_+ & 0\\
    B_+& 0& 0 & B_-\\
	\end{bmatrix} \begin{bmatrix}
	\mathcal{K}(x,k) \mathbf{1}_{k\geq 0}  \\
	\mathcal{K}(x,k) \mathbf{1}_{k\leq 0}\\
   \mathcal{K}(x,-k) \mathbf{1}_{k\leq 0}\\
   \mathcal{K}(x,-k) \mathbf{1}_{k\geq 0}\\
	\end{bmatrix}.
\end{align}
So $\mathcal{K}^\#(x,k)$ can be thought of as a unitary transformation of $\mathcal{K}(x,k)$.

\begin{remark}[Special cases] Certain special values of the coefficients recover previously known distorted transforms. Consider the case $R=0$, which is equivalent to $a^2=1$.
\begin{itemize}
    \item In the case $a = 1$, which is equivalent to $T = 1$, we get the coefficients $(A_+ , B_+ , A_- , B_-) = (1,0,0,1)$. That corresponds to the standard dFT kernel (\ref{kernel}) we defined above. 
    \item In the case $a = -1$, which is equivalent to $T = -1$, we get the coefficients $(A_+ , B_+ , A_- , B_-) = (1,0,0,-1)$. That corresponds to the Chen-Pusateri dFT kernel introduced in \cite{CPNLS2} to solve the modified scattering problem in the case of an odd zero-energy resonance (see remark \ref{remarkdecompmu}). 
\end{itemize}
    
\end{remark}
	
\smallskip
	\subsection{Decomposition of the modified kernel}\label{subsec:DecK}
We define the following coefficients
        \begin{eqnarray} \label{coefformula+}
        \gamma_+(k) := A_+T(k) + A_-R_+(k) \quad \quad & , & \quad \quad \lambda_+(k) := A_+R_-(k) + A_-T(k) \ ,  \\
        \label{coefformula-}
        \gamma_-(k) := B_-T(-k) + B_+R_-(-k) \quad \quad & , & \quad \quad \lambda_-(k) := B_+T(-k) + B_-R_+(-k). 
    \end{eqnarray}
    One can check that 
    \begin{align} \label{coeff0cont}
    \gamma_+(0) = B_- \quad , \quad \gamma_-(0) = A_+ \quad , \quad \lambda_+(0) = B_+ \quad , \quad \lambda_-(0) = A_- .
     \end{align} 
    In fact, the coefficients $A_\pm$ and $B_\pm$ were determined to satisfy \eqref{coeff0cont}, that ensures continuity at $k =0$.
	
	Consider a smooth positive function $\phi$ such that $\operatorname{supp} \phi \subset [-1,1]$ and $\int_{\R} \phi \, dx = 1$.  
We define the cut-off functions $\chi_+$ and $\chi_-$ by:
\begin{align} \label{defchi+-}
\chi_+(x) = \int_{-\infty}^x \phi(y)\,dy \quad \quad \text{and} \quad \quad  \chi_+(x) + \chi_-(x) = 1 
\end{align} 
	
	% Let us then analyze the structure of $\mathcal{K}^\#$ more closely.
 %    First of all, using the cut-off functions $\chi_+$ and $\chi_-$ as above we can have a first decomposition:
 %    $$ \mathcal{K}^{\#}(x,k) = \Gamma (x,k)e^{ikx} + \Lambda(x,k)e^{-ikx} $$
 %    where 
    
 %    \begin{align}\label{matKGam}
	% \Gamma(x,k) = 
	% \begin{cases}
	% \chi_+(x) \gamma_+(k)m_+(x,k) + \chi_-(x) A_+m_-(x,-k) & k \geq 0
	% \\
	% \chi_-(x) \gamma_-(k) m_-(x,-k) + \chi_+(x) B_-m_+(x,k) & k < 0,
	% \end{cases}
	% \end{align}

 %    \begin{align}\label{matKLam}
	% \Lambda(x,k) = 
	% \begin{cases}
	% \chi_-(x) \lambda_+(k)m_-(x,k) + \chi_+(x) A_-m_+(x,-k) & k \geq 0
	% \\
	% \chi_+(x) \lambda_-(k)m_+(x,-k) + \chi_-(x) B_+m_-(x,k) & k < 0,
	% \end{cases}
	% \end{align} 
 
    %The above decomposition allows to have some information on the linear Schrödinger flow but even if it has some aspects of regularity, we need to refine the decomposition to have the right clues for the nonlinear analysis.\\\\

	For notational convenience we let $\chi_0(x) \equiv 1$.
	The kernel $\mathcal{K}^\#$ can be decomposed 
	 as the sum of a ``singular'' and ``regular'' part
	\begin{align}\label{Ksharpdecomp}
	\begin{split}
	& \mathcal{K}^{\#}(x,k) =  \mathcal{K}^{\#}_S(x,k) + \mathcal{K}^{\#}_R(x,k),
	\end{split}
	\end{align}
	with the following definitions:
	
	\setlength{\leftmargini}{1.5em}
	\begin{itemize}
		
		\item The singular part is given by
	\begin{align}\label{KsharpSdecomp}
	\begin{split}
	&  \mathcal{K}^{\#}_S(x,k) := \chi_0(x)\K^{\#}_0(x,k) + \chi_+(x)\K^{\#}_+(x,k) + \chi_-(x)\K^{\#}_-(x,k)
	\end{split}
	\end{align}
    where 
    \begin{align}\label{KsharpS0}
	\begin{split}
	 \K^{\#}_0(x,k)  & := (\chi_+(x)aA_- + \chi_-(x)B_+)m_-(x,0)e^{-ikx}  \\
      & \qquad + (\chi_-(x)\frac{1}{a}A_+ + \chi_+(x)B_-)m_+(x,0)e^{ikx}
	\end{split}
	\end{align}
    and
     \begin{align}\label{KsharpSpm}
	\begin{split}
	  \K^{\#}_+(x,k)  &:= (\gamma_+(k) - \gamma_+(0)) \mathbf{1}_+(k)e^{ikx}  + (\lambda_-(k) - \lambda_-(0) )\mathbf{1}_-(k)e^{-ikx} \\
      & =: a_+^+(k)e^{ikx} + a_+^-(k)e^{-ikx} \ ,\\
       \K^{\#}_-(x,k) &:= (\gamma_-(k) - \gamma_-(0)) \mathbf{1}_-(k)e^{ikx}  + (\lambda_+(k) - \lambda_+(0) )\mathbf{1}_+(k)e^{-ikx} \\
       & =: a_-^+(k)e^{ikx} + a_-^-(k)e^{-ikx} \ .
 	\end{split}
	\end{align}
		\smallskip
		\item the regular part is given as:

        For $k\geq 0$, 
        \begin{align}\label{KRk>0}
	 \mathcal{K}^\#_R(x,k) & = \Big\{\chi_+(x)\Big[ \gamma_+(k)(m_+(x,k) - m_+(x,0))  +(\gamma_+(k) - \gamma_+(0))(m_+(x,0) - 1) \Big]
	\nonumber\\
	& +\chi_-(x) A_+(m_-(x,-k) - m_-(x,0)) \Big\}e^{ikx}  \nonumber\\
       & + \Big\{\chi_-(x)\Big[ \lambda_+(k)(m_-(x,k) - m_-(x,0)) + (\lambda_+(k) - \lambda_+(0))(m_-(x,0) - 1) \Big]
	\nonumber\\
	& +\chi_+(x) A_-(m_+(x,-k) - m_+(x,0)) \Big\}e^{-ikx} \ . 
	\end{align}
	For $k<0$,
	\begin{align}\label{KRk<0}
	 \qquad \mathcal{K}^\#_R(x,k) & = \Big\{\chi_-(x)\Big[ \gamma_-(k)(m_-(x,-k) - m_-(x,0)) + (\gamma_-(k) - \gamma_-(0))(m_-(x,0) - 1) \Big]  \nonumber \\
	& +\chi_+(x) B_-(m_+(x,k) - m_+(x,0)) \Big\}e^{ikx}  \nonumber\\
       & + \Big\{\chi_+(x)\Big[ \lambda_-(k)(m_+(x,-k) - m_+(x,0)) + (\lambda_-(k) - \lambda_-(0))(m_+(x,0) - 1) \Big]
	 \nonumber\\
	& +\chi_-(x) B_+(m_-(x,k) - m_-(x,0)) \Big\}e^{-ikx} \ . 
	\end{align}
	For convenience in the forthcoming computations we write 
    \begin{align}\begin{split}
    \label{kR}
    \mathcal{K}_R^\sharp(x,k) = :\begin{cases}
    \mathcal{G}_+^+(x,k)e^{ikx} + \mathcal{G}_+^-(x,k)e^{-ikx} \quad \quad \text{ for } k\ge 0 \\
    \mathcal{G}_-^+(x,k)e^{ikx} + \mathcal{G}_-^-(x,k)e^{-ikx} \quad \quad \text{ for } k<0
    \end{cases}
     \end{split}
    \end{align}
    
	\end{itemize}

	%Also notice that $\mathcal{K}_{0}^{\#}(x,k)$ is the component which could
	%correspond to the discontinuous part of $\mathcal{K}$.

Let us record here the fact that $m_+(x,0)$ converges rapidly to $ 1$ and $a$ as $x \rightarrow \infty$ and $x\rightarrow-\infty$ respectively.  On the other hand, $m_-(x,0)$ converges rapidly to $ 1$ and $a^{-1}$ as $x \rightarrow - \infty$ and $x\rightarrow \infty$ respectively. In light of this, we decompose $\K^{\#}_0(x,k)$ from \eqref{KsharpS0} more closely as the following    
\begin{align}
	\begin{split}
    \K^{\#}_0(x,k) &= \chi_+(x)A_- (a m_-(x,0)-1)e^{-ikx} + \chi_-(x)B_+ (m_-(x,0)-1)e^{-ikx} \\
    &  + \chi_-(x)A_+ (a^{-1} m_+(x,0)-1)e^{ikx}  + \chi_+(x)B_-(m_+(x,0)-1)e^{ikx}\\
    & + \chi_+(x)A_- e^{-ikx} + \chi_-(x)B_+ e^{-ikx} +  \chi_-(x)A_+ e^{ikx}  + \chi_+(x)B_-e^{ikx}
    \label{K0closedecomp}
	\end{split}
	\end{align}

	\smallskip
	\begin{lemma} \label{m0asymp}
    For $\alpha =0,1$, the following estimates hold :
    \begin{align} \label{estimR+}
        \chi_{\pm}(x) \big| \partial_x^\alpha \big( m_\pm(x,0) - 1 \big) \big| 
		& \lesssim \jx^{-\gamma+1} \ , \\
        \label{estimR-}
        \chi_{\pm}(x) \big| \partial_x^\alpha \big( a^{\pm 1}m_\mp(x,0) - 1 \big) \big| 
		& \lesssim \jx^{-\gamma+1} \ .
    \end{align}
		% For $\epsilon \in \{+,-\}$, $r \in \N $ and for all $s \geq 0$, the following estimates hold
		% \begin{align}\label{m+0}
		% & \chi_{\pm}(x) \big| \partial_x^\alpha \big( m_\epsilon(x,0)^r - m_\epsilon(\pm \infty , 0)^r \big) \big| 
		% \lesssim \jx^{-s} \mathcal{W}_{\pm}^{s+1}(x), \qquad \alpha = 0,1,
		% \end{align}
		% In particular, under our assumption \ref{VassumeWei}
		% we have
		% \begin{align}\label{m+0'}
		% \chi_{\pm}(x) \big| \partial_x^\alpha \big( m_\epsilon(x,0)^r - m_\epsilon(\pm \infty , 0)^r \big) \big| 
		% \lesssim \jx^{-\gamma+1}, 
  %       % \qquad r=1,2,3,4, 
  %       \qquad \alpha = 0,1.
		% \end{align}
	\end{lemma}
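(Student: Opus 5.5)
The plan is to work directly from the zero-energy Volterra equation \eqref{volterra} at $k=0$. There $D_0(x) = \lim_{k\to 0}\frac{e^{2ikx}-1}{2ik} = x$, so
\begin{align}
m_\pm(x,0) = 1 \pm \int_x^{\pm\infty} (\pm(y-x))\, V(y)\, m_\pm(y,0)\, dy .
\end{align}
The two estimates then split according to the region where $\chi_\pm$ is supported.

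\emph{Estimate \eqref{estimR+}.} On $\operatorname{supp}\chi_\pm \subset \{\pm x \ge -1\}$, Lemma \ref{estim m} with $s=0$ gives $|m_\pm(y,0)|\lesssim 1$ for $\pm y \ge -1$. Since $|y-x| \le |y| + |x| \lesssim \jy$ and, on this region, $\jx \lesssim \jy$ for $y$ between $x$ and $\pm\infty$, we get
\begin{align}
|m_\pm(x,0)-1| \lesssim \int_x^{\pm\infty} |y-x|\,|V(y)|\,dy \lesssim \jx^{-(\gamma-1)}\int_x^{\pm\infty} \jy^{\gamma}|V(y)|\,dy \lesssim \jx^{-\gamma+1}\|\jx^\gamma V\|_{L^1}.
\end{align}
For $\alpha=1$, differentiate the Volterra equation: $\partial_x m_\pm(x,0) = -\int_x^{\pm\infty} V m_\pm(y,0)\,dy$. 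This is bounded by $\jx^{-\gamma}\|\jx^\gamma V\|_{L^1}$, which is even better.

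\emph{Estimate \eqref{estimR-}.} Here we need the behaviour of $m_\mp(x,0)$ on the far side, for instance $m_-(x,0)$ as $x\to+\infty$. I would use the non-generic relation $\psi_+(x,0) = a\,\psi_-(x,0)$ (recorded after Lemma \ref{lemngTR}), which at $k=0$ reads $m_+(x,0) = a\, m_-(x,0)$. Then $a\,m_-(x,0) - 1 = m_+(x,0)-1$ on $\operatorname{supp}\chi_+$, which is exactly the case already treated. Symmetrically, $a^{-1}m_+(x,0) - 1 = m_-(x,0) - 1$ on $\operatorname{supp}\chi_-$. The same identities handle the derivative, since $\partial_x(a m_-(x,0)) = \partial_x m_+(x,0)$.

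\emph{Main obstacle.} The only delicate point is the sign convention in \eqref{estimR-}: the pairing $\chi_\pm$ with $a^{\pm1}m_\mp$ must match $m_\pm = a^{\pm1}m_\mp$. Because $\psi_-(-\infty,0)=1$, the relation $\psi_+(\cdot,0)=a\psi_-(\cdot,0)$ holds with $a=\psi_+(-\infty,0)$, which gives $m_+(\cdot,0) = a\,m_-(\cdot,0)$ and hence $m_- = a^{-1}m_+$. This matches the convention in \eqref{estimR-}.

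Beyond that, the argument only requires $\gamma > 1$, so that the weight $\jy^\gamma$ is integrable against $V$. This is satisfied under \ref{VassumeWei}, which assumes $\gamma > \frac52$.
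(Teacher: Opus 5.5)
Your proof is correct and follows essentially the same route as the paper. The paper bounds $\chi_\pm|\partial_x^\alpha(m_\pm(x,0)-1)|$ by $\chi_\pm\mathcal{W}^1_\pm(x)$ via Lemma \ref{estim m}, trades weights using $\mathcal{W}^1_\pm(x)\lesssim\jx^{-\gamma+1}\mathcal{W}^\gamma_\pm(x)$, and deduces \eqref{estimR-} from $m_+(x,0)=a\,m_-(x,0)$; you do the same, except that you derive the $\mathcal{W}^1_\pm$-type bound directly from the $k=0$ Volterra equation instead of quoting it.
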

	
	\begin{proof}
     Lemma \ref{estim m} implies that 
     \begin{align}
         \chi_{\pm}(x) \big| \partial_x^\alpha \big( m_\pm(x,0) - 1 \big) \big| 
		& \lesssim \chi_{\pm}(x)\mathcal{W}_{\pm}^{1}(x) \ . \nonumber
     \end{align}
     \eqref{estimR+} follows from the assumption \ref{VassumeWei} and the observation  $\chi_{\pm}(x)\mathcal{W}_{\pm}^{1}(x) \le \jx^{-\gamma+1}\mathcal{W}_{\pm}^{\gamma}(x)$.
      \eqref{estimR-} follows from \eqref{estimR+} and
     the fact that $m_{+}(x,0) = am_{-}(x,0)$.
%      the following Volterra's equation 
% \begin{align} \label{volterra0neg}
%     m_{+}(x,0) = a + \int_{- \infty}^x (x-y) V(y) m_{+}(y,0) \, dy \ .
% \end{align}
% In fact, since $m_{+}(x,0) = am_{-}(x,0)$. It suffices to argue on $m_{+}(x,0)$. \eqref{volterra0neg} implies for $x \le -1$, the inequality
% \begin{align}
%      |m_{+}(x,0) - a| \lesssim |a|\mathcal{W}_{-}^{1}(x) +\int_{- \infty}^x \jy |V(y)| |m_{+}(y,0) - a| \, dy \ .
% \end{align}
% By Gronwall's inequlity, we get 
% \begin{align}
%     |m_{+}(x,0) - a| \lesssim \mathcal{W}_{-}^{1}(x)
% \end{align}
% and the estimate \eqref{estimR-} is obtained through in the lemma \ref{mestim}.
	\end{proof}
    
	\begin{lemma}\label{lem:Ksing}
		The function $k \mapsto \mathcal{K}_{\pm}^{\#}(x,k)$ is Lipschitz continuous 
			with $\mathcal{K}_{\pm}^{\#}(x,0) = 0$.
			More precisely, $\mathcal{K}_{\pm}^{\#}(x,k) = a_\pm^+(k) e^{ikx} + a_\pm^-(k) e^{-ikx}$
			with bounded Lipschitz coefficients $a_{\epsilon_1}^{\epsilon_2}$, $\epsilon_1,\epsilon_2 \in \{+,-\}$, 
			with $a_{\epsilon_1}^{\epsilon_2}(0)=0$ and bounded derivatives. %in the set $A \smallsetminus \{1,-1\}$. 
\end{lemma}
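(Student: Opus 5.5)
The plan is to read the statement off the definitions \eqref{KsharpSpm} and reduce everything to regularity of the scalar coefficients $\gamma_\pm,\lambda_\pm$ from \eqref{coefformula+}--\eqref{coefformula-}. By construction
\[
a_+^+(k)=(\gamma_+(k)-\gamma_+(0))\mathbf{1}_+(k),\qquad a_+^-(k)=(\lambda_-(k)-\lambda_-(0))\mathbf{1}_-(k),
\]
and similarly $a_-^+(k)=(\gamma_-(k)-\gamma_-(0))\mathbf{1}_-(k)$, $a_-^-(k)=(\lambda_+(k)-\lambda_+(0))\mathbf{1}_+(k)$. Each coefficient is thus a function of $k$ alone, supported on a closed or open half-line, and equal there to the difference between a coefficient and its value at $0$. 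Vanishing at $k=0$ is immediate for the two coefficients carrying $\mathbf{1}_+$, since they are evaluated at $k=0$ itself. For those carrying $\mathbf{1}_-$ the value at $0$ is $0$ by the indicator, and we must show that the one-sided limit as $k\to0^-$ also vanishes. Hence $\mathcal{K}_\pm^\#(x,0)=0$ for every $x$.

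Next I establish boundedness and Lipschitz bounds for $\gamma_\pm,\lambda_\pm$ on their respective half-lines. These are finite linear combinations, with the constant coefficients $A_\pm,B_\pm$, of $T(\pm k)$ and $R_\pm(\pm k)$.

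\begin{itemize}
\item By Lemma \ref{TRcoeff}(iii), $|T|,|R_\pm|\le1$ on $\R\setminus\{0\}$.
\item By Lemma \ref{estim TR}, $|\partial_kT|+|\partial_kR_\pm|\lesssim\langle k\rangle^{-1}\lesssim1$.
\item Continuity of $T,R_\pm$ up to $k=0$ is implicit in Lemma \ref{estim TR}, which gives uniformly bounded derivatives on $(0,\infty)$ and $(-\infty,0)$, together with \eqref{formulaT}--\eqref{formulaR} and Lemma \ref{lemngTR}.
\end{itemize}

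So on each closed half-line $\gamma_\pm,\lambda_\pm$ are bounded with bounded derivative. By the mean value theorem, $|a_{\epsilon_1}^{\epsilon_2}(k)|\lesssim |k|$ and the $a_{\epsilon_1}^{\epsilon_2}$ are Lipschitz on each half-line, with derivative bounded uniformly.

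The only delicate point, and the main obstacle, is gluing at $k=0$: the coefficients must be continuous across $0$, i.e. their one-sided limits from the support side must vanish. For instance, $\lim_{k\to0^-}a_+^-(k)=\lambda_-(0^-)-\lambda_-(0)$, where $\lambda_-(0)$ means the value defined via $T(0),R_\pm(0)$ from Lemma \ref{lemngTR}.

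\begin{itemize}
\item I will verify the identities \eqref{coeff0cont} explicitly. With $T=T(0)$ and $R=R_-(0)=-R_+(0)$, check that $A_+T-A_-R=B_-$, $B_-T+B_+R=A_+$, $A_+R+A_-T=B_+$, and $B_+T-B_-R=A_-$.
\item These follow from $T^2+R^2=1$ (Lemma \ref{TRcoeff}(iii) at $k=0$) by writing $T=\sqrt{1+R}\sqrt{1-R}$ (sign chosen consistently with $a$) and expanding.
\item Combined with the continuity of $T(\pm k),R_\pm(\pm k)$ as $k\to0^\pm$ and $T(-k)=\overline{T(k)}$, this shows that each $a_{\epsilon_1}^{\epsilon_2}$ is continuous at $0$ with value $0$.
\end{itemize}

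A function that is continuous on $\R$, vanishes on one half-line, and is Lipschitz with bounded derivative on the other is globally Lipschitz with bounded a.e. derivative. This gives the stated properties of the $a_{\epsilon_1}^{\epsilon_2}$.

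Finally, Lipschitz continuity of $k\mapsto\mathcal{K}^\#_\pm(x,k)$ for fixed $x$ follows from the product rule. The factors $a_\pm^\pm$ are Lipschitz and bounded, and $e^{\pm ikx}$ is Lipschitz in $k$ with constant $|x|$. The Lipschitz constant therefore grows like $\langle x\rangle$, which is harmless for a fixed-$x$ statement. More precisely,
\[
|\mathcal{K}^\#_\pm(x,k)|\lesssim |k|,\qquad |\partial_k\mathcal{K}^\#_\pm(x,k)|\lesssim 1+|x||k|.
\]
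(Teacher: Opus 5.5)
Your proposal is correct and follows essentially the paper's route. The paper treats $a_+^+$, gets boundedness and continuity from the boundedness of $T,R_\pm$, and computes the distributional derivative $\partial_k a_+^+=\partial_k\gamma_+\,\mathbf{1}_+$, where the $\delta_0$ term drops because the coefficient vanishes at $k=0$; this is exactly your half-line mean-value bound plus gluing at $0$. One simplification: each $a_{\epsilon_1}^{\epsilon_2}$ subtracts $\gamma_\pm(0)$ or $\lambda_\pm(0)$ itself, so the one-sided limits at $0$ vanish by continuity of $T,R_\pm$ at $0$ alone, and your (correct) verification of \eqref{coeff0cont} is not needed for this lemma; those identities only matter for the consistency of the decomposition of $\mathcal{K}^\#$.
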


\begin{proof} To establish $(i)$, it suffices to treat of $a_+^+$, since the analysis of the other coefficients is similar. We have $a_+^+(k) = (\gamma_+(k) - \gamma_+(0))\mathbf{1}_+(k)$ which is continuous and bounded due to the boundedness of $T(k)$ and $R_\pm(k)$. We have in the sense of distributions  
\begin{align*}
    \partial_ka_+^+(k) = \partial_k\gamma_+(k)\mathbf{1}_+(k) + (\gamma_+(k) - \gamma_+(0))\delta_0(k) = \partial_k\gamma_+(k)\mathbf{1}_+(k).
\end{align*}
The $L^\infty$ boundedness of $\partial_ka_+^+(k)$
 then follows from the boundedness of $T(k)$ and $R_\pm(k)$ and the bounds in Lemma \ref{estim TR}. Thus, $a_+^+(k)$ is Lipschitz continuous.
\end{proof}

\begin{lemma}
    \label{estimG+-}
    For all $\e_1, \e_2 \in \{+,-\}$, the following estimates hold 
    \begin{align} \label{decayG+-1}
        |\mathcal{G}_{\e_1}^{\e_2}(x,k)| &\lesssim \jx^{-\gamma +1} \ ,\\
         \label{decayG+-2}
         |\partial_x\mathcal{G}_{\e_1}^{\e_2}(x,k)| &\lesssim \jx^{-\gamma } \ , \\
           \label{decayG+-3}
           |k\partial_k\mathcal{G}_{\e_1}^{\e_2}(x,k)| &\lesssim \jx^{-\gamma +1} \ ,
    \end{align}
    uniformly in $x$ and $k$.
\end{lemma}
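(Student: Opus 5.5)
The plan is to read off each $\mathcal{G}_{\e_1}^{\e_2}$ from \eqref{KRk>0}--\eqref{KRk<0} and bound its summands one at a time. By the symmetry of the formulas it suffices to treat $\mathcal{G}_+^+$, i.e. $k\ge 0$ and the coefficient of $e^{ikx}$:
\begin{align*}
\mathcal{G}_+^+(x,k) = \chi_+(x)\big[\gamma_+(k)(m_+(x,k)-m_+(x,0)) + (\gamma_+(k)-\gamma_+(0))(m_+(x,0)-1)\big] + \chi_-(x)A_+(m_-(x,-k)-m_-(x,0)).
\end{align*}
The terms come in two types. \emph{Type (a)} is $\chi_\pm(x)\,c(k)\,(m_\pm(x,\pm k)-m_\pm(x,0))$ with $c$ bounded. \emph{Type (b)} is $\chi_\pm(x)\,(c(k)-c(0))\,(m_\pm(x,0)-1)$. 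The coefficients $\gamma_\pm,\lambda_\pm$ are bounded with $\jk|\partial_k\cdot|\lesssim 1$, by $|T|,|R_\pm|\le 1$ and Lemma \ref{estim TR}.

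For type (b), Lemma \ref{m0asymp} gives $\chi_\pm|\partial_x^\alpha(m_\pm(x,0)-1)|\lesssim \jx^{-\gamma+1}$ for $\alpha=0,1$, and $\chi_\pm'=\pm\phi$ is compactly supported, so \eqref{decayG+-1} follows. For the $k$-derivative, $|k\partial_k(c(k)-c(0))|\lesssim |k|\jk^{-1}\lesssim1$, which gives \eqref{decayG+-3}. The $x$-derivative is where I expect a small loss. Lemma \ref{m0asymp} only gives $\jx^{-\gamma+1}$ for $\partial_x m_\pm(x,0)$. However, Lemma \ref{estim m} with $s=0$ gives $|\partial_x m_\pm(x,0)|\lesssim \mathcal{W}^0_\pm(x)\le \jx^{-\gamma}\mathcal{W}_\pm^{\gamma}(x)\lesssim\jx^{-\gamma}$ on $\pm x\ge-1$, which is exactly \eqref{decayG+-2}.

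For type (a), on the support of $\chi_\pm$ (i.e. $\pm x\ge -1$) I will use the Volterra structure. For $|k|\ge1$, the first estimate in Lemma \ref{estim m} with $s=0$ bounds $|m_\pm(x,k)-1|$ and $|m_\pm(x,0)-1|$ by $\mathcal{W}^1_\pm(x)\lesssim\jx^{-\gamma+1}$. For $|k|\le1$, the mean value theorem together with Lemma \ref{estim m} at $s=1$ gives $|m_\pm(x,k)-m_\pm(x,0)|\lesssim |k|\,\mathcal{W}^2_\pm(x)\lesssim \jx^{-\gamma+2}$. This is one power short, so instead I will use \eqref{Mestimates1'} in the form $|\partial_k m_\pm|\lesssim \min(1,|k|^{-1})\mathcal{W}^1_\pm$, or equivalently interpolate the two bounds, to keep $\jx^{-\gamma+1}$. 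For \eqref{decayG+-3}, the derivative falls either on $c(k)$, where $k c'(k)$ is bounded, or on $m$, where $|k\partial_k m_\pm(x,k)|\lesssim \mathcal{W}^1_\pm(x)$ by \eqref{Mestimates1'}. For \eqref{decayG+-2}, the $x$-derivative bound in Lemma \ref{estim m} with $s=0$ gives $|\partial_x m_\pm(x,k)|\lesssim\mathcal{W}^0_\pm(x)\lesssim\jx^{-\gamma}$, and the $\chi'$ terms are compactly supported.

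The main obstacle is the low-frequency bound $|m_\pm(x,k)-m_\pm(x,0)|\lesssim\jx^{-\gamma+1}$ uniformly in $k$, because the naive mean value argument loses a power of $\jx$. I would first use the pointwise bound $|\partial_k m_\pm|\lesssim |k|^{-1}\mathcal{W}^1_\pm$ from \eqref{Mestimates1'}. If that is not enough, I would go back to \eqref{volterra} and write $m_\pm(x,k)-m_\pm(x,0)$ as a Volterra integral against $(D_k-D_0)(y-x)V(y)$, bounding $|D_k(z)-D_0(z)|\lesssim|z|$ uniformly in $k$ and using that $m_\pm$ is bounded on $\pm x\ge -1$. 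This gives the bound $\int_x^{\pm\infty}|y-x||V(y)|\,dy\lesssim \mathcal{W}^1_\pm(x)\lesssim\jx^{-\gamma+1}$, up to a Gronwall step.
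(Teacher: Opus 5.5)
Your overall plan matches the paper's proof. You reduce to $\mathcal{G}_+^+$ and control $\gamma_+$ and $k\partial_k\gamma_+$ by Lemma \ref{estim TR}. You get \eqref{decayG+-2} from $\chi_\pm|\partial_x m_\pm(x,k)|\lesssim\mathcal{W}^0_\pm(x)\lesssim\jx^{-\gamma}$ together with the compact support of $\chi_\pm'$. You get \eqref{decayG+-3} from $|k\partial_k m_\pm|\lesssim\mathcal{W}^1_\pm$ in \eqref{Mestimates1'}.

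The only divergence is your type (a) bound, and there the ``main obstacle'' does not exist. The $s=0$ case of \eqref{mestim} gives $|m_\pm(x,k)-1|\lesssim\jk^{-1}\mathcal{W}^1_\pm(x)$ on $\pm x\ge-1$ for \emph{every} $k\in\R$; you already use it at $k=0$. The triangle inequality then gives
\begin{align*}
\chi_\pm(x)\,|m_\pm(x,\pm k)-m_\pm(x,0)|\le\chi_\pm(x)\big(|m_\pm(x,\pm k)-1|+|m_\pm(x,0)-1|\big)\lesssim\mathcal{W}^1_\pm(x)\lesssim\jx^{-\gamma+1}
\end{align*}
uniformly in $k$. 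This is exactly the paper's argument. No splitting into $|k|\le1$ and $|k|\ge1$ is needed, and no mean value theorem. The same bound also handles the term $k\partial_k\gamma_+(k)\,(m_+(x,k)-m_+(x,0))$ in \eqref{decayG+-3}.

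Do not rely on the first workaround you propose, because it fails. Lemma \ref{estim m} gives no bound of the form $|\partial_k m_\pm|\lesssim\min(1,|k|^{-1})\mathcal{W}^1_\pm$. Near $k=0$ the only estimate on $\partial_k m_\pm$ that stays bounded is the $s=1$ case of \eqref{mestim}, which involves $\mathcal{W}^2_\pm$. This is not an artifact of the lemma: differentiating \eqref{volterra} at $k=0$ produces the term $i\int_x^\infty(y-x)^2V(y)m_+(y,0)\,dy$ in $\partial_k m_+(x,0)$.

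Integrating $|k|^{-1}\mathcal{W}^1_\pm$ from $0$ diverges. Interpolating it with the $\mathcal{W}^2_\pm$ bound only recovers the estimate up to a logarithmic loss, roughly $\jx^{-\gamma+1}\log(2+\jx)$.

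Your final fallback is correct: the Volterra equation for the difference, the bound $|D_k(z)-D_0(z)|\le2|z|$, boundedness of $m_\pm$ on $\pm x\ge-1$, and Gronwall. However, it merely reproves the $s=0$ case of \eqref{mestim}, which you could have quoted directly.
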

\begin{proof}
    We focus on the case $\e_1=\e_2=+$, since the other cases are similar. We have 
    \begin{align}
        \mathcal{G}_+^+(x,k) &= \Big\{ \chi_+(x)\Big[ \gamma_+(k)(m_+(x,k) - m_+(x,0))  +(\gamma_+(k) - \gamma_+(0))(m_+(x,0) - 1) \Big] \\
	 & +\chi_-(x) A_+(m_-(x,-k) - m_-(x,0))\Big\} \mathbf{1}_+(k) \ .
    \end{align}
The function $k \mapsto \gamma_+(k)$ is bounded due to boundedness of $T(k)$ and $R_\pm(k)$. In addition, Lemma \ref{estim m} give the estimate
$$ \chi_\pm(x)|m_\pm(x,k) - m_\pm(x,0)| + \chi_\pm(x)|m_\pm(x,0) - 1| \lesssim \chi_{\pm}(x)\mathcal{W}_{\pm}^{1}(x) \le \jx^{-\gamma+1}\mathcal{W}_{\pm}^{\gamma}(x)$$.
Then, we get \eqref{decayG+-1} since $\jx^\gamma V \in L^1$. 

Next, we prove \eqref{decayG+-2}. We can write 
\begin{align}
    \partial_x\mathcal{G}_+^+(x,k) &= \Big\{ \partial_x\chi_+(x)\Big[ \gamma_+(k)(m_+(x,k) - m_+(x,0))  +(\gamma_+(k) - \gamma_+(0))(m_+(x,0) - 1) \Big] \\
	 & +\partial_x\chi_-(x) A_+(m_-(x,-k) - m_-(x,0))\Big\} \mathbf{1}_+(k) \ \\
     &+ \Big\{ \chi_+(x)\Big[ \gamma_+(k)\partial_x(m_+(x,k) - m_+(x,0))  +(\gamma_+(k) - \gamma_+(0))\partial_x(m_+(x,0) - 1) \Big] \\
	 & +\chi_-(x) A_+\partial_x(m_-(x,-k) - m_-(x,0))\Big\} \mathbf{1}_+(k) \ .
\end{align}
Lemma \eqref{estim m} gives $\chi_\pm(x)|\partial_xm_\pm(x,k)| \lesssim \chi_{\pm}(x)\mathcal{W}_{\pm}^{0}(x) \lesssim \jx^{-\gamma}\mathcal{W}_{\pm}^{\gamma}(x)\lesssim \jx^{-\gamma}$. That yields the desired inequality.

 For \eqref{decayG+-3}, since $\mathcal{G}_+^+(x,0) =0$, we can write  
\begin{align}
    \partial_k\mathcal{G}_+^+(x,k) &= \Big\{ \chi_+(x)\Big[ \partial_k\gamma_+(k)(m_+(x,k) - 1)  +\gamma_+(k) \partial_k m_+(x,k)  \Big] \\
	 & -\chi_-(x) A_+\partial_k m_-(x,-k) \Big\} \mathbf{1}_+(k) \ .
\end{align}
Lemma \ref{estim TR} gives $|k\gamma_+(k)| \lesssim 1 $. Lemma \ref{estim m} gives $\chi_\pm(x)|k\partial_k m_\pm(x,k)| \lesssim \chi_{\pm}(x)\mathcal{W}_{\pm}^{1}(x) \le \jx^{-\gamma+1}$. This yields the desired estimate.
\end{proof}

\begin{lemma}
    \label{estimkR}
    The following estimate holds
    \begin{align}
        \label{decaykR1}
    |\K_R^\sharp(x,k)| &\lesssim \jx^{-\gamma +1} \ ,
    \end{align}  
    uniformly in $x$ and $k$. Moreover, the function $k \mapsto \mathcal{K}_R^{\#}(x,k)$  is Lipschitz continuous with $\mathcal{K}_{R}^\#(x,0)=0$. 
\end{lemma}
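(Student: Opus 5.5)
The plan is to read the bound and the Lipschitz property off the representation \eqref{kR}. There $\K_R^\sharp(x,k) = \mathcal{G}_{\pm}^+(x,k)e^{ikx} + \mathcal{G}_\pm^-(x,k)e^{-ikx}$ according to the sign of $k$, and Lemma \ref{estimG+-} already controls the coefficients.

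For \eqref{decaykR1}, bound $|e^{\pm ikx}| = 1$ and apply \eqref{decayG+-1} to each of the two coefficients. This gives $|\K_R^\sharp(x,k)| \le |\mathcal{G}^+_{\epsilon}(x,k)| + |\mathcal{G}^-_{\epsilon}(x,k)| \lesssim \jx^{-\gamma+1}$ uniformly in $x$ and $k$.

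For $\K_R^\sharp(x,0)=0$, evaluate \eqref{KRk>0} at $k=0$. Every bracket there is of the form $m_\pm(x,\pm k)-m_\pm(x,0)$ or $\gamma_+(k)-\gamma_+(0)$, $\lambda_+(k)-\lambda_+(0)$, so each vanishes at $k=0$. The same check on \eqref{KRk<0} shows the limit as $k\to 0^-$ is also $0$, so $\K_R^\sharp(x,\cdot)$ is continuous at $0$ and vanishes there.

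For the Lipschitz property, fix $x$ and work separately on $k\ge 0$ and $k<0$. On each half-line, differentiate in $k$:
\[ \partial_k\K_R^\sharp = \big(\partial_k\mathcal{G}^+ + ix\mathcal{G}^+\big)e^{ikx} + \big(\partial_k\mathcal{G}^- - ix\mathcal{G}^-\big)e^{-ikx}. \]
The term $x\mathcal{G}$ is bounded by $\jx^{-\gamma+2}$, which is fine for fixed $x$ and even uniformly since $\gamma>5/2$. For $\partial_k\mathcal{G}$, the formula in the proof of Lemma \ref{estimG+-} has three kinds of terms:
\begin{itemize}
\item $\partial_k\gamma_\pm$, which is bounded by Lemma \ref{estim TR};
\item $\partial_k m_\pm(x,\pm k)$, which is bounded by Lemma \ref{estim m} with $s=1$, by $\jk^{-1}\mathcal{W}^2_\pm(x)$ on the appropriate side and by $\jx^2$ on the other, hence finite for fixed $x$;
\item products with bounded factors, namely $\gamma_\pm$, $\lambda_\pm$ and $m_\pm-1$.
\end{itemize}
So $\partial_k\K_R^\sharp(x,\cdot)$ is bounded on each open half-line. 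Since $\K_R^\sharp(x,\cdot)$ is continuous across $0$, integrating over $[k_1,0]$ and $[0,k_2]$ gives global Lipschitz continuity with a constant depending on $x$ at most polynomially.

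The main obstacle is the second item: the uniform bound \eqref{decayG+-3} only controls $k\partial_k\mathcal{G}$, so a bound on $\partial_k\mathcal{G}$ itself near $k=0$ has to come from the $s=1$ case of \eqref{mestim}. That case requires $V\in L^1(\jx^2dx)$, which holds under \ref{VassumeWei}. The same estimate also covers the bounded derivative of $T$ and $R_\pm$ at $k=0$, via Lemma \ref{estim TR}.
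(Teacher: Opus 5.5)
Your proposal is correct and follows the paper's argument. The bound comes from \eqref{decayG+-1}, the vanishing at $k=0$ from $\mathcal{G}_{\epsilon_1}^{\epsilon_2}(x,0)=0$, and the Lipschitz property from boundedness of $\partial_k\mathcal{G}$ via the $s=1$ case of Lemma \ref{estim m} together with Lemma \ref{estim TR}. You also make explicit two points the paper leaves implicit: the term $\pm ix\,\mathcal{G}^{\pm}e^{\pm ikx}$ from differentiating the oscillatory factor (harmless, since $\jx^{-\gamma+2}$ is bounded for $\gamma>5/2$), and the continuity of $\K_R^\sharp(x,\cdot)$ across $k=0$, which is needed to glue the two half-lines.
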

\begin{proof}
The estimate \eqref{decaykR1} is a direct consequence of \eqref{decayG+-1}. To establish The Lipschitz continuity, it suffices to treat \begin{align*}
\mathcal{G}_+^+(x,k) &= \Big\{ \chi_+(x)\Big[ \gamma_+(k)(m_+(x,k) - m_+(x,0))  +(\gamma_+(k) - \gamma_+(0))(m_+(x,0) - 1) \Big] \\
	 & +\chi_-(x) A_+(m_-(x,-k) - m_-(x,0))\Big\} \mathbf{1}_+(k) \ 
\end{align*}
which is continuous in $k$. We get that $\partial_k\mathcal{G}_+^+(x,k)$ is  bounded by invoking the estimations in Lemma \ref{estim m}. We have $\mathcal{K}_{R}^\#(x,0)=0$, because $\mathcal{G}_{\e_1}^{\e_2}(x,0) =0$ for any $\e_1, \e_2 \in \{+,-\}$. 
\end{proof}

\medskip
	\subsection{Modified distorted Fourier Transform}
	In analogy with \eqref{dft}, for $f \in \mathcal{S}(\R)$, we define the following modified distorted Fourier transform:
	\begin{align}\label{sharpF}
	\mathcal{F}^\# [f](k) = f^\#(k) = \int_\R \overline{\mathcal{K}^\#(x,k)} f(x)\,dx.
	\end{align}
	Here is the analogue of Proposition \ref{plancherel}.

	\begin{proposition}\label{propsharpF}\ 
        \begin{enumerate} [label = (\roman*)]
        \item  The modified dFT \eqref{sharpF} extends to an unitary isomorphism from $L^2(\R,dx)$ to $L^2(\R,\frac{dk}{2\pi})$, that is:
        \begin{align}\label{FsharpL2}
		{\big\| f^\# \big\|}_{L^2} = {\| f \|}_{L^2}, \quad \forall f\in L^{2} .
		\end{align}
        \item If $\phi \in L^1$, the inverse of $\mathcal{F}^\#$ is given by the formula:
        \begin{align}\label{Fsharp-1}
		\big(\mathcal{F}^\#\big)^{-1}[\phi](x) = \int \mathcal{K}^\#(x,k) \phi(k)\,\frac{dk}{2\pi}.
		\end{align}
        \item The following diagonalization holds
        \begin{align}
            \label{diagosharp}
        \left(-\partial_{xx}+V\right) 
		= \big(\mathcal{F}^\#\big)^{-1} k^{2} \mathcal{F}^\#.
        \end{align}
        More generally, if we denote $D:=\sqrt{-\partial_{xx}+V}$, then for any measurable function $m$, the following functional calculus hold 
		%, if $P_c$ denotes the projection onto the continuous spectrum, for any 
		\begin{align}\label{Fsharpm}
		m(D) = \big(\mathcal{F}^\#\big)^{-1} {m(|k|)} \mathcal{F}^\# \ .
		\end{align}
		
			\item If $ f \in L^1$, then $f^\#$ is a continuous bounded function
			that decays to zero at infinity. Moreover, 
            \begin{align} \label{fsharp l1}
                \|f^\sharp\|_{L^{\infty}} \lesssim \|f\|_{L^1}.
            \end{align}
			
			\smallskip
			\item There exists $C>0$, depending on ${\| V \|}_{L^1}$,
			such that the following bounds hold
			\begin{equation}\label{kusharp}
			\frac{1}{C} {\| u \|}_{H^1} \leq {\big\| \jk u^\# \big\|}_{L^2} \leq C %\big( 1 + \Vert V \Vert_{L^1}^{\frac{1}{2}} \big)
			{\| u \|}_{H^1}.
			\end{equation}
			
			\smallskip
          
			\item The following bound holds
			\begin{align}\label{eq:weiF}
			{\| \partial_{k} f^\# \big\|}_{L^2} \lesssim {\| \jx f \|}_{L^2}.
			\end{align}
			Moreover, for any $\beta < \gamma - 5/2$,
			\begin{align}\label{eq:weiFR}
			{\Big\| \partial_{k} \int \mathcal{K}^\#_R(x,k) f(x) \, dx \Big\|}_{L^2}
			\lesssim {\big\| \jx^{-\beta} f \big\|}_{L^2}.
			\end{align}
		
	     \end{enumerate}
	\end{proposition}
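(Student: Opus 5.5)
The plan is to transfer everything from the standard transform $\widetilde{\mathcal F}$ (Proposition \ref{plancherel}) through the pointwise unitary relation \eqref{matKunitary}. For $k\ge 0$ we write
\[
f^\#(k)=A_+\widetilde f(k)+A_-\widetilde f(-k),\qquad f^\#(-k)=B_+\widetilde f(k)+B_-\widetilde f(-k).
\]
This follows from \eqref{matK}: $\overline{\mathcal K(x,-k)}=T(k)\psi_-(x,k)$ is conjugated consistently, and the coefficients $A_\pm,B_\pm$ are real. Since $(A_+,A_-)$ and $(B_+,B_-)$ are orthonormal by \eqref{eq:ABrela1}--\eqref{eq:ABrela2}, the $2\times2$ matrix $U=\begin{bmatrix}A_+&A_-\\B_+&B_-\end{bmatrix}$ is orthogonal. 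Hence $|f^\#(k)|^2+|f^\#(-k)|^2=|\widetilde f(k)|^2+|\widetilde f(-k)|^2$ for each $k\geq 0$, and integrating over $k>0$ gives \eqref{FsharpL2} from Proposition \ref{plancherel}(i).

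\textbf{Surjectivity and inversion (ii).} Since $U$ is invertible, $\mathcal F^\#=\mathcal U\circ\widetilde{\mathcal F}$, where $\mathcal U$ is the fibrewise orthogonal map on $L^2(\R)$. It is therefore a unitary isomorphism. Its inverse is $\widetilde{\mathcal F}^{-1}\mathcal U^{T}$, and writing this out with the kernel formula of Proposition \ref{plancherel}(ii) regroups, using $U^TU=I$, into $\int\mathcal K^\#(x,k)\phi(k)\,dk/2\pi$.

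\textbf{Functional calculus (iii).} The multiplier $m(|k|)$ is even, so it commutes with $\mathcal U$, which only mixes $k$ and $-k$. Thus \eqref{Fsharpm} reduces to the corresponding statement for $\widetilde{\mathcal F}$. That statement holds because $\mathcal K(\cdot,k)$ are generalized eigenfunctions with eigenvalue $k^2$ and, combined with (i) and (ii), this is the spectral representation \eqref{projspec}. Taking $m(\lambda)=\lambda^2$ gives \eqref{diagosharp}.

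\textbf{$L^\infty$ bound (iv).} By Lemma \ref{estim m} and the boundedness of $T,R_\pm$, $\mathcal K^\#$ is uniformly bounded, which gives \eqref{fsharp l1}. For continuity away from $0$ we use continuity of the Jost functions in $k$ together with dominated convergence. At $k=0$ the relations \eqref{coeff0cont}, together with $\psi_+(x,0)=a\psi_-(x,0)$, show that the two one-sided limits of $\mathcal K^\#(x,k)$ agree. This can also be read off from the decomposition \eqref{Ksharpdecomp}, since $\mathcal K^\#_\pm$ and $\mathcal K^\#_R$ vanish at $0$ (Lemmas \ref{lem:Ksing}, \ref{estimkR}) and $\mathcal K_0^\#$ is continuous in $k$. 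Decay at infinity follows from Riemann--Lebesgue: for large $|k|$, $\mathcal K^\# = $ bounded coefficients times $e^{\pm ikx}(1+O(\langle k\rangle^{-1}))$, by Lemma \ref{estim m}, and we approximate $f$ in $L^1$ by Schwartz functions.

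\textbf{Sobolev bound (v).} By (i) and (iii), $\|\langle k\rangle u^\#\|_{L^2}^2=\langle (1+H)u,u\rangle = \|u\|_{L^2}^2+\|u'\|_{L^2}^2+\int V|u|^2$. The upper bound uses $\int V|u|^2\le \|V\|_{L^1}\|u\|_{L^\infty}^2\lesssim \|V\|_{L^1}\|u\|_{H^1}^2$. For the lower bound we use $H\ge 0$ (no eigenvalues), interpolating between $\|u\|^2_{L^2}+\langle Hu,u\rangle$ and the above with a standard $\epsilon$-Gagliardo--Nirenberg absorption.

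\textbf{Weighted bound (vi).} Again $\mathcal U$ commutes with $\partial_k$ away from $k=0$, and $f^\#$ is continuous at $0$. So no delta mass arises, and $\|\partial_k f^\#\|_{L^2}$ is bounded by the corresponding quantities for $\widetilde f$ restricted to $k>0$ and $k<0$ separately. These are controlled by the known estimate $\|\partial_k \widetilde f\,\mathbf 1_{\pm k>0}\|\lesssim\|\langle x\rangle f\|$ from \cite{CPNLS2,GPR}. That estimate is proved via $\partial_k(e^{ikx}m_\pm)=ixe^{ikx}m_\pm+e^{ikx}\partial_km_\pm$, the pseudo-differential bounds \eqref{eq:pseeasy-1}, \eqref{eq:pdomkd}, and Lemma \ref{estim TR}.

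For \eqref{eq:weiFR}, we expand $\mathcal K_R^\#$ via \eqref{kR}. Here $\partial_k$ falls either on $e^{\pm ikx}$, producing $x\mathcal G$, or on $\mathcal G$. The factors $\gamma_\pm-\gamma_\pm(0)$ and $\partial_k\gamma_\pm$ are bounded by Lemma \ref{estim TR}. The terms $m_\pm(x,k)-m_\pm(x,0)$ and $\partial_km_\pm$ are handled by \eqref{eq:mPDO+-1} and \eqref{eq:pdomkd} with weight $\jx^{-\beta}$, which requires $\gamma>\beta+5/2$. The term $(m_\pm(x,0)-1)$ carries spatial decay $\jx^{-\gamma+1}$ by Lemma \ref{m0asymp}, which absorbs the factor $x$.

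The main obstacle I expect is this last item. One must check that each term of \eqref{KRk>0}, \eqref{KRk<0} with the extra factor $x$ (from differentiating $e^{\pm ikx}$) still fits one of the PDO lemmas with the allowed weight, and that the jump of $\mathbf 1_\pm(k)$ at $k=0$ produces no delta terms, which holds because $\mathcal K^\#_R(x,0)=0$.
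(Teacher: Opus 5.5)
Your proposal is correct. For (i), (iv) and \eqref{eq:weiFR} it follows the paper: the fibrewise identity $|f^\#(k)|^2+|f^\#(-k)|^2=|\widetilde f(k)|^2+|\widetilde f(-k)|^2$, Riemann--Lebesgue plus dominated convergence, and the term-by-term PDO treatment of $\mathcal K^\#_R$, with no Dirac mass because $\mathcal K^\#_R(x,0)=0$. Elsewhere your route differs, and the differences are worth noting.

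\textbf{(ii)--(iii).} You obtain surjectivity, the inversion formula and (iii) structurally from $\mathcal F^\#=\mathcal U\circ\widetilde{\mathcal F}$, noting that the even multiplier $m(|k|)$ commutes with the fibrewise map $\mathcal U$. The paper uses an orthogonal-complement argument for surjectivity and only sketches (ii)--(iii). Your reduction is cleaner and actually explains why (iii) holds.

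\textbf{(v).} You absorb $\int V|u|^2$ using $\|u\|_{L^\infty}^2\le\|u\|_{L^2}\|u'\|_{L^2}$. The paper instead uses $\|u\|_{L^\infty}\lesssim\|u^\#\|_{L^1}$, which brings in $\sup|\mathcal K^\#|$. Your constant genuinely depends only on $\|V\|_{L^1}$, as the statement claims.

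\textbf{\eqref{eq:weiF}.} The paper differentiates the decomposition \eqref{Ksharpdecomp} directly: $\mathcal K^\#_0$ and $\chi_\pm\mathcal K^\#_\pm$ by flat Plancherel, and $\mathcal K^\#_R$ by \eqref{eq:weiFR}. This stays intrinsic to $\mathcal K^\#$ and reuses one decomposition. You instead transfer half-line bounds from $\widetilde f$, using continuity of $f^\#$ at $0$. The sign coming from differentiating $\widetilde f(-k)$ just replaces $U$ by $\mathrm{diag}(1,-1)\,U\,\mathrm{diag}(1,-1)$, which is still orthogonal.

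That half-line bound for the non-generic standard transform is not in the paper, and you need to prove it carefully. For $k>0$, first write
\[
T(k)\psi_+(\cdot,k)=\chi_+T(k)\psi_+(\cdot,k)+\chi_-\bigl(R_-(k)\psi_-(\cdot,k)+\psi_-(\cdot,-k)\bigr)
\]
using \eqref{transref+}. This splitting is needed because \eqref{eq:pseeasy-1} and \eqref{eq:pdomkd} only control $m_\pm$ on $\{\pm x\ge -1\}$. Differentiating $e^{ikx}m_+$ over all of $\R$, as in your sketch, does not suffice.

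The same splitting (or the bounds on the pieces of \eqref{Ksharpdecomp}) is what gives the uniform bound on $\mathcal K^\#$ in (iv), and the large-$|k|$ form you use for Riemann--Lebesgue. Lemma \ref{estim m} alone only yields $|m_\pm(x,k)-1|\lesssim\jx$ on the side $\pm x\le 1$.
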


    \begin{proof}$(i)$
		To establish \eqref{FsharpL2}, we first observe that 
        \begin{align} \label{fsharpftil}
            |f^{\sharp}(k)|^2 + |f^{\sharp}(-k)|^2 = |\widetilde{f}(k)|^2 + |\widetilde{f}(-k)|^2 \ .
        \end{align}
        Indeed, we have for $k \ge 0$
        \begin{align}
             |f^{\sharp}(k)|^2 + |f^{\sharp}(-k)|^2 = \int_{\R \times \R} \big(\overline{\mathcal{K}^\#(x,k)}\mathcal{K}^\#(y,k) + \overline{\mathcal{K}^\#(x,-k)}\mathcal{K}^\#(y,-k)\big) f(x)\overline{f(y)} \ dxdy \ .
        \end{align}
        We also have
        \begin{align}
            \overline{\mathcal{K}^\#(x,k)}\mathcal{K}^\#(y,k) & = |T(k)|^2 [ A_+\overline{\psi_+(x,k)} + A_-\overline{\psi_-(x,k)}] [ A_+\psi_+(y,k) + A_-\psi_-(y,k)] \\
            & =  |T(k)|^2 \Big[ A_+^2\overline{\psi_+(x,k)}\psi_+(y,k) + A_-^2\overline{\psi_-(x,k)}\psi_-(y,k) \\
            & + A_+A_-\big(\overline{\psi_+(x,k)}\psi_-(y,k) + \overline{\psi_-(x,k)}\psi_+(y,k)\big)\Big]
        \end{align}
        and 
        \begin{align}
            \overline{\mathcal{K}^\#(x,-k)}\mathcal{K}^\#(y,-k) & = |T(k)|^2 \Big[ B_+^2\overline{\psi_+(x,k)}\psi_+(y,k) + B_-^2\overline{\psi_-(x,k)}\psi_-(y,k) \\
            & + B_+B_-\big(\overline{\psi_+(x,k)}\psi_-(y,k) + \overline{\psi_-(x,k)}\psi_+(y,k)\big)\Big] \ .
        \end{align}
        Now using the relations \eqref{eq:ABrela2} on the coefficients $A_\pm$ and $B_\pm$, we get 
        \begin{align}
            \overline{\mathcal{K}^\#(x,k)}\mathcal{K}^\#(y,k) + \overline{\mathcal{K}^\#(x,-k)}\mathcal{K}^\#(y,-k) = \overline{\mathcal{K}(x,k)}\mathcal{K}(y,k) + \overline{\mathcal{K}(x,-k)}\mathcal{K}(y,-k) \ ,
        \end{align}
        which then leads us to (\ref{fsharpftil}). Thus \eqref{FsharpL2} follows directly from the relation (\ref{fsharpftil}) and the proposition \ref{plancherel}.

        To show that $\Fou^\sharp$ is an isomorphism, it suffices to prove that its range is $L^2$. For that sake, we consider $g \in (\ran \ \Fou^\sharp)^\perp$. That is for any $f \in L^2$, 
        $$\int_\R f^\sharp(k)g(k) \ dk = 0 \ .$$  
        Now from the relationship \eqref{matKunitary}, we have 
        \begin{align*}
            \begin{cases}
                f^\sharp(k) = A_+\widetilde{f}(k)  + A_-\widetilde{f}(-k) \qquad \text{ for } k\ge 0,\\
                f^\sharp(k) = B_-\widetilde{f}(k)  + B_+\widetilde{f}(-k) \qquad \text{ for } k < 0.
            \end{cases}
        \end{align*}
        That leads to 
        \begin{align}
       \int_\R  \widetilde{f}(k) \Big[ A_+g(k)\mathbf{1}_+(k) + A_-g(-k)\mathbf{1}_-(k) + B_-g(k)\mathbf{1}_-(k) + B_+g(-k)\mathbf{1}_+(k)\Big] \ dk = 0 \ .
            \label{nulliso}
        \end{align}
        From Proposition \ref{plancherel}, we know that $\widetilde{\Fou}$ is an isomorphism from $L^2$ to $L^2$. Then \eqref{nulliso} implies    
        $$A_+g(k)\mathbf{1}_+(k) + A_-g(-k)\mathbf{1}_-(k) + B_-g(k)\mathbf{1}_-(k) + B_+g(-k)\mathbf{1}_+(k) = 0 \ .$$
        For $k \ge 0$, we can write the equation as 
        $$ \begin{bmatrix}
            A_+ & B_+ \\
            A_- & B_- 
        \end{bmatrix} \begin{bmatrix}
            g(k) \\
            g(-k)
        \end{bmatrix} = 0 .$$
        Since $\begin{bmatrix}
            A_+ & B_+ \\
            A_- & B_- 
        \end{bmatrix}$ is unitary, we get $g \equiv 0$.
        \medskip

        $(ii)$ The inversion formula \eqref{Fsharp-1} follows by standard argument analogous to Proposition \ref{plancherel}.

\medskip
		$(iii)$ The diagonalization \eqref{diagosharp} follows from the fact that the modified kernel $\K^\sharp(x,k)$ is made of generalized eigenfunctions of $-\partial_{xx} + V$ associated to the spectral value $k^2$. The functional calculus \eqref{Fsharpm} follows directly from the diagonalization \eqref{diagosharp}.

    \medskip
		
		$(iv)$ The fact that $\mathcal{F}^\#$ maps $L^1$ to bounded functions that decay at infinity
		is a consequence of the Riemann-Lebesgue lemma and decay properties of the normalized Jost functions $m_\pm(x,k)$. Continuity follows from dominated convergence, since $k \mapsto \K^\#(x,k)$ is continuous.
		
\medskip
$(v)$ For \eqref{kusharp} one can first use \eqref{FsharpL2} and \eqref{Fsharpm} to see that
\begin{align*}
{\| u \|}_{H^1}^2 & = - \int_\R u u_{xx} \, dx + \int_\R u^2 = 
  \int_\R u Hu \, dx - \int_\R u Vu \, dx + \int_\R u^2 \, dx
\\
& = %\lesssim 
{\big\| |k| u^\sharp \big\|}_{L^2}^2 - \int_\R u Vu \, dx %+ {\| V \|}_{L^1}{\| u \|}_{L^\infty}^2 
+ {\| u^\sharp \|}_{L^2}^2.
\end{align*}
Then, using \eqref{Fsharp-1} and $|\mathcal{K}^\sharp| \lesssim 1$,
one can estimate ${\| u \|}_{L^\infty} \lesssim {\| u^\sharp \|}_{L^1} \lesssim {\| \jk u^\sharp \|}_{L^2}$
and
\begin{align*}
\Big| \int_\R u Vu \, dx \Big| \lesssim {\| V \|}_{L^1}  {\| \jk u^\sharp \|}_{L^2}^2
\end{align*}
to obtain the first inequality in \eqref{kusharp}. The second inequality follows similarly using ${\| u \|}_{L^\infty} \lesssim {\| u \|}_{H^1}$.

\medskip
$(vi)$ For \eqref{eq:weiF}, we first use \eqref{Ksharpdecomp}-\eqref{KsharpSdecomp} to write 
		\begin{align}\label{dkfsharp}
		\begin{split}
		\partial_k f^\# (k) & = \int_\R \partial_k \overline{\mathcal{K}^\#_0(x,k)} f(x) \, dx
		+ \int_\R \chi_+(x) \partial_k \overline{\mathcal{K}^\#_+(x,k)} f(x) \, dx
		\\
		& + \int_\R \chi_-(x) \partial_k \overline{\mathcal{K}^\#_-(x,k)} f(x) \, dx
		+ \int_\R \partial_k \overline{\mathcal{K}^\#_R(x,k)} f(x) \, dx.
		\end{split}
		\end{align}
		For the first term on the right-hand side above we see from \eqref{KsharpS0} that
		\begin{align*}
		\int_\R \partial_k \overline{\mathcal{K}^\#_0(x,k)}  f(x) \, dx
		 &= \int_\R (\chi_+(x)aA_- + \chi_-(x)B_+)m_-(x,0)ixe^{ikx}f(x) \ dx  \\
         & + \int_\R(\chi_-(x)\frac{1}{a}A_+ + \chi_+(x)B_-)m_+(x,0)(-ix)e^{-ikx}f(x) \ dx
         \end{align*}
		so that taking $L^2$-norms and using (flat) Plancherel we get a bound by ${\| x f \|}_{L^2}$.
		For the second and third term on the right-hand side of \eqref{dkfsharp}, 
		we recall the definitions \eqref{KsharpSpm}
		and use again (flat) Plancherel together with the fact that $|\partial_k a_{\epsilon_1}^{\epsilon_2}(k)|\lesssim 1$,
		$\epsilon_1,\epsilon_2 \in \{+,-\}$.
		
		We are left with estimating the last term  on the right-hand side of \eqref{dkfsharp},
		for which we prove directly the stronger estimate \eqref{eq:weiFR}.
		We start from the formula \eqref{KRk>0},\eqref{KRk<0} to write
		%(notice that we drop the $k$ independent terms)
		\begin{align}\label{dkKR}
		\begin{split}
		& \int_\R \partial_k \mathcal{K}^\#_R(x,k) f(x) \, dx
		\\
		& = \int_\R \chi_+(x)\partial_k\Bigg\{\Big[ \mathbf{1}_+(k)\gamma_+(k)(m_+(x,k) - m_+(x,0)) + \mathbf{1}_+(k)(\gamma_+(k) - \gamma_+(0))(m_+(x,0) - 1) 
	\\
	&  + B_-\mathbf{1}_-(k)(m_+(x,k) - m_+(x,0)) \Big]e^{ikx} \Bigg\}f(x) \ dx\\
    & + \int_\R \chi_+(x) \partial_k \Bigg\{\Big[ \mathbf{1}_-(k)\lambda_-(k)(m_+(x,-k) - m_+(x,0)) + \mathbf{1}_-(k)(\lambda_-(k) - \lambda_-(0))(m_+(x,0) - 1) \\
    &  + A_-\mathbf{1}_+(k)(m_+(x,-k) - m_+(x,0)) \Big] e^{-ikx} \Bigg\}f(x) \ dx \\
       & + \int_\R \chi_-(x) \partial_k \Bigg\{\Big[ \mathbf{1}_-(k)\gamma_-(k)(m_-(x,-k) - m_-(x,0)) + \mathbf{1}_-(k)(\gamma_-(k) - \gamma_-(0))(m_-(x,0) - 1)  \\
       & + A_+\mathbf{1}_+(k)(m_-(x,-k) - m_-(x,0))\Big]e^{ikx} \Bigg\} f(x) \ dx \\
       & + \int_\R \chi_-(x) \partial_k \Bigg\{\Big[ \mathbf{1}_+(k)\lambda_+(k)(m_-(x,k) - m_-(x,0)) + \mathbf{1}_+(k)(\lambda_+(k) - \lambda_+(0))(m_-(x,0) - 1)  \\
       & +  B_+\mathbf{1}_-(k)(m_-(x,k) - m_-(x,0)) \Big]e^{-ikx}\Bigg\} f(x) \ dx
		\end{split}
		\end{align}
		Note that the symbol in curly brackets is Lipschitz in $k$.
		Since the contributions from all four integrals on the right-hand side of \eqref{dkKR} can be treated in the same way, 
		we only look at the first one.
		
		We can write it as
		\begin{align}
		\nonumber
		& \int_\R \chi_+(x) \, 
		\Big\{ \partial_k a_+^+(k) (m_{+}(x,0)-1) 
		\\ 
		\label{dkKR1}
		& \qquad + \partial_k \gamma_+(k) \mathbf{1}_+(k) 
		(m_{+}(x,k)-m_{+}(x,0)) \big\} e^{ixk} f(x) \, dx
		\\
		\label{dkKR2}
		+ & \int_\R \chi_+(x) \, \gamma_+(k) \partial_k \big[\mathbf{1}_+(k) (m_{+}(x,k)-m_{+}(x,0)) \big] \, e^{ixk} 
		f(x) \, dx
		\\
		\nonumber
		+ & \int_\R \chi_+(x) \,\big\{ a_+^+(k) (m_{+}(x,0)-1) 
		\\
		\label{dkKR3}
		& \qquad + \gamma_+(k) \mathbf{1}_+(k) (m_{+}(x,k)-m_{+}(x,0)) \big\}
		(ix) e^{ixk} f(x) \, dx \\
        + & \int_\R B_-\chi_+(x)\ \Big\{ \partial_k\big[\mathbf{1}_-(k)(m_+(x,k) - m_+(x,0))\big]  \nonumber \\
        \label{dkKR4}
        &\qquad + \mathbf{1}_-(k)(m_+(x,k) - m_+(x,0))(ix) \Big\}e^{ikx}f(x) \ dx.
		\end{align}
		For \eqref{dkKR1}, using $|\partial_k a_+^+(k)|, |\partial_k\gamma_+(k)| \leq 1$, we have
		\begin{align*}%\label{dkKR1'}
		\big| \eqref{dkKR1} \big| & \lesssim \Big| \int_\R \chi_+(x) \,(m_{+}(x,0)-1) e^{ixk} f(x) \, dx \Big|
		\\
		& + \Big| \int_\R \chi_+(x) (m_{+}(x,k)- 1 %m_{+}(x,0)
		) e^{ixk} f(x) \, dx \Big|.
		\end{align*}
		We can then use (flat) Plancherel with \eqref{estimR+} for the first line and \eqref{eq:mPDO+-1} ($\gamma > 3/2 + \beta$) for the second line, to estimate the $L^2$ norm of \eqref{dkKR1} by ${\| \jx^{-\beta}f \|}_{L^2}$ as desired.
		
		For \eqref{dkKR2}, 
			we first notice that
			$\partial_k [ \mathbf{1}_+(k) (m_{+}(x,k)-m_{+}(x,0)) ] = \mathbf{1}_+(k) \partial_k m_{+}(x,k)$,
			and then use  $|\gamma_+(k)| \lesssim 1$ and \eqref{eq:pdomkd} ($\gamma > 5/2 + \beta$)
			to estimate
			\begin{align*}
			{\big\| \eqref{dkKR2} \big\|}_{L^2} & \lesssim 
			{\Big\| \int_\R \chi_+(x)  \partial_k m_+(x,k) e^{ixk} f(x) \, dx \Big\|}_{L^2}
			\lesssim {\| \jx^{-\beta} f \|}_{L^2}.
			\end{align*}
		
		%From Lemma \ref{lemmstrong} and the definition \eqref{mdiffk0},
		%we can see that both $a(x,k)$ and $ka(x,k)$ satisfy the assumption of Lemma \ref{Hwang}
		%(provided we assume \eqref{Vassume} with $N\geq 3$ say):
		%for $b,c,d \in\{0,1\}$
		%\begin{align*}
		%\Big| (k\partial_k)^d \partial_x^b \partial_k^c \int_0^1 (\partial_2 m_+)(x, tk) \, dt \Big| 
		%  & \lesssim 
		%  \Big| k^d \int_0^1 t^{c+d} (\partial_x^b \partial_2^{c+d+1} m_+)(x, tk) \, dt \Big| \\
		%& \lesssim |k|^d \int_0^1 t^{c+d} \langle kt \rangle^{-(2+c+d)} \, dt \lesssim 1;
		%\end{align*}
		%we thus get a bound for \eqref{dkKR2} by the right-hand side of \eqref{eq:weiF}.
		
		For \eqref{dkKR3}, we can estimate
		\begin{align*}
		\big| \eqref{dkKR3} \big| & \lesssim \Big| \int_\R \chi_+(x) \, (m_{+}(x,0)-1) e^{ixk} (ix) f(x) \, dx \Big|
		\\
		& + \Big| \int_\R \chi_+(x) (m_{+}(x,k)-m_{+}(x,0)) e^{ixk} (ix) f(x) \, dx \Big|.
		\end{align*}
		Using again (flat) Plancherel with \eqref{estimR+} for the first line,
		and the $L^2$-bound for PDOs \eqref{eq:mPDO+-1} for the second line (with $\gamma \ge \beta  + 3/2$), we obtain a bound by ${\| \jx^{-\beta} f\|}_{L^2}$ as desired. \\
        Finally, for \eqref{dkKR4} we get the inequality with the same trick and this concludes the proof of \eqref{eq:weiFR} and of the proposition.
	\end{proof}

	\medskip
	\begin{definition}[Singular and Regular `Projections'] \label{singregproj}
		According to the decomposition \eqref{Ksharpdecomp}-\eqref{KsharpSdecomp}, 
		given $\phi\in \mathcal{S}$ we can write
		\begin{align}\label{decompphi}
		\begin{split}
		& \phi = \phi_S + \phi_R, \qquad \phi_S = \phi_0 + \phi_+ + \phi_-,
		\\
		& \phi_\ast(x) := \chi_\ast (x)  \int_\R \mathcal{K}_\ast^\#(x,k) \, 
		(\mathcal{F}^\# \phi) (k) \, \frac{dk}{2\pi}, \qquad \ast \in \{0,+,-\}
		\\
		& \phi_R(x) := \int_\R \mathcal{K}_R^\#(x,k) \, 
		(\mathcal{F}^\# \phi) (k) \, \frac{dk}{2\pi}.
		\end{split}
		\end{align}
		We can then extend these definitions to $L^2$ analogously to proposition \ref{propsharpF}.
	\end{definition}
	
	%Moreover $\phi_S$ and $\phi_R$ are projection in all the relevant spaces that we consider.

%-------------------------------------------
\bigskip
\section{Linear estimates: decay and smoothing}\label{seclinest}
In this section, we closely follow  (\cite{CPNLS2}, Section 3) and (\cite{GPR}, Section 3). For the reader's convenience, we restate the relevant results from (\cite{CPNLS2},\cite{GPR}). First, we establish, through the modified distorted Fourier transform, the $1/\sqrt{t}$ decay in the linear regime. That emphasizing the dispersive nature of the problem (see Corollaries \ref{cor:pointwisesingular}, \ref{cor:regularLinfty} and Lemma \ref{lem:pointwiseH}). Then, we provide smoothing estimates for the Schrödinger-type flows (see corollary \ref{cor:smoothing}). We conclude this section with some local decay bounds: one tailored for "improved low-frequency" i.e. pseudo linear solutions with Lipschitz coefficients vanishing at 0 (lemma \ref{lem:lowlocaldecay}), and another $L^2$ local decay  for the differentiated flows (lemma \ref{lem:localderivative} and corollary \ref{cor:locdecdiffflow}).

\medskip
	\subsection{Dispersive decay}\label{ssecdisp} 
	
	\begin{lemma}
		\label{lemstat}
		Consider a function $a(x,k)$ defined on $B \times \mathbb{R}_{+}$, for some $B \subset \R$, and such that
		\begin{equation}
		\label{hypest}
		| a(x, k)| +|k| |\partial_{k}a(x,k)| \lesssim 1, \quad \forall x \in B, \, \forall k \in \mathbb{R}_{+}
		\end{equation}
		and for every  $X \in \mathbb{R}$, consider the oscillatory  integral
		$$
		I(t,X,x) = \int_{0}^{\infty} e^{it(k-X)^2} a(x,k)  h(k) \, dk, \quad t>0, \, x \in B.
		$$
		Then, the following estimate holds
		\begin{equation}
		\label{decaylem} | I(t,X,x)|  \lesssim \frac{1}{\sqrt t} \| h  \|_{L^\infty} + \frac{1}{t^{\frac{3}{4}}} \| \partial_k h \|_{L^2},
		\end{equation}
		 uniformly for $X\in \mathbb{R}$, $t >0$ and $x\in B$.
	\end{lemma}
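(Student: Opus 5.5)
The plan is the standard stationary phase argument of \cite{GPR}, Lemma 3.x, adapted to the symbol $a(x,k)$; we only use \eqref{hypest}, uniformly in $x\in B$. Fix $t>0$, $X\in\R$, $x\in B$. The stationary point of the phase $(k-X)^2$ is $k=X$. We localize near it at scale $t^{-1/2}$. Let $\psi$ be a smooth cutoff equal to $1$ on $[-1,1]$ and supported in $[-2,2]$, and split
\begin{align*}
I = \int_0^\infty e^{it(k-X)^2}\psi\big(\sqrt t (k-X)\big) a(x,k)h(k)\,dk + \int_0^\infty e^{it(k-X)^2}\big(1-\psi(\sqrt t(k-X))\big) a(x,k)h(k)\,dk =: I_1+I_2 .
\end{align*}
The first piece is trivial: $|I_1|\lesssim \|a\|_{L^\infty}\|h\|_{L^\infty}\, t^{-1/2}$, since the integrand is supported on an interval of length $\lesssim t^{-1/2}$.

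For $I_2$ we integrate by parts using $e^{it(k-X)^2} = \frac{1}{2it(k-X)}\partial_k e^{it(k-X)^2}$. This gives a boundary term at $k=0$, namely
\begin{align*}
-\frac{(1-\psi(-\sqrt t X))\,a(x,0)h(0)}{2itX} .
\end{align*}
It is nonzero only when $|X|\gtrsim t^{-1/2}$, so it is bounded by $t^{-1/2}\|h\|_{L^\infty}$. We use here that $a(x,k)$ is bounded; the limit at $k=0^+$ is handled by a limiting argument. The derivative then falls on three factors.
\begin{enumerate}
\item On $(k-X)^{-1}(1-\psi)$: this gives terms bounded by $\frac1t\int_{|k-X|\gtrsim t^{-1/2}} |k-X|^{-2}\,dk\,\|a h\|_{L^\infty}\lesssim t^{-1/2}\|h\|_{L^\infty}$, plus a term from $\sqrt t\psi'$ supported where $|k-X|\sim t^{-1/2}$, which is bounded in the same way.
\item On $h$: by Cauchy--Schwarz, $\frac1t \|(k-X)^{-1}\mathbf{1}_{|k-X|\gtrsim t^{-1/2}}\|_{L^2}\|\partial_k h\|_{L^2}\lesssim t^{-1}t^{1/4}\|\partial_kh\|_{L^2} = t^{-3/4}\|\partial_k h\|_{L^2}$.
\item On $a$: this is the main obstacle, because we only control $|k\partial_k a|\lesssim 1$, not $|\partial_k a|$. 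It yields $\frac1t\int |k-X|^{-1}|k|^{-1}\mathbf{1}_{|k-X|\gtrsim t^{-1/2}}\,dk\;\|h\|_{L^\infty}$, which diverges logarithmically near $k=0$.
\end{enumerate}

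To treat item 3, we first refine the decomposition with a second cutoff near $k=0$ at the same scale: the region $0\le k\lesssim t^{-1/2}$ contributes directly $\lesssim t^{-1/2}\|h\|_{L^\infty}$. On the remaining region $k\gtrsim t^{-1/2}$, $|k-X|\gtrsim t^{-1/2}$ we integrate by parts as above; the cutoff derivatives are again harmless. It remains to bound
\begin{align*}
J=\frac1t\int_{k\gtrsim t^{-1/2},\ |k-X|\gtrsim t^{-1/2}}\frac{dk}{|k|\,|k-X|} .
\end{align*}
We split into $k\le |X|/2$, $k\ge 2|X|$, and $k\sim |X|$. In the first two regions $|k-X|\gtrsim \max(k,|X|)$ and $J\lesssim \frac1t\int_{k\gtrsim t^{-1/2}}k^{-2}\,dk\lesssim t^{-1/2}$. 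In the third region $k\sim|X|\gtrsim t^{-1/2}$ and $J\lesssim \frac{1}{t|X|}\int_{|k-X|\gtrsim t^{-1/2},\,k\sim |X|}\frac{dk}{|k-X|}\lesssim \frac{\log(\sqrt t|X|)}{t|X|}\lesssim t^{-1/2}$, since $\log y/y$ is bounded for $y\gtrsim 1$. Collecting all pieces gives \eqref{decaylem}, uniformly in $X$, $t$ and $x\in B$.
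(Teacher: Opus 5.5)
Your argument is correct and is essentially the standard stationary-phase proof behind \cite{GPR}, Lemma 3.3, which the paper simply cites. You localize at scale $t^{-1/2}$ around $k=X$, add a second cutoff $1-\psi(\sqrt t\,k)$ to compensate for having only $|k\partial_k a|\lesssim 1$, integrate by parts, and use Cauchy--Schwarz on the $\partial_k h$ term. One cleanup: once that second cutoff is in place there is no boundary term at $k=0$, so drop that discussion, which would in fact be problematic because \eqref{hypest} does not force $a(x,0^+)$ to exist (e.g.\ $a(x,k)=e^{i\log k}$).
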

    \begin{proof}
        See (\cite{GPR}, Lemma 3.3).
    \end{proof}
	
	Given the decomposition \eqref{Ksharpdecomp}, 
	we can establish pointwise decay estimates associated to each piece of the decomposition. 
	
	\begin{corollary}\label{cor:pointwisesingular}
		Using the notation in \eqref{KsharpSdecomp}, the following estimates hold
		\begin{align}\label{eq:singulardecaysep}
		\left\Vert\chi_\ast(x)\int_\R \mathcal{K}_{\ast}^{\#}(x,k)e^{ik^{2}t}h(k)\,dk\right\Vert_{L^{\infty}_x}
		\lesssim\frac{1}{\sqrt{t}}{\big\| h \big\|}_{L^\infty_k}
		+\frac{1}{t^{\frac{3}{4}}}{\big\| \partial_k h \big\|} _{L^2_k},
		\end{align}
		for $\ast\in\{0,+,-\}$. 
		In particular, we get
		\begin{align}\label{eq:singulardecay}
		\left\Vert  \int_\R \mathcal{K}^{\#}_S(x,k)e^{ik^{2}t}h(k)\,dk\right\Vert_{L^{\infty}_x}
		\lesssim\frac{1}{\sqrt{t}}{\big\| h \big\|}_{L^\infty_k}
		+\frac{1}{t^{\frac{3}{4}}}{\big\| \partial_k h \big\|} _{L^2_k}.
		\end{align}
	\end{corollary}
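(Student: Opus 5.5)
The plan is to reduce each piece $\chi_\ast\mathcal{K}^\#_\ast$ to a finite sum of oscillatory integrals of the form in Lemma \ref{lemstat}. Taking complex conjugates if needed, that lemma applies equally to the phase $e^{-it(k-X)^2}$ and to integrals over $(-\infty,0]$. The substitution $k\mapsto -k$ turns $\int_{-\infty}^0$ into $\int_0^\infty$, and replaces $h$ by $h(-\cdot)$ with the same $L^\infty$ and $\dot H^1$ norms. The key algebraic observation is
\[
e^{\pm ikx}e^{ik^2t}=e^{it(k\pm \frac{x}{2t})^2}e^{-i\frac{x^2}{4t}} .
\]
So for fixed $x$, each term $b(x,k)e^{\pm ikx}e^{ik^2t}$ is, up to a unimodular factor independent of $k$, of the form $e^{it(k-X)^2}b(x,k)$ with $X=\mp x/(2t)$. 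The estimate \eqref{decaylem} is uniform in $X$, so uniformity in $x$ follows.

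For $\ast=0$, expand $\mathcal{K}^\#_0$ by \eqref{KsharpS0}. Each term is a $k$-independent coefficient times $e^{\pm ikx}$. The coefficients are $(\chi_+aA_-+\chi_-B_+)m_-(x,0)$ and $(\chi_-a^{-1}A_++\chi_+B_-)m_+(x,0)$, which are bounded in $x$ by Lemma \ref{estim m} (or Lemma \ref{m0asymp}), since $m_\pm(\cdot,0)$ are bounded with finite limits. Setting $a(x,k)$ equal to this coefficient times $\mathbf{1}_{\pm k\ge0}$ on each half-line, hypothesis \eqref{hypest} holds trivially because $\partial_k a=0$ on each half-line. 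The multiplier $\chi_0\equiv1$ is harmless.

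For $\ast=\pm$, write $\mathcal{K}^\#_\pm=a^+_\pm(k)e^{ikx}+a^-_\pm(k)e^{-ikx}$, where $a^{\epsilon_2}_{\epsilon_1}$ is supported on one half-line. By Lemma \ref{lem:Ksing}, these coefficients are bounded with bounded derivative. On the relevant half-line, after reflection to $\mathbb{R}_+$ if needed, $|k\partial_k a|\lesssim |k|$ is bounded for $|k|\le1$. For large $|k|$, note that $a^{\pm}_\pm$ is built from $\gamma_\pm,\lambda_\pm$, which are combinations of $T,R_\pm$. Lemma \ref{estim TR} gives $\langle k\rangle|\partial_kT|+\langle k\rangle|\partial_kR_\pm|\lesssim1$, so $|k\partial_k a|\lesssim1$ uniformly. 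Hence \eqref{hypest} holds with $a(x,k)=\chi_\pm(x)a^{\epsilon_2}_\pm(k)$, where $|\chi_\pm|\le1$. Lemma \ref{lemstat} then gives \eqref{eq:singulardecaysep}, and summing over $\ast\in\{0,+,-\}$ gives \eqref{eq:singulardecay}.

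The only point needing care is the hypothesis $|k||\partial_k a|\lesssim1$ at high frequency. This is exactly where the $\langle k\rangle$ weight in Lemma \ref{estim TR} is used; the Lipschitz bound of Lemma \ref{lem:Ksing} alone would not suffice. Since the jump at $k=0$ has been split into half-line integrals, no distributional derivative at the origin enters. Finally, for $t$ small (if $t>0$ is not assumed large) the bound is still what Lemma \ref{lemstat} provides, so no separate argument is needed.
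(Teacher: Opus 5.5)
Your proposal is correct and follows essentially the same route as the paper: complete the square with $X=-x/(2t)$, reduce each piece to the half-line integral of Lemma \ref{lemstat}, and verify \eqref{hypest} using the boundedness of $T,R_\pm$ together with the $\langle k\rangle$-weighted derivative bound of Lemma \ref{estim TR}. The only cosmetic differences are these. For $\ast=0$, the paper takes $a\equiv 1$ and pulls the bounded $x$-dependent coefficient outside the integral, whereas you fold it into $a(x,k)$. The paper also leaves the $k<0$ half as ``similar'', where you spell out the reflection $k\mapsto -k$.
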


	\begin{proof}
		We only consider the pieces with $k\geq0$, since $k<0$ is similar.  
		%The estimates above follow from Lemma \ref{lemstat}.  
		We define
		\begin{equation}
	J_{\ast}(t,x):=\chi_{\ast}(x)\int_{k\geq0} \mathcal{K}_{\ast}^{\#}(x,k)e^{ik^{2}t}h(k)\,dk, \qquad \text{ for } \ast \in \{0,\pm\}.
		\end{equation}
        First, we note $X=  -x/(2 t)$.
        
        For $\ast = 0$, from the expression \eqref{KsharpS0} of $\K^\sharp_0(x,k)$, we can write 
		\begin{align*}
        J_{0}(t,x) & = e^{ -i X^2}(\chi_+(x)aA_- + \chi_-(x)B_+)m_-(x,0)\int_{0}^{+\infty} e^{it (k-X)^2}   h(k)\, dk \\
        & \qquad + e^{ -i X^2}(\chi_-(x)\frac{1}{a}A_+ + \chi_+(x)B_-)m_+(x,0)\int_{0}^{+\infty} e^{it (k+ X)^2} h(k)\, dk.
        \end{align*}
        Then, the result follows from Lemma \ref{lemstat} by taking $a(x,k) =1$.

        For $\ast =\pm$, we will consider the case $\ast = +$, since the other case is similar. From the expression \eqref{KsharpSpm} of $\K^\sharp_+(x,k)$, we can write 
        \begin{align*}
            J_+(t,x) &=  e ^{-iX^2}\chi_+(x)\int_0^{+\infty} e^{i(t+X)^2} (\gamma_+(k)-\gamma(0))h(k) \ dk.
        \end{align*}
        The function $k \mapsto \gamma_+(k)-\gamma(0)$ satisfies \eqref{hypest}. Indeed, from \eqref{coefformula+}, we know that $\gamma_+(k) =  A_+T(k) + A_-R_+(k)$ which is bounded since $T(k)$ and $R_+(k)$ are bounded. In addition, $k\partial_k\gamma_+(k)$ is also bounded due to Lemma \ref{estim TR}. Hence, the estimate follows by Lemma \ref{lemstat}.  
       
	\end{proof}

	% Recall the definition of the regular part from \eqref{KRk>0} and \eqref{KRk<0}.
	% We have the following $L^\infty$ decay for the regular part of  the Schr\"odinger flow:
	% % again as a corollary of Lemma \ref{lemstat}.
	
	\begin{corollary}\label{cor:regularLinfty}
		If $\jx^\gamma V(x)\in L^1$ with $\gamma\geq\beta+1$, then
		\begin{align}\label{regularinfty}
		\left\Vert \jx^\beta \int_\R \mathcal{K}^{\#}_R(x,k)e^{ik^{2}t}h(k)\,dk\right\Vert_{L^{\infty}_x}
		\lesssim\frac{1}{\sqrt{t}}{\big\| h \big\|}_{L^\infty_k}
		+\frac{1}{t^{\frac{3}{4}}}{\big\| \partial_k h \big\|} _{L^2_k}.
		\end{align}
	\end{corollary}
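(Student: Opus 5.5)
We reduce to Lemma \ref{lemstat} with a suitable symbol, following the proof of Corollary \ref{cor:pointwisesingular}. We treat the $k\ge 0$ half; $k<0$ is symmetric. By \eqref{kR},
\[
\int_{k\ge0}\mathcal{K}^\#_R(x,k)e^{ik^2t}h(k)\,dk
=\int_0^\infty \mathcal{G}_+^+(x,k)e^{ikx+ik^2t}h(k)\,dk+\int_0^\infty \mathcal{G}_+^-(x,k)e^{-ikx+ik^2t}h(k)\,dk .
\]
Completing the square with $X=\mp x/(2t)$, each term becomes $e^{-itX^2}\int_0^\infty e^{it(k-X)^2}\mathcal{G}_+^{\pm}(x,k)h(k)\,dk$. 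The phase factor has modulus one, so it can be dropped.

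Next we multiply by $\jx^\beta$ and set $a(x,k):=\jx^\beta\mathcal{G}_+^{\pm}(x,k)$ for $x\in B=\R$ and $k\ge0$. We must check the hypothesis \eqref{hypest}, namely $|a|+|k||\partial_k a|\lesssim 1$ uniformly in $x$ and $k$. This follows from Lemma \ref{estimG+-}. Estimate \eqref{decayG+-1} gives $|\mathcal{G}|\lesssim \jx^{-\gamma+1}$, and \eqref{decayG+-3} gives $|k\partial_k\mathcal{G}|\lesssim\jx^{-\gamma+1}$. Hence
\[
|a|+|k\partial_k a|\lesssim \jx^{\beta-\gamma+1}\lesssim 1
\]
precisely when $\gamma\ge\beta+1$, which is the assumption of the corollary.

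Applying Lemma \ref{lemstat} uniformly in $X\in\R$ and $x$ then gives the bound $t^{-1/2}\|h\|_{L^\infty}+t^{-3/4}\|\partial_k h\|_{L^2}$ for each term. Taking the supremum in $x$ and summing the four pieces ($\pm k$, $e^{\pm ikx}$) yields \eqref{regularinfty}.

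The only delicate point is \eqref{decayG+-3}, which needs the weight-uniform bound $\chi_\pm|k\partial_k m_\pm(x,k)|\lesssim \mathcal{W}^1_\pm(x)$ for the $k\partial_k$ derivative. This bound is already established in Lemma \ref{estimG+-} via \eqref{Mestimates1'} and Lemma \ref{estim TR}, so no further obstacle is expected. One should also note that the indicator $\mathbf{1}_+(k)$ only restricts the integral to $[0,\infty)$, which is exactly the setting of Lemma \ref{lemstat}.
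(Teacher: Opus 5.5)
Your proposal is correct and follows the paper's own proof essentially step for step. Both restrict to $k\ge 0$ and a single exponential $e^{\pm ikx}$, complete the square with $X=\mp x/(2t)$, take $a(x,k)=\jx^\beta\mathcal{G}_+^{\pm}(x,k)$, and verify \eqref{hypest} via Lemma \ref{estimG+-}; this gives $|a|+|k\partial_k a|\lesssim\jx^{\beta-\gamma+1}\lesssim 1$ when $\gamma\ge\beta+1$, and Lemma \ref{lemstat} then concludes (your phase factor $e^{-itX^2}$ and integrand $e^{it(k-X)^2}$ are in fact the correct versions of the paper's slightly mistyped display).
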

	
	\begin{proof} We only analyze the case $k\geq0$, since the case $k<0$ is similar.
		From the definition of the regular part \eqref{KRk>0}, we focus on the contribution of $e^{ikx}$, since the case of $e^{-ikx}$ is similar. 
        Then, recalling the definition of $\mathcal{G}_{+}^{+}(x,k)$ in \eqref{kR}, we are led to estimate:
\begin{align*}
 &  e^{-iX^2}\int_{k \ge 0} e^{i(t-X)^2}\jx^{\beta} \mathcal{G}_{+}^{+}(x,k)h(k) \ dk , 
\end{align*}
where $X=  -x/(2 t)$. We consider $a(x,k)  :=  \jx^{\beta} \mathcal{G}_{+}^{+}(x,k)$. 
Lemma \ref{estimG+-} gives 
$$ |a(x,k)| + |k\partial_ka(x,k)| \lesssim \jx^{\beta -\gamma +1} .$$
Then, the functions $a(x,k)$ satisfies the assumptions \eqref{hypest}. Lemma \ref{lemstat} yields the desired estimate.
% First, the function $a(x,k)$ is bounded. Indeed, since $\gamma_+(k)$ is bounded, it suffices to show that $\chi_\pm(x)\jx^\beta(m_\pm(x,k) - 1 )$ is also bounded. This follows from Lemma \ref{mestim} that gives 
% \begin{align*}
%     \chi_\pm(x)\jx^\beta|m_\pm(x,k) - 1 | & \lesssim \jx^\beta\mathcal{W}^1_\pm(x).
% \end{align*}
% We get the boundedness, using the fact that $\beta +1 \le  \gamma$ and $\jx^\gamma V(x)\in L^1$.

% Second, the function $k\partial_ka(x,k)$ is bounded. Indeed, it suffices to prove that $k\partial_k \gamma_+(k)$  and $\chi_\pm(x)\jx^\beta k\partial_{k}m_\pm(x,k) $ are bounded. The first boundedness follows from Lemma \ref{estim TR}. For the second one, Lemma \ref{estim m} tells us 
% \begin{align*}
%     \chi_\pm(x)\jx^\beta| \partial_{k}m_\pm(x,k)| 
% 		& \lesssim \frac{1}{| k|}\jx^\beta\mathcal{W}^{1}_\pm(x).
% \end{align*}
% Thus, we get the boundedness using the fact that $\beta +1 \le  \gamma$ and $\jx^\gamma V(x)\in L^1$. Hence, we get the estimate by invoking Lemma \ref{lemstat}.
	\end{proof}

	\smallskip
	Putting together the results from Corollary \ref{cor:pointwisesingular}
	and Corollary \ref{cor:regularLinfty} we can obtain the following pointwise decay estimate:
	
	\begin{lemma}\label{lem:pointwiseH}
		Suppose $\jx^\gamma V(x)\in L^1$ with $\gamma\ge 5/2$ and assume the potential $V$ is non-generic.
		Then, the Schr\"odinger flow has the following dispersive estimate: for all $t \ge 0$,
		\begin{equation}\label{eq:linearpoinwiseH}
		{\big\| e^{itH}h \big\|}_{L^\infty_x}  \lesssim \frac{1}{\sqrt{t}} {\big\| h^\# \big\|}_{L^\infty_k}
		+\frac{1}{t^{\frac{3}{4}}}{\big\| \partial_k h^\# \big\|} _{L^2_k}.
		\end{equation}
	\end{lemma}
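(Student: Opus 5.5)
We write $e^{itH}h$ through the modified distorted Fourier transform and then split the kernel as in \eqref{Ksharpdecomp}. By \eqref{Fsharp-1} and the functional calculus \eqref{Fsharpm} with $m(|k|)=e^{itk^2}$,
\begin{align*}
e^{itH}h(x) = \int_\R \mathcal{K}^\#(x,k)e^{itk^2}h^\#(k)\,\frac{dk}{2\pi}.
\end{align*}
This holds first for $h$ Schwartz, with $h^\#\in L^1$ so that \eqref{Fsharp-1} applies. For general $h$ with finite right-hand side of \eqref{eq:linearpoinwiseH}, we approximate $h^\#$ in $L^\infty\cap \dot H^1$ on compact $k$-sets and pass to the limit pointwise in $x$. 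For $t\ge 1$ one could insert a $k$-cutoff and remove it at the end, but for simplicity we work with Schwartz $h$ and conclude by density.

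Next we decompose $\mathcal{K}^\#=\mathcal{K}^\#_S+\mathcal{K}^\#_R$.
\begin{itemize}
\item For the singular part $\mathcal{K}^\#_S=\chi_0\mathcal{K}_0^\#+\chi_+\mathcal{K}^\#_++\chi_-\mathcal{K}^\#_-$, estimate \eqref{eq:singulardecay} of Corollary \ref{cor:pointwisesingular}, applied with $h$ replaced by $h^\#$, bounds the $L^\infty_x$ norm by $t^{-1/2}\|h^\#\|_{L^\infty}+t^{-3/4}\|\partial_k h^\#\|_{L^2}$ directly.
\item For the regular part, we apply Corollary \ref{cor:regularLinfty} with $\beta=0$. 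Its hypothesis $\gamma\ge\beta+1=1$ holds since $\gamma\ge 5/2$. Because $\jx^0=1$, this gives the same bound for $\big\|\int\mathcal{K}^\#_R(x,k)e^{ik^2t}h^\#(k)\,dk\big\|_{L^\infty_x}$.
\end{itemize}
Summing the two contributions yields \eqref{eq:linearpoinwiseH}. The hypothesis $\gamma\ge5/2$ is more than enough for all the ingredients used: Lemma \ref{estim m} with $s=1$, Lemma \ref{estimG+-}, and Lemma \ref{estim TR}. Lemma \ref{estimG+-} gives $|k\partial_k\mathcal{G}|\lesssim 1$, which is exactly hypothesis \eqref{hypest} of Lemma \ref{lemstat}.

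The only point requiring care is that Lemma \ref{lemstat} is stated for $k\in\R_+$ and a phase $(k-X)^2$. The pieces with $k<0$ and the factors $e^{-ikx}$ are reduced to it by the change of variables $k\mapsto -k$. This changes $X=-x/(2t)$ into $-X$, and the phase constant $e^{-itX^2}$ has modulus one; uniformity in $X$ absorbs these changes. The coefficients $\mathbf{1}_\pm(k)$ cause no trouble because each half-line integral is treated separately, and on each half-line the symbols $a_\pm^\pm$, $\gamma_\pm$, $\mathcal{G}^{\pm}_\pm$ satisfy \eqref{hypest} there.

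Thus the main obstacle, which is uniform-in-$x$ control of the symbol, has already been handled in the two corollaries. The proof is a direct assembly of Corollaries \ref{cor:pointwisesingular} and \ref{cor:regularLinfty}, plus the density argument.
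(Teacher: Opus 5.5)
Your proposal is correct and matches the paper's proof. You represent $e^{itH}h=\int \mathcal{K}^\sharp(x,k)e^{itk^2}h^\sharp(k)\,\frac{dk}{2\pi}$ via the functional calculus and inversion formula for $\mathcal{F}^\sharp$, split $\mathcal{K}^\sharp=\mathcal{K}^\sharp_S+\mathcal{K}^\sharp_R$, and apply Corollary \ref{cor:pointwisesingular} and Corollary \ref{cor:regularLinfty} with $\beta=0$. Your extra remarks (the density argument from Schwartz $h$, and reducing the $k<0$ and $e^{-ikx}$ pieces to Lemma \ref{lemstat} via $k\mapsto -k$) are details the paper leaves implicit and do not change the argument.
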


	\begin{proof}
		The functional calculus of Proposition \ref{propsharpF} gives
		\begin{align}
		e^{itH}h 
		= \int_\R \mathcal{K}^{\#}(x,k) \mathcal{F}^{\#}(e^{itH} h) \,\frac{dk}{2\pi}
		& = \int_\R \mathcal{K}^{\#}(x,k) e^{itk^2} h^{\#}(k)\,\frac{dk}{2\pi}.
		\end{align}
		By the decomposition \eqref{Ksharpdecomp}, 
		the desired decay estimate is a direct consequence of 
		Corollary \ref{cor:pointwisesingular} and Corollary \ref{cor:regularLinfty} with $\beta=0$.
	\end{proof}

\subsection{Smoothing estimates}\label{ssecsmooth}
	
\begin{lemma}\label{lem:smoothingsim}
		Let  $\mathcal{Q}: \R_x\times \R_k \mapsto \C$ be such that, for some $\beta \in \R$,
		\begin{equation}\label{eq:Qbound}
		\sup_{x\in\mathbb{R},\,k\in\mathbb{R}} \big| \left\langle x\right\rangle^{\beta}\mathcal{Q}(x,k) \big| < \infty,
		\end{equation}
		and let $\phi: \R \mapsto \C$ be such that $|\phi(k)|\lesssim \sqrt{|k|}$. Then, for all $t\geq0$,
		\begin{equation}\label{eq:moregeneralsmoothing}
		\left\Vert \left \langle x\right\rangle^{\beta} \int_\R \mathcal{Q}(x,k) \phi(k) e^{ik^{2}s}h(k)\,dk\right\Vert _{L_{x}^{\infty}L_{s}^{2}([0,t])}
		\lesssim \left\Vert h\right\Vert _{L^{2}}.
		\end{equation}
		
		Under the same assumptions, the following inhomogeneous estimate holds:
		\begin{align}\label{eq:generalimhomsmoothing}
		\left\Vert \int_0^t \left[ \int_\R e^{-ik^{2}s} \overline{\phi(k)} \overline{\mathcal{Q}(y,k)}F(s,y)\,dy \right] \,ds\right\Vert_{L_{k}^{2}}
		\lesssim & \left\Vert \left\langle x\right\rangle^{-\beta}F\right\Vert_{L_{x}^{1}L_{s}^{2}([0,t])}.
		\end{align}
		
	\end{lemma}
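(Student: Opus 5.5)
We prove the homogeneous estimate \eqref{eq:moregeneralsmoothing} by fixing $x$ and taking the Fourier transform in time; the inhomogeneous estimate \eqref{eq:generalimhomsmoothing} then follows by duality.

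\emph{Homogeneous estimate.} Fix $x\in\R$. Since $\langle x\rangle^\beta|\mathcal{Q}(x,k)|\lesssim 1$ uniformly, it suffices to bound the $L^2_s$ norm of $s\mapsto G_x(s):=\int_\R q_x(k)\phi(k)e^{ik^2 s}h(k)\,dk$, where $q_x(k):=\langle x\rangle^\beta \mathcal{Q}(x,k)$ is bounded by a constant independent of $x$. We extend the time integration from $[0,t]$ to all of $\R$, which only increases the norm.

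Split $G_x=G_x^++G_x^-$ according to $k>0$ and $k<0$. On $k>0$ change variables $\lambda=k^2$, so $dk=d\lambda/(2\sqrt\lambda)$ and
\[
G_x^+(s)=\int_0^\infty e^{i\lambda s}\, \frac{q_x(\sqrt\lambda)\phi(\sqrt\lambda)h(\sqrt\lambda)}{2\sqrt\lambda}\,d\lambda .
\]
This is an inverse flat Fourier transform in $\lambda$. By Plancherel in $s$,
\[
\|G_x^+\|_{L^2_s(\R)}^2 \simeq \int_0^\infty \frac{|q_x\phi h|^2(\sqrt\lambda)}{4\lambda}\,d\lambda = \int_0^\infty \frac{|q_x(k)\phi(k)h(k)|^2}{2k}\,dk \lesssim \int_0^\infty |h(k)|^2\,dk .
\]
In the last step we used $|\phi(k)|^2\lesssim |k|$, which exactly cancels the Jacobian factor $1/k$; this is the only place the hypothesis on $\phi$ enters. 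The case $k<0$ is identical.

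Taking the supremum over $x$ gives \eqref{eq:moregeneralsmoothing} with a constant uniform in $x$ and in $t$.

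\emph{Inhomogeneous estimate.} Let $\mathcal{T}:L^2_k\to L^\infty_xL^2_s([0,t])$ denote the homogeneous operator $h\mapsto \langle x\rangle^{\beta}\int\mathcal{Q}\phi e^{ik^2s}h\,dk$. Pair the left-hand side of \eqref{eq:generalimhomsmoothing} with a test function $h\in L^2_k$ and use Fubini, which is justified for nice $F$ and $h$ and then extended by density:
\[
\int_\R h(k)\,\overline{\int_0^t\!\int_\R e^{ik^2s}\phi(k)\mathcal{Q}(y,k)\overline{F(s,y)}\,dy\,ds}\,dk
=\int_0^t\!\int_\R \overline{F(s,y)}^{\,*}\, \langle y\rangle^{-\beta}(\mathcal{T}h)(s,y)\,dy\,ds ,
\]
with the conjugations arranged so that the expression matches the left-hand side of \eqref{eq:generalimhomsmoothing}. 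Apply H\"older in $s$ and then in $y$ ($L^1_y$ against $L^\infty_y$). This bounds the pairing by $\|\langle y\rangle^{-\beta}F\|_{L^1_yL^2_s}\,\|\mathcal{T}h\|_{L^\infty_yL^2_s}$, and the homogeneous estimate controls the second factor by $\|h\|_{L^2}$. Taking the supremum over $\|h\|_{L^2}\le1$ gives \eqref{eq:generalimhomsmoothing}.

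\emph{Main obstacle.} There is no real obstacle. The step needing care is the Plancherel change of variables: it relies on the restriction of $e^{ik^2s}$ to each half-line $k\gtrless0$ being a genuine Fourier transform in $\lambda=k^2$, and on the weight $|\phi|^2/|k|$ staying bounded. The duality step only requires the conjugates and the $\langle y\rangle^{\pm\beta}$ weights to be placed consistently.
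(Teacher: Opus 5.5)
Your proof is correct and follows essentially the same route as the paper: the paper defers to Chen--Pusateri (Lemma 3.5), describes the estimate as coming from the flat Fourier transform in time (your change of variables $\lambda=k^2$ plus Plancherel in $s$), and obtains the inhomogeneous bound by duality. The only loose point is bookkeeping in the duality step, which is harmless: the pairing actually produces $\overline{(\mathcal{T}\bar h)(s,y)}$ rather than $(\mathcal{T}h)(s,y)$, and this changes nothing because $\|\bar h\|_{L^2}=\|h\|_{L^2}$.
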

    \begin{proof}
        See (\cite{CPNLS2}, Lemma 3.5).
    \end{proof}
    Note that \eqref{eq:generalimhomsmoothing} follows from \eqref{eq:moregeneralsmoothing} by duality.\\

   Since some parts of the nonlinear analysis require treating small and large frequencies separately, we establish the following tailored corollary.
	
	\begin{corollary}\label{cor:smoothing}
		Let $\mathcal{Q}:\R_x\times \R_k \mapsto \C$ be such that for some $\beta\in\mathbb{R}$
		\begin{equation}\label{eq:Qbound'}
		\sup_{x\in\mathbb{R},\,k\in\mathbb{R}}\left|\left\langle x\right\rangle ^{\beta}\mathcal{Q}(x,k)\right|
		<\infty,
		\end{equation}
        and let $\phi_1$ and $\phi_2$ be non-negative cut-off functions such that $\phi_1$ is supported in $[-1,1]$ and $\phi_1 +\phi_2 =1$.
		Then, the following estimates hold, for all $t>0$:
		\begin{align}\label{eq:smoothingQ}
		\left\Vert \left\langle x\right\rangle ^{\beta}\int_\R \phi_1(k)\,k\,\mathcal{Q}(x,k) e^{ik^{2}s}h(k)\,dk
		\right\Vert_{L_{x}^{\infty}L_{s}^{2}([0,t])}
		&\lesssim\left\Vert h\right\Vert _{L^{2}}, \\
		% By duality, the following inhomogeneous estimate holds:
		\label{eq:smoothingQim}
		\left\Vert \int_0^t \left[\phi_1(k) \int_\R  e^{-ik^{2}s} \,k\, \overline{\mathcal{Q}(y,k)} F(s,y)\,dy\right]\,ds\right\Vert _{L_{k}^{2}}
		& \lesssim {\big\| \left\langle x\right\rangle^{-\beta}F \big\|}_{L_{x}^{1}L_{s}^{2}([0,t])}, \\
		%For the high-frequency part, we can conclude
		% Under the same assumptions above
\label{eq:smoothingQh}
		\left\Vert \left\langle x\right\rangle ^{\beta}\int_\R \phi_2(k)
		\mathcal{Q}(x,k)e^{ik^{2}s}h(k)\,dk\right\Vert _{L_{x}^{\infty}L_{s}^{2}([0,t])} &\lesssim\left\Vert h\right\Vert _{L^{2}}, \\
		% By duality, the following inhomogeneous estimate holds:
\label{eq:smoothingQimh}
		\left\Vert \int_0^t \left[\phi_2(k) \int_\R  e^{-ik^{2}s}\overline{\mathcal{Q}(y,k)}F(s,y)\,dy\right]\,ds\right\Vert _{L_{k}^{2}}
		&\lesssim  {\big\| \left\langle x\right\rangle^{-\beta}F \big\|}_{L_{x}^{1}L_{s}^{2}([0,t])}.
		\end{align}
	\end{corollary}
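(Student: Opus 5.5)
The plan is to reduce each of the four estimates to Lemma \ref{lem:smoothingsim}, choosing a new symbol $\mathcal{Q}$ and weight $\phi$ each time. Only the two homogeneous estimates \eqref{eq:smoothingQ} and \eqref{eq:smoothingQh} need real work. The inhomogeneous ones \eqref{eq:smoothingQim} and \eqref{eq:smoothingQimh} then follow by duality, exactly as \eqref{eq:generalimhomsmoothing} follows from \eqref{eq:moregeneralsmoothing}: pair against $h\in L^2_k$, exchange the order of integration, and apply H\"older in $(x,s)$ in the form $L^1_xL^2_s$ against $L^\infty_xL^2_s$. Alternatively, one can apply \eqref{eq:generalimhomsmoothing} directly with the same choice of $\mathcal{Q},\phi$ as below.

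For the low-frequency estimate \eqref{eq:smoothingQ}, write
\[
\phi_1(k)\,k\,\mathcal{Q}(x,k)=\phi(k)\,\widetilde{\mathcal{Q}}(x,k),\qquad \phi(k):=\sqrt{|k|},\qquad \widetilde{\mathcal{Q}}(x,k):=\phi_1(k)\,\sgn(k)\sqrt{|k|}\,\mathcal{Q}(x,k).
\]
Since $\phi_1$ is supported in $[-1,1]$ and bounded, we have $\sup_{x,k}|\jx^\beta\widetilde{\mathcal{Q}}(x,k)|\lesssim \sup_{x,k}|\jx^\beta\mathcal{Q}(x,k)|<\infty$. The hypotheses \eqref{eq:Qbound} and $|\phi(k)|\lesssim\sqrt{|k|}$ therefore hold, and \eqref{eq:moregeneralsmoothing} gives \eqref{eq:smoothingQ} directly. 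The dual estimate \eqref{eq:smoothingQim} is obtained the same way, since $\overline{\phi(k)}\,\overline{\widetilde{\mathcal{Q}}(y,k)}=\phi_1(k)\,k\,\overline{\mathcal{Q}(y,k)}$ because $\phi_1$ is real.

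For the high-frequency estimate \eqref{eq:smoothingQh}, we may assume $\phi_2$ vanishes near $k=0$. This holds, for instance, if $\phi_1\equiv1$ on $[-1/2,1/2]$, which is the natural reading of the cut-off convention. Then write
\[
\phi_2(k)\,\mathcal{Q}(x,k)=\sqrt{|k|}\,\widetilde{\mathcal{Q}}(x,k),\qquad \widetilde{\mathcal{Q}}(x,k):=\frac{\phi_2(k)}{\sqrt{|k|}}\,\mathcal{Q}(x,k).
\]
On the support of $\phi_2$ we have $|k|\gtrsim 1$, so $\phi_2(k)|k|^{-1/2}$ is bounded. Hence $\widetilde{\mathcal{Q}}$ satisfies \eqref{eq:Qbound}, and Lemma \ref{lem:smoothingsim} yields \eqref{eq:smoothingQh}, and dually \eqref{eq:smoothingQimh}.

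The only delicate point is this last step: one needs $\phi_2$ to vanish in a neighborhood of the origin, so that dividing by $\sqrt{|k|}$ is harmless. If the cut-off is only assumed to satisfy $\phi_1+\phi_2=1$ with $\operatorname{supp}\phi_1\subset[-1,1]$, I would first replace it by a standard smooth partition with $\phi_1=1$ near $0$. Any discrepancy is then a compactly supported bounded multiplier of $\mathcal{Q}$ near the origin, and I would handle it by the same trick as in the low-frequency case when it carries a factor $k$, or else by absorbing it into the high-frequency case.
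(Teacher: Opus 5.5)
Your argument is essentially the paper's. The paper applies Lemma \ref{lem:smoothingsim} (and its dual \eqref{eq:generalimhomsmoothing}) with $\phi(k)=\phi_1(k)k$ and $\phi(k)=\phi_2(k)$, which is equivalent to your factorization through $\sqrt{|k|}$. As you correctly note, it also tacitly needs $\phi_2$ to vanish near $k=0$; this holds in the later applications, where $\phi_1\equiv 1$ on $[-1,1]$.

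Drop your final fallback paragraph, however. If $\phi_2(0)>0$, the $t$-uniform bound \eqref{eq:smoothingQh} is simply false. Take $\mathcal{Q}\equiv 1$, $\beta=0$, $h=\mathbf{1}_{[0,\delta]}$ and $t\gtrsim\delta^{-2}$: the left side is then $\gtrsim 1$ while $\|h\|_{L^2}=\sqrt{\delta}$. So no re-partitioning can absorb the discrepancy near the origin, which carries no factor of $k$ in that estimate.
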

	
	\begin{proof}
		These results follow directly from Lemma \ref{lem:smoothingsim} by choosing $\phi(k)=\phi_1(k) k$  
		and $\phi(k)=\phi_2(k)$.
		
	\end{proof}
    
\subsection{Local decay}\label{sseclocdec}
	We have the following improved local decay estimate for flows with improved low-frequency behavior:
	
	\begin{lemma}\label{lem:lowlocaldecay}
		Let $\mathcal{Q}:\R_x\times \R_k \mapsto \C$ be such that for some $\beta\in\mathbb{R}$
		\begin{equation}\label{eq:QboundP}
		\sup_{x\in\mathbb{R},\,k\in\mathbb{R}}\left|\left\langle x\right\rangle^{\beta}
		\partial_{k}^{j}\mathcal{Q}(x,k)\right|<\infty, \qquad j=0,1,
		\end{equation}
		and let  $\phi$ be a smooth function such that $|\phi(k)|\lesssim |k|$ for $|k|\leq1$
			and $|\phi(k)|$ is bounded for $|k|\geq1$. 
		Then, the following estimate holds 
		\[
		\left\Vert \left\langle x\right\rangle^{\beta}\int_\R \mathbf{1}_{\{\pm k\geq0\}}\phi(k)
		\mathcal{Q}(x,k)e^{ik^{2}t}h(k)\,dk\right\Vert_{L_{x}^{\infty}}
		\lesssim \frac{1}{\left\langle t\right\rangle }\left\Vert h\right\Vert _{H^{1}}.
		\]
	\end{lemma}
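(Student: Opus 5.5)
Fix $x$ and set $a(x,k):=\jx^{\beta}\mathcal{Q}(x,k)$. By \eqref{eq:QboundP}, $|a|+|\partial_k a|\lesssim 1$ uniformly in $x$ and $k$. We treat only the sign $+$, i.e. $k\ge0$; the other sign is symmetric. For $|t|\le1$ the bound is immediate: $|\phi|\lesssim1$, so the integral is at most $\|h\|_{L^1}$, which we want to control by $\|h\|_{H^1}$. Strictly speaking this only gives $\|h\|_{L^1}\lesssim \|\jk h\|_{L^2}$. I will use the $H^1$ norm as the paper intends, namely $\|h\|_{L^2}+\|\partial_kh\|_{L^2}$ together with the weighted control needed for integrability at high frequencies. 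If the weight is unavailable, I would insert a harmless high-frequency cutoff and treat high frequencies through the same integration by parts below, since $\partial_k$ of the phase grows like $k$.

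So assume $t\ge1$. The plan is a single integration by parts in $k$ using $e^{ik^2t}=\frac{1}{2ikt}\partial_k e^{ik^2t}$, with the factor $1/k$ absorbed by $\phi$. Define $b(x,k):=\phi(k)a(x,k)h(k)/k$. Since $|\phi(k)|\lesssim|k|$ near $0$, the quotient $\phi(k)/k$ is bounded near $0$; for $|k|\ge1$ we have $|\phi(k)/k|\le|\phi(k)|\lesssim1$. Smoothness of $\phi$ with $\phi(0)=0$ also gives $\partial_k(\phi/k)$ bounded near $0$, while for large $k$ we need $\phi'$ bounded. If only boundedness of $\phi$ is assumed at large $k$, I would split with a cutoff $\phi_1+\phi_2$ as in Corollary \ref{cor:smoothing} and, at high frequency, integrate by parts using $1/k$ directly. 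Then
\begin{align*}
\int_0^\infty e^{ik^2t}\phi a h\,dk=\Big[\frac{e^{ik^2t}}{2it}b\Big]_{0}^{\infty}-\frac{1}{2it}\int_0^\infty e^{ik^2t}\partial_k b\,dk .
\end{align*}
At $k=0$ the boundary term is $\frac{1}{2it}\lim_{k\to0^+}\frac{\phi(k)}{k}a(x,0)h(0)$, which is bounded by $t^{-1}\|h\|_{L^\infty}\lesssim t^{-1}\|h\|_{H^1}$ by Sobolev embedding. The half-line restriction $\mathbf{1}_{\{k\ge0\}}$ is exactly what creates this boundary term. It is harmless only because of the $1/t$ gain here, whereas the $1/\sqrt t$ in Lemma \ref{lemstat} needed no vanishing of $\phi$.

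For the remaining integral, the Leibniz rule gives
\[
\partial_k b=(\phi/k)'ah+(\phi/k)(\partial_ka)h+(\phi/k)a\,\partial_kh ,
\]
each term bounded by $|h|+|\partial_kh|$. It therefore suffices to bound these in $L^1_k$. The $\partial_kh$ term is the real obstacle, since $\|\partial_kh\|_{L^1}$ is not controlled by $\|h\|_{H^1}$ on the half-line. I would handle it by splitting $|k|\le1$ and $|k|\ge1$. On $|k|\le1$, Cauchy–Schwarz gives a bound by $\|\partial_kh\|_{L^2}$. On $|k|\ge1$, I would not integrate by parts globally. Instead, I would integrate by parts using the phase $e^{ik^2t}=\frac{1}{2ikt}\partial_ke^{ik^2t}$ with the full factor $1/k$, which yields terms like $\int_{|k|\ge1}|\partial_kh|/|k|\,dk\le\|\partial_kh\|_{L^2}$ and $\int_{|k|\ge1}|h|/k^2\,dk\lesssim\|h\|_{L^\infty}$. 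Boundary terms at $|k|=1$ are at most $t^{-1}\|h\|_{L^\infty}$.

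Collecting the boundary terms and the $O(t^{-1})$ integrals gives the claimed $\jt^{-1}\|h\|_{H^1}$ bound, uniformly in $x$. The main step to check carefully is the high-frequency piece, to make sure no term requires more than $\|h\|_{L^\infty}+\|\partial_kh\|_{L^2}$.
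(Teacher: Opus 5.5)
Your argument is correct and is essentially the integration by parts in $k$ behind the Chen--Pusateri lemma, which the paper cites in place of a proof. That argument writes $e^{ik^2t}=\frac{1}{2ikt}\partial_k e^{ik^2t}$ and lets $|\phi(k)|\lesssim|k|$ absorb the $1/k$. The boundary term at $k=0$ created by $\mathbf{1}_{\{\pm k\ge0\}}$ is bounded by $t^{-1}\|h\|_{L^\infty}$, and the residual $1/k$ makes $\partial_k\big(\tfrac{\phi}{k}\,a\,h\big)$ integrable at high frequency.

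The two extra hypotheses you invoke, $\langle k\rangle h\in L^2$ for $|t|\lesssim1$ and $\phi'$ bounded for $|k|\ge1$, are genuinely needed, and your fallback remedies would not remove them. With $\mathcal{Q}\equiv1$, $\phi\equiv1$ for $k\ge1$ and $h=\langle k\rangle^{-3/4}\in H^1$, the left side grows like $t^{-1/8}$ as $t\to0^+$. A bounded $\phi$ with unbounded $\phi'$ can cancel $e^{ik^2t}$ on disjoint intervals at a sequence of times, so the $t^{-1}$ gain fails there. Both hypotheses do hold where the lemma is meant to be applied, namely $h=f^\sharp$ with $u\in H^1$ and coefficients with bounded $k$-derivatives.
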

	
	\begin{proof}
		See (\cite{CPNLS2}, Lemma 3.8). 
	\end{proof}

Finally, we state a local decay estimate for the derivative of Schr\"odinger-type flows.

	\begin{lemma}\label{lem:localderivative}
		Let $\mathfrak{Q}:\R_x\times \R_k \mapsto \C$ be such that for some $\beta\in\mathbb{R}$
		\begin{equation}\label{eq:QboundP2}
	\sup_{x\in\mathbb{R},\,k\in\mathbb{R}}\left| \jx^\beta \partial_x^j \partial_k^\rho \mathfrak{Q}(x,k)\right|\lesssim 1,
	\qquad j,\rho=0,1,
		\end{equation}
		and assume that $\jx^\beta \partial_x^j \partial_k^\rho \mathfrak{Q}(x,k)$ 
		are symbols of $L^2$-bounded PDOs.
		%could be more precise with using $\beta-1$
		Then, the following estimate holds
		\begin{equation}\label{eq:QdecayL2}
		\left\Vert \left\langle x\right\rangle ^{\beta-1}\partial_{x}
		\int_\R \mathbf{1}_{\{\pm k\geq0\}}e^{ikx}\mathfrak{Q}(x,k)e^{ik^{2}t}h(k)\,dk\right\Vert_{L_{x}^{2}}
		\lesssim \frac{1}{\jt^{1/2}} \left\Vert h\right\Vert _{H^1}.
		\end{equation}
	\end{lemma}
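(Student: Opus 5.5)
We differentiate under the integral and split into two terms:
\begin{align*}
\partial_x \int_\R \mathbf{1}_{\{\pm k\geq0\}}e^{ikx}\mathfrak{Q}(x,k)e^{ik^{2}t}h(k)\,dk
&= \int_\R \mathbf{1}_{\{\pm k\geq0\}}e^{ikx}\,\partial_x\mathfrak{Q}(x,k)\,e^{ik^{2}t}h(k)\,dk \\
&\quad + \int_\R \mathbf{1}_{\{\pm k\geq0\}}e^{ikx}\,ik\,\mathfrak{Q}(x,k)\,e^{ik^{2}t}h(k)\,dk =: I_1 + I_2 .
\end{align*}
We treat each term with an $L^2_x$ interpolation between a trivial bound, which is uniform in $t$, and a bound that gains a factor $t^{-1}$ at the cost of a weight $\jx$. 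Interpolating these two yields $\jt^{-1/2}$ after losing $\jx^{1/2}\le\jx$, which is exactly what the weight $\jx^{\beta-1}$ in \eqref{eq:QdecayL2} allows.

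\textbf{The bound uniform in $t$.} By assumption, $\jx^\beta\partial_x^j\mathfrak{Q}$ is the symbol of an $L^2$-bounded PDO. Hence
$$\|\jx^{\beta}I_1\|_{L^2_x}\lesssim\|h\|_{L^2}, \qquad \|\jx^{\beta}I_2\|_{L^2_x}\lesssim\|kh\|_{L^2}\le\|h\|_{H^1}.$$
Here we use $\|h\|_{H^1}$ as a norm on the frequency side which controls both $\jk h$ and $\partial_k h$; this is the sense in which it is used in Lemma \ref{lem:lowlocaldecay}.

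\textbf{The bound with $t^{-1}$ decay.} We use $e^{ik^2t}=\frac{1}{2ikt}\partial_k e^{ik^2t}$ and integrate by parts in $k$. This is harmless in $I_2$, since the factor $k$ cancels the $1/k$: we get $\partial_k\big(e^{ikx}\mathfrak{Q}h\big)/(2t)$ plus a boundary term at $k=0$ of size $t^{-1}|\mathfrak{Q}(x,0)h(0)|$. The derivative produces three pieces:
\begin{itemize}
\item $ix\,e^{ikx}\mathfrak{Q}h$, bounded in $L^2_x$ by $\jx^{1-\beta}\|h\|_{L^2}$;
\item $e^{ikx}\partial_k\mathfrak{Q}\,h$, controlled by the PDO bound;
\item $e^{ikx}\mathfrak{Q}\,\partial_kh$, controlled by the PDO bound.
\end{itemize}
For the boundary term, $|h(0)|\lesssim\|h\|_{H^1}$ by Sobolev, and the term itself is pointwise in $x$ with size $\jx^{-\beta}$. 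Altogether,
$$\|\jx^{\beta-1}I_2\|_{L^2_x}\lesssim t^{-1}\|h\|_{H^1}.$$
The boundary term is only $L^2$ after multiplication by $\jx^{-1}$, and this is exactly the weight we have, up to an $\epsilon$ loss. I expect this to be the main obstacle. I would first try to handle it by inserting a smooth cutoff $\chi(k\sqrt t)$: on the region $|k|\lesssim t^{-1/2}$ we bound $I_2$ directly by $\|kh\mathbf{1}_{|k|\lesssim t^{-1/2}}\|$, which gives $t^{-1/2}\|h\|_{L^\infty}$ after a PDO estimate. On the complement we integrate by parts, where the derivative falling on the cutoff costs $\sqrt t$ but lives on a set of size $t^{-1/2}$, so the balance again gives $t^{-1/2}$.

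\textbf{The term $I_1$.} This term has no factor $k$, so we apply the same cutoff strategy. For $|k|\le t^{-1/2}$ we bound crudely by $\|h\|_{L^\infty}\,|\{|k|\le t^{-1/2}\}|^{1/2}\lesssim t^{-1/4}$ in $L^2_k$. This is not sufficient on its own, so instead we use the extra $x$-decay: $\jx^{\beta}\partial_x\mathfrak{Q}$ is bounded, and we may spend one power of $\jx$. On the low-frequency piece we estimate pointwise, $\big|\int_{|k|\le t^{-1/2}}\dots\big|\lesssim t^{-1/2}\|h\|_{L^\infty}\jx^{-\beta}$, and then $\jx^{\beta-1}$ times this is in $L^2_x$ with norm $\lesssim t^{-1/2}\|h\|_{H^1}$. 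On $|k|\ge t^{-1/2}$ we integrate by parts, gaining $1/(kt)$. The resulting terms are bounded by $\|\jx^{1-\beta}\cdots\|$ times $\|h/k\|_{L^2(|k|\ge t^{-1/2})}$ or $\|\partial_kh/k\|$, and these are $\lesssim t^{1/2}\|h\|_{H^1}$ (for the $h/k$ term, use $|h|\le\|h\|_{L^\infty}$). Overall this gives $t^{-1/2}$.

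Finally, for $t\le1$ we use the uniform bound, which combined with the estimates above yields the $\jt^{-1/2}$ rate in \eqref{eq:QdecayL2}.
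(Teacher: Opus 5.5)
Your argument is correct in substance and is the natural route to this estimate. The paper gives no proof of its own here and simply cites Chen--Pusateri (Lemma~3.9 of \cite{CPNLS2}). Your split closes: the term where $\partial_x$ hits $e^{ikx}$ is handled by integration by parts in $k$ via $ike^{ik^2t}=(2t)^{-1}\partial_k e^{ik^2t}$, and the term where $\partial_x$ hits $\mathfrak{Q}$ by pointwise $t^{-1/2}$ bounds combined with $\jx^{-1}\in L^2$. Three points need correcting.

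\textbf{The boundary term in $I_2$.} The boundary term at $k=0$ is not an obstacle. You have $|\jx^{\beta-1}\mathfrak{Q}(x,0)h(0)|\lesssim \jx^{-1}|h(0)|$, and $\jx^{-1}\in L^2(\R)$ with room to spare, since any $\jx^{-1/2-\delta}$ with $\delta>0$ is already in $L^2$. So there is no $\epsilon$ loss. Your direct integration by parts already gives $\|\jx^{\beta-1}I_2\|_{L^2_x}\lesssim t^{-1}\|h\|_{H^1}$, and the cutoff detour is unnecessary, though it also works.

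\textbf{Missing terms in the high-frequency part of $I_1$.} Your list of terms is incomplete. The terms where $\partial_k$ falls on $1/k$ and on the cutoff $1-\chi(k\sqrt t)$ are not of the form $\|h/k\|_{L^2}$ or $\|\partial_k h/k\|_{L^2}$; with a sharp cutoff, the same applies to the boundary terms at $|k|=t^{-1/2}$. An $L^2$ PDO bound on these gives only $t^{-1}\|k^{-2}h\|_{L^2(|k|\gtrsim t^{-1/2})}\sim t^{-1/4}\|h\|_{L^\infty}$, which is not enough. Estimate them pointwise instead, exactly as you did for the low-frequency piece. Since $t^{-1}\int_{|k|\gtrsim t^{-1/2}}k^{-2}\,dk+t^{-1/2}\int|\chi'(k\sqrt t)|\,|k|^{-1}\,dk\lesssim t^{-1/2}$, these terms are $\lesssim t^{-1/2}\jx^{-\beta}\|h\|_{L^\infty}$, and $\jx^{-1}\in L^2$ finishes.

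\textbf{The meaning of $\|h\|_{H^1}$.} Your reading of $\|h\|_{H^1}$ as also controlling $\|\jk h\|_{L^2}$ is genuinely needed, but only for $t\lesssim 1$ and only in $I_2$. Take $\mathfrak{Q}(x,k)=\eta(x)$ with $\eta\in C^\infty_c$ and $h=\theta(\cdot-K)$ with $\theta\in C^\infty_c$. Then the left side at $t=0$ is $\gtrsim K$, while $\|h\|_{L^2}+\|\partial_k h\|_{L^2}$ does not depend on $K$. For $t\ge 1$, your argument uses only $\|h\|_{L^2}+\|\partial_k h\|_{L^2}$, together with $\|h\|_{L^\infty}$ and $|h(0)|$, which that quantity controls. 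This is all that Corollary~\ref{cor:locdecdiffflow} requires at large times.
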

	
	\begin{proof}
		See (\cite{CPNLS2}, Lemma 3.9).
	\end{proof}

\begin{corollary} \label{cor:locdecdiffflow}
    Using the notation \eqref{decompphi}, with $f=e^{-itH}u$ , we have  
		\begin{align}\label{dxuM}
		{\big\| \jx^{-1}\partial_x u_{M}(t) \big\|}_{L^2_x} \lesssim \jt^{-1/2} {\| f^\#(t) \|}_{H^1} 
		%\lesssim \js^{-1/2+\alpha} \varepsilon
		, \qquad M \in \{S,R\}.
		\end{align}
\end{corollary}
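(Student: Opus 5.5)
The plan is to write $u_M(t)$, for $M\in\{S,R\}$, as a sum of pseudo-linear flows of the form treated in Lemma \ref{lem:localderivative}, and then apply that lemma piece by piece with $\beta=0$. Since $u = e^{itH}f$, the functional calculus \eqref{Fsharpm} gives $u^\#(t,k)=e^{itk^2}f^\#(t,k)$. Definition \ref{singregproj} then yields
\begin{align}
u_\ast(t,x) = \chi_\ast(x)\int_\R \mathcal{K}^\#_\ast(x,k)e^{itk^2}f^\#(t,k)\,\frac{dk}{2\pi}, \qquad \ast\in\{0,+,-\},
\end{align}
and the analogous formula holds for $u_R$ with $\mathcal{K}^\#_R$. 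In each case we split $k\ge0$ and $k<0$, and within each half-line we split into the $e^{ikx}$ and $e^{-ikx}$ components. For the $e^{-ikx}$ components we change variables $k\mapsto -k$. This turns them into $\mathbf{1}_{\{\mp k\ge 0\}}e^{ikx}\mathfrak{Q}(x,-k)e^{ik^2t}f^\#(-k)$, which is harmless because $\|f^\#(-\cdot)\|_{H^1}=\|f^\#\|_{H^1}$.

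\textbf{Singular pieces.} For $\ast=\pm$ we use \eqref{KsharpSpm}, which gives $\mathfrak{Q}(x,k)=\chi_\pm(x)a_\pm^{\epsilon}(k)$. By Lemma \ref{lem:Ksing} this symbol is bounded together with its $k$-derivative. Its $x$-derivative is $\chi_\pm'(x)a^\epsilon_\pm(k)$, which is compactly supported and bounded. A symbol that is a product of a bounded function of $x$ and a function of $k$ is trivially $L^2$-PDO bounded. Lemma \ref{lem:localderivative} with $\beta=0$ then gives the bound $\jt^{-1/2}\|f^\#\|_{H^1}$. For $\ast=0$ we use \eqref{K0closedecomp}. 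The symbols are $x$-dependent only, e.g. $\chi_+(x)A_-am_-(x,0)$ or $\chi_+(x)B_-$. These are bounded with bounded $x$-derivative by Lemma \ref{m0asymp}, and their $\partial_k$ vanishes. So the lemma applies directly.

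\textbf{Regular piece.} We use \eqref{kR}, so that $\mathfrak{Q}=\mathcal{G}_{\epsilon_1}^{\epsilon_2}(x,\pm k)$. Lemma \ref{estimG+-} controls $\mathfrak{Q}$ and $\partial_x\mathfrak{Q}$ with decay $\jx^{-\gamma+1}$. However, it only controls $k\partial_k\mathfrak{Q}$, not $\partial_k\mathfrak{Q}$. We therefore need a uniform bound on $\partial_k\mathcal{G}$ and on $\partial_x\partial_k\mathcal{G}$, and this is the main point to check. From the explicit formulas \eqref{KRk>0}, the $k$-derivative falls either on $\gamma_\pm,\lambda_\pm$, which is bounded by Lemma \ref{estim TR}, or on $m_\pm(x,\pm k)$. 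For the latter we need $\chi_\pm|\partial_k m_\pm|\lesssim \mathcal{W}^2_\pm(x)$, which is bounded by \eqref{mestim} with $s=1$, using $\gamma>5/2$. Lemma \ref{estim m} likewise gives $\chi_\pm|\partial_x\partial_k m_\pm|\lesssim\mathcal{W}^1_\pm$, which is bounded.

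The PDO boundedness hypothesis of Lemma \ref{lem:localderivative} is the more delicate requirement. Here I would not verify symbol conditions abstractly. Instead I would invoke Lemma \ref{lem:m-1} directly: the estimates \eqref{eq:pseeasy}, \eqref{eq:mPDO+}, \eqref{eq:psehard}, \eqref{eq:pdomk} and \eqref{eq:pdomxk}, all with $\beta=0$, cover exactly $m_\pm$, $m_\pm-1$, $\partial_xm_\pm$, $\partial_km_\pm$ and $\partial_x\partial_km_\pm$ restricted to $\pm x\ge-1$, and $\gamma>5/2$ satisfies every threshold there. The remaining factors are the coefficients in $k$, which act as bounded multipliers on $h$, and the cutoffs $\chi_\pm$ or $m_\pm(x,0)-1$, which act as bounded multipliers in $x$. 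Each $\mathcal{G}$-term is a finite combination of such compositions, so its symbol defines an $L^2$-bounded PDO. Two terms need a little care. The term $m_+(x,k)-m_+(x,0)$ splits as $(m_+(x,k)-1)-(m_+(x,0)-1)$, with the second part $k$-independent. The term $m_+(x,-k)$ is handled by the reflection $k\mapsto-k$ already used above.

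Summing the finitely many contributions yields \eqref{dxuM} with the weight $\jx^{\beta-1}=\jx^{-1}$. The main obstacle I anticipate is confirming that the indicator $\mathbf{1}_{\pm}(k)$, together with the Lipschitz but not smooth coefficients $a^\epsilon_\pm$ and $\gamma_\pm$, fits the PDO framework that the cited Lemma 3.9 of \cite{CPNLS2} requires. If it does not, the fallback is to redo that proof: integrate by parts in $x$, using $\partial_x e^{ikx}=ik e^{ikx}$, and apply Plancherel directly, relying only on the bounds from Lemma \ref{lem:m-1}.
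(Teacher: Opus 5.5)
Your proposal is correct and follows essentially the paper's proof: you apply Lemma \ref{lem:localderivative} with $\beta=0$ to each piece of $\mathcal{K}^\#_S$ and $\mathcal{K}^\#_R$ (after reflecting $k\mapsto -k$ for the $e^{-ikx}$ components), and for the regular part you verify the symbol and PDO hypotheses via Lemmas \ref{estim m}, \ref{estim TR} and \ref{lem:m-1}. The only difference is cosmetic and both versions are valid. For the singular pieces the paper applies the product rule: it bounds the terms where $\partial_x$ hits $\chi_\pm$ or $m_\pm(x,0)$ with the pointwise estimate of Lemma \ref{lemstat}, and invokes Lemma \ref{lem:localderivative} only with $k$-dependent symbols ($\mathfrak{Q}=1$ or $\gamma_+(k)-\gamma_+(0)$). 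You instead fold those bounded $x$-dependent factors directly into $\mathfrak{Q}$.
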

\begin{proof} For $M \in \{S,R\}$, we have 
\begin{align}
    u_M(t,x) = \int_\R \K^\sharp_M(x,k) e^{-ik^2t} f^\sharp(k)\frac{dk}{2\pi}.
\end{align}
We start with the case $M=S$. In view of the decomposition \eqref{KsharpSdecomp} of $\K^\sharp_S(x,k)$, it suffices to prove the estimate 
\begin{align}
    {\big\| \jx^{-1}\partial_x u_{\ast}(t) \big\|}_{L^2_x} \lesssim \jt^{-1/2} {\| f^\#(t) \|}_{H^1} 
		%\lesssim \js^{-1/2+\alpha} \varepsilon
		, \qquad \text{ for } \ast \in \{0,\pm \},
\end{align}
where 
\begin{align*}
    u_{\ast}(t,x) : = \int_\R \chi_*(x)\K^\sharp_{\ast}(x,k) e^{-ik^2t} f^\sharp(k)\frac{dk}{2\pi}.
\end{align*}

For $\ast =0$, recalling the expression \eqref{KsharpS0} of $\K^\sharp_0(x,k)$, we can write 
\begin{align*}
    u_0(t,x) &= (\chi_+(x)aA_- + \chi_-(x)B_+)m_-(x,0)\int_{\R} e^{-ikx}e^{-ik^2t}   f^\sharp(k)\, \frac{dk}{2\pi} \\
        & +(\chi_-(x)\frac{1}{a}A_+ + \chi_+(x)B_-)m_+(x,0)\int_{\R} e^{ikx}e^{-ik^2t} f^\sharp(k)\, \frac{dk}{2\pi}.
\end{align*}
We are going to treat the first term, since the analysis is similar for the second one. After taking derivatives with respect to $x$, we get :

\begin{align}\label{der1}
    &(\partial_x\chi_+(x)aA_- + \partial_x\chi_-(x)B_+)m_-(x,0)\int_{\R} e^{-ikx}e^{-ik^2t}   f^\sharp(k)\, \frac{dk}{2\pi}  \\
    \label{der2}
    &+(\chi_+(x)aA_- + \chi_-(x)B_+)\partial_xm_-(x,0)\int_{\R} e^{-ikx}e^{-ik^2t}   f^\sharp(k)\, \frac{dk}{2\pi} \\
    \label{der3}
    &+ (\chi_+(x)aA_- + \chi_-(x)B_+)m_-(x,0)\,\partial_x\int_{\R} e^{-ikx}e^{-ik^2t}   f^\sharp(k)\, \frac{dk}{2\pi}.
\end{align} 
 The $\jx^{-1}$ weighted $L^2_x$ norm of \eqref{der1} and \eqref{der2} is bounded, up to a multiplicative constant independent of $t$, by

\begin{align}\label{supintegral}
    \underset{x\in\R}{\sup}\Big| \int_\R e^{-ikx}e^{-ik^2t}   f^\sharp(k)\, \frac{dk}{2\pi}\Big| 
\end{align} 
This follows from the fact that $\partial_x\chi_\pm$ are compactly supported, the function $m_-(x,0)$ is bounded and  $\partial_xm_-(x,0) \le \jx^{-\gamma +1}$ from Lemma \ref{m0asymp}. Then \eqref{supintegral} is less than $\jt^{-1/2} {\| f^\#(t) \|}_{H^1}$ due to Lemma \ref{lemstat} and the one dimensional Sobolev embedding $\|f^\sharp\|_{L^\infty} \lesssim \| f^\#(t) \|_{H^1}$. The estimation of the third term \eqref{der3} follows directly from Lemma \ref{lem:localderivative} with $\beta =0$ and $\mathfrak{Q}(x,k)=1$.

For $* =\pm$, we will focus on the case $\ast =+$, since the other case is similar. From \eqref{KsharpSpm}, we have 
\begin{align*}
    u_+(t,x) &= \chi_+(x)\int_0^{+\infty} e^{ikx}(\gamma_+(k) - \gamma_+(0)) e^{-ik^2t} f^\sharp(k) \ dk \\
    &+  \chi_+(x)\int_{-\infty}^0 e^{-ikx}(\lambda_+(k) - \lambda_+(0)) e^{-ik^2t} f^\sharp(k) \ dk.
\end{align*}
We then focus on the first term, since the second can be treated similarly. After taking derivative in $x$, we get 
\begin{align*}
    &\partial_x\chi_+(x) \int_0^{+\infty} e^{ikx}(\gamma_+(k) - \gamma_+(0)) e^{-ik^2t} f^\sharp(k) \ dk \\
    & + \chi_+(x) \ \partial_x \int_0^{+\infty} e^{ikx}(\gamma_+(k) - \gamma_+(0)) e^{-ik^2t} f^\sharp(k) \ dk.
\end{align*}
The first term is treated as above, using the fact that $\partial_x\chi_+(x)$ is compactly supported and Lemma \ref{lemstat}. For the second one, we apply Lemma \ref{lem:localderivative} with $\beta =0$ and $\mathfrak{Q}(x,k)= \gamma_+(k) - \gamma_+(0)$, which satisfies the assumptions of Lemma \ref{lem:localderivative}. Indeed, $\gamma_+(k)$ and its derivative are bounded (Lemma \ref{estim TR}). This provides \eqref{eq:QboundP2} and $L^2$-PDOs boundedness assumption of Lemma \ref{lem:localderivative}.\\

We look at the case $M=R$. From the definition of the regular part \eqref{KRk>0} of the distorted kernel, we focus on the contribution of $e^{ikx}$, since the case of $e^{-ikx}$ is similar. 
Then, we are going to treat
\begin{align*}
&  \int_{k \ge 0} e^{ikx} \Big\{ \chi_+(x)\big[ \gamma_+(k)(m_+(x,k) - m_+(x,0)) \\
   & + (\gamma_+(k) - \gamma_+(0))(m_+(x,0) - 1) \big]+\chi_-(x) A_+(m_-(x,-k) - m_-(x,0)) \Big\} e^{-ik^2t}f^\sharp(k) \ dk .
\end{align*}
We then apply Lemma \ref{lem:localderivative} with $\beta =0$ and 
\begin{align*}
    \mathfrak{Q}(x,k) &= \chi_+(x)\big[ \gamma_+(k)(m_+(x,k) - m_+(x,0))  + (\gamma_+(k) - \gamma_+(0))(m_+(x,0) - 1) \big] \\
    & \qquad +\chi_-(x) A_+(m_-(x,-k) - m_-(x,0)).
\end{align*}
The function $\mathfrak{Q}(x,k)$ satisfies \eqref{eq:QboundP2}, as a consequence of the decay properties of Jost functions (Lemma \ref{estim m}) and the scattering data (Lemma \ref{estim TR}). It also satisfies the $L^2$-PDOs boundedness assumption of Lemma \ref{lem:localderivative} due to PDO bounds in Lemma \ref{lem:m-1}.

\end{proof}

%----------------------------------------------------
\def\eps{\epsilon}
	\def\epss{\eps_0,\eps_1
,\eps_1,\eps_2}
	\def\kk{k_0,k_1,k_2,k_3}

\bigskip
\section{The nonlinear spectral distribution for non-generic potentials}\label{secmu}
This section is devoted to the decomposition of the nonlinear spectral distribution (NSD)
\begin{equation}\label{musharpdef}
	\mu^{\#}(k,\ell,m,n):=\int_\R \overline{\mathcal{K}^{\#}(x,k)}\mathcal{K}^{\#}(x,\ell)
	\overline{\mathcal{K}^{\#}(x,m)}\mathcal{K}^{\#}(x,n)\,dx.
	\end{equation}
% \np{The rigorous way to understand $\mu^\sharp$ is as a limit in $\mathcal{S}'(\R^4)$. In fact, we consider a non-negative compactly supported function $\varphi$ on $\R$ such that $\varphi(0)=1$. The object $\mu^\sharp$ is defined as the limit in $\mathcal{S}'(\R^4)$ when $\eta \to 0$ of the following
% %$L^\infty$
% bounded functions 
% \begin{align*}
%     (k,\ell,m,n) \mapsto \int_\R \overline{\mathcal{K}^{\#}(x,k)}\mathcal{K}^{\#}(x,\ell)
% 	\overline{\mathcal{K}^{\#}(x,m)}\mathcal{K}^{\#}(x,n) \varphi(\eta x)\,dx.
% \end{align*}}

\subsection{Preliminaries} 
Following (\cite{CPNLS2}, Section 4), we consider the set of coefficients appearing in the decomposition of the singular part of the modified kernel \eqref{KsharpSdecomp}-\eqref{KsharpSpm}. We denote this set by
\begin{equation}
\label{defS}
\begin{split}
	 A := \Big\{ [\gamma_+(\cdot) - \gamma_+(0)) ]\mathbf{1}_+(\cdot),\ &[\lambda_-(\cdot) - \lambda_-(0)] \mathbf{1}_-(\cdot), \\
      &[\gamma_-(\cdot) - \gamma_-(0)] \mathbf{1}_-(\cdot),
\ [\lambda_+(\cdot) - \lambda_+(0)] \mathbf{1}_+(\cdot), \ 1,\ -1 \Big\}.
\end{split}
\end{equation}

We can notice that this set contains the coefficients $a^\epsilon_\iota$,
	$\epsilon,\iota \in \{+,-\}$ defined in \eqref{KsharpSpm}. These are all bounded continuous functions and their derivatives are also bounded (Lemma \ref{lem:Ksing}).
	
	To simplify the forthcoming writings, we use the following  notations:
	we denote signs $\{+,-\}$ by $\eps_j$ and let
	\begin{align}\label{convsum}
	\sum_\ast = \sum_{\epss \in \{+,-\}} \, .
	\end{align}
	%We will also use indexes $\iota \in \{0,+,-\}$.
	
	Given $(a_0,a_1,a_2,a_3) \in A^4$, we denote a fourfold tensor product with alternate conjugation as follows:
	\begin{align}\label{convprod}
	\prod_\ast %{j=1}^4 
	a_j(k_j) := \overline{a_0(k_0)} a_1(k_1) \overline{a_2(k_2)} a_3(k_3).
	\end{align}
	We will also use a similar notation for the product of four functions that also depend on the (same) variable $x$:
	\begin{align}\label{convprod'}
	\prod_\ast %{j=1}^4 
	b_j(x,k_j) := \overline{b_0(x,k_0)} b_1(x,k_1) \overline{b_2(x,k_2)} b_3(x,k_3).
	\end{align}
	
	For ${\bf \eps} = (\epss)$ and ${\bf k} = (\kk)$, we let
	\begin{align}\label{dot}
	{\bf \eps} \cdot {\bf k} := \eps_0k_0 + \eps_1k_1 + \eps_2k_2 + \eps_3k_3
	\end{align}
	be the standard dot product.
	
	We define $\mathcal{C}$ to be the set of functions on $\R^4$ that are a tensor product of $4$ elements in $A$:
	\begin{align}\label{defSet}
	\mathcal{C} := \big\{ f: {\bf k}:=(k_0,k_1,k_2,k_3) \in \R^4 \rightarrow \C, \, \, 
	\mbox{s.t.} \,  f({\bf k}) = \prod_\ast a_j(k_j), \,\, a_j \in A \big\},
	\end{align}
	%\begin{align*}
    
	$\mathcal{C} =  \bar{A}\otimes  A \otimes  \bar{A}\otimes A.$ %\approx A^4.
	%\end{align*}
We will also consider the above set minus the constant functions $1$ and $-1$:
	\begin{align}\label{defSet'}
	\mathcal{C}_L = \mathcal{C} \smallsetminus \{1,-1\}.
	\end{align}

\subsection{Decomposition of the NSD}
First of all, we provide the following Fourier transform formulas. We fix $\zeta$ an even smooth function with compact support and integral $1$.
\begin{lemma} \label{fouchi}
    We consider $\chi_+$ and $\chi_-$ as defined in \eqref{defchi+-}. For $r=1,2,3,4$, the following formulas hold
	\begin{align}\label{chi+-}
	\begin{split}
	\int_\R e^{-i\xi x}\chi_{+}^{r}(x)\,dx = 
	\pi\delta_{0}(\xi)+\pv\frac{\what{\zeta}(\xi)}{i\xi}+\what{\varpi_r}(\xi),
	\\
	\int_\R e^{-i\xi x}\chi_{-}^{r}(x)\,dx =
	\pi\delta_{0}(\xi)-\pv\frac{\what{\zeta}(\xi)}{i\xi}+\what{\varpi_r}(\xi),
	\end{split}
	\end{align}
	where $\varpi_r$ is  a  smooth function with compact support.
\end{lemma}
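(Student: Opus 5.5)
The plan is to reduce everything to the Heaviside function $H=\mathbf{1}_+$ by splitting off a compactly supported piece. Since $\chi_+(x)=\int_{-\infty}^x\phi$ with $\phi\ge0$ supported in $[-1,1]$ and of integral $1$, we have $\chi_+=1$ for $x\ge1$ and $\chi_+=0$ for $x\le-1$. Hence $\chi_+^r=1$ for $x\ge1$ and $\chi_+^r=0$ for $x\le -1$ for every $r$. The same holds for $\chi_-^r=(1-\chi_+)^r$ with the roles of $\pm$ swapped.

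Rather than comparing with $H$ directly, I will use the reference function $\theta(x):=\int_{-\infty}^x\zeta(y)\,dy$. It equals $1$ for $x\gg1$ and $0$ for $x\ll-1$ because $\zeta$ is compactly supported with integral $1$. Then $\varpi_r:=\chi_+^r-\theta$ is smooth and compactly supported, so $\what{\varpi_r}$ is a Schwartz function. It therefore remains to compute $\what\theta$ in the sense of distributions.

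\textbf{Computing $\what\theta$.} Write $\theta=\frac12+\big(\theta-\frac12\big)$. The constant contributes $\frac12\cdot2\pi\delta_0=\pi\delta_0$. For the second part, set $g:=\theta-\frac12$. Since $\zeta$ is even, $g$ is odd, and $g'=\zeta$. Taking Fourier transforms gives $i\xi\,\what g(\xi)=\what\zeta(\xi)$. Hence $\what g=\pv\frac{\what\zeta(\xi)}{i\xi}+c\,\delta_0$, and the constant $c$ is the only delicate point. Because $g$ is odd and real, $\what g$ is an odd distribution. The principal value term is odd, since $\what\zeta$ is even, while $\delta_0$ is even. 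This forces $c=0$, and we obtain the first formula in \eqref{chi+-}. Note that the principal value is well defined, because $\what\zeta/\xi$ has only a simple pole at $0$ with smooth, decaying numerator.

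\textbf{The $\chi_-$ case.} Here $\chi_-^r-(1-\theta)$ is again smooth and compactly supported, because $\chi_-^r$ equals $1$ for $x\le-1$ and $0$ for $x\ge1$. Moreover $1-\theta=\frac12-g$, whose Fourier transform is $\pi\delta_0-\pv\frac{\what\zeta}{i\xi}$. Explicitly, the remainder is $\varpi_r^-=\chi_-^r-1+\theta$. This differs from $\varpi_r$ in general, but the statement only requires some smooth compactly supported function, so I will simply relabel it (or note that it is again of the required form).

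The main obstacle is to justify the division by $\xi$ distributionally. Specifically, one must show that the solutions of $\xi T=0$ are exactly the multiples of $\delta_0$, and then use parity to rule out the $\delta_0$ term. This is standard, so I expect no real difficulty.
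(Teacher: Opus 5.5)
Your proof is correct and follows the paper's route. Your reference function $\theta(x)=\int_{-\infty}^x\zeta$ is exactly the paper's $\zeta*\mathbf{1}_+$, and the remainder $\varpi_r=\chi_+^r-\theta$ is the same. The one difference is how $\widehat\theta$ is computed. The paper uses the convolution theorem, $\widehat\theta=\widehat\zeta\,\widehat{\mathbf{1}_+}$ with $\widehat{\mathbf{1}_+}=\pi\delta_0+\pv\frac{1}{i\xi}$ and $\widehat\zeta(0)=1$, while you rederive it from $g'=\zeta$ plus a parity argument that relies on $\zeta$ being even. Your remark that the $\chi_-$ remainder is a different compactly supported function from $\varpi_r$ is accurate, and slightly more careful than the paper's notation.
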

\begin{proof}These formulas follow from the fact that 
\begin{align}\label{fou1+}
    \widehat{\mathbf{1}_{+}}(\xi) = \pi \delta_0(\xi) + \pv\frac{1}{i\xi} \ .
\end{align}
First, one can observe that it suffices to treat the case of $\chi_+$. We notice that $\varpi_r : =\chi^r_+ - \zeta \ast \mathbf{1}_{+}$ is a smooth function with compact support. Thus, 
\begin{align*}
    \int_\R e^{-i\xi x}\chi_{+}^{r}(x)\,dx = 
\what{\zeta \ast \mathbf{1}_{+}}(\xi) + \what{\varpi_r}(\xi).
\end{align*}
The formula follows from the convolution property of the (flat) Fourier transform, the relation \eqref{fou1+} and the fact that $\what{\zeta}(0) = \int_\R \zeta(x) \ dx =1$.
\end{proof}

\begin{theorem}\label{theomu}
		% The NSD $\mu^\#$ defined in \eqref{musharpdef} can be decomposed into the sum of  delta functions $\mu_0^\sharp$,
		% a singular part $\mu_S^\sharp$ with (a priori) dangerous p.v. terms and an improved low frequency behavior (subscript $L$)
		% and a regular part (subscript $R$) as follows:
		Let ${\bf k}=(k_0,k_1,k_2,k_3) \in \R^4$. The following decomposition holds
		\begin{align}\label{mudecomp}
		  \mu^\#({\bf k}) = \mu_0^\sharp({\bf k}) + \mu_{\pv}^\#({\bf k}) + \mu_L^\#({\bf k}) \ + \mu^\#_R({\bf k}).
		\end{align}
		
		\setlength{\leftmargini}{1.5em}
		\begin{itemize}
			
			\medskip
            \item  The  distribution $\mu_0^\sharp$ is given by
            \begin{equation}\label{mu0}
            \begin{aligned}
		\mu_0^\#({\bf k}) := 2\pi(A_-^4 + A_+^4) \delta_0(k_0-k_1+k_2-k_3) + 4\pi A_-^2A_+^2 \Big[ \delta_0(k_0-k_1-k_2+k_3) \\+\delta_0(k_0+k_1+k_2+k_3) + \delta_0(k_0+k_1-k_2-k_3)\Big] .
		\end{aligned}
        \end{equation}
			\item  The distribution $\mu_\pv^\sharp$ is given by
        \begin{align}\label{mupv}
		 \mu_{\pv}^\#({\bf k}) := 2(A_-^2 - A_+^2)B_+A_+ \sum_{\eps \in E} \pv\frac{\what{\zeta}(\eps\cdot {\bf k})}{i\eps \cdot {\bf k}} ,
		\end{align}
        where $E : = \{ (1,-1,1,1) ; (1,-1,-1,-1) ;  (1,1,1,-1) ; (-1,-1,1,-1) \}$.\\
        \item
            Denoting ${\bf \eps} = (\epss) \in \{+,-\}^4$, the part  $\mu^\#_L({\bf k})$ is a finite $\C$-linear combination of terms of the form
			\begin{align}\label{muL}
			\begin{split}
			% \mu^\#_L({\bf k}) = \sum_\ast %{\epss \in \{+,-\}} 
			a_{\epss}({\bf k}) \Big[ \pi \delta({\bf \eps} \cdot {\bf k} ) %\eps_0k_0+\eps_1\ell+\eps_2m+\eps_3n) 
			\pm \pv \frac{\what{\zeta}({\bf \eps} \cdot {\bf k} )}{i{\bf \eps} \cdot {\bf k} } \Big] &,
			\quad \, \mbox{with} \quad a_{\epss} \in \mathcal{C}_L,
			\end{split}
			\end{align}
			where $\zeta$ is the function from Lemma \ref{fouchi}.
			
			\medskip
			\item The  distribution $\mu_R^\sharp$ is given by
			\begin{align}\label{muR0}
			\begin{split}
			\mu^\#_R({\bf k}) :=  \mu^\#_{R,1}({\bf k}) + \mu^\#_{R,2}({\bf k}),  
			\end{split}
			\end{align}
			where
			\begin{align}\label{muR1}
			\begin{split}
			\mu^\#_{R,1}({\bf k}) := \sum_{(A_0,A_1,A_2,A_3) \in \mathcal{X}_R} \int_{\R} \prod_\ast \mathcal{K}^\#_{A_j}(x,k_j) \, dx,
			\,\quad \mathcal{X}_R := \{ S,R \}^4 \smallsetminus \{(S,S,S,S)\},
			\end{split}
			\end{align}
			and $\mu^\#_{R,2}({\bf k})$ is a finite $\C$-linear combination of terms of the form
			\begin{align}\label{muR2}
			%\sum_\ast
			\begin{split}
			& a_{\epss}({\bf k}) \, \what{\Fou}({\varphi_{\epss}})({\bf \eps} \cdot {\bf k}),
			%\qquad \varphi_{\epss} \in \mathcal{S}.
			\\
			& \mbox{with} \quad a_{\epss} \in \mathcal{C},
			\quad \mbox{and} \quad \big|\partial_x^\alpha \varphi_{\epss} \big| \lesssim \jx^{-\gamma+1},\quad \alpha=0,1.
			\end{split}
			\end{align}
			
		\end{itemize}
		
	\end{theorem}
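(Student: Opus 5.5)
The plan is to expand $\mu^\sharp$ by multilinearity using $\mathcal{K}^\sharp = \mathcal{K}^\sharp_S + \mathcal{K}^\sharp_R$ from \eqref{Ksharpdecomp}. Every term containing at least one factor $\mathcal{K}^\sharp_R$ is put into $\mu^\sharp_{R,1}$ by definition \eqref{muR1}. By Lemma \ref{estimkR} and boundedness of $\mathcal{K}^\sharp_S$, these terms are genuine functions, since the $x$-integral converges absolutely thanks to $\jx^{-\gamma+1}$ decay with $\gamma>5/2$. It therefore remains to decompose the purely singular product $\int_\R \prod_\ast \mathcal{K}^\sharp_S(x,k_j)\,dx$.

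For this we use \eqref{KsharpSdecomp} together with the refined form \eqref{K0closedecomp} of $\mathcal{K}^\sharp_0$. This writes $\mathcal{K}^\sharp_S(x,k)$ as a sum of terms of two types:
\begin{itemize}
\item \emph{localized terms} $\chi_\pm(x)\,c\,(\text{something}(x)-1)\,e^{\pm ikx}$, whose $x$-dependent factor decays like $\jx^{-\gamma+1}$ together with its derivative (Lemma \ref{m0asymp});
\item \emph{plane-wave terms} $\chi_\pm(x)\,a(k)\,e^{\pm ikx}$, with $a \in A$, namely the constants $A_\pm, B_\pm$ from \eqref{K0closedecomp} and the coefficients $a^{\pm}_{\pm}$ from \eqref{KsharpSpm}.
\end{itemize}

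\textbf{Products with a localized factor.} Expanding the fourfold product, any product containing at least one localized factor has the form $a_{\epss}({\bf k})\int \varphi(x)e^{-i({\bf \eps}\cdot{\bf k})x}dx$ with $a_{\epss}\in\mathcal{C}$ and $|\partial_x^\alpha\varphi|\lesssim\jx^{-\gamma+1}$. Indeed, products of $\chi$'s and bounded $m_\pm(x,0)$ keep both the decay and the derivative bound. These terms constitute $\mu^\sharp_{R,2}$.

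\textbf{Pure plane-wave products.} The remaining products are $a_{\epss}({\bf k})\int \chi_{\iota_0}\chi_{\iota_1}\chi_{\iota_2}\chi_{\iota_3}(x) e^{-i({\bf\eps}\cdot{\bf k})x}dx$. We use the following facts:
\begin{itemize}
\item since $\chi_+\chi_-$ is compactly supported, any mixed product of $\chi_+$'s and $\chi_-$'s is a Schwartz bump, and these terms are again absorbed into $\mu^\sharp_{R,2}$;
\item for the unmixed products $\chi_\pm^r$, with $r$ the number of factors (up to 4), Lemma \ref{fouchi} gives $\pi\delta_0 \pm \pv\frac{\widehat\zeta}{i\xi} + \widehat{\varpi_r}$, and the $\widehat{\varpi_r}$ piece goes to $\mu^\sharp_{R,2}$.
\end{itemize}
If at least one coefficient among the four is a nonconstant element of $A$, i.e.\ one of the $a^{\pm}_{\pm}$ (which vanish at $0$ by Lemma \ref{lem:Ksing}), then the coefficient lies in $\mathcal{C}_L$ and the term belongs to $\mu^\sharp_L$ as in \eqref{muL}.

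\textbf{The constant-coefficient terms.} What is left are the terms where all four factors come from the last line of \eqref{K0closedecomp}: on $\{x>0\}$ the wave is $A_-e^{-ikx}+B_-e^{ikx}$, and on $\{x<0\}$ it is $B_+e^{-ikx}+A_+e^{ikx}$. Here one must sum all $2^4$ sign patterns on each side with the coefficients raised to powers. The $\delta_0$ contributions from $\chi_+^4$ and $\chi_-^4$ add, with coefficient $\pi$ from each side. The $\pv$ contributions come with opposite signs and subtract. The delta coefficients are then simplified using \eqref{eq:ABrela1}–\eqref{eq:ABrela3}, namely $A_+^2=B_-^2$ and $A_-^2=B_+^2$. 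For instance, the pattern ${\bf\eps}=(1,-1,1,-1)$ collects $\pi(B_-^4 + A_+^4) + \pi(A_-^4+B_+^4) = 2\pi(A_+^4+A_-^4)$ after pairing with the mirror pattern; note that conjugation flips the sign on $k_0,k_2$. The patterns of type $(1,-1,-1,1)$ collect $4\pi A_+^2A_-^2$ by symmetry. Signed patterns related by ${\bf\eps}\to-{\bf\eps}$ are identified, giving the even delta combination \eqref{mu0}.

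\textbf{Main obstacle.} The hardest step is the $\pv$ bookkeeping. The coefficient of $\pv\frac{\widehat\zeta({\bf\eps}\cdot{\bf k})}{i{\bf\eps}\cdot{\bf k}}$ is a difference of the form $\prod(\text{right coefficients}) - \prod(\text{left coefficients})$. We must verify that it cancels for balanced patterns (two $+$, two $-$, which matches the flat case and \cite{CPNLS2}), and survives only for the four odd patterns in $E$ with coefficient $2(A_-^2-A_+^2)B_+A_+$. I would carry this out by using \eqref{eq:ABrela1} to write $A_- B_- = -A_+B_+$ and $A_+A_-=-B_+B_-$. Each three-to-one pattern then gives a product like $A_-B_-^3 - B_+A_+^3$, which should reduce to $B_+A_+(A_-^2-A_+^2)$, hence factor $2$ after combining ${\bf\eps}$ with $-{\bf\eps}$ via oddness of $\pv\,\widehat\zeta(\xi)/\xi$. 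I would check the signs carefully against the explicit definitions of $A_\pm,B_\pm$.
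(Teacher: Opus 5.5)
Your decomposition is the paper's own: peel off every term containing a $\mathcal{K}^\sharp_R$ into $\mu^\sharp_{R,1}$, expand $\mathcal{K}^\sharp_0$ via \eqref{K0closedecomp}, and send localized and mixed $\chi_+\chi_-$ products to $\mu^\sharp_{R,2}$. Unmixed products with at least one vanishing coefficient $a^\pm_\pm$ go to $\mu^\sharp_L$. The constant-coefficient $\chi_\pm^4$ terms are then computed with Lemma \ref{fouchi} and \eqref{eq:ABrela1}--\eqref{eq:ABrela3}. Your delta bookkeeping for the balanced patterns is correct. However, the mechanism you sketch for the three-to-one patterns, which you flagged as the main obstacle, is wrong as stated, and one check is missing there.

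Fix $\epsilon\in E$ and write $P(\xi)=\pv\frac{\widehat\zeta(\xi)}{i\xi}$. The pattern producing $e^{ix\,\epsilon\cdot\mathbf{k}}$ carries $A_-^3B_-$ under $\chi_+^4$ and $A_+B_+^3$ under $\chi_-^4$. The mirror pattern $e^{-ix\,\epsilon\cdot\mathbf{k}}$ carries $A_-B_-^3$ and $A_+^3B_+$. Lemma \ref{fouchi} gives $\int\chi_\pm^4e^{ix\xi}\,dx=\pi\delta_0(\xi)\mp P(\xi)+\dots$ and $\int\chi_\pm^4e^{-ix\xi}\,dx=\pi\delta_0(\xi)\pm P(\xi)+\dots$. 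Using $A_-B_-=-A_+B_+$ and $B_\pm^2=A_\mp^2$, the two patterns contribute the following to the coefficient of $P(\epsilon\cdot\mathbf{k})$:
\begin{itemize}
\item the $\epsilon$ pattern gives $A_+B_+^3-A_-^3B_-=2A_-^2A_+B_+$;
\item the mirror pattern gives $A_-B_-^3-A_+^3B_+=-2A_+^3B_+$.
\end{itemize}
These are not equal halves: $A_-B_-^3-B_+A_+^3=-2A_+^3B_+$ does not reduce to $B_+A_+(A_-^2-A_+^2)$, so the step ``reduce each, then double by oddness'' fails. Only their sum gives $2(A_-^2-A_+^2)A_+B_+$, which is exactly \eqref{mupv}.

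You also need the $\delta_0$ part of these odd patterns to vanish, or they would add terms absent from \eqref{mu0}. It does vanish, because its coefficient is $\pi\big(A_-B_-(A_-^2+B_-^2)+A_+B_+(A_+^2+B_+^2)\big)=\pi(A_-B_-+A_+B_+)=0$. This is precisely the paper's computation, which factors out $A_-B_-=-A_+B_+$ from the $(3,1)$ and $(1,3)$ terms before applying Lemma \ref{fouchi}. With these two corrections your proof is complete.
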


\bigskip

\begin{remark}  \label{remarkdecompmu}We make some remarks on the decomposition above and its relevance for the nonlinear analysis:
\begin{itemize}
    \smallskip
    \item  The piece $\mu^\#_R$ is called the regular part. In fact, it is a contribution of a local part $\mu^\#_{R,2}$ which is a linear combination
		of localized functions of ${\bf \eps} \cdot {\bf k}$, and a pseudodifferential part $\mu^\#_{R,1}$
		which involves at least one localized $\mathcal{K}^\#_R(x,k_j)$ kernel.
        
    \smallskip
    \item The pieces $\mu_L^\sharp$ and $\mu_\pv^\sharp$ are the singular parts of the NSD.

    \smallskip
    \item The $\mu^\#_0$ contribution is the easiest to handle in comparison to the other part of $\mu^\sharp$, since its analysis could be done as in the flat case ($V=0$).

    \smallskip 
    \item The part $\mu^\#_L$ is called the "improved low-frequency" part. In fact, the contributions appearing in \eqref{muL} come with Lipschitz coefficients vanishing at zero frequencies. This part can be handled using smoothing estimates as it was done in the Section 5 of \cite{CPNLS2}.

    \smallskip
    \item The part $\mu^\#_\pv$ is called "dangerous" $\pv$ part, due to the absence of low-frequency improvement as in $\mu^\#_L$. Nevertheless, one can see from \eqref{mupv} that if the coefficient $(A_-^2 - A_+^2)A_+B_+ = 0$, there are no "dangerous" $\pv$ part anymore. This null condition is equivalent to the situation where $a^2 =1$. Indeed, this is due to the relation 
     \begin{align}
        (A_-^2 - A_+^2)A_+B_+ = |a|\frac{a -a^3}{(a^2 +1)^2} 
    \end{align}
 where $a$ is the limit at $-\infty$ of the zero-energy resonance \eqref{a}. In this case, modified scattering has been essentially established in \cite{CPNLS2}. In fact, we can say that the following was established in \cite{CPNLS2}: in absence of "dangerous" $\pv$ part, for any $\alpha \in (0,1/4]$, there exists $\varepsilon >0$ small enough such that globally in time, the bound $\| \partial_kf^\sharp(t)\|_{L^2_k} \lesssim \varepsilon\jt^\alpha$ holds. This bound is of crucial importance in their argument for the pointwise estimate of $f^\sharp(t)$. We can observe that the pointwise analysis depends on the value of the scattering data $T(0), R_\pm(0)$ that are respectively $\pm 1,0$ (see Section $7$ in \cite{CPNLS2}). In view of Lemma \ref{lemngTR}, we understand that the result in \cite{CPNLS2} follows not necessarily because of the parity of the zero-energy resonance, but rather the parity of its limit at $\pm \infty$ i.e. $a^2 =1$.

 \smallskip
 \item The "dangerous" $\pv$ part are troublesome, since we could not use the improved low-frequency smoothing effects as in \cite{CPNLS2}. But, even if we could not close the bootstrap scheme to get global-in-time bounds, we obtain an "optimal" growth of $\varepsilon^3\jt^{1/4}$ when estimating $\| \partial_kf^\sharp(t)\|_{L^2_k}$ (see Section \ref{propgoalproof} for a discussion on this optimality).

    \smallskip
    \item  The refined decomposition performed in Theorem \ref{theomu} is of importance through its isolation of the "improved low-frequency" structure in the singular part of the NSD $\mu^\sharp.$ It helps to understand the subtlety of non-generic potentials without symmetry-type assumptions. Indeed, this formulation of Theorem \ref{theomu} shows the appearance of "dangerous" $\pv$ part. Nevertheless, for the nonlinear estimate of the singular part, we only need to know that the singular part is a finite $\C$-linear combination of tempered distribution on $\R^4$ of a certain form. This form is given by 
\begin{align}\label{gensingpart}
    (k,\ell,m,n) \mapsto \overline{a_{\e_0}(k)}a_{\e_1}(\ell)\overline{a_{\e_2}(m)}a_{\e_3}(n) \ \mathrm{b}(\e_0k + \e_1\ell + \e_2m + \e_3n)
\end{align}
where $(\e_0,\e_1,\e_2,\e_3) \in \{+,-\}^4$, the functions $a_{\e_j}$ and their derivatives are bounded, and $\mathrm{b}$ is a tempered distribution. Moreover, the distribution $\mathrm{b}$ takes the form 
\begin{align} \label{bform}
\xi \mapsto c\delta_0(\xi) + \pv\frac{\what{\zeta}(\xi)}{i\xi}
\end{align}
where $c$ is a constant and $\zeta$ a smooth function with compact support.
    \end{itemize}
\end{remark}

Theorem \ref{theomu} induces, in distorted frequency space, a corresponding decomposition 
of the nonlinear interactions through the Duhamel formulation \eqref{introD}. We recall the profile $f$ of the solution $u$, defined in \eqref{main1prof} as $f(t,x) := e^{-itH}u(t,x)$.

\begin{corollary}\label{duhamelfdecomp}
The following decomposition holds
\begin{align}\label{eq:expandf}
	f^{\#}(t,k)  = f^{\#}(0,k)  + i
	\int_0^{t} \big(\mathcal{N}_{0} + \mathcal{N}_{\pv} +\mathcal{N}_{L}
	+ \mathcal{N}_{R,1} + \mathcal{N}_{R,2} \big)\,ds,
\end{align}
where, for $\ast\in\{0; \pv; L; R,\!1; R,\!2\}$, 
\begin{align}\label{Nast}
	\begin{split} 
	\mathcal{N}_{\ast}(s,k) := \underset{\R^3}{\iiint} e^{is(-k^2+\ell^2-m^2+n^2)}
	f^{\#}(s,\ell)\overline{f^{\#}(s,m)}
	f^{\#}(s,n) \ \mu^{\#}_{\ast}(k,\ell,m,n)\,\frac{dn}{2\pi}\frac{dm}{2\pi}\frac{d\ell}{2\pi} \ .
	\end{split}
	\end{align}   
\end{corollary}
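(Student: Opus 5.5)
The corollary is obtained by writing the Duhamel formula on the distorted Fourier side and then inserting the decomposition of Theorem \ref{theomu}.

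\emph{Step 1: the profile equation.} Set $f=e^{-itH}u$. From \eqref{NLS} we get
\[
i\partial_t f = \mp e^{-itH}\big(|u|^2u\big).
\]
Apply $\mathcal{F}^\sharp$ and use the functional calculus \eqref{Fsharpm}, which gives $\mathcal{F}^\sharp(e^{-itH}g)(k)=e^{-itk^2}g^\sharp(k)$. This yields
\[
\partial_t f^\sharp(t,k) = \pm i\,e^{-itk^2}\,\mathcal{F}^\sharp(|u|^2u)(t,k),
\]
where the sign constant is absorbed as in \eqref{introD}.

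\emph{Step 2: the cubic term in frequency variables.} Write $u(t,x)=\int \mathcal{K}^\sharp(x,\ell)e^{it\ell^2}f^\sharp(t,\ell)\,\frac{d\ell}{2\pi}$ by \eqref{Fsharp-1}. The conjugate factor is handled in the same way. Substituting into
\[
\mathcal{F}^\sharp(|u|^2u)(k)=\int \overline{\mathcal{K}^\sharp(x,k)}\,u\,\overline{u}\,u\,dx
\]
and exchanging the $x$ integral with the $(\ell,m,n)$ integrals produces the kernel $\mu^\sharp(k,\ell,m,n)$ of \eqref{musharpdef} and the phase $e^{is(-k^2+\ell^2-m^2+n^2)}$. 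Integrating in time from $0$ to $t$ then gives \eqref{introD}.

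\emph{Main obstacle: justifying the exchange of integrals.} The $x$-integral defining $\mu^\sharp$ is not absolutely convergent; $\mu^\sharp$ exists only as a tempered distribution. I would proceed in three moves.
\begin{itemize}
\item Insert a cutoff $\varphi(\eta x)$ with $\varphi\in C_c^\infty$ and $\varphi(0)=1$.
\item Work first with data for which $f^\sharp(s,\cdot)$ is Schwartz, so that every integral converges absolutely and Fubini applies. This gives the identity with the truncated kernel $\mu^\sharp_\eta$.
\item Let $\eta\to0$: the truncated kernels converge to $\mu^\sharp$ in $\mathcal{S}'(\R^4)$, which is implicit in the statement of Theorem \ref{theomu}. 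On the left-hand side, dominated convergence applies because $|\mathcal{K}^\sharp|\lesssim1$ and $|u|^2u\in L^1$.
\end{itemize}
A density argument, together with continuity of the data-to-solution map in $H^1\cap L^2(\jx^2dx)$, then extends the identity to the general case, understood in the distributional sense in $k$.

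\emph{Step 3: splitting $\mu^\sharp$.} Theorem \ref{theomu} gives
\[
\mu^\sharp=\mu^\sharp_0+\mu^\sharp_{\pv}+\mu^\sharp_L+\mu^\sharp_{R,1}+\mu^\sharp_{R,2}.
\]
Each piece is a tempered distribution. Pairing it with the Schwartz function $(\ell,m,n)\mapsto e^{is(\cdots)}f^\sharp(\ell)\overline{f^\sharp(m)}f^\sharp(n)$ is linear, so the trilinear expression $\mathcal{N}_{\mu^\sharp}$ splits into the sum of the $\mathcal{N}_\ast$ defined in \eqref{Nast}. This is exactly \eqref{eq:expandf}.
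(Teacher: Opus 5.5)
Your Steps 1--3 are the paper's proof. The paper takes the Duhamel formula \eqref{introD} as given and obtains \eqref{eq:expandf} by inserting the decomposition of Theorem \ref{theomu}, using linearity of $\mathcal{N}_{\mu^\sharp}$ in $\mu^\sharp$. It never discusses how the $x$-integral defining $\mu^\sharp$ is exchanged with the $(\ell,m,n)$-integrals, and at that level of rigor your argument is correct.

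The justification you add for that exchange has a step that fails for a general $V$. You propose to ``work first with data for which $f^\sharp(s,\cdot)$ is Schwartz'' and then extend by density and continuity of the data-to-solution map, but no such dense class is available.
\begin{itemize}
\item The map $k\mapsto\mathcal{K}^\sharp(x,k)$ is defined separately on $k\ge0$ and $k<0$, and is only known to be Lipschitz at $k=0$ (Lemmas \ref{lem:Ksing}, \ref{estimkR}). So $\mathcal{F}^\sharp$ does not map $\mathcal{S}(\R)$ into $\mathcal{S}(\R)$.
\item Since $V$ lies only in $L^1(\jx^\gamma dx)$, the nonlinear flow does not propagate Schwartz-type smoothness or decay of $f^\sharp(s)$.
\end{itemize}
Consequently the convergence $\mu^\sharp_\eta\to\mu^\sharp$ in $\mathcal{S}'(\R^4)$ cannot be tested against $\psi(k)e^{is(\cdots)}f^\sharp(\ell)\overline{f^\sharp(m)}f^\sharp(n)$, which is not a Schwartz function on $\R^4$.

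What works is a fixed-time argument for $g=u(s)\in H^1$, with no regularization of the data:
\begin{itemize}
\item With the cutoff $\varphi(\eta x)$, Fubini needs only $g^\sharp\in L^1$. This follows from \eqref{kusharp} and Cauchy--Schwarz.
\item Split the truncated kernel as in Theorem \ref{theomu} and let $\eta\to0$ piece by piece.
\item By Lemma \ref{lem:inverseFD}, each singular piece is, up to $e^{-isk^2}$ and a bounded coefficient in $k$, the flat Fourier transform of a product of three flat waves in $L^2\cap L^\infty$ with a bounded function of $x$ (e.g.\ $c+\frac12\sgn\ast\zeta$, or $\chi_\pm^r$ times a localized factor).
\item Each regular piece is an $x$-integral against a localized factor.
\end{itemize}
Dominated convergence together with Plancherel then identifies each limit with the corresponding $\mathcal{N}_\ast$ of \eqref{Nast}.
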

\begin{proof}
    From the Duhamel formulation in frequency space \eqref{introD}, we have 
    \begin{align*}
         f^\sharp(t,k) = f^\sharp(0,k) + i\int_0^t \!\! \underset{\R^3}{\iiint} e^{is(-k^2+\ell^2-m^2+n^2)}
f^{\#}(s,\ell)\overline{f^{\#}(s,m)}f^{\#}(s,n) \, \mu^\sharp(k,\ell,m,n) \,\frac{dn}{2\pi}\frac{dm}{2\pi}\frac{d\ell}{2\pi} .
    \end{align*}
    The result follows directly from decomposition of the NSD $\mu^\sharp$ provided by theorem \ref{theomu}.
\end{proof}

\subsection{Proof of theorem \ref{theomu}}
\label{subsec:DecM}
	We proceed in a few steps.
	% Recall the definitions \eqref{Ksharpdecomp} and \eqref{KsharpSdecomp}-\eqref{KsharpSpm} and Lemma \ref{lem:Ksing}.
	% Note that $\mathcal{K}_{\pm}^{\#}(x,k)$ vanish at $k=0$, and are a linear combination of regular exponentials with 
	% coefficients in the set $A \smallsetminus \{1,-1\}$.
	
	\medskip
	\noindent
	{\it Step 1: Removing $\mu^\#_{R,1}$ and the delta functions and dangerous $\pv$ terms}.
	First, we remove $\mu^\#_{R,1}$ 
	as defined in \eqref{muR1}, from $\mu^\#$. We are left with
	\begin{align}\label{prmuS1}
	\begin{split}
	 \mu^\#({\bf k}) - \mu^\#_{R,1}({\bf k}) & = \int_{\R} \prod_\ast \mathcal{K}^\#_S(x,k_j) \, dx
	\\
	& = \sum_{(\iota_0,\iota_1,\iota_2,\iota_3) \in \{0,+,-\}^4} \int_{\R} \prod_\ast \chi_{\iota_j}(x)\mathcal{K}^\#_{\iota_j}(x,k_j) \, dx.
	\end{split}
	\end{align}
	To prove the result, it will suffice to show that the terms in \eqref{prmuS1} are 
	either of the form \eqref{mu0} , \eqref{mupv}, \eqref{muL} or of the form \eqref{muR2}.
	
	Now, we consider the leading order term in  \eqref{prmuS1}, 
	that is the one with $(\iota_0,\iota_1,\iota_2,\iota_3)=(0,0,0,0)$. Recalling the convention \eqref{convprod} and the convention $\chi_0(x) \equiv 1$, this leading order term
	is given by 
	\begin{align}\label{prmuS0}
	\begin{split}
	 \int_{\R} \prod_\ast \mathcal{K}^\#_{0}(x,k_j) \, dx.
	\end{split}
	\end{align}
    We use the decomposition of $\K^{\#}_0(x,k)$ from \eqref{K0closedecomp} as follows    
\begin{align}\label{Ksharp0dec}
	\begin{split}
    \K^{\#}_0(x,k) &= \chi_+(x)A_- (a m_-(x,0)-1)e^{-ikx} + \chi_-(x)B_+ (m_-(x,0)-1)e^{-ikx} \\
    &  + \chi_-(x)A_+ (a^{-1} m_+(x,0)-1)e^{ikx}  + \chi_+(x)B_-(m_+(x,0)-1)e^{ikx}\\
    & + \chi_+(x)A_- e^{-ikx} + \chi_-(x)B_+ e^{-ikx} +  \chi_-(x)A_+ e^{ikx}  + \chi_+(x)B_-e^{ikx}.
    %\\
    %&= \chi_+(x)A_- (a m_-(x,0)-1)e^{-ikx} + \chi_-(x)B_+ (m_-(x,0)-1)e^{-ikx} \\
    %&  + \chi_-(x)A_+ (a^{-1} m_+(x,0)-1)e^{ikx}  + \chi_+(x)B_-(m_+(x,0)-1)e^{ikx}
	\end{split}
	\end{align}
     From Lemma \ref{m0asymp}, $m_+(x,0)$ converges rapidly to $ 1$ and $a$ as $x \rightarrow +\infty$ and $x\rightarrow-\infty$ respectively, and  $m_-(x,0)$ converges rapidly to $ 1$ and $a^{-1}$ as $x \rightarrow - \infty$ and $x+\rightarrow \infty$ respectively. \\
     Therefore, up to terms that go in \eqref{muR2}, we have the contribution
     \begin{align*}
        \int_\R\prod_\ast ( g(x)e^{-ik_jx} + f(x)e^{ik_jx} ) \ dx
     \end{align*}
     where $g(x) : = \chi_+(x)A_- + \chi_-(x)B_+$ and $f(x) : = \chi_-(x)A_+ + \chi_+(x)B_-$. And after computations, we get :
     \begin{align*}
       \int_\R \Big\{ & g(x)^4e^{ix(k_0 - k_1 +k_2 -k_3)} + f(x)^4e^{-ix(k_0 - k_1 +k_2 -k_3)} \\
        & + g(x)^2f(x)^2 \Big[ e^{ix(k_0 - k_1 -k_2 +k_3)} + e^{ix(k_0 + k_1 +k_2 +k_3)} + e^{ix(k_0 + k_1 -k_2 -k_3)} \\
        &\qquad \qquad \qquad \qquad + e^{ix(-k_0 - k_1 -k_2 -k_3)} + e^{ix(-k_0 + k_1 + k_2 -k_3)} + e^{ix(-k_0 - k_1 +k_2 +k_3)}\Big] \\
        & + g(x)^3f(x) \Big[ e^{ix(k_0 - k_1 +k_2 +k_3)} + e^{ix(k_0 - k_1 -k_2 -k_3)} \\
        & \qquad \qquad \qquad \qquad+ e^{ix(k_0 + k_1 +k_2 -k_3)}+ e^{ix(-k_0 - k_1 +k_2 -k_3)}\Big] \\
        & +g(x)f(x)^3 \Big[ e^{-ix(k_0 - k_1 +k_2 +k_3)} + e^{-ix(k_0 - k_1 -k_2 -k_3)} \\
        & \qquad \qquad \qquad \qquad+ e^{-ix(k_0 + k_1 +k_2 -k_3)}+ e^{-ix(-k_0 - k_1 +k_2 -k_3)}\Big] \Big\} \ dx
     \end{align*}

     We perform the analysis of the terms $g(x)^mf(x)^n$ 
     % for $n,m$ even number, we get up to localized terms (that will contribute immediately to \eqref{muR2}) that the potential "dangerous terms" are :
     \medskip
     \begin{itemize}
         \item For $(m,n) = (4,0)$, we get, up to localized terms that go in \eqref{muR2},  
         \begin{align*}
             \int_\R (A_-^4\chi_+(x)^4 +B_+^4 \chi_-(x)^4)e^{ix(k_0 - k_1 +k_2 -k_3)} \ dx.
         \end{align*}
     We use the Fourier transform formulas \eqref{chi+-} and we have cancellation of the $\pv$ terms due to the fact that $A_-^2 = B_+^2$ \eqref{eq:ABrela3}. Thus, up to terms in \eqref{muR2}, we get
         \begin{align} \label{delta40}
             2\pi A_-^4 \delta_0(k_0 - k_1 + k_2 -k_3) .
         \end{align} 
         
         \item For $(m,n) = (0,4)$, we get, up to localized terms that go in \eqref{muR2}, 
          \begin{align*}
             \int_\R (A_+^4\chi_-(x)^4 +B_-^4 \chi_+(x)^4)e^{-ix(k_0 - k_1 +k_2 -k_3)} \ dx
         \end{align*}
         We use the Fourier transform formulas \eqref{chi+-} and we have cancellation of the $\pv$ terms due to the fact that $A_+^2 = B_-^2$ \eqref{eq:ABrela3}. Thus, up to terms in \eqref{muR2}, we get
         \begin{align} \label{delta04}
              2\pi A_+^4 \delta_0(k_0 - k_1 + k_2 -k_3)
         \end{align}
         since $\delta_0$ is even.
         \medskip
         \item For $(m,n) = (2,2)$, we get, up to localized terms that go in \eqref{muR2},
         \begin{align*}
             \int_\R (A_-^2B_-^2\chi_+(x)^4 +A_+^2B_+^2 \chi_-(x)^4)e^{ix \eps \cdot {\bf k }} \ dx.
         \end{align*}
         We proceed as above by observing that $A_-^2B_-^2 = A_+^2B_+^2$. By using the evenness of $\delta_0$, we get, up to terms in \eqref{muR2} 
         \begin{align} \label{delta22}
             4\pi A_-^2A_+^2 \Big[ \delta_0(k_0-k_1-k_2+k_3) +\delta_0(k_0+k_1+k_2+k_3) + \delta_0(k_0+k_1-k_2-k_3)\Big].
         \end{align}

         \medskip
         \item For $(m,n) = (3,1)$ and $(1,3)$, we get, up to  terms \eqref{muR2},
          \begin{align*}
             \int_\R (A_-^3B_-\chi_+(x)^4  + A_+B_+^3 \chi_-(x)^4)e^{ix \eps \cdot {\bf k }} \ dx + \int_\R (A_-B_-^3\chi_+(x)^4  + A_+^3B_+ \chi_-(x)^4)e^{-ix \eps \cdot {\bf k }} \ dx.
         \end{align*}
         We can write 
         \begin{align*}
            & A_-B_- \Big[ A_-^2\int_\R \chi_+(x)^4 e^{ix\eps \cdot {\bf k}}dx + B_-^2\int_\R \chi_+(x)^4 e^{-ix\eps \cdot {\bf k}}dx\Big]  \\
            &+ A_+B_+ \Big[ B_+^2\int_\R \chi_-(x)^4 e^{ix\eps \cdot {\bf k}}dx + A_+^2\int_\R \chi_-(x)^4 e^{-ix\eps \cdot {\bf k}}dx\Big].
         \end{align*}
         Since $A_+B_+ + A_-B_- = 0 $ \eqref{eq:ABrela1}, it suffices to consider, up to the multiplicative constant $A_+B_+$, 
         \begin{align*}
             &- A_-^2\int_\R \chi_+(x)^4 e^{ix\eps \cdot {\bf k}}dx - B_-^2\int_\R \chi_+(x)^4 e^{-ix\eps \cdot {\bf k}}dx \\
             & + B_+^2\int_\R \chi_-(x)^4 e^{ix\eps \cdot {\bf k}}dx + A_+^2\int_\R \chi_-(x)^4 e^{-ix\eps \cdot {\bf k}}dx.
         \end{align*}
     We use the Fourier transform formulas \eqref{chi+-} to get, up to terms in \eqref{muR2}, 
     \begin{align}\label{pv31-13}
         -2(A_+^2 - A_-^2)\pv \frac{\what{\zeta}(\eps \cdot {\bf k})}{i\eps \cdot { \bf k}}.
     \end{align}
     \end{itemize}
    At the end, we obtain \ref{mu0} as the contribution of \eqref{delta40}, \eqref{delta04} and \eqref{delta22}. And \eqref{pv31-13} \eqref{mupv} as announced in the theorem.

	% Next, we look at the contributions from \eqref{prmuS1} that give rise 
	% to the improved low frequency singular distribution $\mu^\#_L$.
	
	\medskip
	\noindent
	{\it Step 2: The singular part $\mu^\#_L$}.
    This part of the distribution arises from the contributions to \eqref{prmuS1} 
	that have no terms with both $+$ and $-$ in the sum,
	that is, the term
	\begin{align}
	\label{prmu+a}
	& \sum_{(\iota_0,\iota_1,\iota_2,\iota_3) \in \{0,+\}^4 \smallsetminus \{0\}^4} 
	\int_{\R} \prod_\ast \chi_{\iota_j}(x)\mathcal{K}^\#_{\iota_j}(x,k_j) \, dx
	\\
	\label{prmu+b}
	& + \sum_{(\iota_0,\iota_1,\iota_2,\iota_3) \in \{0,-\}^4 \smallsetminus \{0\}^4} 
	\int_{\R} \prod_\ast \chi_{\iota_j}(x)\mathcal{K}^\#_{\iota_j}(x,k_j) \, dx.
	\end{align}

	We look at the first sum above \eqref{prmu+a}. 
	By symmetry in the $k_j$ variables (up to irrelevant conjugations),
	it suffices to analyze four types of terms in the sum in \eqref{prmu+a}, 
	depending on how many indexes $\iota_j = 0$ appear; these terms are (recall the convention $\chi_0\equiv1$)
	\begin{subequations}\label{prmu+1}
		\begin{align}
		\label{prmu+11}
		& \int_{\R} \chi_+(x)  \overline{\mathcal{K}^\#_0(x,k_0)} \mathcal{K}^\#_0(x,k_1) 
		\overline{\mathcal{K}^\#_0(x,k_2)} \mathcal{K}^\#_+(x,k_3) \, dx,
		\\
		\label{prmu+12}
		& \int_{\R} \chi_+^2(x)  \overline{\mathcal{K}^\#_0(x,k_0)} \mathcal{K}^\#_0(x,k_1) 
		\overline{\mathcal{K}^\#_+(x,k_2)} \mathcal{K}^\#_+(x,k_3) \, dx,
		\\
		\label{prmu+13}
		& \int_{\R} \chi_+^3(x) \overline{\mathcal{K}^\#_0(x,k_0)} \mathcal{K}^\#_+(x,k_1) 
		\overline{\mathcal{K}^\#_+(x,k_2)} \mathcal{K}^\#_+(x,k_3) \, dx,
		\\
		\label{prmu+14}
		& \int_{\R} \chi_+^4(x) \prod_\ast \mathcal{K}^\#_{+}(x,k_j) \, dx.
		\end{align}
	\end{subequations}
 Recalling the refined decomposition \eqref{K0closedecomp} of $\mathcal{K}^\#_0(x,k)$, we perform the computation of \begin{align}
	    \overline{\mathcal{K}^\#_0(x,k_0)} \mathcal{K}^\#_0(x,k_1) 
		\overline{\mathcal{K}^\#_0(x,k_2)} \quad \text{ , } \quad \overline{\mathcal{K}^\#_0(x,k_0)} \mathcal{K}^\#_0(x,k_1) \quad \text{ and } \quad \overline{\mathcal{K}^\#_0(x,k_0)}.
	\end{align}
In fact, these terms are $\C-$linear combination of  elements of the form
\begin{align} \label{contribut}
        \varphi_{\epsilon}(x)e^{i\epsilon \cdot\bf k} + C_{\eta_+,r}\chi_+^r(x)e^{i\eta_+\cdot\bf k} +  C_{\eta_-,r}\chi_-^r(x)e^{i\eta_-\cdot\bf k}
    \end{align}
    where $\epsilon = (\epsilon_0, \epsilon_1, \epsilon_2, \epsilon_3) \in \{0,+,-\}^4 $, $\eta_\pm = (\eta_{0 \pm}, \eta_{1 \pm}, \eta_{2 \pm}, \eta_{3 \pm}) \in \{0,+,-\}^4$, $r= 1,2,3$, $C_{\eta_\pm,r}$ are constants and $\varphi_\epsilon(x)$ are localized smooth functions. Furthermore, the functions $\varphi_\epsilon(x)$ either contain the factor $\chi_+(x)\chi_-(x)$, which is smooth and compactly supported, or they involve products of the form $\chi_{\pm}(x)\big(m_\pm(x,0) - 1\big)$ and $\chi_{\pm}(x)\big(a^{\pm 1}m_\mp(x,0) - 1\big)$. The decay properties of the latter are provided by Lemma \ref{m0asymp}. 
    Now, we recall from the definition \eqref{KsharpSpm} that $\mathcal{K}^\#_+(x,k)$ comes with a coefficient that vanishes at $k=0$. Thus, applying Fourier formulas \eqref{chi+-} combined with the decomposition \eqref{contribut}. We get that the tempered distributions \eqref{prmu+11}, \eqref{prmu+12}, \eqref{prmu+13} and \eqref{prmu+14} are sums of terms in \eqref{muL} and \eqref{muR2}.

	\medskip
	\noindent
	{\it Step 3: The regular part $\mu_{R,2}^\sharp$}. From the analysis above, up to terms in \eqref{muR2}, we are left with the terms in \eqref{prmuS1}, where the sum is taken over quadruples 
	$(\iota_0,\iota_1,\iota_2,\iota_3)$
	that contain at least one $+$ and one $-$ sign. Thus,
 %    , which we denote by 
	% \begin{align}
	% \mathcal{I} := \big\{ (\iota_0,\iota_1,\iota_2,\iota_3) \in \{+,-,0\}^4 \, \mbox{s.t.} \, \exists \, a,b \in \{0,1,2,3\} \,\, \mbox{with} 
	% \,\, \iota_a = +, \iota_b = - \big\}.
	% \end{align}
	up to permuting variables (and conjugating), we can reduce matters to the terms where the $+$ and $-$ indexes correspond to the first two 
	kernels. %$\mathcal{K}^\#_{\iota_0}(k_0,x)$ and  $\mathcal{K}^\#_{\iota_1}$, 
	That is, the sum
	\begin{align}\label{prmuR0}
	\begin{split}
	& \sum_{(\iota_2,\iota_3) \in \{+,-,0\} } \int_{\R} \overline{\chi_{+}(x)\mathcal{K}^\#_{+}(x,k_0)}  
	\, \chi_{-}(x)\mathcal{K}^\#_{-}(x,k_1) 
	\, \overline{\chi_{\iota_2}(x)\mathcal{K}^\#_{\iota_2}(x,k_2)}  \, \chi_{\iota_3}(x)\mathcal{K}^\#_{\iota_3}(x,k_3) \, dx.
	\end{split}
	\end{align}
	Using \eqref{KsharpSpm}, we can write
	\begin{align}\label{prmuR1}
	\begin{split}
	\eqref{prmuR0} & = \sum_{ \substack{\iota_2,\iota_3 \in \{+,-,0\} \\ \eps_0,\eps_1 \in \{+,-\} } }   \overline{a_+^{\eps_0}(k_0)}  a_-^{\eps_1}(k_1)
	%\overline{a_{\iota_2}^{\eps_2}(k_2)}  \, a_{\iota_3}^{\eps_3}(k_3)
	\\ 
	& \times \int_{\R} (\chi_+\chi_-)(x) \, 
	%\overline{\mathcal{K}^\#_{+}(k_0,x)}  \mathcal{K}^\#_{-}(k_1,x) 
	%e^{ix(-\eps_0k_0+\eps_1k_1-\eps_2k_2+\eps_3k_3)} 
	e^{ix(-\eps_0k_0+\eps_1k_1)} 
	\overline{\chi_{\iota_2}(x)\mathcal{K}^\#_{\iota_2}(x,k_2)}  
	\, \chi_{\iota_3}(x)\mathcal{K}^\#_{\iota_3}(x,k_3) \, dx.
	\end{split}
	\end{align}
	Since $\chi_+\chi_-$ is compactly supported,
	the expression above is of the form \eqref{muR2} in view of the definitions 
	\eqref{KsharpS0}, \eqref{KsharpSpm}, and the definition \eqref{defSet}. 
 $\hfill \Box$
%-------------------------------------------------------
\bigskip
\section{\texorpdfstring{$L^2$}{L2} estimates for the singular part}\label{secnonlinl2sing} 
In this section, we prove the $L^2$ estimates involving the terms $\mathcal{N}_\ast$ for $\ast \in \{ 0, L, \pv\}$ \eqref{Nast}. The estimates follow from multilinear estimates (see Corollary \ref{cor:estimTbeps}), the inhomogeneous smoothing estimates of Lemma \ref{lem:smoothingsim} and a Fourier restriction-type inequality (see Lemma \ref{k^2lemma}). The latter is the main novelty in the analysis of singular parts especially in the absence of low-frequency improvement. We start by defining the following trilinear form.

% We start with $L^2$ estimates of the singular part. Here are lemmas that will be of use for the estimates.

% This section is devoted to the $L^2$ estimates, specifically to proving the bound \eqref{dkf bound} in Proposition \ref{propgoal}. For that sake, we first focus on the singular part.

\begin{definition}
    Let $\mathrm{b}$ be a tempered distribution on $\R$ and $ \eps = (\e_0,\e_1,\e_2,\e_3) \in \{ +,-\}^4$. For $\mathcal{S}(\R)$ functions $f_1,f_2,f_3$, we define the following trilinear form 
		\begin{align}\label{eq:tril}
		\begin{split}
		\mathcal{T}_{\mathrm{b},\eps}(f_1,f_2,f_3)(t,k) :=
		\underset{\R^3}{\iiint} & e^{it(-k^2+\ell^2-m^2+n^2)}f_{1}(\ell) \overline{f_{2}(m)} f_{3}(n)
		 \\
         & \times \mathrm{b}(\e_0k+\e_1\ell+\e_2m+\e_3n)\,\frac{dn}{2\pi}\frac{dm}{2\pi}\frac{d\ell}{2\pi} .
		\end{split} 
		\end{align}
\end{definition}
This trilinear form appears naturally in the expression of $\mathcal{N}_{\ast}$ due to the form of the $\mu_\ast^\sharp$  for $\ast \in \{ 0, L, \pv\}$.

\medskip
\begin{lemma}\label{lem:inverseFD}
		Let $\mathrm{b}$ be a tempered distribution on $\R$ and $ \eps = (\e_0,\e_1,\e_2,\e_3) \in \{ +,-\}^4$. Let $f_1,f_2,f_3$ be functions in $\mathcal{S}(\R)$.   Then, the following identity holds
		\begin{align}\label{eq:inverseFD}
		\begin{split}
		 \widehat{\mathcal{F}}^{-1} &\big[k \mapsto e^{itk^2} {\mathcal{T}}_{\mathrm{b},\e_0,\e_1,\e_2,\e_3}(f_1,f_2,f_3)(t,k)\big](x) \\
		&=  \e_0u_{1}(t,-\e_1x)\overline{u_{2}(t,\e_2x)}u_{3}(t,-\e_3x)\widehat{\mathcal{F}}^{-1}\left[\mathrm{b}\right](\eps_0x),
		\end{split}
		\end{align}
         where $u_{j}:=e^{-it\partial_{xx}}\widehat{\mathcal{F}}^{-1}(f_j)$.
	\end{lemma}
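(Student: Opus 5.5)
The plan is a direct Fourier computation. First I take $\mathrm{b}$ to be a Schwartz function, so that every integral converges absolutely. Then I extend to tempered $\mathrm{b}$ by duality.

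Write $B := \widehat{\mathcal{F}}^{-1}[\mathrm{b}]$, so that $\mathrm{b}(\xi) = \int_\R e^{-iy\xi} B(y)\,dy$ by \eqref{eq:FT}. From $u_j = e^{-it\partial_{xx}}\widehat{\mathcal{F}}^{-1}(f_j)$ one has $\widehat{u_j}(t,\xi) = e^{it\xi^2} f_j(\xi)$, hence
\begin{align*}
u_j(t,x) = \int_\R e^{ix\xi} e^{it\xi^2} f_j(\xi)\,\frac{d\xi}{2\pi}.
\end{align*}

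Next I insert the representation of $\mathrm{b}(\e_0k+\e_1\ell+\e_2m+\e_3n)$ into \eqref{eq:tril}, multiply by $e^{itk^2}$, and use Fubini to bring the $y$-integral outside. The phase $e^{-itk^2}$ cancels, and the $(\ell,m,n)$ integrals factor into three one-dimensional integrals:
\begin{align*}
\int_\R e^{it\ell^2} f_1(\ell) e^{-iy\e_1\ell}\,\frac{d\ell}{2\pi} &= u_1(t,-\e_1 y),\\
\int_\R e^{-itm^2}\overline{f_2(m)} e^{-iy\e_2 m}\,\frac{dm}{2\pi} &= \overline{u_2(t,\e_2 y)},\\
\int_\R e^{itn^2} f_3(n) e^{-iy\e_3 n}\,\frac{dn}{2\pi} &= u_3(t,-\e_3 y).
\end{align*}
This gives
\begin{align*}
e^{itk^2}\mathcal{T}_{\mathrm{b},\eps}(f_1,f_2,f_3)(t,k) = \int_\R e^{-i\e_0 k y}\, G(t,y)\,dy,
\qquad G(t,y) := B(y)\,u_1(t,-\e_1y)\,\overline{u_2(t,\e_2y)}\,u_3(t,-\e_3y).
\end{align*}

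I then change variables $y = \e_0 x$. This turns the right-hand side into the flat Fourier transform of $x\mapsto G(t,\e_0 x)$, and applying $\widehat{\mathcal{F}}^{-1}$ recovers $G(t,\e_0x)$. In this step the signs must be tracked carefully: the orientation of the substitution and the relation $\e_0^2=1$ determine the exact arguments of each $u_j$ and the constant prefactor $\e_0$ appearing in \eqref{eq:inverseFD}. I would redo this bookkeeping against the stated normalization. In particular, I would check whether the arguments of $u_1,u_2,u_3$ should carry an extra factor $\e_0$, which becomes harmless after relabelling $x\mapsto\e_0x$.

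Finally I would treat the case of a tempered distribution $\mathrm{b}$, which I expect to be the main (though mild) obstacle, since Fubini is not directly available there. Both sides of \eqref{eq:inverseFD} are tempered distributions in $x$. On the right, the product $u_1\overline{u_2}u_3$ is Schwartz in $x$ for Schwartz $f_j$ and fixed $t$, so multiplying it with $B$ is well defined. I would therefore pair both sides with a Schwartz test function. Alternatively, I would approximate $\mathrm{b}$ by Schwartz functions $\mathrm{b}_n\to\mathrm{b}$ in $\mathcal{S}'$ and pass to the limit: the trilinear form \eqref{eq:tril} is defined by pairing $\mathrm{b}$ against a Schwartz function of $\e\cdot\mathbf{k}$, and that pairing is continuous in $\mathrm{b}$. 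The identity established for Schwartz $\mathrm{b}$ then extends to all tempered $\mathrm{b}$.
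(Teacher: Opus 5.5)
Your computation is correct as far as it goes, and it is the paper's argument written out by hand. The paper writes $e^{itk^2}\mathcal{T}$ as a convolution of the $\widehat{u_j}(-\epsilon_j\,\cdot)$ with $\mathrm{b}$, evaluated at $\epsilon_0k$, and invokes the convolution theorem; you insert $\mathrm{b}=\widehat{\mathcal{F}}[B]$ and factor with Fubini. Your extension to tempered $\mathrm{b}$ by pairing with test functions (or by density) is fine, and it supplies a step the paper omits.

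The step that fails is the last one, the reconciliation with \eqref{eq:inverseFD}. Your substitution $y=\epsilon_0x$ actually gives
\[
\widehat{\mathcal{F}}^{-1}\big[k\mapsto e^{itk^2}\mathcal{T}_{\mathrm{b},\epsilon}(f_1,f_2,f_3)(t,k)\big](x)
=u_1(t,-\epsilon_0\epsilon_1x)\,\overline{u_2(t,\epsilon_0\epsilon_2x)}\,u_3(t,-\epsilon_0\epsilon_3x)\,\widehat{\mathcal{F}}^{-1}[\mathrm{b}](\epsilon_0x),
\]
with no prefactor. The reason is that $\int_\R h(y)\,dy=\int_\R h(\epsilon_0x)\,dx$ for either sign (the Jacobian is $|\epsilon_0|=1$), so orientation cannot produce a factor $\epsilon_0$. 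Nor can you remove the $\epsilon_0$ from the arguments by relabelling. The substitution $x\mapsto\epsilon_0x$ must be applied to both sides, and it only yields the equivalent statement: the left side evaluated at $\epsilon_0x$ equals $u_1(t,-\epsilon_1x)\overline{u_2(t,\epsilon_2x)}u_3(t,-\epsilon_3x)\,\widehat{\mathcal{F}}^{-1}[\mathrm{b}](x)$.

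No amount of bookkeeping can close this gap, because \eqref{eq:inverseFD} is false for $\epsilon_0=-1$. Take $\mathrm{b}\equiv1$. Then $e^{itk^2}\mathcal{T}_{1,\epsilon}(f_1,f_2,f_3)(t,k)=u_1(t,0)\overline{u_2(t,0)}u_3(t,0)$ for every $k$. So the left side is $u_1(t,0)\overline{u_2(t,0)}u_3(t,0)\,\delta_0$, while the right side of \eqref{eq:inverseFD} is $\epsilon_0$ times this; they differ whenever $u_1(t,0)u_2(t,0)u_3(t,0)\neq0$.

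The paper's own proof makes the same slip. The factor $\epsilon_1\epsilon_2\epsilon_3$ in its convolution formula is spurious, since the substitutions $\ell\mapsto-\epsilon_1\ell$ etc. have Jacobian $1$. Evaluating the convolution at $\epsilon_0k$ then puts $\epsilon_0$ into every argument, exactly as in your formula. So you should state and prove the corrected identity displayed above, rather than try to match the stated normalization.

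This costs nothing downstream. The lemma is only used through quantities insensitive to $x\mapsto\epsilon_0x$ and to unimodular constants: the $L^2$ bounds of Corollary~\ref{cor:estimTbeps}, the norm $\|\langle y\rangle F\|_{L^1_yL^2_s}$, and $\int_\R F(s,y)\,dy$. The paper in fact writes the last of these without any $\epsilon_0$.
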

    \begin{proof}
    First, we can write 
    \begin{align} \label{invFDcomput}
    \begin{split}
        {\mathcal{T}}_{\mathrm{b},\e_0,\e_1,\e_2,\e_3}(f_1,f_2,f_3)(t,k) = e^{-itk^2} \underset{\R^3}{\iiint} &\big(e^{it\ell^2} f_1(\ell) \big) \overline{\big(e^{itm^2} f_2(m)\big)} \big(e^{itn^2} f_3(n)\big) \\
        &\times \ \mathrm{b}(\e_0k+\e_1\ell+\e_2m+\e_3n)\,\frac{dn}{2\pi}\frac{dm}{2\pi}\frac{d\ell}{2\pi} .
         \end{split}
    \end{align}
    Then by observing that $e^{it\xi^2}f_j(\xi) = \what{u_j}(\xi)$, we can write \eqref{invFDcomput} as the following convolution
    \begin{align}
        e^{itk^2}{\mathcal{T}}_{\mathrm{b},\e_0,\e_1,\e_2,\e_3}(f_1,f_2,f_3)(t,k) = \e_1\e_2\e_3 \big(\what{u_1}(-\e_1\cdot) \ast \overline{\what{u_2}(-\e_2\cdot)} \ast \what{u_3}(-\e_3\cdot) \ast \mathrm{b}\big)(\e_0k). \nonumber
    \end{align}
       The identity \eqref{eq:inverseFD} follows from the fact that the (flat) Fourier transform changes convolution in pointwise product. 
    \end{proof}
\medskip
\begin{corollary} \label{cor:estimTbeps}
         Let $\mathrm{b}$ be a tempered distribution on $\R$ and $ \eps = (\e_0,\e_1,\e_2,\e_3) \in \{ +,-\}^4$. Let $f_1,f_2,f_3$ be functions in $\mathcal{S}(\R)$. If $\widehat{\mathcal{F}}^{-1}\left[\mathrm{b}\right] \in L^\infty$, then the following estimates hold
\begin{align*}
\begin{split}
     \| {\mathcal{T}}_{\mathrm{b},\eps}(f_1,f_2,f_3)(t,\cdot)\|_{L^2} &\lesssim \|u_1(t,\cdot)\|_{L^\infty}\|u_2(t,\cdot)\|_{L^\infty} \|f_3\|_{L^2},
\end{split}
    \\
    \begin{split}
         \| {\mathcal{T}}_{\mathrm{b},\eps}(f_1,f_2,f_3)(t,\cdot)\|_{L^2} &\lesssim \|u_1(t,\cdot)\|_{L^\infty}\|f_2\|_{L^2} \|u_3(t,\cdot)\|_{L^\infty}, 
    \end{split}
   \\
   \begin{split}
        \| {\mathcal{T}}_{\mathrm{b},\eps}(f_1,f_2,f_3)(t,\cdot)\|_{L^2} &\lesssim \|f_1\|_{L^2}\|u_2(t,\cdot)\|_{L^\infty}\|u_3(t,\cdot)\|_{L^\infty} ,
   \end{split}
\end{align*}
where $u_{j}:=e^{-it\partial_{xx}}\widehat{\mathcal{F}}^{-1}(f_j)$.
    \end{corollary}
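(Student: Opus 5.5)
The plan is to reduce each estimate to Hölder's inequality in physical space, using Lemma \ref{lem:inverseFD} together with the flat Plancherel identity. The first step is to remove the phase $e^{-itk^2}$. Since this factor has modulus one,
\begin{align*}
\| \mathcal{T}_{\mathrm{b},\eps}(f_1,f_2,f_3)(t,\cdot)\|_{L^2_k} = \big\| e^{itk^2}\mathcal{T}_{\mathrm{b},\eps}(f_1,f_2,f_3)(t,k)\big\|_{L^2_k}.
\end{align*}
By the flat Plancherel theorem (with the normalization \eqref{eq:FT}, which only produces a factor $\sqrt{2\pi}$), this equals, up to a constant, the $L^2_x$ norm of $\widehat{\mathcal{F}}^{-1}\big[e^{itk^2}\mathcal{T}_{\mathrm{b},\eps}(f_1,f_2,f_3)(t,\cdot)\big]$.

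The second step is to compute this inverse transform. Lemma \ref{lem:inverseFD} gives it explicitly as
\begin{align*}
\e_0\, u_1(t,-\e_1x)\,\overline{u_2(t,\e_2x)}\,u_3(t,-\e_3x)\,\widehat{\mathcal{F}}^{-1}[\mathrm{b}](\e_0x).
\end{align*}
The factor $\widehat{\mathcal{F}}^{-1}[\mathrm{b}]$ is bounded by hypothesis. We therefore estimate its contribution by $\|\widehat{\mathcal{F}}^{-1}[\mathrm{b}]\|_{L^\infty}$, which is a constant absorbed into $\lesssim$.

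The third step is Hölder's inequality: place two of the $u_j$ in $L^\infty_x$ and the remaining one in $L^2_x$. The reflections $x\mapsto \pm x$ leave all $L^p$ norms unchanged, so which argument appears in each factor does not matter. For the factor placed in $L^2$, we use that $e^{-it\partial_{xx}}$ is unitary on $L^2$ and apply flat Plancherel again:
\begin{align*}
\|u_j(t)\|_{L^2} = \|\widehat{\mathcal{F}}^{-1}f_j\|_{L^2} \simeq \|f_j\|_{L^2}.
\end{align*}
Choosing $j=3$, $j=2$ and $j=1$ in turn as the $L^2$ factor yields the three stated inequalities.

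There is no genuine obstacle here. The only point requiring care is justifying the use of Lemma \ref{lem:inverseFD} when $\mathrm{b}$ is merely tempered. For Schwartz $f_j$, each $u_j(t)$ is Schwartz, so the right-hand side of \eqref{eq:inverseFD} is a bounded function times a Schwartz function and lies in $L^2$. The identity \eqref{eq:inverseFD} holds in $\mathcal{S}'$, so Plancherel applies to this $L^2$ function, and the computation above is rigorous.
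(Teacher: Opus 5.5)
Your proposal is correct and follows essentially the same route as the paper: flat Plancherel to pass to the physical-space product given by Lemma \ref{lem:inverseFD}, then H\"older with $\widehat{\mathcal{F}}^{-1}[\mathrm{b}]\in L^\infty$ in one slot, and the $L^2$-unitarity of the free Schr\"odinger flow for the remaining factor. Your closing justification, that each $u_j(t)$ is Schwartz so the right side of \eqref{eq:inverseFD} is an $L^2$ function, is a worthwhile detail the paper leaves implicit when $\mathrm{b}$ is only tempered.
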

    \begin{proof} We focus on the first estimate, as the remaining ones follow by similar arguments. By (flat) Plancherel, we have 
        \begin{align*}
         \| {\mathcal{T}}_{\mathrm{b},\eps}(f_1,f_2,f_3)(t,\cdot)\|_{L^2} =   \| \widehat{\mathcal{F}}^{-1} \big[k \mapsto e^{itk^2} {\mathcal{T}}_{\mathrm{b},\e_0,\e_1,\e_2,\e_3}(f_1,f_2,f_3)(t,k)\big]\|_{L^2} .
        \end{align*}
        By using Lemma \ref{lem:inverseFD}, we can write  
        \begin{align*}
           \| {\mathcal{T}}_{\mathrm{b},\eps}(f_1,f_2,f_3)(t,\cdot)\|_{L^2} \lesssim \|u_1(t,\cdot)\|_{L^\infty}\|u_2(t,\cdot)\|_{L^\infty} \|u_3(t,\cdot)\|_{L^2} \| \widehat{\mathcal{F}}^{-1}\left[\mathrm{b}\right] \|_{L^\infty}.
        \end{align*}
        Using the fact that the (flat) Schrödinger propagator $e^{-it\partial_{xx}}$ is unitary on $L^2$ and (flat) Plancherel, we get $\|u_3\|_{L^2} = \|f_3\|_{L^2}$. Invoking the $L^\infty$ boundedness of $\widehat{\mathcal{F}}^{-1}\left[\mathrm{b}\right]$, the estimate follows.
    \end{proof}

\medskip
\begin{lemma}\label{lem:algetri} Let $\mathrm{b}$ be a tempered distribution on $\R$ and $ \eps = (\e_0,\e_1,\e_2,\e_3) \in \{ +,-\}^4$. Let $f_1,f_2,f_3$ be functions in $\mathcal{S}(\R)$. Then, the following identity holds
		\begin{align}\label{eq:kdiffT}
		\begin{split}
		 \e_0 \partial_{k}\mathcal{T}_{\mathrm{b},\e_0,\e_1,\e_2,\e_3}&(f_1,f_2,f_3)(t,k) 
		   \\
          =&-\e_1\mathcal{T}_{\mathrm{b},\e_0,\e_1,\e_2,\e_3}
		\left(\partial_{\ell}f_{1},f_{2},f_{3}\right)(t,k)+\e_2\mathcal{T}_{\mathrm{b},\e_0,\e_1,\e_2,\e_3}\left(f_{1},\partial_{m}f_{2},f_{3}\right)(t,k)
		\\
		&  -\e_3\mathcal{T}_{\mathrm{b},\e_0,\e_1,\e_2,\e_3}\left(f_{1},f_{2},\partial_{n}f_{3}\right)(t,k)
		 -2it \, \mathcal{T}_{\mathrm{\xi b},\e_0,\e_1,\e_2,\e_3}(f_1,f_2,f_3)(t,k).
		\end{split}
		\end{align}
	\end{lemma}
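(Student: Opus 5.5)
The plan is to differentiate \eqref{eq:tril} directly in $k$ and then trade the $k$-derivatives for derivatives in $\ell,m,n$ by integrating by parts. Set $\xi:=\e_0k+\e_1\ell+\e_2m+\e_3n$.

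\emph{Justification.} Since $\mathrm{b}$ is only a tempered distribution, I first make sense of the integral. For fixed $(t,k)$, the change of variables $\ell\mapsto \xi$ (with $m,n$ fixed) turns $\mathcal{T}_{\mathrm{b},\eps}(f_1,f_2,f_3)(t,k)$ into the pairing of $\mathrm{b}(\xi)$ with
\[
\xi\mapsto \iint e^{it(\cdots)} f_1\big(\e_1(\xi-\e_0k-\e_2m-\e_3n)\big)\overline{f_2(m)}f_3(n)\,dm\,dn ,
\]
which is Schwartz in $\xi$ and depends smoothly on $k$. Hence differentiation in $k$ and the integrations by parts below are legitimate. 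Equivalently, one can approximate $\mathrm{b}$ by Schwartz functions in $\mathcal{S}'$ and pass to the limit in the final identity. With this in hand I compute formally.

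\emph{Differentiating in $k$.} Using $\e_0^2=1$, we have $\e_0\partial_k[\mathrm{b}(\xi)]=\mathrm{b}'(\xi)$. Therefore
\[
\e_0\partial_k\mathcal{T}=\iiint e^{it\Phi}f_1\overline{f_2}f_3\,\big[-2it\,\e_0k\,\mathrm{b}(\xi)+\mathrm{b}'(\xi)\big],
\qquad \Phi:=-k^2+\ell^2-m^2+n^2 .
\]
The key algebraic step is to substitute $\e_0k=\xi-\e_1\ell-\e_2m-\e_3n$. The $\xi$ part produces exactly $-2it\,\mathcal{T}_{\xi\mathrm{b},\eps}(f_1,f_2,f_3)$. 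The remaining part is $2it(\e_1\ell+\e_2m+\e_3n)\,\mathrm{b}(\xi)$, and each of its three pieces is written as a derivative of the phase:
\[
2it\e_1\ell\, e^{it\ell^2}=\e_1\partial_\ell e^{it\ell^2},\qquad
2it\e_2m\, e^{-itm^2}=-\e_2\partial_m e^{-itm^2},\qquad
2it\e_3n\, e^{itn^2}=\e_3\partial_n e^{itn^2}.
\]

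\emph{Integrating by parts.}
\begin{itemize}
\item In $\ell$: we get $-\e_1\mathcal{T}(\partial_\ell f_1,f_2,f_3)$. The derivative falling on $\mathrm{b}(\xi)$ gives $-\e_1\cdot\e_1\mathrm{b}'(\xi)=-\mathrm{b}'(\xi)$.
\item In $m$: we get $+\e_2\mathcal{T}(f_1,\partial_mf_2,f_3)$, using $\partial_m\overline{f_2}=\overline{\partial_mf_2}$. The term on $\mathrm{b}$ gives $+\mathrm{b}'(\xi)$.
\item In $n$: we get $-\e_3\mathcal{T}(f_1,f_2,\partial_nf_3)$, together with $-\mathrm{b}'(\xi)$.
\end{itemize}
The $\mathrm{b}'$ contributions from the integrations by parts total $-1+1-1=-1$. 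This cancels exactly the $+\mathrm{b}'(\xi)$ term coming from the direct differentiation, which yields \eqref{eq:kdiffT}.

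The main point requiring care is this sign bookkeeping: the signs come from the conjugated middle slot and from the phase sign $-m^2$, and they must produce the cancellation of the $\mathrm{b}'$ terms. The rigorous justification for distributional $\mathrm{b}$ is routine via the change of variables or approximation argument above.
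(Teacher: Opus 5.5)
Your proof is correct. Differentiating in $k$, substituting $\e_0k=\xi-\e_1\ell-\e_2m-\e_3n$, and integrating by parts against the phases $e^{it\ell^2}$, $e^{-itm^2}$, $e^{itn^2}$ produces exactly the stated terms, and the $\mathrm{b}'(\xi)$ contributions cancel ($+1-1+1-1=0$); your justification via the Schwartz pairing in $\xi$, or via approximating $\mathrm{b}$ in $\mathcal{S}'$, is adequate.

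The paper gives no argument of its own here and simply cites Chen--Pusateri's Lemma 4.3. Your computation is the standard direct verification of that identity.
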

	
	\begin{proof}
		See Chen-Pusateri \cite[Lemma  4.3]{NLSV}.
	\end{proof}
    \medskip
    In addition to the above lemmas, the following Fourier restriction-type lemma will be of use.
    \begin{lemma} \label{k^2lemma}
    Let $F$ be a function in $\mathcal{S}(\R)$. Then, for $1 <p,q,r < \infty$ such that $\frac{1}{q} + \frac{1}{p} =1$ and $\frac{1}{p} = \frac{1}{r} - \frac{1}{2}$, the following estimate holds 
    \begin{align}
        \int_\R |\widehat{F}(k^2)|^2 \ dk \underset{p}{\lesssim} ||F||_{L^q} ||F||_{L^r} \ .
    \end{align}
    In particular, 
    \begin{align}\label{k^2lem43}
        \Big( \int_\R |\widehat{F}(k^2)|^2 \ dk \Big)^{1/2} \lesssim ||F||_{L^{4/3}} \ .
    \end{align}
    \end{lemma}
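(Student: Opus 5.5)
Expanding the square and changing variables reduces everything to a one-dimensional convolution inequality. Write $|\widehat F(k^2)|^2 = \widehat F(k^2)\overline{\widehat F(k^2)} = \iint F(x)\overline{F(y)}e^{-ik^2(x-y)}\,dx\,dy$. Integrating in $k$ formally, we would use the Fresnel integral $\int_\R e^{-ik^2 z}\,dk = \sqrt{\pi}\,e^{-i\pi\,\mathrm{sgn}(z)/4}|z|^{-1/2}$. To justify this rigorously, insert a Gaussian damping $e^{-\eta k^2}$ with $\eta\to0^+$. The damped integral is $\sqrt{\pi}\,(\eta+iz)^{-1/2}$, which is bounded in modulus by $\sqrt{\pi}|z|^{-1/2}$ uniformly in $\eta$. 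Since $F\in\mathcal S$, Fubini applies for $\eta>0$, and monotone convergence on the left gives
\begin{align}
\int_\R |\widehat F(k^2)|^2\,dk = \lim_{\eta\to0^+}\iint F(x)\overline{F(y)}\,\frac{\sqrt{\pi}}{(\eta+i(x-y))^{1/2}}\,dx\,dy \le \sqrt{\pi}\iint \frac{|F(x)||F(y)|}{|x-y|^{1/2}}\,dx\,dy .
\end{align}
The passage to the limit uses dominated convergence, with dominating function $|F(x)||F(y)||x-y|^{-1/2}$. This function is integrable because the kernel singularity is locally integrable in one dimension and $F$ is Schwartz.

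The right-hand side is $\int |F|\,(|\cdot|^{-1/2}\ast|F|)$. Applying H\"older with exponents $q$ and $p$, where $1/q+1/p=1$, bounds it by $\|F\|_{L^q}\,\big\||\cdot|^{-1/2}\ast|F|\big\|_{L^p}$. The Hardy--Littlewood--Sobolev inequality in dimension one, with fractional order $1/2$, gives $\big\||\cdot|^{-1/2}\ast|F|\big\|_{L^p}\lesssim_p\|F\|_{L^r}$ whenever $1<r<p<\infty$ and $1/p=1/r-1/2$. These are exactly the hypotheses of the lemma, so the general estimate follows.

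For the particular case, take $p=4$ and $q=r=4/3$. Then $1/r-1/2=3/4-1/2=1/4=1/p$ and $1/q+1/p=1$, so both constraints hold and we get $\int|\widehat F(k^2)|^2\,dk\lesssim\|F\|_{L^{4/3}}^2$. Taking square roots gives \eqref{k^2lem43}.

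The only delicate step is the rigorous evaluation of the oscillatory $k$-integral, which is why the Gaussian regularization is used. Once the kernel $|x-y|^{-1/2}$ has been isolated, the remainder is standard HLS.
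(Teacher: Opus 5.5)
Your proof is correct, and it takes a genuinely different route from the paper's.

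\textbf{The paper's route.} The paper substitutes $\lambda=k^2$ and writes
$\int_\R|\widehat F(k^2)|^2\,dk=\frac12\int_\R|G(\lambda)|^2|\lambda|^{-1/2}\,d\lambda$, with $G=\mathbf{1}_{[0,+\infty)}\widehat F$. It then uses Parseval against $\widehat{|\lambda|^{-1/2}}=c|\xi|^{-1/2}$, followed by H\"older and Hardy--Littlewood--Sobolev, to reach $\|\widehat G\|_{L^q}\|\widehat G\|_{L^r}$. To get back to $\|F\|_{L^q}\|F\|_{L^r}$ it needs one more ingredient: the $L^s$-boundedness of the Hilbert transform, because $\widehat G$ is a Riesz (half-line) projection of $F(-\cdot)$.

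\textbf{Your route.} You integrate in $k$ first, a $TT^*$-type computation. This absorbs the half-line restriction into the kernel, since
$\int_\R e^{-ik^2 z}\,dk=\int_0^\infty e^{-i\lambda z}\lambda^{-1/2}\,d\lambda$
is the Fourier transform of $\mathbf{1}_{[0,+\infty)}|\lambda|^{-1/2}$, and this is still pointwise dominated by $\sqrt{\pi}\,|z|^{-1/2}$.

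\textbf{What each buys.} Your version applies HLS directly to $|F|$ and needs no Hilbert transform. It also yields a slightly stronger intermediate bound, $\sqrt{\pi}\iint|F(x)||F(y)||x-y|^{-1/2}\,dx\,dy$, which depends only on $|F|$ and has an explicit constant. The Gaussian damping makes the exchange of integrals rigorous. Dominated convergence is in fact unnecessary for the inequality: the bound holds for every $\eta>0$, so monotone convergence on the left suffices. The paper's version makes the ``Fourier restriction'' reading explicit, as a weighted $L^2$ bound for $\widehat F$ on $\{\lambda\ge 0\}$, at the cost of the extra singular-integral step.
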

    \begin{proof} By the change of variables $\lambda = k^2$, we obtain
    \begin{align}
        I:=\int_\R |\widehat{F}(k^2)|^2\,dk
        = \frac12 \int_0^{+\infty} |\widehat{F}(\lambda)|^2\,\lambda^{-1/2}\,d\lambda
        = \frac12 \int_\R |G(\lambda)|^2 |\lambda|^{-1/2}\,d\lambda,
    \end{align}
    where $G(\lambda):=\mathbf 1_{[0,+\infty)}(\lambda)\widehat F(\lambda)$. 
    
     We know the following Fourier transform formula in $\R^n$ :
    \begin{align}
        \widehat{\frac{1}{|k|^\alpha}}(\xi) = c_{n,\alpha} \frac{1}{|\xi|^{n-\alpha}}, \quad \quad \quad \text{ for } \alpha \in (0,n).
    \end{align}
    In our case, we take $n=1$ and $\alpha =1/2$.

    Since the Fourier transform of $|\lambda|^{-1/2}$ is a constant multiple of $|\xi|^{-1/2}$ in dimension one, Plancherel gives
    \begin{align}
        I
        &\lesssim \int_\R \widehat{|G|^2}(\xi)|\xi|^{-1/2}\,d\xi
        = \int_\R (\widehat G * \widehat{\overline G})(\xi)|\xi|^{-1/2}\,d\xi.
    \end{align}
    Therefore, by H\"older's inequality with $\frac1q+\frac1p=1$,
    \begin{align}
        I
        &\lesssim \|\widehat G\|_{L^q}\,\big\||\xi|^{-1/2} * \widehat{\overline G}\big\|_{L^p}.
    \end{align}
    Since $\frac1p=\frac1r-\frac12$, the Hardy--Littlewood--Sobolev inequality yields
    \begin{align}
        I \lesssim \|\widehat G\|_{L^q}\,\|\widehat G\|_{L^r}.
    \end{align}
    It remains to control $\widehat G$. Using
    \begin{align}
        \widehat{\mathbf 1_{[0,+\infty)}}(\xi)=\pi\delta_0(\xi)+\pv\frac{1}{i\xi},
    \end{align}
    we have, up to harmless constants,
    \begin{align}
        \widehat G
        = \widehat{\mathbf 1_{[0,+\infty)}} * \widehat{\widehat F}
        = \widehat{\mathbf 1_{[0,+\infty)}} * F(-\cdot).
    \end{align}
    Hence for every $1<s<\infty$,
    \begin{align}
        \|\widehat G\|_{L^s}
        &\lesssim \|F\|_{L^s}+\Big\|\pv\frac{1}{\xi}*F\Big\|_{L^s}
        \lesssim \|F\|_{L^s},
    \end{align}
    by the $L^s$ boundedness of the Hilbert transform. Taking $s=q$ and $s=r$ proves
    \begin{align}
        \int_\R |\widehat F(k^2)|^2\,dk \lesssim \|F\|_{L^q}\|F\|_{L^r}.
    \end{align}
We get \eqref{k^2lem43}, by taking $q=r=4/3$.
    \end{proof}
\medskip
The sequel of this section will be devoted to prove the following statements.
\begin{proposition} \label{l2boundsing}
Under the bootstrap hypothesis \eqref{boothyp}, it holds that for every $|t| \le T$, 
    \begin{align}\begin{split}
		\label{weightmain0}
		 {\Big\| \int_0^t \partial_k \mathcal{N}_0(s,k) \, ds \Big\|}_{L^2_k} &\lesssim C_0^3\varepsilon^3 \jt^{1/4} ,
        \end{split}
		\\
        \begin{split}
		\label{weightmain1}
		 {\Big\| \int_0^t \partial_k \mathcal{N}_L(s,k) \, ds  \Big\|}_{L^2_k} &\lesssim C_0^3\varepsilon^3 \jt^{1/4} ,
         \end{split}
		\\
        \begin{split}
        \label{weightmainpv}
		 {\Big\| \int_0^t \partial_k \mathcal{N}_{\pv}(s,k) \, ds  \Big\|}_{L^2_k} &\lesssim C_0^3\varepsilon^3 \jt^{1/4} .
        \end{split}
        \end{align}
\end{proposition}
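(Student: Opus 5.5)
The plan is to write each of $\mathcal{N}_0,\mathcal{N}_L,\mathcal{N}_{\pv}$ as a finite linear combination of trilinear forms $\mathcal{T}_{\mathrm{b},\eps}$ (with coefficients $a_{\eps_j}$ attached to the inputs) and then apply the commutation identity of Lemma \ref{lem:algetri}. Using the form \eqref{gensingpart}, a term of $\mathcal{N}_\ast$ is $\overline{a_{\e_0}(k)}\,\mathcal{T}_{\mathrm{b},\eps}(a_{\e_1}f^\sharp,a_{\e_2}f^\sharp,a_{\e_3}f^\sharp)(s,k)$.

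\textbf{Step 1: split $\partial_k$.} Differentiating in $k$ produces two kinds of terms:
\begin{itemize}
\item \emph{Derivatives of the coefficients.} The term with $\partial_k\overline{a_{\e_0}}$ is bounded, and the terms with $\partial(a_{\e_j}f^\sharp)$ for $j=1,2,3$ are given by \eqref{eq:kdiffT}.
\item \emph{The dangerous term} $-2is\,\mathcal{T}_{\xi\mathrm b,\eps}$.
\end{itemize}

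\textbf{Step 2: non-dangerous terms.} These are handled by Corollary \ref{cor:estimTbeps}, since $\widehat{\mathcal F}^{-1}[\mathrm b]\in L^\infty$ for both $\delta_0$ and $\pv\,\what\zeta/(i\xi)$ (the latter is a bounded smoothed sign function). Take the $L^2$ norm on the differentiated input and $L^\infty$ on the other two flat flows $u_j$. By the bootstrap hypothesis \eqref{boothyp} and Lemma \ref{lemstat} (with $a\equiv1$), $\|u_j\|_{L^\infty}\lesssim C_0\varepsilon\js^{-1/2}$ and $\|\partial(a f^\sharp)\|_{L^2}\lesssim C_0\varepsilon\js^{1/4}$. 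The integrand is therefore $\lesssim C_0^3\varepsilon^3\js^{-3/4}$, and integrating over $[0,t]$ gives $\jt^{1/4}$.

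\textbf{Step 3: the $\xi\mathrm b$ term.} For $\mathrm b=c\delta_0$ this term vanishes, since $\xi\delta_0=0$, which settles $\mathcal{N}_0$. For the $\pv$ part, $\xi\mathrm b=\what\zeta(\xi)/i$ is Schwartz. By Lemma \ref{lem:inverseFD}, after taking the inverse flat Fourier transform, the term becomes
\[
e^{-isk^2}\,\widehat{\mathcal F}\big[s\,u_1\overline{u_2}u_3\,\zeta(\eps_0\cdot)\big](k),
\]
a localized cubic expression carrying an explicit factor $s$. Pointwise bounds only give $s\cdot s^{-3/2}$, which is not integrable in $L^2_k$ after the $s$-integration, so time oscillation must be exploited.

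\textbf{Step 4: the improved part $\mathcal{N}_L$.} Here $a_{\e_0}(k)$ (or one of the input coefficients) is Lipschitz and vanishes at $0$, so $|a(k)|\lesssim\min(|k|,1)\lesssim\sqrt{|k|}$. When the vanishing coefficient sits on the output, apply the inhomogeneous smoothing estimate \eqref{eq:generalimhomsmoothing} with $\phi=a_{\e_0}$ and $F=s\,\zeta\, u_1\overline{u_2}u_3$. This requires $\|s\,\zeta u_1\overline u_2 u_3\|_{L^1_xL^2_s}$, which I would bound by combining the $L^\infty$ decay of two factors with the local decay of Lemma \ref{lem:lowlocaldecay} (rate $\js^{-1}$) on the factor carrying the vanishing coefficient. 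When the vanishing coefficient sits on an input, that flow has improved local decay, $\js^{-1}\|f^\sharp\|_{H^1}\lesssim C_0\varepsilon\js^{-3/4}$. The product is then $s\cdot s^{-1/2}s^{-1/2}s^{-3/4}$, which lies in $L^2_s$ up to a $\jt^{1/4}$ loss, and the smoothing estimate closes. This follows \cite{CPNLS2}.

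\textbf{Step 5: $\mathcal{N}_{\pv}$, the main obstacle.} Here there is no vanishing coefficient. Writing $F(s,x)=s\,\zeta(x)\,u_1\overline{u_2}u_3(s,x)$, the time integral $\int_0^t e^{-isk^2}\widehat{F(s)}(k)\,ds$ is, for each fixed $k$, a space-time Fourier transform of $\mathbf 1_{[0,t]}F$ evaluated at (time frequency, space frequency) $=(k^2,k)$. I would apply Lemma \ref{k^2lemma} in the time variable to $s\mapsto \widehat{F(s)}(k)$, or after freezing $x$ via Minkowski to $s\mapsto F(s,x)$, so as to get
\[
\Big\|\int_0^t e^{-isk^2}F(s,x)\,ds\Big\|_{L^2_k}\lesssim\|F(\cdot,x)\|_{L^{4/3}_s([0,t])},
\]
and then integrate in $x$ against the localization $\zeta$. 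Since $|F|\lesssim C_0^3\varepsilon^3 s\cdot s^{-3/2}$, we get $\|F\|_{L^{4/3}_s}\lesssim\big(\int^t s^{-2/3}\,ds\big)^{3/4}\sim t^{1/4}$, which is exactly the claimed bound. The delicate point is the coupling of $e^{ikx}$ with $e^{-isk^2}$. I would handle it by treating $x\in\operatorname{supp}\zeta$ as compact and writing $e^{-ikx}$ as a bounded $k$-multiplier uniformly in $x$, and I expect this step to need the most care.
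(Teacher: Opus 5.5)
Your argument is correct, but for the key term it takes a different and shorter route than the paper.

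Both arguments reduce, via Lemma~\ref{lem:algetri} and $\xi\,\mathrm{b}=-i\what{\zeta}$, to bounding $I(t,k)=\int_0^t e^{-isk^2}\int_\R e^{-iky}F(s,y)\,dy\,ds$ with $F=s\,\zeta\,u_1\overline{u_2}u_3$ (up to sign flips in the arguments).

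The paper splits $e^{-iky}=(e^{-iky}-1)+1$:
\begin{itemize}
\item The first piece vanishes at $k=0$. So the smoothing estimate of Lemma~\ref{lem:smoothingsim} applies, with $\phi(k)=k$ near $0$ and a weight $\langle y\rangle$ absorbed by $\zeta$. It costs only $\sqrt{\log\jt}$.
\item The second piece is $\what{G_t}(k^2)$ for the single scalar function $G_t(s)=s\,\mathcal{T}_{\what{\zeta},\epsilon}(f_1,f_2,f_3)(s,0)\mathbf{1}_{[0,t]}(s)$. Lemma~\ref{k^2lemma} is applied to it once.
\end{itemize}

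You instead move the $y$-integral outside the $L^2_k$ norm by Minkowski, then apply Lemma~\ref{k^2lemma} to $s\mapsto\mathbf{1}_{[0,t]}(s)F(s,y)$ for each fixed $y$. That already closes the estimate. The coupling you flag as delicate disappears because $|e^{-iky}|=1$: for fixed $y$ the phase does not change the $L^2_k$ norm. You need only $\zeta\in L^1$, and no smoothing estimate at all.

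Drop your first alternative, applying the lemma to $s\mapsto\what{F(s)}(k)$. That function depends on $k$, so the lemma does not apply to it.

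What the paper's split buys is structural. It places the entire $t^{1/4}$ growth in the one quantity $\int_0^t e^{-isk^2}s\,\mathcal{T}_{\what{\zeta},\epsilon}(s,0)\,ds$, with everything else $O(\sqrt{\log\jt})$. This is the obstruction used in Section~\ref{propgoalproof} to argue that $t^{1/4}$ is sharp. Your fiberwise bound gives the estimate but not that information.

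Your Step 4 is unnecessary. The fiberwise argument uses only boundedness of the $a_{\epsilon_j}$, so it covers $\mathcal{N}_L$ verbatim. This is what the paper does: it treats $\mathcal{N}_L$ and $\mathcal{N}_{\pv}$ together via \eqref{gensingpart} and never uses vanishing at $0$.

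As written, Step 4 is also slightly off in both of its cases:
\begin{itemize}
\item If the vanishing coefficient is on the output, no input carries it, and no local decay is needed. The $L^\infty$ decay of the three factors alone gives $\|F\|_{L^1_xL^2_s([0,t])}\lesssim C_0^3\varepsilon^3\sqrt{\log\jt}$.
\item If it is on an input, the output coefficient may be constant. Then there is no $\phi$ with $|\phi(k)|\lesssim\sqrt{|k|}$, and Lemma~\ref{lem:smoothingsim} is unavailable. What closes this case is Minkowski in time, using $\|F(s)\|_{L^2_x}\lesssim C_0^3\varepsilon^3\js^{-3/4}$.
\end{itemize}
Also, Lemma~\ref{lem:lowlocaldecay} is stated for smooth $\phi$, whereas the coefficients in $A$ are only Lipschitz. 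Invoking it would need extra justification.
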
 

\begin{proof}For $\ast \in \{0,L,\pv\}$, We invoke the form of $\mathcal{N}_{\ast}$ in \eqref{Nast}.  We invoke the form of $\mu_\ast$ given in \eqref{gensingpart}. 
% \begin{align*}
%     \mathcal{N}_{\ast}(s,k) := \underset{\R^3}{\iiint} e^{is(-k^2+\ell^2-m^2+n^2)}
% 	f^{\#}(s,\ell)\overline{f^{\#}(s,m)}
% 	f^{\#}(s,n) \ \mu^{\#}_{\ast}(k,\ell,m,n)\,\frac{dn}{2\pi}\frac{dm}{2\pi}\frac{d\ell}{2\pi}.
% \end{align*}
% Theorem \ref{theomu} tells us that $\mu^{\#}_{\ast}$ are finite $\C$-linear combination of tempered distribution on $\R^4$ of a certain type. It takes the form
% \begin{align*}
%     (k,\ell,m,n) \mapsto \overline{a_{\e_0}(k)}a_{\e_1}(\ell)\overline{a_{\e_2}(m)}a_{\e_3}(n) \ \mathrm{b}(\e_0k + \e_1\ell + \e_2m + \e_3n)
% \end{align*}
% where $\e = (\e_0,\e_1,\e_2,\e_3) \in \{+,-\}^4$, the functions $a_{\e_j}$ and their derivatives are bounded in $L^\infty$, and $\mathrm{b}$ is a tempered distribution. Moreover, the distribution $\mathrm{b}$ takes the form 
% \begin{align} \label{bform}
% \xi \mapsto c\delta_0(\xi) + \pv\frac{\what{\zeta}(\xi)}{i\xi}
% \end{align}
% where $c$ is a constant and $\zeta$ a smooth function with compact support. 
Thus, to prove the proposition, it suffices to estimate
\begin{align*}
    \Big\| \int_0^t \partial_k \Big(\overline{a_{\e_0}(k)}{\mathcal{T}}_{\mathrm{b},\eps}(a_{\e_1}f^\sharp,a_{\e_2}f^\sharp,a_{\e_3}f^\sharp)(s,k)\Big) \ ds \Big\|_{L^2_k} 
\end{align*}
where $\mathrm{b}$ takes the form \eqref{bform}. Since $a_{\e_0}$ and $\partial_ka_{\e_0}$ are bounded, it suffices to estimate 
\begin{align} \label{noderiv}
    &\Big\| \int_0^t {\mathcal{T}}_{\mathrm{b},\eps}(a_{\e_1}f^\sharp,a_{\e_2}f^\sharp,a_{\e_3}f^\sharp)(s,k) \ ds \Big\|_{L^2_k}, \\
    \label{withderiv}
     &\Big\| \int_0^t \partial_k {\mathcal{T}}_{\mathrm{b},\eps}(a_{\e_1}f^\sharp,a_{\e_2}f^\sharp,a_{\e_3}f^\sharp)(s,k) \ ds \Big\|_{L^2_k}.
\end{align}
For $j\in\{1,2,3\}$, we set $f_j := a_{\e_j}f^\sharp$ and $u_j(t):= e^{-it\partial_{xx}}\what{\Fou}^{-1}(f_j)$. The tempered distribution $\mathrm{b}$ satisfies $\what{\Fou}^{-1}[\mathrm{b}] \in L^\infty$. Indeed, 
$$ \what{\Fou}^{-1}[\mathrm{b}](x) = c + \frac{1}{2}(\sgn \ast \zeta)(x) $$
which is bounded since $\zeta \in L^1$. Then by Corollary \ref{cor:estimTbeps}, we have 
\begin{align}\label{Tbf1f2f3 1}
    \Big\|{\mathcal{T}}_{\mathrm{b},\eps}(f_1,f_2,f_3)(s,k) \Big\|_{L^2_k} \lesssim \| u_1(s)\|_{L^\infty_x}\| u_2(s)\|_{L^\infty_x}\| f_3(s)\|_{L^2_k}.
\end{align}
We observe that 
\begin{align*}
    u_j(s,x)= e^{-is\partial_{xx}}\what{\Fou}^{-1}(f_j)(x) = \what{\Fou}^{-1}\Big[ k \mapsto e^{isk^2} a_{\e_j}(k)f^\sharp(s,k)\Big](x) 
\end{align*}
Thus, from Lemma \ref{lemstat}, we get 
\begin{align*}
    \|  u_j(s) \|_{L^\infty} &\lesssim \frac{1}{\js^{1/2}} \Big(  \| a_{\e_j}f^\sharp(s)\|_{L^\infty_k} + \frac{1}{\js^{1/4}} \| \partial_k(a_{\e_j}f^\sharp(s))\|_{L^2_k} \Big).
\end{align*}
Since $a_{\e_j}$ and $\partial_ka_{\e_j}$ are bounded,
\begin{align*}
    \|  u_j(s) \|_{L^\infty}& \lesssim \frac{1}{\js^{1/2}} \Big(  \| f^\sharp(s)\|_{L^\infty_k} + \frac{1}{\js^{1/4}} (\| f^\sharp(s)\|_{L^2_k} + \| \partial_kf^\sharp(s)\|_{L^2_k} )\Big)\ .
\end{align*}
 And using the bootstrap hypothesis  \eqref{boothyp}, and the fact that $\| f^\sharp(s)\|_{L^2_k} = \| u^\sharp(s)\|_{L^2_k} = \| u(s)\|_{L^2_x} = \| u_0\|_{L^2_x} \le \varepsilon$, we have 
\begin{align} \label{dispdecayauxill}
    \|  u_j(s) \|_{L^\infty_x} \lesssim\frac{C_0\varepsilon}{\js^{1/2}} \qquad \text{ for all } s \in [-T,T].
\end{align}
We combine \eqref{Tbf1f2f3 1} and \eqref{dispdecayauxill} to get 
\begin{align*}
    \Big\|{\mathcal{T}}_{\mathrm{b},\eps}(f_1,f_2,f_3)(s,k) \Big\|_{L^2_k} \lesssim \frac{C_0^3\varepsilon^3}{\js}.
\end{align*}
We perform triangle inequality and time integration to obtain
\begin{align*}
    \eqref{noderiv} \lesssim C_0^3\varepsilon^3\int_0^t \frac{ds}{\js} \lesssim C_0^3\varepsilon^3 \log \jt \lesssim C_0^3\varepsilon^3 \jt^{1/4}.
\end{align*}

\medskip
Next, we estimate \eqref{withderiv}. We start by invoking Lemma \ref{lem:algetri}. Thus, triangle inequality leads to 
\begin{align} \label{usealgetri}
\begin{split}
    \eqref{withderiv} &\lesssim \Big\| \int_0^t {\mathcal{T}}_{\mathrm{b},\eps}(\partial_\ell f_1,f_2,f_3)(s,k) \ ds \Big\|_{L^2_k}  + \Big\| \int_0^t {\mathcal{T}}_{\mathrm{b},\eps}(f_1,\partial_m f_2,f_3)(s,k) \ ds \Big\|_{L^2_k} \\
    & + \Big\| \int_0^t {\mathcal{T}}_{\mathrm{b},\eps}(f_1,f_2,\partial_n f_3)(s,k) \ ds \Big\|_{L^2_k}  + \Big\| \int_0^t s{\mathcal{T}}_{\xi \mathrm{b},\eps}(f_1,f_2,f_3)(s,k) \ ds \Big\|_{L^2_k} .
\end{split}
\end{align}
The first 3 terms in \eqref{usealgetri} can be treated similarly. Then, we focus on the first one. Invoking corollary \ref{cor:estimTbeps}, we get for $s \in [-T,T]$, 
\begin{align*}
     \Big\|{\mathcal{T}}_{\mathrm{b},\eps}(\partial_\ell f_1,f_2,f_3)(s,k) \Big\|_{L^2_k} \lesssim \| \partial_\ell f_1(s)\|_{L^2} \| u_2(s)\|_{L^\infty}\| u_3(s)\|_{L^\infty}.
\end{align*}
We use \eqref{dispdecayauxill} to estimate $\| u_2(s)\|_{L^\infty}$ and $\| u_3(s)\|_{L^\infty}$. Using the $L^\infty$ boundedness of $a_{\e_1}$ and $\partial_\ell a_{\e_1}$, and the bootstrap hypothesis, we have $\| \partial_\ell f_1(s)\|_{L^2} \lesssim C_0\varepsilon \js^{1/4}$. Thus, we obtain
\begin{align*}
     \Big\|{\mathcal{T}}_{\mathrm{b},\eps}(\partial_\ell f_1,f_2,f_3)(s,k) \Big\|_{L^2_k} \lesssim \frac{C_0^3\varepsilon^3}{\js^{3/4}} \ .
\end{align*}
We get the desired estimate by performing integration in time.

\medskip
We are left with the last term of \eqref{usealgetri}. We observe that $\xi\mathrm{b}(\xi) = -i\what{\zeta}(\xi)$. So, we are led to estimate the $L^2_k$ norm of 
\begin{align*}
    I(t,k) &:= \int_0^t s{\mathcal{T}}_{\what{\zeta},\eps}(f_1,f_2,f_3)(s,k) \ ds \\
     & = \int_0^t e^{-isk^2} e^{isk^2} s{\mathcal{T}}_{\what{\zeta},\eps}(f_1,f_2,f_3)(s,k) \ ds \\
     & = \int_0^t e^{-isk^2} \what{\Fou}\what{\Fou}^{-1}\Big[k \mapsto e^{isk^2} s{\mathcal{T}}_{\what{\zeta},\eps}(f_1,f_2,f_3)(s,k)\Big] \ ds \\
     & = \int_0^t e^{-isk^2}\int_\R e^{-iky} F(s,y) \ dy\ ds 
\end{align*}
where $F(s,y) :=  \what{\Fou}^{-1}\Big[k \mapsto e^{isk^2} s{\mathcal{T}}_{\what{\zeta},\eps}(f_1,f_2,f_3)(s,k)\Big](y)$. We can write
\begin{align*}
I(t,k) = \int_0^t e^{-isk^2}\int_\R (e^{-iky}-1) F(s,y) \ dy\ ds + \int_0^t e^{-isk^2}\int_\R F(s,y) \ dy\ ds. 
\end{align*}
By observing that 
$$\int_\R F(s,y) \ dy = \int_\R \what{\Fou}^{-1}\Big[k \mapsto e^{isk^2} s{\mathcal{T}}_{\what{\zeta},\eps}(f_1,f_2,f_3)(s,k)\Big](y)\  dy \ ,$$
 (flat) Fourier inversion formula gives us 
$$ \int_\R F(s,y) \ dy  = s{\mathcal{T}}_{\what{\zeta},\eps}(f_1,f_2,f_3)(s,0) \ . $$
Thus, we get 
\begin{align*}
I(t,k) & = \int_0^t e^{-isk^2}\int_\R (e^{-iky}-1) F(s,y) \ dy\ ds + \int_0^t e^{-isk^2}s{\mathcal{T}}_{\what{\zeta},\eps}(f_1,f_2,f_3)(s,0)\ ds \\
& =: I_1(t,k) \ + I_2(t,k)\ . 
\end{align*}\\
\medskip
\emph{Estimates for $I_1(t,k)$.} We consider the functions
\begin{align*}
    \phi(k)&:= \begin{cases}
        k \qquad\text{ if } |k| \le 1, \\
        1 \qquad\text{ if } |k| > 1, 
    \end{cases} \qquad , \qquad
    Q(y,k):= \begin{cases}
        \frac{e^{-iky} -1}{k} \qquad\text{ if } |k| \le 1, \\
        e^{-iky} -1 \qquad\text{ if } |k| > 1. 
    \end{cases}
\end{align*}
We can check that $e^{-iky} -1 = \phi(k)Q(y,k)$. Thus, 
$$ I_1(t,k) = \int_0^t \int_\R e^{-isk^2}\phi(k)Q(y,k)F(s,y) \ dy\ ds \ .$$
Moreover, 
$$ | \phi(k) | \le \sqrt{|k|} \quad , \quad \underset{(y,k) \in \R_y \times \R_k}{\sup} \big| \jy^{-1} Q(y,k)\big| < \infty \ .$$
Then, invoking the smoothing estimates of Lemma \ref{lem:smoothingsim}, we get 
\begin{align}
    \label{usesmoothestimI1}
    \| I_1(t,k) \|_{L^2_k} \lesssim \| \jy F(s,y) \|_{L^1_y L^2_s([0,t])} \ .
\end{align}
Using Lemma \ref{lem:inverseFD}, we write 
$F(s,y) = \e_0 su_1(s,-\e_1y)\overline{u_2(s,\e_2y)}u_3(s,-\e_3y)\zeta(\e_0y).$
Then we obtain the following estimate
\begin{align*}
    \| \jy F(s,y) \|_{L^1_y L^2_s([0,t])} \lesssim \| \jy \zeta(y) \|_{L^1_y} \Big\| s\|u_1(s,y)\|_{L^\infty_y}\|u_2(s,y)\|_{L^\infty_y}\|u_3(s,y)\|_{L^\infty_y} \Big\|_{L^2_s([0,t])} 
\end{align*}
Since $\zeta$ is smooth with compact support, we have $\| \jy \zeta(y) \|_{L^1_y} < \infty$. Thus, from the decay estimate \eqref{dispdecayauxill}, we get 
\begin{align}\label{estimF(s,y)}
    \| \jy F(s,y) \|_{L^1_y L^2_s([0,t])} \lesssim  \Big\| \frac{C_0^3\varepsilon^3}{\js^{1/2}} \Big\|_{L^2_s([0,t])} \lesssim C_0^3\varepsilon^3  \sqrt{\log \jt}.
\end{align}
Combining \eqref{usesmoothestimI1} and \eqref{estimF(s,y)} leads to the desired estimate.\\\\
\emph{Estimates for $I_2(t,k)$.} We set $G_t(s) := s{\mathcal{T}}_{\what{\zeta},\eps}(f_1,f_2,f_3)(s,0) \mathbf{1}_{[0,t]}(s) $. Thus, we can write 
$$I_2(t,k) = \what{G_t}(k^2)\ .$$
Then, Lemma \ref{k^2lemma} gives 
\begin{align}
    \label{usek^2lemI2}
    \|I_2(t,k)\|_{L^2_k} \lesssim \|G_t(s)\|_{L^{4/3}_s} \ .
\end{align}
Now, from Lemma \ref{lem:inverseFD}, we can observe that 
$$ {\mathcal{T}}_{\what{\zeta},\eps}(f_1,f_2,f_3)(s,0) = \int_\R u_1(s,-\e_1x)\overline{u_2(s,\e_2x)}u_3(s,-\e_3x)\zeta(x) \ dx.$$
This implies the following pointwise bound
\begin{align}
    \label{ptwise Tzetae(s,0)}
    |{\mathcal{T}}_{\what{\zeta},\eps}(f_1,f_2,f_3)(s,0)| &\lesssim \| u_1(s,x)\|_{L^\infty_x} \| u_2(s,x)\|_{L^\infty_x} \| u_3(s,x)\|_{L^\infty_x} \|\zeta(x)\|_{L^1_x} \nonumber\\
    |{\mathcal{T}}_{\what{\zeta},\eps}(f_1,f_2,f_3)(s,0)| & \lesssim \frac{C_0^3\varepsilon^3}{\js^{3/2}}, 
\end{align}
due to the decay \eqref{dispdecayauxill} and $\zeta \in L^1$. The pointwise bound \eqref{ptwise Tzetae(s,0)} implies $ |G_t(s)| \lesssim C_0^3\varepsilon^3\js^{-1/2}$. This yields  
$$ \|G_t(s)\|_{L^{4/3}_s} \lesssim C_0^3\varepsilon^3 \jt^{1/4}.$$
Therefore, the desired estimate is obtained due to \eqref{usek^2lemI2}.
\end{proof}

\bigskip
\section{\texorpdfstring{$L^2$}{L2} estimates for the regular part}\label{secnonlinl2reg} 

In this section, we perform the $L^2$ estimates involving $\mathcal{N}_\ast$ for $\ast \in \{R,1 \ ;\ R,2\}$. The arguments follow closely the Section $6$ of \cite{CPNLS2}.

\begin{proposition}\label{l2reg1}
    Under the bootstrap hypothesis \eqref{boothyp}, it holds that for every $|t| \le T$,
   \begin{align}\begin{split}
		\label{weightmainr1}
		& {\Big\| \int_0^t \partial_k \mathcal{N}_{R,1}(s,k) \, ds \Big\|}_{L^2_k} \lesssim C_0^3\varepsilon^3 \jt^{1/4} \ .
        \end{split}
        \end{align}
\end{proposition}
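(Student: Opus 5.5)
Let me write the plan. We need to bound the $L^2_k$ norm of $\int_0^t \partial_k \mathcal{N}_{R,1}\,ds$, where
\[
\mathcal{N}_{R,1}(s,k)=e^{-isk^2}\int_\R \overline{\mathcal{K}^\#_{A_0}(x,k)}\, u_{A_1}\overline{u_{A_2}}u_{A_3}(s,x)\,dx,
\]
with at least one $A_j=R$, and with $u_{A}(s,x)=\int \mathcal{K}^\#_A(x,k)e^{-isk^2}f^\sharp(s,k)\,dk/2\pi$ the corresponding projections from \eqref{decompphi}.

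\textbf{Undoing the distorted Fourier transform.} First I undo the distorted Fourier transform in $(\ell,m,n)$, which writes $\mathcal{N}_{R,1}$ in the physical-space form displayed above. Differentiating in $k$ produces two kinds of terms:
\[
\text{(a)}\quad -2isk\, e^{-isk^2}\int \overline{\mathcal{K}^\#_{A_0}(x,k)}\,(\cdots)\,dx,
\qquad
\text{(b)}\quad e^{-isk^2}\int \partial_k\overline{\mathcal{K}^\#_{A_0}(x,k)}\,(\cdots)\,dx.
\]

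\textbf{Case $A_0=R$.} Terms of type (b) are handled by \eqref{eq:weiFR} applied to $F=u_{A_1}\overline{u_{A_2}}u_{A_3}$ at fixed $s$. The resulting bound is $\|\jx^{-\beta}|u|^3\|_{L^2}\lesssim \|u\|_{L^\infty}^2\|u\|_{L^2}\lesssim C_0^3\varepsilon^3\js^{-1}$, using \eqref{eq:linearpoinwiseH} and the bootstrap. Integrating in time then gives $\log\jt\lesssim\jt^{1/4}$.

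Terms of type (a) are the core. I split $k$ with $\phi_1+\phi_2=1$.
\begin{itemize}
\item \emph{Low frequencies.} I apply \eqref{eq:smoothingQim} with $\mathcal{Q}=\mathcal{K}^\#_R$, which is localized like $\jx^{-\gamma+1}$ by Lemma \ref{estimkR}. This bounds the contribution by $\|s\jx^{-\beta}u_{A_1}\overline{u_{A_2}}u_{A_3}\|_{L^1_xL^2_s}$.
\item \emph{High frequencies.} Here I write $\mathcal{K}^\#_R=\mathcal{G}^\pm e^{\pm ikx}$ and use $k e^{ikx}=-i\partial_x e^{ikx}$ to integrate by parts in $x$. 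The derivative then falls either on $\mathcal{G}$, which gains decay by \eqref{decayG+-2}, or on the cubic product. Afterwards I use \eqref{eq:smoothingQimh}.
\end{itemize}

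To make $s|u|^3$ square integrable in time, I exploit the localization: one factor is estimated in a local norm. If some $A_j=R$ for $j\ge1$, that factor is $u_R$. It has improved local decay from Lemma \ref{lem:lowlocaldecay}, since $\mathcal{K}^\#_R(x,0)=0$ is Lipschitz, which gives $\|\jx^{-\beta}u_R\|_{L^\infty}\lesssim \js^{-1}\|f^\sharp\|_{H^1}\lesssim C_0\varepsilon\js^{-3/4}$. Then
\[
s\,\|\jx^{-\beta}u_R\|_{L^\infty}\|u\|^2_{L^\infty}\lesssim C_0^3\varepsilon^3\js^{-3/4},
\]
whose $L^2_s([0,t])$ norm is $\lesssim\jt^{-1/4}$, comfortably enough. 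If only $A_0=R$, the weight sits on $\mathcal{K}^\#_R$ itself, and I instead use the $L^2$-in-time smoothing \eqref{eq:moregeneralsmoothing} on one factor $u_S$ together with its dispersive decay. When the $x$-derivative from the integration by parts hits a factor, I use Corollary \ref{cor:locdecdiffflow}: $\|\jx^{-1}\partial_xu_M\|_{L^2}\lesssim\js^{-1/2}\|f^\sharp\|_{H^1}$, paired with two $L^\infty$ decays and the weight.

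\textbf{Case $A_0=S$.} Then some $A_j=R$ with $j\ge1$. I split $\mathcal{K}^\#_S$ into the pieces \eqref{KsharpS0} and \eqref{KsharpSpm}. These are bounded combinations of $e^{\pm ikx}$ with coefficients that are $x$-bounded and Lipschitz in $k$, so $\partial_k$ produces either bounded coefficients, handled by flat Plancherel, or a factor of $x$. The factor $x$ is absorbed by the localized $u_R$ factor, or handled by the type-(a) mechanism above with $\mathcal{Q}$ bounded (take $\beta=0$) and the localization coming from $u_R$.

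\textbf{Main obstacle.} I expect the hardest step to be the high-frequency part of (a), where the factor $sk$ must be absorbed. The time integrability there relies on $\|f^\sharp\|_{H^1}\lesssim C_0\varepsilon\js^{1/4}$, so one must check that after the integration by parts every resulting term still has $L^2_s$ norm $\lesssim\jt^{1/4}$, with the worst term being $s\,\js^{-1/2}\js^{1/4}\js^{-1}$. I would track the exponents there carefully, following Section 6 of \cite{CPNLS2}.
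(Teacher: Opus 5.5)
Your architecture coincides with the paper's proof. It uses the same split of $\partial_k\mathcal{N}_{R,1}$ into the $sk$-term and the $\partial_k$-kernel term. The latter is handled by \eqref{eq:weiFR}, or by \eqref{eq:weiF} plus the localization of $u_R$ when $A_0=S$. For the $sk$-term you use a low/high frequency split with \eqref{eq:smoothingQim} and \eqref{eq:smoothingQimh}, and integration by parts in $x$ at high frequency, with the derivative landing on $\mathcal{G}$ or on a factor. Corollary~\ref{cor:locdecdiffflow} covers the second case. You also identify the same worst term $s\,\js^{-1/4}\js^{-1}$, whose $L^2_s([0,t])$ norm is exactly $\jt^{1/4}$.

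Where your plan goes wrong is the claim that a localized factor with extra decay is needed to make $s|u|^3$ square integrable in time. It is not needed. After \eqref{eq:smoothingQim} with $\mathcal{Q}=\mathcal{K}^\#_R$ and $\beta=\gamma-1$, the paper applies the dispersive bound \eqref{decayuAj} to all three factors:
\[
\big\|\jx^{1-\gamma}\,s\,u_{A_1}\overline{u_{A_2}}u_{A_3}\big\|_{L^1_xL^2_s([0,t])}\lesssim \|\jx^{1-\gamma}\|_{L^1_x}\,\big\|C_0^3\varepsilon^3\,s\,\js^{-3/2}\big\|_{L^2_s([0,t])}\lesssim C_0^3\varepsilon^3\sqrt{\log\jt}.
\]
A $\sqrt{\log\jt}$ loss is well inside the $\jt^{1/4}$ budget.

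The substitutes you propose do not work as stated.
\begin{itemize}
\item In the case $A_0=R$, $A_1=A_2=A_3=S$, which does occur in $\mu^\#_{R,1}$, you cannot apply \eqref{eq:moregeneralsmoothing} to $u_S$. That estimate concerns $\int e^{ik^2s}h(k)\,dk$ with $h$ independent of $s$, whereas $u_S(s)$ involves $f^\#(s,\cdot)$. It also requires a factor $|\phi(k)|\lesssim\sqrt{|k|}$, which $\mathcal{K}^\#_S$ does not supply because $\mathcal{K}^\#_0(x,0)\neq 0$; this non-vanishing is precisely the non-generic obstruction.
\item Invoking Lemma~\ref{lem:lowlocaldecay} for $u_R$ needs a factorization $\phi(k)\mathcal{Q}(x,k)$ with $\partial_k\mathcal{Q}$ bounded. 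This does not follow from the Lipschitz vanishing of $\mathcal{K}^\#_R$ at $k=0$ alone: the piece $m_\pm(x,k)-m_\pm(x,0)$ would need control of $\partial_k^2 m_\pm$, i.e.\ more decay of $V$.
\end{itemize}
Replace both detours by the plain $L^\infty$ bound above, and your outline becomes the paper's proof. As a minor correction, $\|\js^{-3/4}\|_{L^2_s([0,t])}$ is $O(1)$, not $O(\jt^{-1/4})$; this is harmless.
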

\begin{proof}
    Theorem \ref{theomu} tells us that $\mu_{R,1}^\sharp(k,\ell,m,n)$ is a finite sum of terms of the form 
    \begin{align}\label{mur1(1)}
       \mu_{R,1}^{\sharp,(1)}(k,\ell,m,n):= \int_\R\overline{\mathcal{K}^{\#}_{R}(x,k)}\mathcal{K}^{\#}_{A_{1}}(x,\ell)
	\overline{\mathcal{K}^{\#}_{A_{2}}(x,m)}\mathcal{K}^{\#}_{A_{3}}(x,n)\,dx
    \end{align}
    or 
    \begin{align}\label{mur1(2)}
       \mu_{R,1}^{\sharp,(2)}(k,\ell,m,n):= \int_\R\overline{\mathcal{K}^{\#}_{S}(x,k)}\mathcal{K}^{\#}_{A'_{1}}(x,\ell)
	\overline{\mathcal{K}^{\#}_{A'_{2}}(x,m)}\mathcal{K}^{\#}_{A'_{3}}(x,n)\,dx,
    \end{align}
    where the indices $A_j$ and $A'_j$ are in $\{S,R\}$. Moreover, there exist $j \in \{1,2,3\}$ such that $A'_j =R$. Without loss of generality, we suppose that $A'_3 = R$. Thus, for $\bullet \in \{1,2\}$, we define 
    \begin{align} \label{NR1bull}
        \mathcal{N}_{R,1}^{(\bullet)}(s,k) &: =  \underset{\R^3}{\iiint} e^{is(-k^2+ \ell^2 - m^2 +n^2)}f^{\#}(\ell)\overline{f^{\#}(m)}f^{\#}(n)  
	\,\mu^{\#,(\bullet)}_{R,1}(k,\ell, m,n)
	\,\frac{dn}{2\pi}\frac{dm}{2\pi}\frac{d\ell}{2\pi}\ ,\\
    & = e^{-isk^2}\underset{\R^3}{\iiint}u^{\#}(\ell)\overline{u^{\#}(m)}u^{\#}(n)  
	\,\mu^{\#,(\bullet)}_{R,1}(k,\ell, m,n)
	\,\frac{dn}{2\pi}\frac{dm}{2\pi}\frac{d\ell}{2\pi}\ .
    \end{align}
    % Then, it suffices to show that  
    % $$ {\Big\| \int_0^t \partial_k \mathcal{N}_{R,1}^{(\bullet)}(s,k) \, ds \Big\|}_{L^2_k} \lesssim C_0^3\varepsilon^3 \jt^{1/4} \ .$$
    We have 
    \begin{align}
        \partial_k \mathcal{N}_{R,1}^{(\bullet)}(s,k) & = -2iske^{-isk^2}\underset{\R^3}{\iiint} u^{\#}(\ell)\overline{u^{\#}(m)}u^{\#}(n)  
	\,\mu^{\#,(\bullet)}_{R,1}(k,\ell, m,n)
	\,\frac{dn}{2\pi}\frac{dm}{2\pi}\frac{d\ell}{2\pi}\ \\
    & + e^{-isk^2}\underset{\R^3}{\iiint} u^{\#}(\ell)\overline{u^{\#}(m)}u^{\#}(n)  
	\,\partial_k\mu^{\#,(\bullet)}_{R,1}(k,\ell, m,n)
	\,\frac{dn}{2\pi}\frac{dm}{2\pi}\frac{d\ell}{2\pi}\ . 
    \end{align}
Moreover, 
\begin{align}
     \partial_k\mu_{R,1}^{\sharp,(1)}(k,\ell,m,n)= \int_\R\partial_k\overline{\mathcal{K}^{\#}_{R}(x,k)}\mathcal{K}^{\#}_{A_{1}}(x,\ell)
	\overline{\mathcal{K}^{\#}_{A_{2}}(x,m)}\mathcal{K}^{\#}_{A_{3}}(x,n)\,dx \ , \\
       \partial_k\mu_{R,1}^{\sharp,(2)}(k,\ell,m,n)= \int_\R\partial_k \overline{\mathcal{K}^{\#}_{S}(x,k)}\mathcal{K}^{\#}_{A'_{1}}(x,\ell)
	\overline{\mathcal{K}^{\#}_{A'_{2}}(x,m)}\mathcal{K}^{\#}_{R}(x,n)\,dx \ . 
\end{align}
We consider 
\begin{align}\label{defI(.)}
    I^{(\bullet)}(t,k) &: = \int_0^t iske^{-isk^2}\underset{\R^3}{\iiint} u^{\#}(\ell)\overline{u^{\#}(m)}u^{\#}(n)  
	\,\mu^{\#,(\bullet)}_{R,1}(k,\ell, m,n)
	\,\frac{dn}{2\pi}\frac{dm}{2\pi}\frac{d\ell}{2\pi}\ ds \ ,\\
    \label{defJ(.)}
    J^{(\bullet)}(t,k) &: = \int_0^t  e^{-isk^2}\underset{\R^3}{\iiint} u^{\#}(\ell)\overline{u^{\#}(m)}u^{\#}(n)  
	\,\partial_k\mu^{\#,(\bullet)}_{R,1}(k,\ell, m,n) 
	\,\frac{dn}{2\pi}\frac{dm}{2\pi}\frac{d\ell}{2\pi}\ ds \ .
\end{align}
It suffices to prove that 
\begin{align}
    \|I^{(\bullet)}(t,k)\|_{L^2_k} + \|J^{(\bullet)}(t,k)\|_{L^2_k} \lesssim C_0^3\varepsilon^3 \jt^{1/4} \ . 
\end{align}
Due to the growing factor $k$ in $I^{(\bullet)}$, we perform a low$\backslash$high frequency analysis for this term. Thus, we consider non-negative cut-off functions $\phi_1$ and $\phi_2$ such that $\phi_1$ is supported in $[-2,2]$, $\phi_1(k)= 1$ for $|k| \le 1$ and $\phi_1 + \phi_2 =1$. For $\ast \in \{1,2\}$, we define 
\begin{align}
    I^{(\bullet)}_{\phi_\ast}(t,k) &: = \int_0^t is\phi_{\ast}(k)ke^{-isk^2}\underset{\R^3}{\iiint} u^{\#}(\ell)\overline{u^{\#}(m)}u^{\#}(n)  
	\,\mu^{\#,(\bullet)}_{R,1}(k,\ell, m,n)
	\,\frac{dn}{2\pi}\frac{dm}{2\pi}\frac{d\ell}{2\pi}\ ds \ .
 %    J^{(\bullet)}_{\phi_\ast}(t,k) &: = \int_0^t  \phi_{\ast}(k)e^{-isk^2}\underset{\R^3}{\iiint} u^{\#}(\ell)\overline{u^{\#}(m)}u^{\#}(n)  
	% \,\partial_k\mu^{\#,(\bullet)}_{R,1}(k,\ell, m,n) 
	% \,\frac{dn}{2\pi}\frac{dm}{2\pi}\frac{d\ell}{2\pi}\ ds \ .
\end{align}
We proceed in two steps.
\subsubsection*{Step I: Estimates for $\mu^{\#,(1)}_{R,1}$} 
First, from the expression \eqref{mur1(1)} of $\mu^{\#,(1)}_{R,1}$, we can write 
\begin{align}
    I^{(1)}_{\phi_1}(t,k) =  \int_0^t i\phi_{1}(k)ke^{-isk^2}\underset{\R}{\int} \overline{\K_R^\sharp(x,k)} su_{A_1}(s,x)\overline{u_{A_2}(s,x)}u_{A_3}(s,x) \ dx \ ds \ ,
\end{align}
where $u_{A_j}$ are defined via definition \ref{singregproj}. Then, we use the inhomogeneous smoothing estimate \eqref{eq:smoothingQim} by taking $\mathcal{Q}(x,k) = \K_R^\sharp(x,k)$ with $\beta = \gamma-1$ (due to Lemma \ref{estimkR}), and $F(s,x) = s u_{A_1}(s,x)\overline{u_{A_2}(s,x)}u_{A_3}(s,x)$. We get 
\begin{align}
    \| I^{(1)}_{\phi_1}(t,k)\|_{L^2_k} \lesssim \Big\| \jx^{1-\gamma} s u_{A_1}(s,x)\overline{u_{A_2}(s,x)}u_{A_3}(s,x) \Big\|_{L^1_x L^2_s([0,t])} \ .
\end{align}
Corollaries \ref{cor:pointwisesingular} and \ref{cor:regularLinfty} combined with the bootstrap assumption \eqref{boothyp} give the decay 
\begin{align} \label{decayuAj}
 \|u_{A_j}(s,x)\|_{L^\infty_x} \lesssim \frac{C_0\varepsilon}{\js^{1/2}} \ .
 \end{align}
Since $x \mapsto \jx^{1-\gamma} \in L^1$, we obtain
$$ \| I^{(1)}_{\phi_1}(t,k)\|_{L^2_k} \lesssim \Big\| \frac{C_0^3\varepsilon^3}{\js^{1/2}} \Big\|_{ L^2_s([0,t])} \lesssim C_0^3\varepsilon^3\sqrt{\log \jt} \lesssim C_0^3\varepsilon^3 \jt^{1/4} \ .$$

Second, we can write 
\begin{align}
    I^{(1)}_{\phi_2}(t,k) =  -\int_0^t \phi_{2}(k)e^{-isk^2}\underset{\R}{\int} \overline{ik\K_R^\sharp(x,k)} su_{A_1}(s,x)\overline{u_{A_2}(s,x)}u_{A_3}(s,x) \ dx \ ds \ .
\end{align}
Now, we recall the expression \eqref{kR}
 of $\K_R^\sharp(x,k)$. We can observe that 
 \begin{align} \label{KRid}
     ik\K_R^\sharp(x,k) = \partial_x \mathcal{Q}^{\sharp,R}_1(x,k) - \mathcal{Q}^{\sharp,R}_2(x,k)
 \end{align}
 where 
 \begin{align}
     \mathcal{Q}^{\sharp,R}_1(x,k) &: = \begin{cases}
    \mathcal{G}_+^+(x,k)e^{ikx} - \mathcal{G}_+^-(x,k)e^{-ikx} \quad \quad \text{ for } k\ge 0 \\
    \mathcal{G}_-^+(x,k)e^{ikx} - \mathcal{G}_-^-(x,k)e^{-ikx} \quad \quad \text{ for } k<0
    \end{cases}\ ,\\
     \mathcal{Q}^{\sharp,R}_2(x,k) &:= \begin{cases}
    \partial_x\mathcal{G}_+^+(x,k)e^{ikx} - \partial_x\mathcal{G}_+^-(x,k)e^{-ikx} \quad \quad \text{ for } k\ge 0 \\
    \partial_x\mathcal{G}_-^+(x,k)e^{ikx} - \partial_x\mathcal{G}_-^-(x,k)e^{-ikx} \quad \quad \text{ for } k<0
    \end{cases} \ .
 \end{align}
 In particular, due to Lemma \ref{estimG+-}, we have 
 \begin{align}
\label{decayQR}|\mathcal{Q}^{\sharp,R}_1(x,k)|+|\mathcal{Q}^{\sharp,R}_2(x,k)| \le \jx^{-\gamma +1} \ .
\end{align}

We define
\begin{align}\label{I1phi21}
    I^{(1)}_{\phi_2,1}(t,k) &:=  -\int_0^t \phi_{2}(k)e^{-isk^2}\underset{\R}{\int} \partial_x\overline{\mathcal{Q}_1^{\sharp,R}(x,k)} su_{A_1}(s,x)\overline{u_{A_2}(s,x)}u_{A_3}(s,x) \ dx \ ds \ ,\\
    \label{I1phi22}
    I^{(1)}_{\phi_2,2}(t,k) &:=  \int_0^t \phi_{2}(k)e^{-isk^2}\underset{\R}{\int} \overline{\mathcal{Q}_2^{\sharp,R}(x,k)} su_{A_1}(s,x)\overline{u_{A_2}(s,x)}u_{A_3}(s,x) \ dx \ ds \ 
\end{align}
such that $I^{(1)}_{\phi_2}(t,k) = I^{(1)}_{\phi_2,1}(t,k) + I^{(1)}_{\phi_2,2}(t,k)$. For the term $I^{(1)}_{\phi_2,1}$, we perform an integration by part in the variable $x$ to get 
\begin{align}
    I^{(1)}_{\phi_2,1}(t,k) = \int_0^t \phi_{2}(k)e^{-isk^2}\underset{\R}{\int} \overline{\mathcal{Q}_1^{\sharp,R}(x,k)} \partial_x[su_{A_1}(s,x)\overline{u_{A_2}(s,x)}u_{A_3}(s,x)] \ dx \ ds \ .
\end{align}
We use the inhomogeneous smoothing estimate \eqref{eq:smoothingQimh} by taking $\mathcal{Q}(x,k) = \mathcal{Q}_1^{\sharp,R}(x,k)$ with $\beta = \gamma -1$ (due to \eqref{decayQR}), and $F(s,x) = \partial_x[su_{A_1}(s,x)\overline{u_{A_2}(s,x)}u_{A_3}(s,x)]$. We obtain
$$\| I^{(1)}_{\phi_2,1}(t,k) \|_{L^2_k} \lesssim \Big\| \jx^{1-\gamma} s \partial_x[u_{A_1}(s,x)\overline{u_{A_2}(s,x)}u_{A_3}(s,x)] \Big\|_{L^1_xL^2_s([0,t])} \ .$$ 
Without loss of generality, it suffices to estimate
\begin{align}
    \label{dertargetA1}
\Big\| \jx^{1-\gamma} s \partial_xu_{A_1}(s,x)\ \overline{u_{A_2}(s,x)}u_{A_3}(s,x)\Big\|_{L^1_xL^2_s([0,t])},
\end{align} 
since the case where $\partial_x$ acts on $\overline{u_{A_2}}$ or $u_{A_3}$ is similar. We have 
\begin{align}
    \eqref{dertargetA1} \lesssim \Big\|s \|\jx^{1-\gamma}  \partial_xu_{A_1}(s,x)\|_{L^1_x}\ \|u_{A_2}(s,x)\|_{L^\infty_x}\|u_{A_3}(s,x)\|_{L^\infty_x} \Big\|_{L^2_s([0,t])} \ .
\end{align}
By Cauchy-Schwarz, we have 
$$\|\jx^{1-\gamma}\partial_xu_{A_1}(s,x)\|_{L^1_x} \le \|\jx^{-1}\partial_xu_{A_1}(s,x)\|_{L^2_x}\|\jx^{2-\gamma}\|_{L^2_x} \lesssim \|\jx^{-1}\partial_xu_{A_1}(s,x)\|_{L^2_x} \ ,$$ 
since $\gamma > 5/2$. So, we have the estimate
\begin{align}
\eqref{dertargetA1} \lesssim \Big\|s \|\jx^{-1}  \partial_xu_{A_1}(s,x)\|_{L^2_x}\ \|u_{A_2}(s,x)\|_{L^\infty_x}\|u_{A_3}(s,x)\|_{L^\infty_x} \Big\|_{L^2_s([0,t])} \ .
\end{align}
The local decay from Corollary \ref{cor:locdecdiffflow}, combined with bootstrap assumption \eqref{boothyp} gives  \begin{align}
    \label{localdecayagain}\Big\|\jx^{-1}  \partial_xu_{A_1}(s,x)\Big\|_{L^2_x} \lesssim C_0\varepsilon \js^{-1/4} \ .
\end{align}
Then, due to the dispersive decay \eqref{decayuAj}, this yields
\begin{align}
    \eqref{dertargetA1} \lesssim \Big\|C_0^3\varepsilon^3 \js^{-1/4} \Big\|_{L^2_s([0,t])} \lesssim C_0^3\varepsilon^3 \jt^{1/4} \ .
\end{align}

For the term $I^{(1)}_{\phi_2,2}$ \eqref{I1phi22}, we use the inhomogeneous smoothing estimate \eqref{eq:smoothingQimh} by taking $\mathcal{Q}(x,k) = \mathcal{Q}_2^{\sharp,R}(x,k)$ with $\beta = \gamma -1$ (due to \eqref{decayQR}), and $F(s,x) = su_{A_1}(s,x)\overline{u_{A_2}(s,x)}u_{A_3}(s,x)$. We get
$$\| I^{(1)}_{\phi_2,1}(t,k) \|_{L^2_k} \lesssim \Big\| \jx^{1-\gamma} s u_{A_1}(s,x)\overline{u_{A_2}(s,x)}u_{A_3}(s,x)\Big\|_{L^1_xL^2_s([0,t])} \ .$$
Due to the dispersive decay \eqref{decayuAj} and the fact that $x \mapsto \jx^{1-\gamma} \in L^1$, we get 
$$\| I^{(1)}_{\phi_2,1}(t,k) \|_{L^2_k} \lesssim \Big\| C_0^3\varepsilon^3 \js^{-1/2} \Big\|_{L^2_s([0,t])} \lesssim C_0^3\varepsilon^3 \sqrt{\log \jt} \ . $$

Next, we treat $J^{(1)}(t,k)$ defined at \eqref{defJ(.)}. Using triangle inequality, we can write 
\begin{align}
    \left\Vert J^{(1)}(t,k) \right\Vert _{L^{2}_k} 
	& \lesssim \int_0^t \left\Vert  \int \overline{\partial_k \mathcal{K}^{\#}_R(x,k)}
	 u_{A_{1}}(s,x) \overline{u_{A_{2}}(s,x)} u_{A_{3}}(s,x)  \, dx \right\Vert_{L^2_k}\,ds   \ . 
\end{align}
The pseudo-differential inequality on $L^2$ \eqref{eq:weiFR} gives 
	\begin{align*}
	 \left\Vert J^{(1)}(t,k) \right\Vert _{L^{2}_k} 
	& \lesssim \int_0^t \left\Vert  
	 u_{A_{1}}(s,x) \overline{u_{A_{2}}(s,x)} u_{A_{3}}(s,x)   \right\Vert_{L^2_x}\,ds \\
     & \lesssim \int_0^t   
	 \|u_{A_{1}}(s,x)\|_{L^\infty_x} \|u_{A_{2}}(s,x)\|_{L^\infty_x} \|u_{A_{3}}(s,x)   \|_{L^2_x}\,ds \\
     &	\lesssim  C_0^3\varepsilon^3 \log \jt.
	\end{align*}
    
\subsubsection*{Step II: Estimates for $\mu^{\sharp,(2)}_{R,1}$} First, from the expression \eqref{mur1(2)} of $\mu^{\sharp,(2)}_{R,1}$, we can write 
\begin{align}
    I_{\phi_1}^{(2)}(t,k) =  \int_0^t i\phi_{1}(k)ke^{-isk^2}\underset{\R}{\int} \overline{\K_S^\sharp(x,k)} su_{A_1'}(s,x)\overline{u_{A_2'}(s,x)}u_{R}(s,x) \ dx \ ds \ .
\end{align}
We use the inhomogeneous smoothing estimate \eqref{eq:smoothingQim} by taking $\mathcal{Q}(x,k) = \K_S^\sharp(x,k)$ with $\beta = 0$, and $F(s,x) = su_{A_1'}(s,x)\overline{u_{A_2'}(s,x)}u_{R}(s,x)$, to get  
\begin{align}\label{estimuA1uA2uR}
    \| I_{\phi_1}^{(2)}(t,k)\|_{L^2_k} &\lesssim \Big\| su_{A_1'}(s,x)u_{A_2'}(s,x)u_{R}(s,x) \Big\|_{L^1_x L^2_s([0,t])}.
\end{align}
Thus, we can write
\begin{align}
   \| I_{\phi_1}^{(2)}(t,k)\|_{L^2_k} \lesssim \Big\| s\|u_{A_1'}(s,x)\|_{L^\infty_x}\|u_{A_2'}(s,x)\|_{L^\infty_x}\| \jx^{\gamma-1}u_{R}(s,x)\|_{L^\infty_x} \| \jx^{1-\gamma}\|_{L^1_x} \Big\|_{ L^2_s([0,t])} \ .
\end{align}
We note that $x\mapsto \jx^{1-\gamma} \in L^1$. Corollary \ref{cor:regularLinfty} combined with the bootstrap assumption \eqref{boothyp} gives 
\begin{align}\label{decayuRagain}
    \|\jx^{\gamma-1}u_{R}(s,x)\|_{L^\infty_x} \lesssim C_0\varepsilon \js^{-1/2} \ .
\end{align}
Using this and the dispersion decay \eqref{decayuAj}, we obtain 
\begin{align}
    \| I_{\phi_1}^{(2)}(t,k)\|_{L^2_k} \lesssim \Big\| C_0^3\varepsilon^3 \js^{-1/2} \Big\|_{ L^2_s([0,t])} \lesssim C_0^3\varepsilon^3 \sqrt{\log \jt } \ .
\end{align}

Second, we have 
\begin{align}
    I_{\phi_2}^{(2)}(t,k) =  -\int_0^t \phi_{2}(k)e^{-isk^2}\underset{\R}{\int} \overline{ik\K_S^\sharp(x,k)} su_{A_1'}(s,x)\overline{u_{A_2'}(s,x)}u_{R}(s,x) \ dx \ ds \ .
\end{align}
Recalling the formulas \eqref{KsharpSdecomp}-\eqref{KsharpSpm} of $K_S^\sharp(x,k)$, we can observe that 
\begin{align}
    ikK_S^\sharp(x,k) = \partial_x \mathcal{Q}^{\sharp,S}_1(x,k) - \mathcal{Q}^{\sharp,S}_2(x,k) 
\end{align}
where 
\begin{align}
    \mathcal{Q}^{\sharp,S}_1(x,k) & := \chi_0(x)\mathcal{Q}^{\sharp,S,0}_1(x,k) + \chi_+(x) \mathcal{Q}^{\sharp,S,+}_1(x,k) 
	+ \chi_-(x) \mathcal{Q}^{\sharp,S,-}_1(x,k) \ , \\
    \mathcal{Q}^{\sharp,S}_2(x,k) & := \chi_0(x)\mathcal{Q}^{\sharp,S,0}_2(x,k) + \partial_x\chi_+(x) \mathcal{Q}^{\sharp,S,+}_1(x,k) 
	+ \partial_x\chi_-(x) \mathcal{Q}^{\sharp,S,-}_1(x,k) \,
\end{align}
with 
	\begin{align}\label{KS1}
	\begin{split}
    \mathcal{Q}^{\sharp,S,+}_1(x,k)  &:=  a_+^+(k)e^{ikx} - a_+^-(k)e^{-ikx} \ ,
	\\
	 \mathcal{Q}^{\sharp,S,-}_1(x,k) & := a_-^+(k)e^{ikx} - a_-^-(k)e^{-ikx} \ , \\
     \mathcal{Q}^{\sharp,S,0}_1(x,k) &: = -(\chi_+(x)aA_- + \chi_-(x)B_+)m_-(x,0)e^{-ikx}  \\
     & + (\chi_-(x)\frac{1}{a}A_+ + \chi_+(x)B_-)m_+(x,0)e^{ikx} \ , \\
    \mathcal{Q}^{\sharp,S,0}_2(x,k) & := - \partial_x\Big[(\chi_+(x)aA_-+ \chi_-(x)B_+)m_-(x,0)\Big]e^{-ikx}  \\
     & + \partial_x\Big[(\chi_-(x)\frac{1}{a}A_+ + \chi_+(x)B_-)m_+(x,0) \Big]e^{ikx} \ . 
	\end{split}
	\end{align}
Moreover, we have 
\begin{align}
    \label{decayQS}
|\mathcal{Q}^{\sharp,S}_1(x,k)| + |\mathcal{Q}^{\sharp,S}_2(x,k)| \lesssim 1 \ . 
\end{align}
We define
\begin{align}\label{I2phi21}
    I^{(2)}_{\phi_2,1}(t,k) &:=  -\int_0^t \phi_{2}(k)e^{-isk^2}\underset{\R}{\int} \partial_x\overline{\mathcal{Q}_1^{\sharp,S}(x,k)} su_{A_1'}(s,x)\overline{u_{A_2'}(s,x)}u_{R}(s,x) \ dx \ ds \ ,\\
    \label{I2phi22}
    I^{(2)}_{\phi_2,2}(t,k) &:=  \int_0^t \phi_{2}(k)e^{-isk^2}\underset{\R}{\int} \overline{\mathcal{Q}_2^{\sharp,S}(x,k)} su_{A_1'}(s,x)\overline{u_{A_2'}(s,x)}u_{R}(s,x) \ dx \ ds \ 
\end{align}
such that $I^{(2)}_{\phi_2}(t,k) = I^{(2)}_{\phi_2,1}(t,k) + I^{(2)}_{\phi_2,2}(t,k)$. For the term $I^{(2)}_{\phi_2,1}$, we perform an integration by part in the variable $x$ to get 
\begin{align}
    I^{(2)}_{\phi_2,1}(t,k) = \int_0^t \phi_{2}(k)e^{-isk^2}\underset{\R}{\int} \overline{\mathcal{Q}_1^{\sharp,S}(x,k)} \partial_x[su_{A_1'}(s,x)\overline{u_{A_2'}(s,x)}u_{R}(s,x)] \ dx \ ds \ .
\end{align}
We use the inhomogeneous smoothing estimate \eqref{eq:smoothingQimh} by taking $\mathcal{Q}(x,k) = \mathcal{Q}_1^{\sharp,S}(x,k)$ with $\beta = 0$ (due to \eqref{decayQS}), and $F(s,x) = \partial_x[su_{A_1'}(s,x)\overline{u_{A_2'}(s,x)}u_{R}(s,x)]$. We obtain
\begin{align}
    \|  I^{(2)}_{\phi_2,1}(t,k)\|_{L^2_k} \lesssim \Big\|  \partial_x[su_{A_1'}(s,x)\overline{u_{A_2'}(s,x)}u_{R}(s,x)]\Big\|_{L^1_xL^2_s([0,t])} \ .
\end{align}
It suffices to estimate 
\begin{align}
    \label{deruA1'}
    \Big\|  s\partial_xu_{A_1'}(s,x) \ u_{A_2'}(s,x) u_{R}(s,x)\Big\|_{L^1_x L^2_s([0,t])} \ ,\\
    \label{deruA2'}
    \Big\|  su_{A_1'}(s,x) \partial_x u_{A_2'}(s,x) \  u_{R}(s,x)\Big\|_{L^1_x L^2_s([0,t])} \ , \\
    \label{deruA3'}
    \Big\|  su_{A_1'}(s,x) u_{A_2'}(s,x) \ \partial_xu_{R}(s,x)\Big\|_{L^1_x L^2_s([0,t])} \ .
\end{align}
Since the estimation of \eqref{deruA1'} and \eqref{deruA2'} is similar, we focus on the first one. So we can write 
\begin{align}
    \eqref{deruA1'} \lesssim \Big\|  s\|\jx^{-1}\partial_xu_{A_1'}(s,x)\jx^{2-\gamma}\|_{L^1_x} \ \|u_{A_2'}(s,x)\|_{L^\infty_x} \|\jx^{\gamma -1}u_{R}(s,x)\|_{L^\infty_x}\Big\|_{ L^2_s([0,t])} 
\end{align}
and by Cauchy-Schwarz, we have 
\begin{align}
    \|\jx^{-1}\partial_xu_{A_1'}(s,x)\jx^{2- \gamma }\|_{L^1_x} \le \|\jx^{-1}\partial_xu_{A_1'}(s,x)\|_{L^2_x} \|\jx^{2-\gamma}\|_{L^2_x} . 
\end{align}
We know that $\|\jx^{2- \gamma}\|_{L^2_x} < \infty$, since $\gamma > 5/2$. Thus, we can write  
\begin{align}
    \eqref{deruA1'} \lesssim \Big\|  s\|\jx^{-1}\partial_xu_{A_1'}(s,x)\|_{L^2_x} \ \|u_{A_2'}(s,x)\|_{L^\infty_x} \|\jx^{\gamma -1}u_{R}(s,x)\|_{L^\infty_x}\Big\|_{ L^2_s([0,t])} 
\end{align}
We combine the decays \eqref{localdecayagain}, \eqref{decayuAj} and \eqref{decayuRagain} to get 
\begin{align}
    \eqref{deruA1'} \lesssim \Big\|  C_0^3\varepsilon^3 \js^{-1/4}\Big\|_{ L^2_s([0,t])} \lesssim  C_0^3\varepsilon^3 \jt^{1/4} \ .
\end{align}
For the estimate for \eqref{deruA3'}, we write
\begin{align}
    \eqref{deruA3'} \lesssim \Big\|  s\|u_{A_1'}(s,x)\|_{L^\infty_x} \|u_{A_2'}(s,x)\|_{L^\infty_x} \ \|\jx^{\gamma -1}\partial_xu_{R}(s,x)\|_{L^2_x} \| \jx^{1-\gamma}\|_{L^2_x}\Big\|_{ L^2_s([0,t])}  \ .
\end{align}
Using the fact that $x \mapsto \jx^{1-\gamma} \in L^2$, the dispersive decays \eqref{decayuAj} and \eqref{decayuRagain}, we get 
\begin{align}
    \eqref{deruA3'} \lesssim \Big\|  C_0^3\varepsilon^3 \js^{-1/4}\Big\|_{ L^2_s([0,t])} \lesssim  C_0^3\varepsilon^3 \jt^{1/4} \ . 
\end{align}

We treat the term $I^{(2)}_{\phi_2, 2}$ \eqref{I2phi22}. We use the inhomogeneous smoothing estimate \eqref{eq:smoothingQimh} again by taking $\mathcal{Q}(x,k) = \mathcal{Q}_2^{\sharp,S}(x,k)$ with $\beta =0$ (due to \eqref{decayQS}), and $F(s,x) =su_{A_1'}(s,x)\overline{u_{A_2'}(s,x)}u_{R}(s,x) $ to get 
\begin{align}
    \| I^{(2)}_{\phi_2, 2}(t,k) \|_{L^2_k} \lesssim \Big\| su_{A_1'}(s,x)\overline{u_{A_2'}(s,x)}u_{R}(s,x) \Big\|_{L^1_x L^2_s([0,t])} \ .
\end{align}
The estimation of \eqref{estimuA1uA2uR} gives the desired estimate. 
\smallskip
Next, we estimate $J^{(2)}(t,k)$ defined at \eqref{defJ(.)}. Using triangle inequality, we can write 
\begin{align}
    \left\Vert J^{(2)}(t,k) \right\Vert _{L^{2}_k} 
	& \lesssim \int_0^t \left\Vert  \int \overline{\partial_k \mathcal{K}^{\#}_S(x,k)}
	 u_{A_{1}'}(s,x) \overline{u_{A_{2}'}(s,x)} u_{R}(s,x)  \, dx \right\Vert_{L^2_k}\,ds  \ .
\end{align}
The pseudo-differential inequality on $L^2$ \eqref{eq:weiF}-\eqref{eq:weiFR} gives 
{\setlength{\belowdisplayskip}{-10pt}
\setlength{\belowdisplayshortskip}{-10pt}	\begin{align*}
	 \left\Vert J^{(2)}(t,k) \right\Vert _{L^{2}_k} 
	& \lesssim \int_0^t \left\Vert  
	 u_{A_{1}'}(s,x) \overline{u_{A_{2}'}(s,x)} u_{R}(s,x)   \right\Vert_{L^2_x}\,ds \\
     & \lesssim \int_0^t   
	 \|u_{A_{1}'}(s,x)\|_{L^\infty_x} \|u_{A_{2}'}(s,x)\|_{L^\infty_x} \|u_R(s,x)   \|_{L^2_x}\,ds \\
     &	\lesssim  C_0^3\varepsilon^3 \log \jt \ .
	\end{align*}
    }
\end{proof}

	\medskip
    \begin{proposition} \label{l2boundreg2}
Under the bootstrap hypothesis \eqref{boothyp}, it holds that for every $|t| \le T$,
    \begin{align}\begin{split}
		\label{weightmainr2}
		& {\Big\| \int_0^t \partial_k \mathcal{N}_{R,2}(s,k) \, ds \Big\|}_{L^2_k} \lesssim C_0^3\varepsilon^3 \jt^{1/4} \ . 
        \end{split}
        \end{align}
    \end{proposition}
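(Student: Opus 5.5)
By Theorem \ref{theomu}, $\mu^\sharp_{R,2}$ is a finite $\C$-linear combination of terms $a_{\epss}({\bf k})\,\widehat{\Fou}(\varphi_{\epss})({\bf \eps}\cdot{\bf k})$, where $a_{\epss}=\overline{a_0}\otimes a_1\otimes\overline{a_2}\otimes a_3\in\mathcal{C}$ and $|\partial_x^\alpha\varphi_{\epss}|\lesssim\jx^{-\gamma+1}$ for $\alpha=0,1$. It therefore suffices to treat one such term.

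The plan is to rewrite this term in physical space and then reuse the two mechanisms from Proposition \ref{l2reg1}. Set $f_j:=a_jf^\sharp$ and $u_j:=e^{-is\partial_{xx}}\what{\Fou}^{-1}(f_j)$. The corresponding piece of $\mathcal{N}_{R,2}$ is $\overline{a_0(k)}\,\mathcal{T}_{\mathrm{b},\eps}(f_1,f_2,f_3)$ with $\mathrm{b}=\widehat{\varphi}$. Lemma \ref{lem:inverseFD} then writes it as
\begin{align}
\overline{a_0(k)}\,e^{-isk^2}\int_\R e^{-i\e_0 kx}\,u_1(s,-\e_1x)\overline{u_2(s,\e_2x)}u_3(s,-\e_3x)\,\varphi(x)\,dx ,
\end{align}
up to constants and a reflection of $\varphi$. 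This is a flat Fourier transform against the localized weight $\varphi$, with flat-free flows $u_j$ that satisfy the decay bound \eqref{dispdecayauxill}.

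Next I apply $\partial_k$. The derivative can fall in three places.

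\emph{On $\overline{a_0}$.} This term is bounded by the undifferentiated one. Corollary \ref{cor:estimTbeps} applies since $\what{\Fou}^{-1}[\widehat\varphi]=\varphi\in L^\infty$, giving a bound $C_0^3\varepsilon^3\js^{-1}$. Integrating in time yields $\log\jt$.

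\emph{On $e^{-i\e_0kx}$.} This produces $x\varphi(x)$ in place of $\varphi$. Since $|x\varphi|\lesssim\jx^{2-\gamma}$ is bounded, the same argument applies.

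\emph{On $e^{-isk^2}$.} This produces the factor $-2isk$ and is the main term. I split it with $\phi_1+\phi_2=1$, exactly as for $I^{(1)}$ in Proposition \ref{l2reg1}.
\begin{itemize}
\item At low frequency I apply \eqref{eq:smoothingQim} with $\mathcal{Q}(x,k)=e^{i\e_0kx}$, $\beta=0$, and $F=s\,u_1\overline{u_2}u_3\varphi$. This gives $\|F\|_{L^1_xL^2_s}\lesssim\|\js^{-1/2}\|_{L^2_s}C_0^3\varepsilon^3\lesssim C_0^3\varepsilon^3\sqrt{\log\jt}$, since $\varphi\in L^1$ because $\gamma>2$.
\item At high frequency I write $ike^{-i\e_0kx}=-\e_0\partial_xe^{-i\e_0kx}$ and integrate by parts in $x$. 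The derivative then falls either on $\varphi$, which keeps the same decay $\jx^{-\gamma+1}$ and is handled like the low-frequency case, or on one of the $u_j$.
\end{itemize}

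The main obstacle is the high-frequency term in which $\partial_x$ hits some $u_j$: the decay of $s\,\partial_xu_j\cdots$ needs local decay of the derivative. The $u_j$ here are flat flows of $a_jf^\sharp$. The approach is Lemma \ref{lem:localderivative} with $\mathfrak{Q}=a_j(k)$, which is bounded Lipschitz, and with $\beta=0$ after splitting $k\gtrless0$. This gives
\begin{align}
\|\jx^{-1}\partial_xu_j(s)\|_{L^2}\lesssim\js^{-1/2}\|a_jf^\sharp\|_{H^1}\lesssim C_0\varepsilon\js^{-1/4}.
\end{align}
Cauchy--Schwarz with $\jx\varphi\in L^2$, valid since $\gamma>5/2$, then produces the integrand $C_0^3\varepsilon^3\js\cdot\js^{-1/4}\js^{-1}$. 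Its $L^2_s([0,t])$ norm is $\lesssim C_0^3\varepsilon^3\jt^{1/4}$. Summing all pieces proves \eqref{weightmainr2}.
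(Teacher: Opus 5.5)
Your proposal is correct and follows the paper's proof essentially step for step. You use the same splitting of $\partial_k\mathcal{N}^{\eps}_{R,2}$:
\begin{itemize}
\item the $-2isk$ term, handled by the $\phi_1/\phi_2$ frequency split, the smoothing estimates \eqref{eq:smoothingQim}--\eqref{eq:smoothingQimh} with $\mathcal{Q}=e^{i\e_0kx}$, integration by parts in $x$ at high frequency, and the local decay of Lemma \ref{lem:localderivative};
\item the terms where $\partial_k$ falls on $\overline{a_{\e_0}}$ or on $\widehat{\Fou}(\varphi)$, handled by Corollary \ref{cor:estimTbeps} with $\widehat{\Fou}^{-1}[\mathrm{b}]=x^\alpha\varphi$ for $\alpha=0,1$.
\end{itemize}
The exponent bookkeeping also matches the paper, including $\jx\varphi\in L^2$ from $\gamma>5/2$ and the resulting $C_0^3\varepsilon^3\js^{-1/4}$ integrand in $L^2_s([0,t])$.
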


 \begin{proof}
We recall the definition of $\mu^\sharp_{R,2}$ at \eqref{muR2}. Thus, for $\e = (\e_0,\e_1,\e_2,\e_3) \in \{+,-\}^4$, we define 
\begin{align}\label{mur2eps}
    \mu^{\sharp,\e}_{R,2} (k,\ell,m,n) : = a_{\e_0,\e_1,\e_2,\e_3}(k,\ell,m,n)\  \what{\Fou}(\varphi_\e)(\e_0k + \e_1\ell +\e_2m + \e_3n) 
\end{align}
where $\varphi_\e$ and $a_{\e_0,\e_1,\e_2,\e_3}(k,\ell,m,n) = \overline{a_{\e_0}(k)}a_{\e_1}(\ell)\overline{a_{\e_2}(m)}a_{\e_3}(n)$ are given in \eqref{muR2}. We recall that the functions $a_{\e_j}$ and their first derivatives are bounded. We define 
\begin{align} \label{nr2eps}
    \mathcal{N}_{R,2}^\e(s,k) & := \underset{\R^3}{\iiint} e^{is(-k^2+\ell^2-m^2+n^2)}
	f^{\#}(\ell)\overline{f^{\#}(m)} f^{\#}(n)\mu_{R,2}^{\#,\e}(k,\ell,m,n)\,\frac{dn}{2\pi}\frac{dm}{2\pi}\frac{d\ell}{2\pi} \ .
\end{align}
We have
% \begin{align}
%     \e_0\partial_k \mathcal{N}_{R,2}^\e(s,k) & = \e_0\partial_k \overline{a_{\e_0}(k)} \mathcal{T}_{\what{\varphi_\e}, \e}\big( a_{\e_1}f^{\#} , a_{\e_2}f^{\#} , a_{\e_3}f^{\#}\big) (s,k) \\
%     & -\e_1\mathcal{T}_{\what{\varphi_\e},\e}
% 		\left(\partial_{\ell}f_{1},f_{2},f_{3}\right)(t,k)+\e_2\mathcal{T}_{\what{\varphi_\e},\e}\left(f_{1},\partial_{m}f_{2},f_{3}\right)(t,k)
% 		\\
% 		&  -\e_3\mathcal{T}_{\what{\varphi_\e},\e}\left(f_{1},f_{2},\partial_{n}f_{3}\right)(t,k)
% 		 -2it \, \mathcal{T}_{\xi\what{\varphi_\e},\e}(f_1,f_2,f_3)(t,k).
% \end{align}
\begin{align}
    \partial_k \mathcal{N}_{R,2}^\e(s,k) &= -2isk\underset{\R^3}{\iiint} e^{is(-k^2+\ell^2-m^2+n^2)}
	f^{\#}(\ell)\overline{f^{\#}(m)} f^{\#}(n)\mu_{R,2}^{\#,\e}(k,\ell,m,n)\,\frac{dn}{2\pi}\frac{dm}{2\pi}\frac{d\ell}{2\pi} \\ 
    & + \underset{\R^3}{\iiint} e^{is(-k^2+\ell^2-m^2+n^2)}
	f^{\#}(\ell)\overline{f^{\#}(m)} f^{\#}(n)\ \partial_k\mu_{R,2}^{\#,\e}(k,\ell,m,n)\,\frac{dn}{2\pi}\frac{dm}{2\pi}\frac{d\ell}{2\pi} \ .
\end{align}
Moreover, 
\begin{align}
    \partial_k\mu_{R,2}^{\#,\e}(k,\ell,m,n) &= \partial_k \overline{a_{\e_0}(k)}a_{\e_1}(\ell)\overline{a_{\e_2}(m)}a_{\e_3}(n)\  \what{\Fou}(\varphi_\e)(\e_0k + \e_1\ell +\e_2m + \e_3n) \\
    & -i\e_0 \overline{a_{\e_0}(k)}a_{\e_1}(\ell)\overline{a_{\e_2}(m)}a_{\e_3}(n) \what{\Fou}(x\varphi_\e)(\e_0k + \e_1\ell +\e_2m + \e_3n) \ .
\end{align}
We define  
\begin{align} \label{I1e}
    I^\e(t,k) &: = \int_0^t -2isk\underset{\R^3}{\iiint} e^{is(-k^2+\ell^2-m^2+n^2)}
	f^{\#}(\ell)\overline{f^{\#}(m)} f^{\#}(n)\mu_{R,2}^{\#,\e}(k,\ell,m,n)\,\frac{dn}{2\pi}\frac{dm}{2\pi}\frac{d\ell}{2\pi} \ .
\end{align}
Recalling the definition \eqref{eq:tril}, we define
\begin{align}
    \begin{split}
    \label{I2e}
     II^\e(t,k) &: = \int_0^t \partial_k \overline{a_{\e_0}(k)} \mathcal{T}_{\what{\Fou}(\varphi_\e), \e}\big( a_{\e_1}f^{\#} , a_{\e_2}f^{\#} , a_{\e_3}f^{\#}\big) (k) \ ds  \\
     & + \int_0^t -i\e_0 \overline{a_{\e_0}(k)} \mathcal{T}_{\what{\Fou}(x\varphi_\e), \e}\big( a_{\e_1}f^{\#} , a_{\e_2}f^{\#} , a_{\e_3}f^{\#}\big) (k) \ ds \ ,
     \end{split}
\end{align}
such that 
$$ \partial_k \mathcal{N}_{R,2}^\e(s,k) = I^\e(t,k) + II^\e(t,k) \ .$$
Thus, it suffices to show that  
$$ \| I^\e(t,k) \|_{L^2_k}  + \| II^\e(t,k) \|_{L^2_k}  \lesssim C_0^3\varepsilon^3 \jt^{1/4} \ . $$

We set  $f_j := a_{\e_j}f^{\#}$ for $j \in \{1,2,3\}$. We proceed in 2 steps. 
\subsubsection*{Step 1: Estimation of $I^\e$}
We can write 
\begin{align}
    I^\e(t,k) = \int_0^t -2ik\overline{a_{\e_0}(k)}e^{-isk^2}{\int}_\R e^{-i\e_0kx}\varphi_\e(x) 
	su_1(s,x) \overline{u_2(s,-x)} u_3(s,x) \ dx \ ,
\end{align}
where 
\begin{align}\label{defu_j}
u_j(s,x) := \int_\R e^{-\e_jikx + isk^2} f_j(k) \frac{dk}{2\pi} \ . 
\end{align}

We perform a low$\backslash$high frequency analysis. Thus, we consider non-negative cut-off functions $\phi_1$ and $\phi_2$ such that $\phi_1$ is supported in $[-2,2]$, $\phi_1(k)= 1$ for $|k| \le 1$ and $\phi_1 + \phi_2 =1$. For $\ast \in \{1,2\}$, we define
\begin{align}
    I_{\phi_\ast}^\e(t,k) = \int_0^t -2ik\phi_\ast(k)\overline{a_{\e_0}(k)}e^{-isk^2}{\int}_\R e^{-i\e_0kx}\varphi_\e(x) 
	su_1(s,x) \overline{u_2(s,-x)} u_3(s,x) \ dx \ . 
\end{align}
For low frequencies, we use the inhomogeneous smoothing estimate \eqref{eq:smoothingQim} by taking $\mathcal{Q}(x,k) = e^{i\e_0kx}$ with $\beta = 0$, and  $F(s,x) =  \varphi_\e(x) 
	su_1(s,x) \overline{u_2(s,-x)} u_3(s,x)$. We get 
\begin{align}
    \| I_{\phi_1}^\e(t,k) \|_{L^2_k} &\lesssim \Big\| \varphi_\e(x) 
	su_1(s,x) \overline{u_2(s,-x)} u_3(s,x) \Big\|_{L^1_x L^2_s([0,t])} \\
     &\lesssim \Big\| \|\varphi_\e\|_{L^1}
	s\|u_1(s,x)\|_{L^\infty_x} \|u_2(s,x)\|_{L^\infty_x} \|u_3(s,x)\|_{L^\infty_x} \Big\|_{L^2_s([0,t])} .
\end{align}
Due to Lemma \ref{lemstat}, and the bootstrap hypothesis \eqref{boothyp}, we have the decay 
\begin{align}
    \label{decayujagain}
    \|u_j(s,x)\|_{L^\infty_x} \lesssim C_0\varepsilon \js^{-1/2} \ . 
\end{align}
Recalling that $\varphi_\e \in L^1$, this gives
\begin{align}
    \| I_{\phi_1}^\e(t,k) \|_{L^2_k} \lesssim \Big\| C_0^3\varepsilon^3\js^{-1/2} \Big\|_{L^2_s([0,t])}  \lesssim C_0^3\varepsilon^3 \jt^{1/4} \ . 
\end{align}

For high frequencies, we perform integration by parts in the $x$ variable to absorb the growing factor $k$. In fact, we can write
\begin{align}
    I_{\phi_2}^\e(t,k) = \int_0^t -2\e_0\phi_2(k)\overline{a_{\e_0}(k)}e^{-isk^2}{\int}_\R e^{-i\e_0kx} \partial_x [\varphi_\e(x) 
	su_1(s,x) \overline{u_2(s,-x)} u_3(s,x) ] \ dx \ . 
\end{align}
We use the inhomogeneous smoothing estimate \eqref{eq:smoothingQimh} by taking $\mathcal{Q}(x,k) = e^{i\e_0kx}$ with $\beta = 0$, and  $F(s,x) =  \partial_x [\varphi_\e(x) 
	su_1(s,x) \overline{u_2(s,-x)} u_3(s,x)]$. We obtain
\begin{align}
    \| I_{\phi_2}^\e(t,k)\|_{L^2_k} \lesssim \Big\| \partial_x [ \varphi_\e(x) 
	su_1(s,x) \overline{u_2(s,-x)} u_3(s,x)] \Big\|_{L^1_x L^2_s([0,t])} 
\end{align}
Then, it suffices to estimate 
\begin{align}
    \label{deru1}
    \Big\| \varphi_\e(x) 
	s \partial_x u_1(s,x)\  u_2(s,-x) u_3(s,x) \Big\|_{L^1_x L^2_s([0,t])} \ , \\
    \label{deru2}
     \Big\| \varphi_\e(x) 
	s  u_1(s,x)  \partial_x u_2(s,-x) \ u_3(s,x) \Big\|_{L^1_x L^2_s([0,t])} \ ,\\
    \label{deru3}
    \Big\| \varphi_\e(x) 
	s  u_1(s,x)   u_2(s,-x) \ \partial_x u_3(s,x) \Big\|_{L^1_x L^2_s([0,t])} \ ,\\
    \label{derphi}
    \Big\| \partial_x  \varphi_\e(x) \ 
	su_1(s,x) u_2(s,-x) u_3(s,x) \Big\|_{L^1_x L^2_s([0,t])} \ . 
\end{align}
The quantities \eqref{deru1}, \eqref{deru2}, \eqref{deru3} can be treated similarly. Then, we focus on \eqref{deru1}. By using Cauchy Schwarz and the fact that $\gamma > 5/2$, we have 
\begin{align}
    \eqref{deru1} \lesssim \Big\|  s\|\jx^{\gamma -1}\varphi_\e(x)\|_{L^\infty_x}\|\jx^{-1}\partial_xu_{1}(s,x)\|_{L^2_x} \ \|u_{2}(s,x)\|_{L^\infty_x} \|u_{3}(s,x)\|_{L^\infty_x}\Big\|_{ L^2_s([0,t])} 
\end{align}
Lemma \ref{lem:localderivative}, combined with the bootstrap hypothesis \eqref{boothyp}, gives the decay 
\begin{align}
    \label{localdecayderu1}
    \|\jx^{-1}\partial_xu_{1}(s,x)\|_{L^2_x} \lesssim C_0\varepsilon \js^{-1/4} \ . 
\end{align}
Since $\|\jx^{\gamma -1}\varphi_\e(x)\|_{L^\infty_x} < \infty $ \eqref{muR2}, we combined the decays \eqref{localdecayderu1} and \eqref{decayujagain} to get 
the desired estimate.

For \eqref{derphi}, since $\partial_x \varphi_\e \in L^1$ \eqref{muR2}, we have 
\begin{align}
    \eqref{derphi} \lesssim \Big\|  s\|u_{1}(s,x)\|_{L^\infty_x} \ \|u_{2}(s,x)\|_{L^\infty_x} \|u_{3}(s,x)\|_{L^\infty_x}\Big\|_{ L^2_s([0,t])} \ . 
\end{align}
The decay \eqref{decayujagain} gives the desired estimate.

\subsubsection*{Step II: Estimation of $II^\e$}
Recalling \eqref{I2e}, it suffices to estimate 
\begin{align}
    \label{IIb}
    \int_0^t \Big\| \mathcal{T}_{\mathrm{b}, \e}(f_1,f_2,f_3)(s,k)\Big\|_{L^2_k} \ ds
\end{align}
where $\mathrm{b}(\xi) = \what{\Fou}(x^\alpha\varphi_\e)$ for $\alpha =0,1$. In these cases, since $\what{\Fou}^{-1}(\mathrm{b)} =  x^\alpha\varphi_\e \in L^\infty$ , Corollary \ref{cor:estimTbeps} gives 
\begin{align}
\eqref{IIb} \lesssim \int_0^t \| \mathrm{u}_1(s,x)\|_{L^\infty_x}\| \mathrm{u}_2(s,x)\|_{L^\infty_x} \|f_3(s,k)\|_{L^2_k} \ ds
\end{align}
where $\mathrm{u}_j(s,x) := \what{\Fou}^{-1}[k \mapsto e^{isk^2}f_j(k) ]$. Moreover, we have the bounds 
\begin{align}
    \label{decayvj}
    \| \mathrm{u}_j(s,x) \|_{L^\infty_x} \lesssim C_0\varepsilon \js^{-1/2} \  \quad , \quad \|f_3(s,k)\|_{L^2_k} \lesssim C_0\varepsilon
\end{align}
Thus, we obtain 
\begin{align}
    \eqref{IIb} \lesssim \int_0^tC_0^3\varepsilon^3\frac{1}{\js} \ ds \lesssim C_0^3\varepsilon^3 \log \jt \ . 
\end{align}

\end{proof}

%-------------------------------------------------------
\section{\texorpdfstring{$L^\infty$}{L-infinity} estimates }\label{secnonlinlinf}

In this section, we perform the pointwise estimates for the different nonlinear interactions. We treat separately the regular and the singular part. 
 
\subsection{Estimates for the regular part}\label{ssecRasy}
We begin by showing that the regular part only contributes to the lower order reminder terms in $\partial_tf^\sharp$.

%This follows from the localization properties in the $x$ variable of $\mathcal{K}_R$ 
%and $\mathcal{K}_+(x,k_0) \mathcal{K}_+(x,k_1)$. 

%Given the localization from the regular part, we have the following estimate:
\begin{proposition}\label{prop:asyregular}
Under the bootstrap hypothesis \eqref{boothyp},  it holds that for every $|t| \le T$,
\begin{equation}\label{ODEregular}
{\| \mathcal{N}_{R,1}(t,k) \|}_{L^{\infty}_k } \lesssim C_0^3\varepsilon^3 \jt^{-3/2}.
\end{equation}
\end{proposition}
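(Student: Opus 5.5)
The plan is to write $\mathcal{N}_{R,1}$ back in physical space and exploit the spatial localization of $\mathcal{K}^\#_R$. By Theorem \ref{theomu}, $\mu^\#_{R,1}$ is a finite sum of terms of the form \eqref{mur1(1)} or \eqref{mur1(2)}. In each such term at least one kernel is $\mathcal{K}^\#_R$, and the remaining kernels are either $\mathcal{K}^\#_S$ or $\mathcal{K}^\#_R$. As in \eqref{NR1bull}, inserting \eqref{musharpdef} and using the inversion formula \eqref{Fsharp-1} gives, for each such term,
\begin{align*}
\mathcal{N}^{(\bullet)}_{R,1}(t,k) = e^{-itk^2}\int_\R \overline{\mathcal{K}^\#_{A_0}(x,k)}\, u_{A_1}(t,x)\overline{u_{A_2}(t,x)}u_{A_3}(t,x)\,dx,
\end{align*}
where $u_{A_j}$ are the projections of Definition \ref{singregproj}, up to harmless $2\pi$ factors. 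At least one index among $A_0,\dots,A_3$ equals $R$.

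\emph{Case 1: $A_0=R$.} Lemma \ref{estimkR} gives $|\mathcal{K}^\#_R(x,k)|\lesssim \jx^{-\gamma+1}$ uniformly in $k$. Hence
\begin{align*}
|\mathcal{N}^{(1)}_{R,1}(t,k)|\lesssim \|\jx^{1-\gamma}\|_{L^1_x}\prod_{j=1}^3\|u_{A_j}(t)\|_{L^\infty_x}.
\end{align*}
The weight is integrable since $\gamma>5/2>2$. Corollaries \ref{cor:pointwisesingular} and \ref{cor:regularLinfty} (with $\beta=0$), together with the bootstrap hypothesis \eqref{boothyp}, give $\|u_{A_j}(t)\|_{L^\infty_x}\lesssim C_0\varepsilon\jt^{-1/2}$; this is \eqref{decayuAj}. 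The product is therefore $\lesssim C_0^3\varepsilon^3\jt^{-3/2}$, uniformly in $k$.

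\emph{Case 2: $A_0=S$ and, say, $A_3=R$.} Here I use $|\mathcal{K}^\#_S(x,k)|\lesssim 1$. This holds because $m_\pm(x,0)$ is bounded and the coefficients in \eqref{KsharpS0} and \eqref{KsharpSpm} are bounded. The localization then has to come from $u_R$ itself. Corollary \ref{cor:regularLinfty} with $\beta=\gamma-1$ (allowed since $\gamma\ge\beta+1$) gives $\|\jx^{\gamma-1}u_R(t)\|_{L^\infty_x}\lesssim C_0\varepsilon\jt^{-1/2}$, which is \eqref{decayuRagain}. Combining this with \eqref{decayuAj} for the other two factors and the integrability of $\jx^{1-\gamma}$ again gives the bound $C_0^3\varepsilon^3\jt^{-3/2}$. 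Summing the finitely many terms yields \eqref{ODEregular}.

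For $|t|\lesssim 1$ the same argument works with $\jt$ in place of $t$. Corollaries \ref{cor:pointwisesingular} and \ref{cor:regularLinfty} are stated with $t^{-1/2}$, so for small times I replace them by the trivial bound $\|u_{A_j}\|_{L^\infty}\lesssim\|\jk f^\sharp\|_{L^2}$. One has to check that this quantity is controlled by $C_0\varepsilon$, using Sobolev and \eqref{boothyp} (or the $H^1$ control from the conservation laws), and that the weighted version for $u_R$ also holds. This small-time issue is the only mildly delicate point; the main estimate is simply the localization argument above.
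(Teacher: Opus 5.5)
Your proposal is correct and follows the paper's proof essentially verbatim. It uses the same split into the terms \eqref{mur1(1)} and \eqref{mur1(2)}, with the spatial localization supplied by $|\mathcal{K}^\#_R(x,k)|\lesssim\jx^{1-\gamma}$ in the first case, and by $|\mathcal{K}^\#_S|\lesssim 1$ together with the weighted decay \eqref{decayuRagain} of $u_R$ in the second. Your small-time caveat is a point the paper passes over, since it simply quotes \eqref{decayuAj} with $\js^{-1/2}$. Of your two suggested fixes only the conservation-law route works: \eqref{boothyp} controls $\|f^\#\|_{L^\infty_k}$ and $\|\partial_k f^\#\|_{L^2_k}$ but not $\|k f^\#\|_{L^2_k}$, whereas $\|\jk f^\#\|_{L^2}\lesssim\|u(t)\|_{H^1}\lesssim\varepsilon$ follows from \eqref{kusharp} and mass/energy conservation.
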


\begin{proof}
We recall the decomposition performed at the beginning of the proof of Proposition \ref{l2reg1}. Thus, it suffices to estimate the nonlinear terms $\mathcal{N}_{R,1}^{(\bullet)}(t,k)$ for $\bullet = 1,2$ (see \eqref{NR1bull}). 
% According to the structure of the regular part of the measure \eqref{muR0}, it suffices to estimate the nonlinear terms $\mathcal{N}_{R,1}$ and $\mathcal{N}_{R,2}$ 
% associated with $\mu_{R,1}^\sharp$ and $\mu_{R,2}^\sharp$ respectively, as in \eqref{muR1}-\eqref{muR2}.

% We first consider the contribution from $\mathcal{N}_{R,1}$.  
% By definition, $\mu_{R,1}^\sharp$, as in \eqref{eq:mur11}-\eqref{eq:mur12}, is a linear combination of the following two types of terms,
% up to similar ones obtained by permuting the variables:
% \begin{align}
% \mu_{R,1}^{\sharp,(1)}(k,\ell,m,n) & :=
%  \int\overline{\mathcal{K}^\sharp_{R}(x,k)}
%   \mathcal{K}^\sharp_{M_{1}}(x,\ell)\overline{\mathcal{K}^\sharp_{M_{2}}(x,m)}\mathcal{K}^\sharp_{M_{3}}(x,n)\,dx,
% \\
% \mu_{R,1}^{\sharp,(2)}(k,\ell,m,n) & := 
% \int\overline{\mathcal{K}^\sharp_{S}(x,k)}\mathcal{K}^\sharp_{M_{1}}(x,\ell)
%   \overline{\mathcal{K}^\sharp_{M_{2}}(x,m)}\mathcal{K}^\sharp_{R}(x,n)\,dx,
%   \qquad M_{i}\in\{S,R\}.
% \end{align}
% Consider the nonlinear terms associated to the measures above:
% \begin{align*}
%     \mathcal{N}^{(1)}_{R,1}\left(t,k\right)=\iiint e^{it\left(-k^{2}+\ell^{2}-m^{2}+n^{2}\right)}
% f^\sharp(t,\ell)\overline{f^\sharp(t,m)}f^\sharp(t,n)\,\mu_{R,1}^{\sharp,(1)}(k,\ell,m,n) \, \frac{dn}{2\pi}\frac{dm}{2\pi}\frac{d\ell}{2\pi},
%   \\
%    \mathcal{N}^{(2)}_{R,1}\left(t,k\right)=\iiint e^{it\left(-k^{2}+\ell^{2}-m^{2}+n^{2}\right)}
%   f^\sharp(t,\ell)\overline{f^\sharp(t,m)}f^\sharp(t,n)\,\mu_{R,1}^{\sharp,(2)}(k,\ell,m,n) \, \frac{dn}{2\pi}\frac{dm}{2\pi}\frac{d\ell}{2\pi}.
% \end{align*}
Using the notation of \eqref{mur1(1)}-\eqref{mur1(2)}, we have 
\begin{align*}
\mathcal{N}^{(1)}_{R,1}(t,k) & = e^{-itk^2} \int\overline{\mathcal{K}^\sharp_{R}(x,k)} u_{A_1}(t,x)
  \overline{u_{A_2}}(t,x) u_{A_3}(t,x)\,dx \ , \\
  \mathcal{N}^{(2)}_{R,1}(t,k) & = e^{-itk^2} \int\overline{\mathcal{K}^\sharp_{S}(x,k)} u_{A_1'}(t,x)
  \overline{u_{A_2'}}(t,x) u_{R}(t,x)\,dx.
\end{align*}
On one hand, due to Lemma \ref{estimkR}, we have 
$$ \underset{k\in \R}{\sup}\int |\mathcal{K}_{R}(x,k)| \ dx < \infty \ .  $$
Then, we have
\begin{align*}
 {\big\| \mathcal{N}^{(1)}_{R,1}\left(t, k \right) \big\|}_{L^{\infty}_k}
 \lesssim 
 \left\Vert  u_{A_1}(t,x) \overline{u_{A_2}}(t,x) u_{A_3}(t,x)\right\Vert_{L^{\infty}_x} 
 \ . 
\end{align*}
Invoking the dispersive decay \eqref{decayuAj}, we get the desired estimate.

On the other hand, since $\mathcal{K}_{S}(x,k)$ is bounded, we can write :
\begin{align*}
 \left\Vert  \mathcal{N}^{(2)}_{R,1}\left(t,k \right)\right\Vert _{L^{\infty}_k}
 \lesssim \left\Vert  u_{M_1}(t,x) \overline{u_{M_2}}(t,x){\langle x \rangle}^{\gamma-1}
 u_{R}(t,x)\right\Vert _{L^{\infty}_x} \ . 
\end{align*}
Due to the decays \eqref{decayuAj} and  \eqref{decayuRagain}, we get the desired estimate.
% \begin{align}\label{decayuRagain}
%     \|\jx^{\gamma-1}u_{R}(s,x)\|_{L^\infty_x} \lesssim C_0\varepsilon \js^{-1/2} \ .
% \end{align}
% the last inequality follows from \eqref{eq:singulardecay} and \eqref{regularinfty}.
% %and the same  notations as \eqref{eq:notaasy}.
\end{proof}

\begin{proposition}\label{prop:asyregular2}
Under the bootstrap hypothesis \eqref{boothyp},  it holds that for every $|t| \le T$,
\begin{equation}\label{ODEregular2}
{\| \mathcal{N}_{R,2}(t,k) \|}_{L^{\infty}_k } \lesssim C_0^3\varepsilon^3 \jt^{-3/2}.
\end{equation}
\end{proposition}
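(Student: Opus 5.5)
We decompose $\mu^\sharp_{R,2}$ into the finitely many pieces $\mu^{\sharp,\e}_{R,2}$ of \eqref{mur2eps}. By the triangle inequality, it then suffices to prove the bound for each $\mathcal{N}^\e_{R,2}$ defined in \eqref{nr2eps}. We write this term as a trilinear form, as in the proof of Proposition \ref{l2boundreg2}:
\begin{align*}
\mathcal{N}^\e_{R,2}(t,k) = \overline{a_{\e_0}(k)}\, \mathcal{T}_{\what{\Fou}(\varphi_\e),\e}(f_1,f_2,f_3)(t,k), \qquad f_j := a_{\e_j} f^\sharp .
\end{align*}
Since $a_{\e_0}$ is bounded, we only need to bound $\mathcal{T}_{\what{\Fou}(\varphi_\e),\e}(f_1,f_2,f_3)(t,\cdot)$ in $L^\infty_k$.

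The key step is to use Lemma \ref{lem:inverseFD}. It identifies $e^{itk^2}\mathcal{T}_{\mathrm{b},\e}(f_1,f_2,f_3)(t,k)$ as the flat Fourier transform of
\begin{align*}
x\mapsto \e_0\, u_1(t,-\e_1x)\overline{u_2(t,\e_2x)}u_3(t,-\e_3x)\,\widehat{\mathcal F}^{-1}[\mathrm b](\e_0 x),
\end{align*}
where $u_j = e^{-it\partial_{xx}}\widehat{\mathcal F}^{-1}(f_j)$ and $\widehat{\mathcal F}^{-1}[\mathrm b]$ is, up to a harmless constant, a reflection of $\varphi_\e$. Equivalently, one can write it directly as
\begin{align*}
e^{-itk^2}\int_\R e^{-i\e_0 kx}\varphi_\e(x)\,u_1\overline{u_2}u_3\,dx,
\end{align*}
as in Step 1 of the proof of Proposition \ref{l2boundreg2}. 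Bounding the Fourier transform in $L^\infty_k$ by the $L^1_x$ norm then gives
\begin{align*}
\|\mathcal{N}^\e_{R,2}(t)\|_{L^\infty_k}\lesssim \|\varphi_\e\|_{L^1}\prod_{j=1}^3\|u_j(t)\|_{L^\infty_x}.
\end{align*}
Here $\varphi_\e\in L^1$, because $|\varphi_\e|\lesssim\jx^{-\gamma+1}$ with $\gamma>5/2$ by \eqref{muR2}.

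To conclude, we use the dispersive bound \eqref{decayujagain}, $\|u_j(t)\|_{L^\infty_x}\lesssim C_0\varepsilon\jt^{-1/2}$. This follows from Lemma \ref{lemstat} under the bootstrap hypothesis \eqref{boothyp}, since $a_{\e_j}$ and $\partial_k a_{\e_j}$ are bounded and $\|f^\sharp\|_{L^2}\le\varepsilon$. The product of the three factors gives $C_0^3\varepsilon^3\jt^{-3/2}$. For $|t|\le1$ we use the trivial bound $\|u_j\|_{L^\infty}\lesssim\|f_j\|_{L^1}$ instead, or simply note that $\jt\sim1$ there.

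There is no real obstacle in this argument; the localization of $\varphi_\e$ does all the work. The only point to check is the sign and reflection bookkeeping in Lemma \ref{lem:inverseFD}, namely that the arguments $\pm x$ of the $u_j$ do not affect the $L^\infty$ bounds, which is clear.
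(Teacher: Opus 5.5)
Your proposal is correct and follows the paper's proof essentially line by line. You reduce to the pieces $\mathcal{N}^{\e}_{R,2}$, write each as $\overline{a_{\e_0}(k)}e^{-itk^2}\int_\R e^{-i\e_0kx}\varphi_\e(x)\,u_1\overline{u_2}u_3\,dx$, bound this by $\|\varphi_\e\|_{L^1}\prod_j\|u_j(t)\|_{L^\infty_x}$, and conclude with the dispersive decay \eqref{decayujagain}. Your remark on $|t|\le 1$ also covers a small point the paper leaves implicit: there one uses $\|u_j\|_{L^\infty}\lesssim\|f_j\|_{L^1}\lesssim\|\jk f^\sharp\|_{L^2}$, which is controlled through \eqref{kusharp} and conservation of mass and energy rather than by the bootstrap norm itself.
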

\begin{proof}
We recall the beginning of the proof of Proposition \ref{l2boundreg2}. Thus, it suffices to estimate $\mathcal{N}_{R,2}^{\eps}(t,k)$ defined at \eqref{nr2eps}. Due to \eqref{mur2eps}, we can write 
% Finally, we consider the contribution of the nonlinear terms associated with $\mu^{\#}_{R,2}(k,\ell,m,n)$ given by a linear combination of terms
% 	%, see \eqref{muR2}, we know a generic term in from $\mu_{R,2}$ is
% 	of the form (we omit the dependence on $\eps_0,\eps_1,\dots$ in the notation)
% 	\begin{align}
% 	\begin{split} 
% 		%In the proof of the estimate \eqref{weightmain1} we will use the commutation Lemma \ref{lem:algetri} with	
% 	\mu_{R,2}^{\#,(0)}(k,\ell,m,n) := 
%  \overline{a_{\e_0}(k)} a_{\e_1}(\ell) \overline{a_{\e_2}(m)} a_{\e_3}(n)
% 	\int\varPsi(x)e^{i(-\e_0 k+\e_1\ell-\e_2 m+\e_3n)x}\,dx,
% 	\\
% 	|\partial_x^\alpha \varPsi(x)| \lesssim \jx^{-\gamma+1}, \quad \alpha =0,1.
% 	\end{split}
% 	\end{align}  
% Therefore the terms we are going to estimate to conclude the proof of the proposition take the form
% \begin{align*}
%     \mathcal{N}_{R,2}^{(0)}(t,k)& = e^{-itk^2}\int\int\int e^{it(\ell^2-m^2+n^2)}f^\sharp(\ell)\overline{f^\sharp(m)}f^\sharp(n) \mu_{R,2}^{\#,(0)}(k,\ell,m,n) \ \frac{dn}{2\pi}\frac{dm}{2\pi}\frac{d\ell}{2\pi} \\
%     \mathcal{N}_{R,2}^{(0)}(t,k) & =  \overline{a_{\e_0}(k)} e^{-itk^2} %(\sqrt{2\pi})^3
% 	\int_{\R_x}\varPsi(x)e^{-\e_0 ikx} u_{\e_1}(x) 
% 	\overline{u_{\e_2}(x)} u_{\e_3}(x)\, dx
% \end{align*}
\begin{align}
    \mathcal{N}_{R,2}^{\eps}(t,k) & =  \overline{a_{\e_0}(k)} e^{-itk^2} 
	\int_{\R_x}e^{-\e_0 ikx}\varphi_\eps(x) u_{1}(t,x) 
 	\overline{u_{2}(t,-x)} u_{3}(t,x)\, dx
\end{align}
where $u_{j}(t,x)$ is defined at \eqref{defu_j}.  
Due to the boundedness of the coefficients $a_{\e_0}(\cdot)$ and the $L^1$ integrability of $\varphi_{\eps}$, we get 
\begin{align}
    \| \mathcal{N}_{R,2}^{\eps}
(t,k) \|_{L_k^{\infty}} \lesssim \|u_{\e_1}(t,x) 
 	\overline{u_{\e_2}(t,-x)} u_{\e_3}(t,x) \|_{L^\infty_x} \ . 
\end{align}
Then, due to the decay \eqref{decayujagain}, we get the desired estimate. 
\end{proof}

%%%%%%%%%%%%%%%%%%%%%%%%%%%%
%%%%%%%%%%%%%%%%%%%%%%%%%%%%
%%%%%%%%%%%%%%%%%%%%%%%%%%%%

\smallskip
\subsection{Asymptotics of singular part}\label{ssecSasy}
It remains to bound $\mathcal{N}_{S}\left(t,k\right)$.
%which is the same as the singular part in \cite{NLSV}. 
We start with the following standard stationary phase-type lemma adapted from \cite[Section 5]{GPR}.

\begin{lemma}%[Lemma ... \cite{GPR}]
\label{AsLem1}
For $k,t \in \R$ and $\eps = (\e_0,\e_1,\e_2,\e_3) \in \{+,-\}^4$, we consider the integral expression
\begin{align}\label{I1}
\begin{split}
I_{\eps}[g_1,g_2,g_3](t,k) & = \underset{\R^3}{\iiint} e^{it \Phi_\e(k,p,m,n)}
	g_1(\epsilon_1(\epsilon_0 k - p + \epsilon_2 m - \epsilon_3 n)) \overline{g_2(m)} 
	g_3(n) \pv \frac{\widehat{\zeta}(p)}{p} \, dm dn dp
\\
\Phi_\e(k,p,m,n) & = -k^2 + (\epsilon_0 k - p + \epsilon_2 m - \epsilon_3 n)^2 - m^2 + n^2,
\end{split}
\end{align}
for $\zeta$ an even smooth function with compact support and integral $1$, 
and with $g:=(g_1,g_2,g_3)$ satisfying
\begin{align}
\label{AsLem1as}
{\| g(k) \|}_{L^\infty_k} + {\| \langle k \rangle g(k) \|}_{L^2_k} + \langle t \rangle^{-1/4} {\| \partial_kg(k) \|}_{L^2_k} \leq 1 \ . 
\end{align} 
Then, for any $k,t \in \R$,
\begin{align}\label{I2}
\begin{split}
I_{\eps}[g_1,g_2,g_3](t,k) = \frac{\pi}{|t|} e^{-itk^2} \int_\R e^{it(-p+\epsilon_0 k)^2} g_1(\epsilon_1(-p+\epsilon_0 k))
	\overline{g_2(\epsilon_2(-p+\epsilon_0 k))}
	\\ \times  g_3(\epsilon_3 (-p+\epsilon_0 k)) \mathrm{p.v.} \frac{\widehat{\zeta}(p)}{p} \,dp
	+ \mathcal{O}(|t|^{-1}) \ . 
\end{split}
\end{align}

%For %$|k\sqrt{t}|\gtrsim t^{1/4}$, 
%$|k| \geq |t|^{-3\alpha}$ we can further simplify \eqref{I2} to
%\begin{align}\label{I3}
%\begin{split}
%I[g_1,g_2,g_3](t,k) & =-i\frac{\pi}{|t|}
%  \sqrt{\frac{\pi}{2}} \cdot \mathrm{sign}(\e_0k t) \cdot g_1(\epsilon_1\epsilon_0 k) 
%  \overline{g_2(\epsilon_2\epsilon_0 k))} g_3(\epsilon_3\epsilon_0 k)+  \mathcal{O}(|t|^{-1-\rho}).
%\end{split}
%\end{align}
\end{lemma}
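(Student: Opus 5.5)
The plan is to reduce to the standard two-variable stationary phase lemma of \cite[Section 5]{GPR} by freezing $p$, and then integrate against the principal value in $p$, whose loss is only logarithmic. For fixed $p$, set $\kappa := \epsilon_0 k - p$ and change variables so that the inner $(m,n)$-integral reads
\begin{align*}
J(t,\kappa) := \iint e^{it\phi_\kappa(m,n)} g_1(\epsilon_1(\kappa+\epsilon_2 m-\epsilon_3 n))\overline{g_2(m)}g_3(n)\,dm\,dn ,
\end{align*}
with
\begin{align*}
\phi_\kappa(m,n) = (\kappa+\epsilon_2 m-\epsilon_3 n)^2-m^2+n^2 ,
\end{align*}
so that $\Phi_\e = -k^2+\phi_\kappa$. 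The phase $\phi_\kappa$ is quadratic with nondegenerate Hessian (determinant $-4$). It has a unique critical point at $\epsilon_2 m = \epsilon_3 n = \kappa$, where $\phi_\kappa = \kappa^2$; this matches the arguments $g_2(\epsilon_2\kappa)$, $g_3(\epsilon_3\kappa)$, $g_1(\epsilon_1\kappa)$ in \eqref{I2}. The GPR lemma then gives $J(t,\kappa) = \frac{\pi}{|t|}e^{it\kappa^2}g_1(\epsilon_1\kappa)\overline{g_2(\epsilon_2\kappa)}g_3(\epsilon_3\kappa) + R(t,\kappa)$, where $|R(t,\kappa)|\lesssim |t|^{-1-\rho}$ uniformly in $\kappa$ under \eqref{AsLem1as}. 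One can take $\rho>0$ there because the $\partial_k$-norm grows only like $t^{1/4}$; the remainder is of order $t^{-5/4}\|\partial g\|_{L^2}\cdots$.

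To obtain this, I would first establish the bound: write $J$ via Plancherel as a physical-space integral $\int e^{\ldots} \check{g}$-type expression and split $g_j = g_j(\text{stationary point}) + (g_j - g_j(\text{stationary point}))$. The error pieces are controlled by $|t|^{-1}\cdot|t|^{-1/4}\|\partial_k g\|_{L^2}$-type bounds through the Cauchy–Schwarz argument with weight $\langle \sqrt t(m-\epsilon_2\kappa)\rangle$, exactly as in Lemma \ref{lemstat}. This is routine and I would quote it.

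Next comes the $p$-integration. The main term, after multiplying by $e^{-itk^2}$ and integrating against $\mathrm{p.v.}\,\widehat\zeta(p)/p$, is exactly the displayed term in \eqref{I2}. For the remainder $\int R(t,\epsilon_0k-p)\,\mathrm{p.v.}\,\widehat\zeta(p)/p\,dp$, I would split into $|p|\ge |t|^{-1}$ and $|p|<|t|^{-1}$. On the outer region, $|R|\lesssim |t|^{-1-\rho}$ together with $\int_{|t|^{-1}\le|p|}|\widehat\zeta(p)|/|p|\,dp\lesssim \log\langle t\rangle$ gives $O(|t|^{-1-\rho}\log\langle t\rangle)=O(|t|^{-1})$. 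On the inner region I would use the principal value cancellation. Here I need $R(t,\epsilon_0k-p)-R(t,\epsilon_0k+p)$ to be $O(|p|\cdot|t|^{\ldots})$, i.e.\ a Lipschitz bound on $\kappa\mapsto J(t,\kappa)$ and on the main term. Differentiating $J$ in $\kappa$ only hits $g_1$, giving $\partial g_1 \in L^2$ with norm $\lesssim t^{1/4}$, so $|\partial_\kappa J|\lesssim t^{1/4}\cdot t^{-1/2}$ (one variable is left after Cauchy–Schwarz in the other). The main term has $\kappa$-derivative $\lesssim t^{-1}\cdot t$ from the phase $e^{it\kappa^2}$. Hence $|\partial_\kappa R|\lesssim 1$, and the inner region contributes $\int_{|p|<1/|t|}1\,dp\lesssim |t|^{-1}$.

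The main obstacle is this last step: making the principal value cancellation compatible with the oscillating factor $e^{it\kappa^2}$ without losing more than $|t|^{-1}$. If the crude Lipschitz bound fails, my fallback is to keep the oscillation and only claim $O(|t|^{-1})$, as stated, rather than $O(|t|^{-1-\rho})$. Both pieces are handled at the $|t|^{-1}$ scale, and the $\log$ from the outer region is absorbed by the $|t|^{-\rho}$ gain. For $|t|\lesssim 1$, the statement is trivial from boundedness of the integral, using the $L^2$ bounds and the boundedness of $\widehat{\mathcal F}^{-1}$ of the p.v.\ kernel, as in Corollary \ref{cor:estimTbeps}.
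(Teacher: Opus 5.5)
\textbf{There is a genuine gap: at $\alpha=0$ there is no gain $\rho>0$, so your $p$-integration loses a logarithm.} Your route differs from the paper's, which only cites \cite{GPR} at $\alpha=0$. You freeze $p$, apply the two-variable stationary phase (Lemma \ref{Asdirac}) to the $(m,n)$-integral, and integrate against $\mathrm{p.v.}\,\widehat\zeta(p)/p$. The quantitative claim this rests on is false. Under \eqref{AsLem1as}, your own count gives $|t|^{-5/4}\|\partial_k g\|_{L^2}\le |t|^{-5/4}\langle t\rangle^{1/4}\simeq |t|^{-1}$, so there is no gain. This is exactly Remark \ref{absorbmainterm}: the gain in \cite{GPR} is $|t|^{-\alpha/3}$, which vanishes at $\alpha=0$. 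It is also sharp. Bumps of height $\sim 1$ and width $|t|^{-1/2}$ are admissible in \eqref{AsLem1as} and are resolved at the stationary-phase scale, so they move the double integral by an amount of order $|t|^{-1}$. With only $|R(t,\kappa)|\lesssim |t|^{-1}$ uniformly in $\kappa$, your outer region gives $\int_{|t|^{-1}\le |p|\lesssim 1}|R|\,|\widehat\zeta(p)|\,|p|^{-1}\,dp\simeq |t|^{-1}\log\langle t\rangle$. Both your main line and your fallback absorb this logarithm with a $|t|^{-\rho}$ that does not exist, so the argument proves only $\mathcal{O}(|t|^{-1}\log\langle t\rangle)$. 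The loss is not cosmetic. Fed into Proposition \ref{prop:asypv}, it would make $\|f^\sharp(t)\|_{L^\infty_k}$ grow like $\varepsilon^3\log^2\langle t\rangle$ and shrink the lifespan to $\exp(c/\varepsilon)$.

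\textbf{The inner region is not justified either.} It is false that $\partial_\kappa J$ only hits $g_1$. First, $\kappa$ sits in the phase $\phi_\kappa$, which produces a factor $2it(\kappa+\epsilon_2 m-\epsilon_3 n)$. Second, the shift $m=\epsilon_2(\kappa+a)$, $n=\epsilon_3(\kappa+b)$ removes $\kappa$ from the phase (leaving $\kappa^2+2b(b-a)$) but moves it into all three $g_j$. The $\kappa$-derivative of the main term is also problematic: it contains $G'(\kappa)$, controlled only in $L^2$ with norm $\lesssim\langle t\rangle^{1/4}$, and $2\pi i\,\mathrm{sgn}(t)\,\kappa\, e^{it\kappa^2}G(\kappa)$, which is not bounded uniformly in $k$. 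So $|\partial_\kappa R|\lesssim 1$ fails. The natural substitute is a H\"older-$\frac12$ bound of size $\langle t\rangle^{1/4}|\Delta\kappa|^{1/2}$ for $e^{-it\kappa^2}R$, but it loses the factor $|t|^{-1}$ and gives only $\mathcal{O}(|t|^{-1/4})$ on $|p|<|t|^{-1}$.

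\textbf{What is needed.} A proof along your lines needs information on $R$ beyond a uniform pointwise bound. For instance, use the physical-space form of $J$ (Lemma \ref{lem:inverseFD} with $\mathrm{b}=\delta_0$) together with the $L^2$ error $\lesssim |t|^{-1/2}\|\partial_k g\|_{L^2}$ in the linear asymptotics of $u_j$. From these one expects $\|R(t,\cdot)\|_{L^2_\kappa}\lesssim |t|^{-5/4}$, which handles $|p|\ge |t|^{-1/2}$ by Cauchy--Schwarz with no logarithm. On $|p|\le |t|^{-1/2}$ you then need genuine principal-value cancellation. That requires a modulus of continuity for $e^{-it\kappa^2}R$ that keeps the factor $|t|^{-1}$, of size $|t|^{-3/4}|\Delta\kappa|^{1/2}$. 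For $\alpha>0$ a uniform remainder would have absorbed the logarithm. This is therefore also the step against which the paper's remark that the proof of \cite{GPR} ``still works for $\alpha=0$'' must be checked.
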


\begin{proof}
See Lemma 5.1 in \cite{GPR}. %For the proof of \eqref{I3}, see \cite[Lemma 6.3]{NLSV}.
One should note that the proof in \cite{GPR} still works for $\alpha =0 $, leading to the above lemma.
\end{proof}

The following lemma will be of use for the $\delta_0$ terms. 

\begin{lemma} \label{Asdirac}
    For $k, t \in \mathbb{R}$ and $\eps = (\e_0,\e_1,\e_2,\e_3) \in \{+,-\}^4$, we consider the integral expression
\begin{equation}\label{eq:L}
L_{\eps}[g_1, g_2, g_3](t, k) = 
\underset{\R^2}{\iint} e^{i t \Phi_\e(k,p,m,n)} 
\, g_1\big(\e_1(\e_0 k + \e_2 m - \epsilon_3 n)\big) 
\, g_2(m) \, g_3(n) \, dm \, dn,
\end{equation}
with
\[
\Phi_\e(k, p, m, n) = -k^2 + (\e_0 k + \e_2 m - \epsilon_3 n)^2 - m^2 + n^2,
\]
and $g := (g_1, g_2, g_3)$ satisfying
\begin{equation}\label{eq:g-norm}
{\| g(k) \|}_{L^\infty_k} + {\| \langle k \rangle g(k) \|}_{L^2_k} + \langle t \rangle^{-1/4} {\| \partial_kg(k) \|}_{L^2_k} \leq 1 \ . 
\end{equation}
Then, for any $k,t \in \mathbb{R}$,
\begin{equation}\label{eq:asymptotic}
L_{\eps}[g_1, g_2, g_3](t, k) = 
\frac{\pi}{|t|} g_1(\e_1 \e_0 k)\, g_2(\e_2 \e_0 k)\, g_3(\epsilon_3 \e_0 k) 
+ \mathcal{O}\big(|t|^{-1}\big) \ .
\end{equation}
\end{lemma}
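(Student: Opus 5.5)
The plan is to reduce $L_{\eps}$ to a standard two-dimensional stationary phase computation, as in the proof of Lemma \ref{AsLem1} (\cite[Lemma 5.1]{GPR}). This is essentially that lemma with the $\pv$-weight replaced by a Dirac mass at $p=0$.

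\emph{Step 1: change variables and locate the stationary point.} Set $m = \e_2\e_0 k + \eta$ and $n = \e_3\e_0 k + \sigma$. Then $\e_0 k + \e_2 m - \e_3 n = \e_0 k + \e_2\eta - \e_3\sigma$, and expanding the phase gives
\begin{align}
\Phi_\e = 2(\e_2\eta - \e_3\sigma)(\e_0 k) + (\e_2\eta - \e_3\sigma)^2 - 2\e_2\e_0 k\eta - \eta^2 + 2\e_3\e_0 k\sigma + \sigma^2 = -2\e_2\e_3\,\eta\sigma .
\end{align}
So the phase is the pure quadratic form $-2\e_2\e_3\eta\sigma$. Its Hessian has determinant $-4$ and zero signature. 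Since
$$\iint e^{-2it\e_2\e_3\eta\sigma}\,d\eta\, d\sigma = \frac{\pi}{|t|},$$
the leading term is $\frac{\pi}{|t|}G(0,0)$. Here
$$G(\eta,\sigma) = g_1(\e_1(\e_0k+\e_2\eta-\e_3\sigma))\, g_2(\e_2\e_0k+\eta)\, g_3(\e_3\e_0k+\sigma),$$
so that $G(0,0) = g_1(\e_1\e_0k)g_2(\e_2\e_0k)g_3(\e_3\e_0k)$, which is exactly the main term in \eqref{eq:asymptotic}.

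\emph{Step 2: bound the remainder.} The error is $\iint e^{-2it\e_2\e_3\eta\sigma}\,(G(\eta,\sigma)-G(0,0))\,d\eta\, d\sigma$, understood in the oscillatory sense. I would write it on the Fourier side:
$$\iint e^{-2it\e_2\e_3\eta\sigma} G \, d\eta\, d\sigma = \frac{c}{|t|}\iint e^{i\xi\zeta/(2t)}\,\widehat G(\xi,\zeta)\,d\xi\, d\zeta ,$$
with $\frac{c}{|t|}\iint \widehat G = \frac{\pi}{|t|}G(0,0)$. The remainder is then
$$\frac{c}{|t|}\iint \big(e^{i\xi\zeta/2t}-1\big)\widehat G\, d\xi\, d\zeta, \qquad |e^{i\xi\zeta/2t}-1| \lesssim \min\big(1,\,|\xi\zeta|^{\theta}|t|^{-\theta}\big).$$
This is the GPR argument. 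They split $g_j = g_j^{\le} + g_j^{>}$ at the frequency threshold $|t|^{1/2}$ (in physical space $x$, dual to $k$) and estimate separately:
\begin{itemize}
\item the piece carrying the $\min$ factor, using $\|\partial_k g\|_{L^2}\lesssim \jt^{1/4}$;
\item the pieces where some $g_j^{>}$ appears, using $\|g_j^{>}\|_{L^\infty}\lesssim |t|^{-1/4}\|\partial_k g_j\|_{L^2}$ (Bernstein) together with the $L^2$ bounds from $\|\jk g\|_{L^2}\le 1$ and a direct $1/|t|$ bound from the factor in front.
\end{itemize}
Each piece contributes $\mathcal{O}(|t|^{-1}\cdot |t|^{-\rho+\alpha})$ with $\alpha$ the growth exponent. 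With $\alpha = 1/4$ the gain $\rho$ disappears, but the bound remains $\mathcal{O}(|t|^{-1})$. This matches the remark after Lemma \ref{AsLem1} that the GPR proof survives at the endpoint. For $|t|\le 1$ I would instead bound $L_\eps$ directly: Cauchy–Schwarz in $(m,n)$ using $\|g_2\|_{L^2},\|g_3\|_{L^2}\le 1$ and $\|g_1\|_{L^\infty}\le 1$ gives at least a bounded quantity, which is $\lesssim |t|^{-1}$ there.

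\emph{Main obstacle.} The delicate step is Step 2 at the critical growth $\jt^{1/4}$: the $L^\infty$ control of the three-fold product has no room to spare. I would follow \cite[Lemma 5.1]{GPR} line by line, tracking where the exponent $\alpha$ enters, and check that every error term is $\lesssim |t|^{-1}$ once $\alpha = 1/4$. The trilinear structure only produces a product of $\widehat{g_j}$ convolutions, which is handled by Hölder exactly as in their proof. The absence of the $p$-integration here only simplifies that proof.
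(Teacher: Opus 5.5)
Your route is the paper's: its proof of this lemma is only the citation of \cite[Lemma 5.2]{GPR} plus the assertion that the argument survives at the endpoint growth $\jt^{1/4}$. As Remark \ref{absorbmainterm} notes, the main term is itself $\mathcal{O}(|t|^{-1})$, so the whole content is the bound $|L_\eps(t,k)|\lesssim|t|^{-1}$. Two of your details need correcting.

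First, the phase in Step 1 is wrong. With your shifts,
\[
\Phi_\e=2\sigma^2-2\e_2\e_3\eta\sigma=2\sigma(\sigma-\e_2\e_3\eta),
\]
not $-2\e_2\e_3\eta\sigma$; for $\e_j\equiv+$ one has $\Phi=2(n-k)(n-m)$. This is harmless. The form still has determinant $-4$ and signature $0$, and the unimodular shear $\tau=\sigma-\e_2\e_3\eta$ brings it to $2\sigma\tau$, so the constant $\pi/|t|$ stands.

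Second, and more important, Step 2 as you sketch it does not give $|t|^{-1}$ at the endpoint. For the pieces containing some $g_j^{>}$, Bernstein only gives $\|g_j^{>}\|_{L^\infty}\lesssim|t|^{-1/4}\|\partial_kg_j\|_{L^2}\lesssim1$, with no smallness. There is also no ``direct'' $|t|^{-1}$ bound from $L^\infty$ and $L^2$ norms. The prefactor only yields $|t|^{-1}\|\widehat G\|_{L^1(\R^2)}$, and \eqref{eq:g-norm} does not control $\|\widehat G\|_{L^1}$ uniformly in $t$: chirped profiles at spatial scale $t^{1/4}$ make it grow like a power of $t$. So the endpoint claim rests entirely on the citation, exactly as in the paper.

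If you want a self-contained proof, the one-dimensional factorization of the free flow does it in a few lines.
\begin{enumerate}
\item By Lemma \ref{lem:inverseFD} with $\mathrm{b}=\delta_0$, $|L_\eps(t,k)|$ is, up to constants, $|\widehat{\mathcal F}[u_1\overline{u_2}u_3](\pm k)|$, with reflected arguments and possibly one conjugated profile. Here $\widehat{u_j}(t,\xi)=e^{it\xi^2}g_j(\xi)$.
\item Completing the square in the kernel gives exactly
\[
u_j(t,x)=\frac{c_t}{\sqrt{|t|}}\,e^{-ix^2/(4t)}\,G_j\Big(-\frac{x}{2t}\Big),\qquad G_j:=\widehat{\mathcal F}\Big[e^{-iy^2/(4t)}\,\widehat{\mathcal F}^{-1}g_j\Big],
\]
with $|c_t|$ constant. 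Moreover $\|\partial_\xi G_j\|_{L^2}=\|\partial_\xi g_j\|_{L^2}\lesssim\jt^{1/4}$, and $\|G_j-g_j\|_{L^\infty}\le\|(e^{-iy^2/(4t)}-1)\widehat{\mathcal F}^{-1}g_j\|_{L^1}\lesssim|t|^{-1/4}\|\partial_\xi g_j\|_{L^2}\lesssim1$.
\item The quadratic phases enter the product with signs $+,-,+$, so $u_1\overline{u_2}u_3=c\,|t|^{-3/2}e^{-ix^2/(4t)}\Psi(x/(2t))$. Here $\Psi$ is a product of reflected or conjugated $G_j$'s, with $\|\Psi\|_{L^\infty}\lesssim1$ and $\|\Psi'\|_{L^2}\lesssim\jt^{1/4}$.
\item Rescaling $x=2ty$ turns $L_\eps$ into $c\,|t|^{-1/2}\int_\R e^{-it(y\pm k)^2}\Psi(y)\,dy$. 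Lemma \ref{lemstat}, with $a\equiv1$, applied on each half-line and after conjugating if needed, bounds this by $|t|^{-1/2}\big(|t|^{-1/2}+|t|^{-3/4}\jt^{1/4}\big)\lesssim|t|^{-1}$.
\item For $|t|\le1$, use $|L_\eps|\le\|g_1\|_{L^\infty}\|g_2\|_{L^1}\|g_3\|_{L^1}\lesssim1$, since $\|g\|_{L^1}\lesssim\|\jk g\|_{L^2}$.
\end{enumerate}
Keeping the value of $\Psi$ at the stationary point, the same computation also yields the leading term of \eqref{eq:asymptotic}. Under growth $\jt^{1/4-\alpha}$ the remainder is $\mathcal{O}(|t|^{-1-\alpha})$. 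Compared with the two-dimensional stationary phase, this route is more elementary and gives a sharper remainder.
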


\begin{proof}
See Lemma 5.2 in \cite{GPR}. %For the proof of \eqref{I3}, see \cite[Lemma 6.3]{NLSV}.
One  should note that the proof in \cite{GPR} still works for $\alpha =0 $, leading to the above lemma.
\end{proof}

\begin{remark}\label{absorbmainterm}
    The reader could observe that in \eqref{eq:asymptotic}, $L_{\eps}[g_1, g_2, g_3](t, k) = \mathcal{O}\big(|t|^{-1}\big)$. Indeed, according to assumptions \eqref{eq:g-norm}, the main term in \eqref{eq:asymptotic} can be absorbed in the $\mathcal{O}\big(|t|^{-1}\big)$ remainder. Nevertheless, we stated Lemma \ref{Asdirac} as above to stay close to the formulation in \cite[Lemma 5.2]{GPR}. In fact, the asymptotic in \cite{GPR} gives 
    $$L_{\eps}[g_1, g_2, g_3](t, k) = 
\frac{\pi}{|t|} g_1(\e_1 \e_0 k)\, g_2(\e_2 \e_0 k)\, g_3(\epsilon_3 \e_0 k) 
+ \mathcal{O}\big(|t|^{-1-\alpha/3}\big) \ ,$$
under the assumptions
$$ {\| g(k) \|}_{L^\infty_k} + {\| \langle k \rangle g(k) \|}_{L^2_k} + \langle t \rangle^{-1/4+\alpha} {\| \partial_kg(k) \|}_{L^2_k} \leq 1 \ . $$
\end{remark}

%Unlike in Chen-Pusateri \cite{NLSV}, in the non-genereic setting, 
%we cannot use $\wt{f}(t,0)=0$ for all $t$.
To obtain bounds for expressions like \eqref{I2}, 
we shall use the following lemma: %in order to obtain its leading order behavior.

\begin{lemma}\label{lemStat}
For $K\in \mathbb{R}$, $t > 0$, consider the principal value integral  expression
\begin{equation}\label{eq:I}
I(t, K) = \pv \int_\R 
e^{i t x^{2}} \, G(x) \, \frac{\psi(x - K)}{x - K} \, dx,
\end{equation}
for $\psi \in \mathcal{S}$, and $G$ satisfying
\begin{equation}\label{eq:g-cond}
\|G(x)\|_{L^\infty_x} + \langle t \rangle^{-1/4} \|\partial_xG(x)\|_{L^2_x} \leq 1 \ . 
\end{equation}
Then, for $K \in \R$, $t > 0$, 
% \begin{align}\label{sgnpvasymp}
%     I(t,K) = i\pi G(K)\psi(0)e^{itK^2} \sgn(2tK) + O(1) \ . 
% \end{align}
$I(t,K)$ is uniformly bounded. 
\end{lemma}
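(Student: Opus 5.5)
The plan is to reduce $I(t,K)$ to one explicit Fresnel-type integral plus remainders that are bounded by scaling arguments at the natural length scale $t^{-1/2}$. Since the statement only concerns boundedness, we may assume $t\geq 1$: for $t\le 1$ the same argument with $t^{-1/2}$ replaced by $1$ works verbatim. We change variables $y=x-K$, so that
\[
I(t,K)=\pv\int_\R e^{it(y+K)^2}\,G(y+K)\,\frac{\psi(y)}{y}\,dy ,
\]
and decompose $G(y+K)=G(K)+\big(G(y+K)-G(K)\big)$ near $y=0$.

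\emph{Step 1 (the constant part).} We want to show that $J(t,K):=\pv\int_\R e^{it(y+K)^2}\psi(y)\,y^{-1}\,dy$ is bounded uniformly in $(t,K)$. The distribution $\pv\,\psi(y)/y$ has flat Fourier transform equal to a constant multiple of $\sgn\ast\widehat{\psi}$, with $\widehat\psi\in L^1$. The chirp $e^{it(y+K)^2}$ has Fourier transform $c\,t^{-1/2}e^{-i\xi^2/(4t)}e^{iK\xi}$, with $|c|$ independent of $t$. Parseval and Fubini then give
\[
J(t,K)=c'\int_\R\widehat\psi(\eta)\Big[t^{-1/2}\int_\R \sgn(\xi-\eta)\,e^{-i\xi^2/(4t)}e^{iK\xi}\,d\xi\Big]d\eta .
\]
Completing the square, the inner bracket is a difference of two half-line Fresnel integrals $\int_{a}^{\infty}e^{-iu^2}\,du$, after rescaling $u=\xi/(2\sqrt t)$. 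Such integrals are bounded uniformly in $a$, and the factor $t^{-1/2}$ is exactly absorbed by this rescaling. Hence $|J|\lesssim\|\widehat\psi\|_{L^1}$, and so $|G(K)J|\lesssim 1$.

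\emph{Step 2 (the difference near the singularity).} On $|y|\le t^{-1/2}$ we use the Sobolev/Hölder bound
\[
|G(y+K)-G(K)|\le |y|^{1/2}\|\partial_xG\|_{L^2}\le |y|^{1/2}\jt^{1/4},
\]
which follows from \eqref{eq:g-cond}. This gives a contribution bounded by $\jt^{1/4}\int_{|y|\le t^{-1/2}}|y|^{-1/2}\,dy\lesssim 1$. Since Step 1 already accounts for the full constant part, it remains to treat the term $\int_{|y|>t^{-1/2}}e^{it(y+K)^2}G(y+K)\psi(y)y^{-1}\,dy$. Here the trivial bound only gives $\log t$, so oscillation must be used.

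\emph{Step 3 (integration by parts away from the stationary point; the main obstacle).} We split the region $\{|y|>t^{-1/2}\}$ further according to the distance to the stationary point $y=-K$ of the phase.

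\begin{enumerate}
\item On $\{|y|>t^{-1/2},\,|y+K|\le t^{-1/2}\}$ we bound trivially. The set has length at most $2t^{-1/2}$, and on it $|\psi(y)/y|\lesssim t^{1/2}$, so this piece is $\lesssim 1$.
\item On the complementary region $\{|y|>t^{-1/2},\,|y+K|>t^{-1/2}\}$ we write $e^{it(y+K)^2}=\big(2it(y+K)\big)^{-1}\partial_y e^{it(y+K)^2}$ and integrate by parts, with cutoffs smoothed at scale $t^{-1/2}$.
\end{enumerate}

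The integration by parts produces three kinds of terms.

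\begin{enumerate}
\item Boundary and cutoff terms are of size $t^{-1}\cdot t^{1/2}\cdot t^{1/2}\lesssim 1$.
\item Terms in which the derivative falls on $\psi(y)/\big(y(y+K)\big)$ are bounded by
\[
t^{-1}\int_{|y|,|y+K|>t^{-1/2}}\frac{\langle y\rangle^{-2}}{|y|\,|y+K|}\Big(\frac1{|y|}+\frac1{|y+K|}+1\Big)dy\lesssim t^{-1}\cdot t^{1/2}\cdot t^{1/2}\lesssim 1 ,
\]
using $\|G\|_{L^\infty}\le 1$. The estimate follows after separating the cases $|y|\lessgtr|y+K|$.
\item The term in which the derivative falls on $G$ is bounded by Cauchy--Schwarz:
\[
t^{-1}\|\partial_xG\|_{L^2}\Big\|\frac{\psi(y)}{y(y+K)}\Big\|_{L^2(\{|y|,|y+K|>t^{-1/2}\})}\lesssim t^{-1}\,\jt^{1/4}\,t^{3/4}\lesssim 1 .
\]
\end{enumerate}

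The delicate point, which I expect to be the main obstacle, is making these region-dependent bounds uniform in $K$. In particular, the case $|K|\lesssim t^{-1/2}$, where the two singular points $y=0$ and $y=-K$ merge, must be checked. In that case the $L^2$ norm of $1/\big(y(y+K)\big)$ is still $\lesssim t^{3/4}$, since $|y+K|\gtrsim|y|$ fails only on a set of size $O(t^{-1/2})$ which is handled by the trivial bound. The other point to verify is that the exponent $1/4$ in \eqref{eq:g-cond} is exactly critical for both Step 2 and the $\partial_x G$ term in Step 3. Summing Steps 1--3 yields $\sup_{K,t}|I(t,K)|\lesssim 1$.
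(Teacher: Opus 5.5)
Your proof is correct, and apart from the main term it follows the paper's proof closely.

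\textbf{Shared steps.} The paper also cuts at scale $t^{-1/2}$ around the singularity $x=K$ (your $y=0$). There it replaces $G(x)$ by $G(K)$ using $|G(x)-G(K)|\le |x-K|^{1/2}\|\partial_xG\|_{L^2}$ (its term $A$). It bounds the piece within $t^{-1/2}$ of the stationary point trivially (its $B_1$, your first sub-case of Step 3). On the rest it integrates by parts (its $B_2$). Its terms $C_1,C_2,C_3$ correspond to your three kinds of terms, and the $\partial_xG$ term is handled by the same critical Cauchy--Schwarz count $t^{-1}\cdot t^{1/4}\cdot t^{3/4}$.

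\textbf{The genuine difference: the constant-coefficient main term.} The paper keeps it localized, $G(K)\int e^{itx^2}\varphi(|x-K|t^{1/2})\,\mathrm{p.v.}\frac{\psi(x-K)}{x-K}\,dx$. It then replaces $\psi(x-K)$ by $\psi(0)$ and translates. It drops $e^{itx^2}-1$ at cost $t\int_0^{t^{-1/2}}x\,dx\lesssim 1$. Finally it recognizes the Fourier transform of $\varphi(|\cdot|t^{1/2})\,\mathrm{p.v.}\frac1x$ at $-2tK$ as a mollified sign function. This gives the explicit leading term $i\pi G(K)\psi(0)e^{itK^2}\mathrm{sgn}(2tK)$, which shows the bound is sharp: no decay in $t$, and a jump at $K=0$. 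Your Parseval/Fresnel bound on the untruncated $J(t,K)$ is shorter and needs no cutoff, but it yields only boundedness, not this asymptotic.

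\textbf{Three small points to tidy up.}

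First, once you subtract $G(K)J$ over all of $\R$, what remains outside $|y|\le t^{-1/2}$ is $\int_{|y|>t^{-1/2}}e^{it(y+K)^2}\big(G(y+K)-G(K)\big)\psi(y)y^{-1}\,dy$. It is not the integral with $G(y+K)$ alone, as you wrote. This is harmless, because Step 3 applies equally to the constant function $G(K)$, which also satisfies the hypothesis.

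Second, in Step 1 the pairing of $t^{-1/2}e^{-i\xi^2/(4t)}e^{iK\xi}$ with the bounded function $\mathrm{sgn}\ast\widehat\psi$ is not absolutely convergent. Parseval and Fubini should therefore be justified by inserting a damping factor $e^{-\delta y^2}$ and letting $\delta\to0$. The damped half-line Fresnel integrals remain uniformly bounded, so the argument survives.

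Third, for $t\le 1$ the integration by parts does not carry over unchanged, since it costs a factor $t^{-1}$. There you should instead bound $\int_{|y|>1}|\psi(y)|\,|y|^{-1}\,dy$ trivially; Step 1 is valid for every $t>0$. The paper's proof also tacitly assumes $t\ge 1$.
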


\begin{proof}In what follows, we will often omit the p.v. notation where it is understood. We consider $\varphi$ a smooth even cutoff function with support in $(-1,1)$. We define the quantities 
    \begin{align}
        A &= \int_\R e^{i t x^{2}} \, G(x) \, \frac{\psi(x - K)}{x - K}\varphi(|x-K|t^{1/2}) \, dx \ , \\
        B &=  \int_\R e^{i t x^{2}} \, G(x) \, \frac{\psi(x - K)}{x - K}\Big[1 - \varphi(|x-K|t^{1/2}) \Big] \, dx  \ ,  
    \end{align}
such that $ I = A + B $. 
For the first term, we have
\begin{align*}
&\left| A - G(K) 
   \int_\R e^{i t x^{2}}\frac{ \psi(x - K)}{x - K} 
   \, \varphi\!\left(|x - K| t^{1/2}\right) dx 
 \right| \\
&\qquad \lesssim \int_\R
   \frac{|G(x) - G(K)| \, |\psi(x - K)|}{|x - K|} 
   \, \varphi\!\left(|x - K| t^{1/2 }\right) dx \\
&\qquad \lesssim \int_\R
   \int_x^K |G'(s)| \ ds \, \frac{|\psi(x - K)|}{|x - K|} 
   \, \varphi\!\left(|x - K| t^{1/2 }\right) dx  \\
&\qquad \lesssim \|G'\|_{L^2} 
   \int_\R\frac{\varphi\!\left(|x - K| t^{1/2}\right)}{\sqrt{|x - K|}} \, dx \quad \quad (\text{by Cauchy-Schwarz})\\
&\qquad \lesssim t^{1/4}
   \int_\R\frac{\varphi\!\left(|x|\right)}{\sqrt{|x|t^{-1/2}}} \, \frac{dx}{t^{1/2}} \\
&\qquad \lesssim  1 \ . 
\end{align*}

For the second term, we define the quantities
\begin{align}
B_1 &= \int_\R e^{i t x^{2}} 
G(x)\frac{\psi(x-K)}{x-K}
\Big[1 - \varphi\!\left(|x-K|t^{1/2}\right)\Big]
\, \varphi\!\left(|x| t^{1/2}\right) dx \ , \nonumber \\
B_2 &= \int_\R e^{i t x^{2}} 
G(x)\frac{\psi(x-K)}{x-K}
\Big[1 - \varphi\!\left(|x-K|t^{1/2}\right)\Big]
\Big[1 - \varphi\!\left(|x| t^{1/2 }\right)\Big] dx \ ,  \label{eq:B-decomp}
\end{align}
such that $B = B_1 + B_2$. We can see directly that $B_1$ is an acceptable remainder. Indeed, 
\begin{align*}
|B_1| &\lesssim \|g\|_{L^\infty} 
\int_\R\frac{1}{|x-K|} 
\Big[1 - \varphi\!\left(|x-K|t^{1/2 }\right)\Big] 
\varphi\!\left(|x| t^{1/2 }\right) dx \\
&\lesssim \|g\|_{L^\infty} 
\int_{|x|<t^{-1/2} ; |x-K|>t^{-1/2}} \frac{1}{|x-K|} 
\Big[1 - \varphi\!\left(|x-K|t^{1/2 }\right)\Big] 
\varphi\!\left(|x| t^{1/2}\right) dx \\
&\lesssim t^{1/2} \, t^{-1/2} 
\lesssim 1 \ . 
\end{align*}

For $B_2$, notice that we are away from the singularity of the integrand as well as from the stationary point $x=0$. We can then integrate by parts in $x$ to show this is also a remainder. This yields
\begin{align*}
|B_2| &= \left| \int_\R\frac{1}{t} e^{i t x^{2}} 
\partial_x \left( \frac{1}{2x} \, G(x) \frac{\psi(x-K)}{x-K} 
\Big[1 - \varphi\!\left(|x-K|t^{1/2 }\right)\Big] 
\Big[1 - \varphi\!\left(|x| t^{1/2}\right)\Big] \right) dx \right| \nonumber \\
&\lesssim \frac{1}{t} \big(C_1 + C_2 + C_3\big) \ , 
\end{align*}

where
\begin{align}
|C_1| &\lesssim \int_\R\frac{1}{|x|} |G'(x)| \frac{1}{|x-K|}
\Big[1 - \varphi\!\left(|x-K|t^{1/2}\right)\Big]
\Big[1 - \varphi\!\left(|x| t^{1/2 }\right)\Big] dx \ , \label{eq:C1} \\
|C_2| &\lesssim \int_\R\frac{|G(x)|}{|x|} 
\left|\partial_x\!\left( \frac{\psi(x-K)}{x-K} 
\Big[1 - \varphi\!\left(|x-K|t^{1/2}\right)\Big]\right)\right|
\Big[1 - \varphi\!\left(|x| t^{1/2}\right)\Big] dx \ , \label{eq:C2} \\
|C_3| &\lesssim \int_\R\frac{|G(x)|}{|x-K|} 
\Big[1 - \varphi\!\left(|x-K|t^{1/2}\right)\Big] 
\left|\partial_x\!\left( \frac{1}{x}\big(1 - \varphi(|x| t^{1/2})\big)\right)\right| dx \ . \label{eq:C3}
\end{align}

We can bound the first term by
\begin{align*}
|C_1| &\lesssim \|G'\|_{L^2} \, t^{1/2} 
\left(\int_\R\frac{1}{|x|^2} 
\Big[1 - \varphi\!\left(|x| t^{1/2}\right)\Big] dx \right)^{1/2} \\
&\lesssim t^{1/4} \cdot t^{1/2} \cdot t^{1/4} \left(\int_\R\frac{1 - \varphi(|x|)}{|x|^2} 
dx \right)^{1/2} \\
&\lesssim t \ . 
\end{align*}

We can estimate the second term by
\begin{align*}
|C_2| &\lesssim \|G\|_{L^\infty} \, t^{1/2} 
\int_\R\left| \partial_x\!\left( \frac{\psi(x-K)}{x-K} 
\Big[1 - \varphi\!\left(|x-K|t^{1/2 }\right)\Big]\right)\right| dx \\
&\lesssim \|G\|_{L^\infty} \, t^{1/2} \int_\R\Bigg\{\frac{|\partial_x \psi(x-K)|}{|x-K|} \Big[1 - \varphi\!\left(|x-K|t^{1/2 }\right)\Big] \  + \frac{|\psi(x-K)|}{|x-K|^2} \Big[1 - \varphi\!\left(|x-K|t^{1/2 }\right)\Big] \\ 
&  \hspace{6cm} + \frac{|\psi(x-K)|}{|x-K|}t^{1/2} \Big|\partial_x\varphi\!\left(|x-K|t^{1/2}\right)\Big| \Bigg\} \ dx  \ . 
\end{align*}
The first term of the integral is controlled using the cut-off function and the scale $|x-K| > t^{-1/2}$, while the other terms get controlled using changes of variables. It yields 
\begin{align*}
|C_2| \lesssim t^{1/2}\cdot ( t^{1/2} + t^{1/2} + t^{1/2}) \lesssim t
\end{align*}
\\

Finally, $C_3$ can be bounded similarly. Indeed, 
\begin{align*}
    |C_3| &\lesssim \|G\|_{L^\infty} \, t^{1/2} 
\int_\R\left| \partial_x\!\Big( \frac{1}{x}\big[1 - \varphi\big(|x| t^{1/2})\big]\Big) \right| dx \\
 &\lesssim \|G\|_{L^\infty} \, t^{1/2} 
\int_\R\Bigg\{\frac{1}{|x|^2}\big[1 - \varphi\big(|x| t^{1/2})\big] + \frac{t^{1/2}}{|x|}\big|\partial_x\varphi(|x|t^{1/2}) \big| \Bigg\} dx \\
& \lesssim t
\end{align*}
we get the desired estimate by performing changes of variables.\\

Thus, at this stage we can write 
\begin{align*}
    I(t,K) = G(K)  
   \int_\R e^{i t x^{2}} \varphi\!\left(|x - K| t^{1/2}\right) \ \pv \frac{ \psi(x - K)}{x - K} 
   \,  dx + \mathcal{O}(1) \ . 
\end{align*}
We notice that we can replace $\psi(x-K)$ by $\psi(0)$. We get 
\begin{align*}
    I(t,K) = G(K)\psi(0)  
   \int_\R e^{i t x^{2}} \varphi\!\left(|x - K| t^{1/2}\right) \ \pv \frac{ 1}{x - K} 
   \,  dx + \mathcal{O}(1) \ . 
\end{align*}
Then, performing the change of variables $x \mapsto x + K$, we get 
\begin{align}
    I(t,K) = G(K)\psi(0)e^{itK^2} 
   \int_\R e^{i t x^{2}} e^{2itxK} \varphi\!\left(|x| t^{1/2}\right) \ \pv \frac{ 1}{x} 
   \,  dx + \mathcal{O}(1) \ . 
   \label{I x^2+xK vp}
\end{align}
Using the fact that 
\begin{align*}
    e^{itx^2} - 1 = itx^2\int_0^1 e^{itx^2s} \ ds \ ,  
\end{align*}
we can write 
\begin{align*}
     \int_\R e^{i t x^{2}} e^{2itxK} \varphi\!\left(|x| t^{1/2}\right) \ \pv \frac{ 1}{x} 
   \,  dx  &=  \int_\R e^{2itxK} \varphi\!\left(|x| t^{1/2}\right) \ \pv \frac{ 1}{x} 
   \,  dx   \\
   & \hspace{2cm}+ it \int_\R e^{2itxK} \varphi\!\left(|x| t^{1/2}\right) x\int_0^1 e^{itx^2s} \ ds  
   \,  dx \ . 
\end{align*}
We observe that the second term has the following growth
$$t\int_0^{t^{-1/2}} x \ dx \lesssim 1 \ . $$
Therefore, from \eqref{I x^2+xK vp}, we can write 
\begin{align}
    I(t,K) & = G(K)\psi(0)e^{itK^2}  \int_\R e^{2itxK} \varphi\!\left(|x| t^{1/2}\right) \ \pv \frac{ 1}{x} 
   \,  dx + \mathcal{O}(1)\\
     & = G(K)\psi(0)e^{itK^2} \Fou{ \Big(\varphi(|\cdot|t^{1/2})\pv \frac{1}{x} \Big)}(-2tK) + \mathcal{O}(1) \\
     & = -\frac{i}{2} G(K)\psi(0)e^{itK^2} \Big(\Fou \big(\varphi(|\cdot|t^{1/2}) \big)  \ast \sgn \Big)(-2tK) + \mathcal{O}(1) \ . 
    \label{I conv}
\end{align}
By observing that $\Fou \big(\varphi(|\cdot|t^{1/2}) \big)(\xi) = t^{-1/2}\widehat{\varphi}(\xi t^{-1/2})$, we can write 
\begin{align*}
    \Big(\Fou \big(\varphi(|\cdot|t^{1/2}) \big)  \ast \sgn \Big)(-2tK)  & = \int_\R t^{-1/2}\widehat{\varphi}(\xi t^{-1/2}) \ \sgn(-2tK - \xi) \ d\xi \\
    & = \sgn(-2tK)\int_\R t^{-1/2}\widehat{\varphi}(\xi t^{-1/2}) \ d\xi \\
    & \quad \quad \quad + \int_\R t^{-1/2}\widehat{\varphi}(\xi t^{-1/2}) \Big[\sgn(-2tK - \xi) - \sgn(-2tK) \Big] \ d\xi \\
    & = -2\pi \ \sgn(2tK)  + \mathcal{O}( \|\widehat{\varphi}\|_{L^1}) \ ,
\end{align*}
where for the first term we perform a change of variables and use the fact that $\int_\R \widehat{\varphi}(\xi) \ d\xi = 2\pi \varphi(0) = 2\pi$. We get 
\begin{align}\label{sgnpvasymp}
    I(t,K) = i\pi G(K)\psi(0)e^{itK^2} \sgn(2tK) + \mathcal{O}(1) \ ,
\end{align}
leading to the desired bound.
% Finally, by plugging the last estimate in \eqref{I conv}, we get the desired bound. 
\end{proof}
We state the bounds for the nonlinear singular interactions.
\medskip
% \begin{proposition}\label{prop:asydirac}
% Under the bootstrap hypothesis \eqref{boothyp},  it holds that for every $|t| \le T$,
% \begin{equation}\label{ODEdirac}
% {\| \mathcal{N}_{0}\left(t,k \right) \|}_{L^{\infty}} \lesssim C_0^3\varepsilon^3 \jt^{-1}.
% \end{equation}
% \end{proposition}

% \begin{proof}
%     We recall the expression of $\mu_0^\#$ in \eqref{mu0}. Using the notations of Lemma \ref{Asdirac}, we see that $\mathcal{N}_0(t,k)$ is a finite linear combination of terms of the form $L_\eps[f^\sharp,f^\sharp,f^\sharp](t,k)$. Thus, it suffices to estimate such quantities. We apply Lemma \ref{Asdirac} with $g_j = \frac{1}{C_0\varepsilon}f^\sharp$. The assumption \eqref{eq:g-norm} is satisfied due to the bootstrap assumption \eqref{boothyp}. Then, we get 
%     $$ \Big\|L_\eps\Big[\frac{1}{C_0\varepsilon}f^\sharp,\frac{1}{C_0\varepsilon}f^\sharp,\frac{1}{C_0\varepsilon}f^\sharp\Big](t,k)\Big\|_{L^\infty_k} \lesssim \jt^{-1} \ . $$
%     The desired estimate follows from the fact that
%     $$ L_\eps[f^\sharp,f^\sharp,f^\sharp](t,k) = C_0^3\varepsilon^3 L_\eps\Big[\frac{1}{C_0\varepsilon}f^\sharp,\frac{1}{C_0\varepsilon}f^\sharp,\frac{1}{C_0\varepsilon}f^\sharp\Big](t,k) $$
%     since $L_\eps$ is a trilinear form.
% \end{proof}

\begin{proposition}\label{prop:asypv}
Under the bootstrap hypothesis \eqref{boothyp},  it holds that for every $|t| \le T$,
\begin{equation}\label{ODEpv}
\| \mathcal{N}_{0}\left(t,k \right) \|_{L^{\infty}_k} + \| \mathcal{N}_{\pv}\left(t,k \right) \|_{L^{\infty}_k} + \| \mathcal{N}_{L}\left(t,k \right) \|_{L^{\infty}_k} \lesssim C_0^3\varepsilon^3 \jt^{-1}.
\end{equation}
\end{proposition}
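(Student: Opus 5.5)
The plan is to reduce each of $\mathcal{N}_0$, $\mathcal{N}_{\pv}$, $\mathcal{N}_L$ to the model oscillatory integrals of Lemmas \ref{AsLem1} and \ref{Asdirac}, then bound those uniformly in $k$ by $\jt^{-1}$.

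By Theorem \ref{theomu} and Remark \ref{remarkdecompmu}, each singular piece of $\mu^\sharp$ is a finite $\C$-linear combination of terms of the form \eqref{gensingpart}, $\overline{a_{\e_0}(k)}a_{\e_1}(\ell)\overline{a_{\e_2}(m)}a_{\e_3}(n)\,\mathrm{b}(\eps\cdot{\bf k})$ with $\mathrm{b}=c\delta_0+\pv\,\what{\zeta}(\xi)/(i\xi)$. Since $|a_{\e_0}(k)|\lesssim 1$, it suffices to bound $\mathcal{T}_{\mathrm{b},\eps}(f_1,f_2,f_3)(t,k)$ in $L^\infty_k$, with $f_j:=a_{\e_j}f^\sharp$. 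I will rescale $g_j:=f_j/(C'C_0\varepsilon)$. The bootstrap hypothesis \eqref{boothyp}, the boundedness of $a_{\e_j}$ and $\partial_k a_{\e_j}$, and the $H^1$ control $\|\jk f^\sharp\|_{L^2}\lesssim \|u\|_{H^1}\lesssim\varepsilon$ then give \eqref{AsLem1as}. (The $H^1$ control comes from \eqref{kusharp} and energy/mass conservation for small data.)

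\emph{Delta part.} Integrate out the $\delta_0$ in the variable $\ell$, solving $\ell=\e_1(\e_0k+\e_2m+\e_3n)$ up to the sign conventions. The resulting double integral is exactly $L_\eps[g_1,\overline{g_2},g_3](t,k)$ from Lemma \ref{Asdirac}. By Remark \ref{absorbmainterm}, this is $\mathcal{O}(|t|^{-1})$ uniformly in $k$. For $|t|\le 1$ I will instead bound directly by $\|f_1\|_{L^\infty}\|f_2\|_{L^1}$-type quantities, using $\|f\|_{L^1}\lesssim\|\jk f\|_{L^2}$; this gives $\mathcal{O}(1)$, hence $\jt^{-1}$ overall.

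\emph{p.v. part.} I change variables $p:=\eps\cdot{\bf k}$ to express $\ell$ in terms of $(k,p,m,n)$. The $\pv$ term then becomes $I_\eps[g_1,\overline{g_2},g_3](t,k)$ of Lemma \ref{AsLem1}, which gives the leading term
\[
\frac{\pi}{|t|}e^{-itk^2}\,\pv\!\int_\R e^{it(p-\e_0k)^2}G(p-\e_0k)\frac{\what{\zeta}(p)}{p}\,dp+\mathcal{O}(|t|^{-1}),
\]
where $G(x)=g_1(-\e_1x)\overline{g_2(-\e_2x)}g_3(-\e_3x)$. After substituting $x=p-\e_0k$, the integral is exactly $I(t,K)$ of Lemma \ref{lemStat}, with $K=-\e_0k$ and $\psi=\what{\zeta}$. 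For that lemma I need $G$ to satisfy \eqref{eq:g-cond}. This holds because $\|G\|_{L^\infty}\le 1$ and $\|\partial_xG\|_{L^2}\lesssim\sum_j\|\partial g_j\|_{L^2}\prod_{i\ne j}\|g_i\|_{L^\infty}\lesssim\jt^{1/4}$. Lemma \ref{lemStat} then bounds the integral uniformly in $K$, so the whole term is $\mathcal{O}(|t|^{-1})$. Small $|t|$ is again handled crudely: by Corollary \ref{cor:estimTbeps}-type arguments through Lemma \ref{lem:inverseFD}, $|\mathcal{T}_{\mathrm{b},\eps}|$ is bounded by $\|\what{\mathcal{F}}^{-1}\mathrm{b}\|_{L^\infty}$ times $L^1$ norms of the $f_j$ in frequency. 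The same treatment covers $\mathcal{N}_{\pv}$ (with $a\equiv 1$) and $\mathcal{N}_L$ (with Lipschitz coefficients in $\mathcal{C}_L$), since only boundedness of $a_{\e_j}$ and of its derivative is used.

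The main obstacle is the $\pv$ step. The bound $\|\partial_kf^\sharp\|_{L^2}\lesssim\jt^{1/4}$ is critical, so Lemma \ref{AsLem1} at $\alpha=0$ gives remainders of size only $\mathcal{O}(|t|^{-1})$ and no gain. I must therefore check that the $\alpha=0$ versions of Lemmas \ref{AsLem1} and \ref{lemStat} really hold uniformly in $k$, including near $k=0$ where the stationary point and the principal-value singularity coincide. That uniformity is precisely what Lemma \ref{lemStat} was designed to provide.
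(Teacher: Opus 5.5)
Your proposal is correct and follows the paper's own proof. Via \eqref{gensingpart}--\eqref{bform} you reduce to the model integrals $L_\eps$ and $I_\eps$ with rescaled $g_j=a_{\e_j}f^\sharp/(C C_0\varepsilon)$, apply Lemma \ref{Asdirac} (with Remark \ref{absorbmainterm}) to the $\delta_0$ contributions, and apply Lemma \ref{AsLem1} followed by Lemma \ref{lemStat} with $\psi=\what{\zeta}$ to the $\pv$ contributions. Two of your additions fill in points the paper passes over silently: you derive the $\|\jk g_j\|_{L^2}$ part of \eqref{AsLem1as} from \eqref{kusharp} and the conservation laws, and you bound $|t|\le 1$ crudely, which is what turns $|t|^{-1}$ into $\jt^{-1}$.
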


\begin{proof} We recall the expression of $\mu^\sharp_0$, $\mu^\sharp_\pv$, $\mu^\sharp_L$ given at \eqref{gensingpart}-\eqref{bform}. We use the respective notations of Lemmas \ref{Asdirac} and \ref{AsLem1}. Then, it suffices to bound expressions of the form  
\begin{align}
    L_\eps[f_1,f_2,f_3](t,k) \quad \text{ and } \quad I_\eps[f_1,f_2,f_3](t,k) \ ,
\end{align}
where $f_j = a_{\e_j}f^\sharp$. 

For $L_\e$, we apply Lemma \ref{Asdirac} with $g_j = \frac{1}{C_0\varepsilon}f_j$. The assumption \eqref{eq:g-norm} is satisfied due to the bootstrap assumption \eqref{boothyp}. Then, we get 
    $$ \|L_\eps[g_1,g_2,g_3](t,k)\|_{L^\infty_k} \lesssim \jt^{-1} \ . $$
    The desired estimate follows from the fact that
    $$ L_\eps[f_1,f_2,f_3](t,k) = C_0^3\varepsilon^3 L_\eps[g_1,g_2,g_3](t,k) \ , $$
    since $L_\eps$ is a trilinear form. 

For $I_\e$, we apply Lemma \ref{AsLem1} with $g_j = \frac{1}{C_0\varepsilon}f_j$. We get 
\begin{align}
    I_\eps[g_1,g_2,g_3](t,k) = \frac{\pi}{|t|} e^{-itk^2} \int_\R e^{it(-p+\epsilon_0 k)^2} g_1(\epsilon_1(-p+\epsilon_0 k))
	\overline{g_2(\epsilon_2(-p+\epsilon_0 k))}
	\\ \times  g_3(\epsilon_3 (-p+\epsilon_0 k)) \mathrm{p.v.} \frac{\widehat{\zeta}(p)}{p} \,dp
	+ \mathcal{O}(|t|^{-1}) \ . 
\end{align}
Then, we apply Lemma \ref{lemStat} with $\psi = \widehat{\zeta}$ and 
\begin{align*}
    G(x) = g_1(\e_1x)\overline{g_2(\e_2x)}g_3(\e_3x) \ . 
\end{align*}
It yields
$$ I_\eps[g_1,g_2,g_3](t,k) \lesssim 1 \ . $$
The desired estimate follows from the fact that
    $$ I_\eps[f_1,f_2,f_3](t,k) = C_0^3\varepsilon^3 I_\eps[g_1,g_2,g_3](t,k) \ , $$
    since $I_\eps$ is a trilinear form.

    % First of all, recalling the expression of $\mu_L^\#$ in \eqref{muL}, one can notice that the $\delta_0$ terms that appear can be handled as above, by using lemma \ref{Asdirac} and setting $g_i(\ell) = a_{\eps_{i+1}}f^\#(\ell)$. It remains to treat the improved low frequency $\pv$ terms which take the form \eqref{I1} with $g_i(\ell) = a_{\eps_{i+1}}f^\#(\ell)$, up to bounded factor $a_1(k)$. Thus, by invoking lemma \ref{AsLem1}, we get asymptotics like \ref{I2}. then we use lemma \ref{lemStat} to get the desired estimate of $\mathcal{N}_L(t,k)$. 
    
    % We use the same strategy for  $\mathcal{N}_{\pv}(t,k)$, but by setting $g_i(\ell) = f^\#(\ell)$ while using the pointwise estimate lemmas \ref{AsLem1} and \ref{lemStat}. 
\end{proof}

%-------------------------------------------------------
\bigskip
\section{Bootstrap argument and concluding remarks}\label{propgoalproof}
In this section, we provide the bootstrap arguments leading to the proof of the main Theorem \ref{mainthm}. Next, we conclude with some remarks about the choice of bootstrap norms. 

\begin{proposition}\label{propgoal}
    Under the bootstrap hypothesis \eqref{boothyp}, for all $t \in [-T,T]$, the following estimates hold
    \begin{align} \label{dkf bound}
        {\big\| \partial_{k} (\mathcal{F}^\sharp f) (t) \big\|}_{L_k^2} & \le \frac{C_0}{4}\varepsilon +CC_0^3\varepsilon^3\jt^{1/4} \ , \\
        \label{f bound}
        {\big\| (\mathcal{F}^\sharp f) (t) \big\|}_{L_k^\infty} & \le  \frac{C_0}{4}\varepsilon +CC_0^3\varepsilon^3\log \jt \ . 
    \end{align}
    with $C$ an absolute constant. 
\end{proposition}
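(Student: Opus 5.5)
Both bounds come from the Duhamel decomposition of Corollary \ref{duhamelfdecomp} combined with the nonlinear estimates of Sections \ref{secnonlinl2sing}--\ref{secnonlinlinf}. We treat $t\ge 0$; negative times follow by time reversal (equivalently, by conjugation symmetry of the equation). We write
\begin{align*}
f^\sharp(t,k) = f^\sharp(0,k) + i\int_0^t \big(\mathcal{N}_0+\mathcal{N}_{\pv}+\mathcal{N}_L+\mathcal{N}_{R,1}+\mathcal{N}_{R,2}\big)(s,k)\,ds ,
\end{align*}
and estimate the linear term and the nonlinear terms separately.

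\emph{Linear term.} Since $f(0)=u_0$, Proposition \ref{propsharpF} gives $\|\partial_k f^\sharp(0)\|_{L^2}\lesssim \|\jx u_0\|_{L^2}\lesssim\varepsilon$ by \eqref{eq:weiF}, and $\|f^\sharp(0)\|_{L^\infty}\lesssim \|u_0\|_{L^1}\lesssim \|\jx u_0\|_{L^2}\lesssim\varepsilon$ by \eqref{fsharp l1} and Cauchy--Schwarz. Both constants depend only on $V$. We therefore fix $C_0$ so large that these bounds are at most $\frac{C_0}{4}\varepsilon$; this is the only constraint on $C_0$.

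\emph{Estimate \eqref{dkf bound}.} We differentiate the Duhamel formula in $k$ and move $\partial_k$ inside the time integral. Then we sum Proposition \ref{l2boundsing} (for $\mathcal{N}_0,\mathcal{N}_L,\mathcal{N}_{\pv}$), Proposition \ref{l2reg1} (for $\mathcal{N}_{R,1}$) and Proposition \ref{l2boundreg2} (for $\mathcal{N}_{R,2}$). Each term is bounded by $CC_0^3\varepsilon^3\jt^{1/4}$, which gives \eqref{dkf bound}. The one point to check is that these propositions estimate $\|\int_0^t\partial_k\mathcal{N}_\ast\,ds\|_{L^2}$ rather than $\|\partial_k\mathcal{N}_\ast\|_{L^2}$ pointwise in $s$. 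This is exactly the quantity appearing here, so no further time integration is needed; this matters because the $I_2$ piece in Proposition \ref{l2boundsing} is controlled only after integrating in time.

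\emph{Estimate \eqref{f bound}.} We take the $L^\infty_k$ norm and use Minkowski in $s$. Propositions \ref{prop:asyregular} and \ref{prop:asyregular2} give $\|\mathcal{N}_{R,j}(s)\|_{L^\infty}\lesssim C_0^3\varepsilon^3\js^{-3/2}$, which is integrable in time and contributes $O(C_0^3\varepsilon^3)$. Proposition \ref{prop:asypv} gives $\|\mathcal{N}_0+\mathcal{N}_{\pv}+\mathcal{N}_L\|_{L^\infty}\lesssim C_0^3\varepsilon^3\js^{-1}$, whose time integral is at most $C C_0^3\varepsilon^3\log\jt$. The constant contribution is absorbed into the logarithmic term for $t\gtrsim 1$.

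The main obstacle is the short-time regime, where the bounds of Lemmas \ref{lemstat}, \ref{AsLem1} and \ref{Asdirac} degenerate like $|t|^{-1}$ and are stated for large $t$. We handle $|t|\le 1$ by a crude bound: $\|\mathcal{N}_\ast(s)\|_{L^\infty_k}\lesssim \|u(s)\|_{L^\infty}^2\|u(s)\|_{L^1}$-type estimates are unavailable, so instead we use $\mathcal{N}=\Fou^\sharp(e^{-isH}|u|^2u)$ together with \eqref{fsharp l1}, $\||u|^2u\|_{L^1}\le\|u\|_{L^\infty}\|u\|_{L^2}^2$, and $\|u\|_{L^\infty}\lesssim\|u\|_{H^1}\lesssim\varepsilon$. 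This bounds the total nonlinear contribution by $C\varepsilon^3\le CC_0^3\varepsilon^3$. Hence both \eqref{dkf bound} and \eqref{f bound} hold on $[0,1]$, using $\log\jt\gtrsim t^2$ there and, where needed, enlarging $C$ by a constant; this uses only the $H^1$ control from the conserved mass and energy. Combining the linear bound with the nonlinear bounds, with $C$ absolute and independent of $C_0,\varepsilon,T$, proves the proposition.
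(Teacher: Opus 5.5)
Your core argument is exactly the paper's proof: the Duhamel decomposition of Corollary \ref{duhamelfdecomp}, \eqref{eq:weiF} and \eqref{fsharp l1} for the data term, Propositions \ref{l2boundsing}, \ref{l2reg1} and \ref{l2boundreg2} for \eqref{dkf bound}, and Propositions \ref{prop:asyregular}, \ref{prop:asyregular2} and \ref{prop:asypv} with Minkowski in time for \eqref{f bound}. All of these propositions are stated for every $|t|\le T$ with $\jt$-weights, so you need neither the time-reversal reduction nor a separate short-time argument for the nonlinear bounds. The time-reversal reduction is also not exact: conjugation does not commute with $\mathcal{F}^\sharp$. It mixes $f^\sharp(k)$ and $f^\sharp(-k)$ through the scattering coefficients, so the bootstrap quantity, with its precise constants, is preserved only up to constants.

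The short-time paragraph you add, however, fails as written. The crude bound $\|\mathcal{N}(s)\|_{L^\infty_k}\lesssim \||u|^2u(s)\|_{L^1}\lesssim\varepsilon^3$ gives a nonlinear increment of size $\varepsilon^3 t$ on $[0,t]$. To absorb it into $CC_0^3\varepsilon^3\log\jt$ you would need $t\lesssim \log\jt$ on $[0,1]$. This is false, since $\log\jt\sim t^2/2$ as $t\to 0$; the fact $\log\jt\gtrsim t^2$ does not help. In addition, an $L^\infty_k$ bound on $\mathcal{N}$ gives no control of $\|\partial_k\int_0^t\mathcal{N}\,ds\|_{L^2}$, so your claim that both bounds hold on $[0,1]$ is unsupported. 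Fortunately \eqref{dkf bound} does not need it.

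The issue you sensed is real, and the paper's proof shares it. Integrating $\js^{-1}$ and $\js^{-3/2}$ also yields an increment of size about $t$ near $t=0$, not $O(\log\jt)$. There are two easy repairs.
\begin{enumerate}
\item Read \eqref{f bound} with $1+\log\jt$ (or $\log(2+|t|)$) in place of $\log\jt$. This is all the bootstrap in Section \ref{propgoalproof} uses, since one only needs $CC_0^3\varepsilon^2(1+\log\jt)\le C_0/4$ for $|t|\le T_\varepsilon$.
\item Choose $C_0$ so the data term is at most $\frac{C_0}{8}\varepsilon$. For $|t|\le 1$, absorb the $O(C_0^3\varepsilon^3)$ increment into the remaining $\frac{C_0}{8}\varepsilon$ using $\varepsilon\le\varepsilon_0$. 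For $|t|\ge 1$, the bounded contributions are absorbed into $\log\jt$, as you said.
\end{enumerate}
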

\begin{proof}
We start with the estimate \eqref{dkf bound}. Indeed, from corollary \ref{duhamelfdecomp}, we can write 
\begin{align}\label{eq:expandfl2}
	{\big\| \partial_{k}f^{\#}(t) \big\|}_{L_{k}^{2}} \leq {\big\| \partial_{k}f^{\#}(0) \big\|}_{L_k^2} + 
	{\Big\|\int_0^{t} \partial_{k}\big(\mathcal{N}_{0} + \mathcal{N}_{\pv} +\mathcal{N}_{L}
	+ \mathcal{N}_{R,1} + \mathcal{N}_{R,2} \big)\,ds \Big\|}_{L_k^2} \ . 
\end{align}
We know that $f(0) = u_0$. Then, using \eqref{eq:weiF}, we have 
 $$ {\big\| \partial_{k}f^{\#}(0) \big\|}_{L_k^2} \le \frac{C_0}{4}{\big\| \jx u_0 \big\|}_{L^2} \le \frac{C_0}{4}\varepsilon \ ,$$
where $C_0$ is a constant depending only on $V$. Therefore, the $L^2_k$ estimate \eqref{dkf bound} follows from \eqref{eq:expandfl2}, and the estimates of Propositions \ref{l2boundsing}, \ref{l2reg1} and \ref{l2boundreg2}. 

For the $L^\infty$ estimate \eqref{f bound}, we start with 
\begin{align}\label{eq:expandflinf}
	{\big\| f^{\#}(t) \big\|}_{L_{k}^{\infty}} &\leq {\big\| f^{\#}(0) \big\|}_{L_k^\infty} \\
    &+ 
	\int_0^{t} \big({\|\mathcal{N}_{0}\|}_{L_k^\infty} + {\|\mathcal{N}_{\pv}\|}_{L_k^\infty}+{\|\mathcal{N}_{L}\|}_{L_k^\infty}
	+ {\|\mathcal{N}_{R,1}\|}_{L_k^\infty} + {\|\mathcal{N}_{R,2}\|}_{L_k^\infty} \big)\,ds \ .  
\end{align}
Using \eqref{fsharp l1}, we see that  
$$ {\big\| f^{\#}(0) \big\|}_{L_k^\infty} \lesssim {\big\| u_0 \big\|}_{L^1} \le \frac{C_0}{4}{\big\| \jx u_0 \big\|}_{L^2} $$
with $C_0$ a constant depending only on $V$. Thus, the estimate \eqref{f bound} follows from \eqref{eq:expandflinf} and Propositions \ref{prop:asyregular} and \ref{prop:asypv}. 
\end{proof}
\begin{proof}[Proof of main theorem \ref{mainthm}] The goal is to prove that for all $t \in  [-T_\varepsilon, T_\varepsilon]$:
\begin{align}
{\big\| (\mathcal{F}^\sharp f) (t) \big\|}_{L_k^\infty} + \jt^{-1/4} 
  {\big\| \partial_{k} (\mathcal{F}^\sharp f) (t) \big\|}_{L_k^2} \lesssim \varepsilon \ . 
  \nonumber
\end{align}
 The continuity at $t = 0$ combined with the small initial data hypothesis \eqref{smalldata}, gives the existence of a time interval $[-T^* , T^*]$ on which the bound  is satisfied. Let $C_0$ be a constant depending only on $V$ to be determined. Then, we consider  $T > 0 $ such that 
\begin{align} \label{boothyp-again}
    \underset{t \in [-T,T]}{\sup}\Big({\big\| (\mathcal{F}^\sharp f) (t) \big\|}_{L_k^\infty} + \jt^{-1/4} 
  {\big\| \partial_{k} (\mathcal{F}^\sharp f) (t) \big\|}_{L_k^2} \Big) = 2C_0\varepsilon  \ . 
\end{align}
If, $T > T_\varepsilon$, Theorem \ref{mainthm} follows immediately from the linear dispersion estimate \eqref{eq:linearpoinwiseH}
\begin{align}
    \|u(t)\|_{L^\infty_x} \lesssim \jt^{-1/2}\Big({\big\| (\mathcal{F}^\sharp f) (t) \big\|}_{L_k^\infty} + \jt^{-1/4} 
  {\big\| \partial_{k} (\mathcal{F}^\sharp f) (t) \big\|}_{L_k^2} \Big) \ . 
\end{align}
Thus, we assume that $T \le T_\varepsilon$. Now, we take  $\varepsilon_0$ small enough to get $CC_0^3\varepsilon^2 \le C_0/4$. Moreover, the choice of $T_\varepsilon$ is such that $CC_0^3\varepsilon^2\log \jt \le C_0/4$. Thus, Proposition \ref{propgoal} gives the bound 
$$\underset{t \in [-T,T]}{\sup}\Big({\big\| (\mathcal{F}^\sharp f) (t) \big\|}_{L_k^\infty} + \jt^{-1/4} 
  {\big\| \partial_{k} (\mathcal{F}^\sharp f) (t) \big\|}_{L_k^2} \Big) \le C_0\varepsilon $$ 
  that contradicts \eqref{boothyp-again}. Therefore, $T > T_\varepsilon$. 
\end{proof}

\medskip
We are going to discuss some issues in the choice of bootstrap norms in order to get global bounds for non-generic potentials. For $\alpha, T > 0$, we consider the following bootstrap norms 
\begin{align}
    \|u(T)\|_{\mathcal{B}_\alpha} := \underset{t\in [-T,T]}{\sup} \ ( \|f^\sharp(t)\|_{L^\infty} + \jt^{-\alpha}  \|\partial_kf^\sharp(t)\|_{L^2} ) \ . 
\end{align}
In previous works such as \cite{GPR} and \cite{CPNLS2}, the authors used it with  $\alpha < 1/4$.

The starting point of the discussion is the technique we used (see Section \ref{secnonlinl2sing}) for the $L^2$ estimate of the singular parts involving $\pv$ terms, especially those without low-frequency improvements. This technique gives a threshold growth of $t^{1/4}$ no matter what the growth of $\| \partial_k f^\sharp \|_{L^2}$ is in the bootstrap hypothesis. Indeed,  let us consider the bootstrap hypothesis 
\begin{align}
     \|u(T)\|_{\mathcal{B}_\alpha}= 2C_0\varepsilon 
\end{align}
with $\alpha \le 1/4$. Using the machinery of Chen and Pusateri in \cite{CPNLS2}, due to the low-frequency structure, we succeed in controlling the $L^2$ norm of 
$$\int_0^t \partial_k \mathcal{N}_0(s,k) \ ds \quad \text{ and } \quad \int_0^t \partial_k \mathcal{N}_L(s,k) \ ds \ . $$
It gives a $t^\alpha$ growth (see \cite[Section 5]{CPNLS2}). On the other hand, for the dangerous $\pv$ part, we have the following asymptotic in $L^2_k$ 
        \begin{align}
            \Big\|\int_0^t \partial_k \mathcal{N_\pv}(s,k) ds -8(A_+^2 - A_-^2)A_+B_+ \int_0^t e^{-isk^2} s|\underline{u}(s,0)|^2\underline{u}(s,0) \ ds \Big\|_{L^2_k} = \mathcal{O}(C_0^3\varepsilon^3 t^\alpha) \ , 
        \end{align}   
        where $\underline{u} = \Fou^{-1}u^\sharp$  . The main term of this $L^2_k$ asymptotic is the reason why we cannot close the bootstrap scheme for $\alpha < 1/4$. In fact, after a stationary phase lemma, this is equivalent to estimating
        \begin{align} \label{refinedasymppv}
            \int_1^t \int_1^t \frac{ss'}{\sqrt{|s-s'|}} |\underline{u}(s,0)\underline{u}(s',0)|^2 \underline{u}(s,0)\overline{\underline{u}(s',0)}  \ dsds' \ . 
        \end{align}

    In addition to the almost global sharp decay, one can prove that the bootstrap norms used for the generic and symmetric non-generic case can be unbounded in the case of asymmetric non-generic potentials. In fact, this possibility is linked to the long time asymptotics at $x = 0$ of the inverse of Wave Operators applied to the solution of \eqref{NLS}. This statement is made clear through the following proposition.
    Since this result is included only for explanatory purposes and is not needed for the proof of our main theorem, we omit some details of the argument for the sake of brevity.
\begin{proposition}
    Let $V$ be an asymmetric non-generic potential. i.e. $V$ is non-generic with a zero energy resonance which has a limit different from $1$ or $-1$ at $-\infty$. If  
    \begin{align}\label{liminfass}
        \liminf\limits_{t \to +\infty} t^{1/2} |\underline{u}(t,0)| > 0 \ , 
    \end{align} 
    then, for any $\alpha < 1/4$, the bootstrap norm $\|u\|_{\mathcal{B}_\alpha}$ is not bounded.
\end{proposition}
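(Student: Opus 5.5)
We argue by contradiction. Fix $\alpha<1/4$ and suppose $\|u\|_{\mathcal{B}_\alpha}\le M<\infty$ globally in time. We then expand $\partial_k f^\sharp$ through the Duhamel decomposition of Corollary \ref{duhamelfdecomp}, and show that the dangerous p.v. contribution forces $\|\partial_k f^\sharp(t)\|_{L^2}\gtrsim t^{1/4}$ along a sequence $t\to\infty$. This is incompatible with the assumed growth $t^{\alpha}$.

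\emph{Step 1: reduction to the model term.} Under the $\mathcal{B}_\alpha$ bound, the machinery of \cite{CPNLS2} (smoothing estimates, Lemma \ref{lem:smoothingsim}, and local decay, Lemma \ref{lem:lowlocaldecay}) controls the contributions of $\mathcal{N}_0$ and $\mathcal{N}_L$ by $O(t^\alpha)$. The arguments of Propositions \ref{l2reg1} and \ref{l2boundreg2}, run with the improved hypothesis, do the same for $\mathcal{N}_{R,1}$ and $\mathcal{N}_{R,2}$. For $\mathcal{N}_{\pv}$ we repeat the proof of Proposition \ref{l2boundsing}.
\begin{itemize}
\item The terms where $\partial_k$ falls on a profile are $O(t^\alpha)$.
\item The piece $I_1$ is $O(\sqrt{\log t})$.
\end{itemize}
What remains is $I_2(t,k)=\widehat{G_t}(k^2)$, where $G_t(s)=s\,\mathcal{T}_{\widehat\zeta,\eps}(f_1,f_2,f_3)(s,0)\mathbf 1_{[0,t]}(s)$.

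\emph{Step 2: identifying the leading term.} We replace $\mathcal{T}_{\widehat\zeta,\eps}(\cdot)(s,0)=\int u_1\bar u_2u_3\zeta\,dx$ by its value at $x=0$, namely $|\underline u(s,0)|^2\underline u(s,0)$. The error is bounded by $\|\jx^{-1}\partial_x u\|_{L^2}$ (Corollary \ref{cor:locdecdiffflow}) times $s^{-1}$, and so contributes $O(t^\alpha)$. Summing over $\eps\in E$ with the coefficient from \eqref{mupv} produces $c_a:=8(A_+^2-A_-^2)A_+B_+$. This constant is nonzero exactly when $a^2\neq1$, by Remark \ref{remarkdecompmu}, which is where asymmetry is used. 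At this point we have
\[
\Big\|\partial_kf^\sharp(t)-c_a\int_0^te^{-isk^2}s|\underline u(s,0)|^2\underline u(s,0)\,ds\Big\|_{L^2_k}=O(t^\alpha).
\]

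\emph{Step 3: lower bound (main obstacle).} By the change of variables in Lemma \ref{k^2lemma}, the squared $L^2_k$ norm of the main term equals $\frac12\int_0^\infty|\widehat{H_t}(\lambda)|^2\lambda^{-1/2}d\lambda$, where $H_t(s)=s|\underline u(s,0)|^2\underline u(s,0)\mathbf 1_{[1,t]}$. This quantity is, up to a constant, the double integral \eqref{refinedasymppv}. We want to show it is $\gtrsim t^{1/2}$. By \eqref{liminfass} and the $L^\infty$ bound, $|H_t(s)|\simeq s^{-1/2}$ for large $s$, but $H_t$ carries the phase of $\underline u(s,0)$, so cancellation in \eqref{refinedasymppv} must be excluded. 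This is the hard point. I would first use the phase structure: the bound on $\mathcal{N}$ in Proposition \ref{prop:asypv}, together with $\partial_t f^\sharp=O(\varepsilon^3/t)$, shows that $\underline u(s,0)$ has the form $s^{-1/2}e^{i\theta(s)}A(s)$ with $\theta$ and $A$ slowly varying on dyadic scales (logarithmic modulation). Restricting to $s,s'\in[t/2,t]$, the kernel $|s-s'|^{-1/2}$ is positive-definite and the phase $\theta(s)-\theta(s')$ is small for $|s-s'|\ll t$. Lower-bounding the near-diagonal region then gives $\gtrsim t^{1/2}$, while the off-diagonal part is handled by positivity of the quadratic form (its Fourier weight $\lambda^{-1/2}\ge0$). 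If the slow variation is not available, I would instead work with the full non-negative form $\int|\widehat{H_t}|^2\lambda^{-1/2}$ and apply Cauchy--Schwarz against a low-frequency test function.

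In either case we obtain $\|\partial_k f^\sharp(t)\|_{L^2}\gtrsim t^{1/4}-O(t^\alpha)$, contradicting the assumed $\mathcal{B}_\alpha$ bound.
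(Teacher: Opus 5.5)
Your Steps 1--2 reproduce the $L^2_k$ reduction the paper states just before the proposition. Step 3, however, has a genuine gap, and it is the step that actually produces the conclusion.

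The only information you use about the phase of $\underline u(s,0)$ is $\partial_t f^\sharp=O(M^3/t)$ from Proposition \ref{prop:asypv}, where $M$ is the assumed $\mathcal B_\alpha$ bound. Over $[t/2,t]$ this lets $f^\sharp(s,0)$ move by $O(M^3)$. On the other hand, \eqref{liminfass} only gives $|f^\sharp(s,0)|\ge c_*>0$, with $c_*$ unrelated to $M$. So the relative phase drift per dyadic block is $O(M^3/c_*)$, and ``slow variation on dyadic scales'' does not follow.

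You can still take windows $|s-s'|\le\delta t$ with $\delta\ll c_*/M^3$. This gives a positive near-diagonal contribution $\gtrsim c_*^6\sqrt{\delta}\,t^{1/2}$. The rest of \eqref{refinedasymppv}, however, is only bounded by $O(M^6t^{1/2}/\sqrt{\delta})$, which is larger. Positivity cannot dispose of it. The kernel restricted to $|s-s'|>\delta t$ is not positive semidefinite. Also, nonnegativity of $\int_0^\infty|\widehat{H_t}(\lambda)|^2\lambda^{-1/2}\,d\lambda$ gives no monotonicity when you split $H_t$ into pieces.

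Your Cauchy--Schwarz fallback hits the same obstruction in another form. Only $\lambda>0$ enters, and a logarithmic rotation $e^{iK\log s}$ of the unfavorable sign shrinks the positive-frequency content of $s^{-1/2}e^{iK\log s}$ by a factor comparable to $e^{-2\pi|K|}$. With only log-Lipschitz control, $|K|$ can be as large as $M^3/c_*$. You would therefore need a uniform, quantitative Paley--Wiener-type lower bound over all such phases, which you do not supply.

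The missing idea is the one the paper uses. Exploit $\alpha<1/4$ a second time, now pointwise, to get integrable remainders in the stationary phase lemmas of \cite{GPR}. Then at $k=0$ the regular and $\mu^\sharp_L$ contributions become remainders, and the p.v.\ main terms cancel by symmetry. This yields the asymptotic ODE $-i\partial_t f^\sharp(t,0)=\frac{c}{t}|f^\sharp(t,0)|^2f^\sharp(t,0)+O(t^{-1-\rho})$ with $c>0$. It produces a limit $f_\infty$ and the expansion $\underline u(t,0)=\frac{f_\infty}{\sqrt{-4\pi it}}e^{ic|f_\infty|^2\log t}+O(t^{-1/2-\rho})$. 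The phase is thus an explicit logarithm whose rate is tied to the limiting amplitude itself. The main term of \eqref{refinedasymppv} can then be computed, and the corresponding $L^2_k$ norm grows like $|f_\infty|^3t^{1/4}$. Hypothesis \eqref{liminfass} is used only to guarantee $f_\infty\neq0$.

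Separately, your Step 2 error bound is too weak as written. Local decay gives $\|\jx^{-1}\partial_x u\|_{L^2}\lesssim s^{-1/2+\alpha}$, hence an error of size $s^{-1/2+\alpha}$ in $G_t$. Its $L^{4/3}([1,t])$ norm is $t^{1/4+\alpha}$, which exceeds the main term. Use instead the refined stationary phase bound $|\underline u(s,x)-\underline u(s,0)|\lesssim s^{-3/4}(\jx\|f^\sharp\|_{L^2}+\|\partial_k f^\sharp\|_{L^2})$. This gives an error $s^{-3/4+\alpha}$ in $G_t$, and hence the $O(t^\alpha)$ you want.
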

\begin{proof}[Sketch of a proof]
    We argue by contradiction, by assuming that $\|u\|_{\mathcal{B}_\alpha}$ is bounded. This bound allows us to use the stationary phase lemma used in the paper \cite{GPR} (especially Lemmas 5.1, 5.2 and 5.3) to derive an asymptotic ODE for the profile at frequency $k= 0 $. We recall the decomposition of the NSD from Theorem \ref{theomu}
    $$ \mu^\sharp = \mu^\sharp_0 + \mu^\sharp_\pv + \mu^\sharp_L + \mu^\sharp_R \ . $$
    The dynamic of the profile in the distorted Fourier setting is given by
    \begin{align}
	f^{\#}(t,k)  = f^{\#}(0,k)  + i
	\int_0^{t} \big(\mathcal{N}_{0} + \mathcal{N}_{\pv} +\mathcal{N}_{L}
	+ \mathcal{N}_{R} \big)\,ds,
    \end{align}
where, for $\ast\in\{0; \pv; L; R\}$, 
\begin{align}\label{Nast-2}
	\begin{split} 
	\mathcal{N}_{\ast}(s,k)= \iiint e^{is(-k^2+\ell^2-m^2+n^2)}
	f^{\#}(s,\ell)\overline{f^{\#}(s,m)}
	f^{\#}(s,n) \ \mu^{\#}_{\ast}(k,\ell,m,n)\,\frac{dn}{2\pi}\frac{dm}{2\pi}\frac{d\ell}{2\pi} \ . 
	\end{split}
	\end{align}  

By performing the pointwise multilinear estimates, the regular part is well estimated, and the part of the NSD with low frequency improvement has vanishing coefficients at $k = 0$. Those parts will not contribute to the main terms and we get 
\begin{align}
    -i\partial_t f^{\#}(t,0) =  \mathcal{N}_{0}(t,0) +  \mathcal{N}_{\pv}(t,0) + \mathcal{O}(t^{-1-\rho})
\end{align}
with a certain $\rho > 0$. In addtion, the dangerous $\pv$ parts go to the remainder too. Indeed, we apply Lemmas 5.1 and 5.3 of \cite{GPR}. We notice that at $k =0$, the main contribution of the $\pv$ terms is zero due to a cancellation caused by a symmetry in the formula. For the Dirac part, we use Lemma 5.2 of \cite{GPR} to end up with the following asymptotic ODE at frequency $k =0$, 
\begin{align}
     -i\partial_t f^{\#}(t,0) = \frac{A_-^4 + A_+^4 + 6 A_-^2A_+^2}{4\pi t} |f^{\#}(t,0)|^2f^{\#}(t,0) + \mathcal{O}(t^{-1-\rho}) \ . 
\end{align}
This yields the existence of a complex number $f_\infty$ such that  
\begin{align}
    \Big| \exp \Big({-}i c\int_0^t |f^\sharp(s,0)|^2 \frac{ds}{s+1}\Big) f^\sharp(t,0) - f_\infty \Big| \lesssim t^{-\rho}
\end{align}
with $c =\frac{1}{4\pi}(A_-^4 + A_+^4 + 6 A_-^2A_+^2) > 0 $.

This asymptotic combined with the linear asymptotics formula of $\underline{u}(t,x)$ gives, for $t\ge 1$,
\begin{align}
    \underline{u}(t,0) = \frac{1}{\sqrt{-4\pi it}}f_\infty e^{ic|f_\infty|^2\log t} + \mathcal{O}(t^{-1/2 -\rho}) \ . 
\end{align}
We put this asymptotic in the integral \eqref{refinedasymppv}. This implies that the main term of \eqref{refinedasymppv} grows as $t^{1/4}$ that contradicts the boundedness of the bootstrap norm $\|u\|_{\mathcal{B}_\alpha} $ for $\alpha < 1/4$, if $f_\infty \neq 0$. This last condition is ensured by the assumption \eqref{liminfass}.
\end{proof}

%\newpage

\end{document}